\title[An Index Localization Theorem for Dirac operators]{A Localization Theorem for Dirac operators}
\author[M. Maridakis ]{Manousos Maridakis}
\email{mmaridaki1@gmail.com}
\begin{document}


\vskip.15in

\begin{abstract}
We study perturbed Dirac operators of the form $ D_s= D + s{\mathcal A} :\Gamma(E^0)\rightarrow \Gamma(E^1)$ over a compact Riemannian manifold $(X, g)$ with symbol $c$ and special bundle maps ${\mathcal A} :  E^0\rightarrow E^1$ for $\lvert s \rvert >>0$. Under a simple algebraic criterion on the pair $(c, {\mathcal A})$, solutions of $D_s\psi=0$ concentrate as $\lvert s \rvert \to\infty$ around the singular set $Z_{\mathcal A}$ of ${\mathcal A}$. We prove a spectral separation property of the deformed Laplacians $D_s^*D_s$ and $D_s D_s^*$, for $\lvert s \rvert >>0$. As a corollary we prove an  index localization theorem.
\end{abstract}

\maketitle

 \vspace{1cm}

\setcounter{equation}{0}
\section{Introduction}

 Given a first order elliptic operator $D$ with symbol $c$, one can look for zeroth-order perturbation ${\mathcal A}$  such that all $W^{1,2}$-solutions of the equation
\begin{equation}
\label{eq:1}
D_s\xi\ :=\ (D+s{\mathcal A})\xi\ =\  0
\end{equation}
concentrate along submanifolds $Z_\ell$ as $\lvert s \rvert \to\infty$. In \cite{m} we gave a simple algebraic criterion satisfied from $c$ and ${\mathcal A}$ (see also Definition~\ref{defn:Main_Def_CP}) that ensures localization of solutions to $D_s\xi=0$ in the sense of Proposition~\ref{prop:concentration_Prop}. We also provided a general method for constructing examples of $c$ and ${\mathcal A}$ satisfying this criterion. 

This paper is a sequel to \cite{m} proving two main results:  The first is a  Spectral Decomposition Theorem for operators $D_s$  (Theorem~\ref{Th:mainT}) that satisfy the concentration condition \eqref{eq:cond}, two transversality conditions namely and two degeneration conditions. The Spectral Decomposition Theorem then states that:
\begin{itemize}
\item The eigenvectors corresponding to the  low eigenvalues  of  $D^*_sD_s$ concentrate near the singular set of the perturbation bundle map ${\mathcal A}$ as $\lvert s \rvert \to\infty$.
\item  The eigenvalues of $D^*_sD_s$ corresponding to the eigensections that do not concentrate grow at least as $O(\lvert s \rvert)$ when $s\to\infty$.
\item  The components of the critical set of the perturbation bundle map ${\mathcal A}$ are submanifolds $Z_\ell$ and each determines an associated decomposition of the normal bundle to the $Z_\ell$ giving a precise asymptotic formula for the solutions of \eqref{eq:1} when $\lvert s \rvert$ is large.
\end{itemize}

Our second main result is an Index Localization Theorem, which follows from the Spectral Decomposition Theorem.  It describes an Atiyah-Singer type index formula for how the index of $D$ decomposes as a sum of local indices associated with specific Dirac operators on suitable bundles over the submanifolds $Z_\ell$.

 The concentration condition \eqref{eq:cond} was  previously found by I. Prokhorenkov and K.~Richardson in \cite{pr}. They classified the complex linear perturbations ${\mathcal A}$ that satisfy \eqref{eq:cond} but found few examples, all of which concentrate at points. They proved a spectral decomposition theorem and an index localization theorem in the case where the submanifolds $Z_\ell$ are finite unions of points.  Our result generalize that theorem in the more general case where $Z_\ell$ are submanifolds. Using our general construction of perturbations ${\mathcal A}$ using spinors described in \cite{m}, the Index Localization formula draws some interesting conclusions of the index of $D$ as described by the local information of the zero set of a spinor. Part of our analysis on the asymptotics of the solutions when $s$ is large, is similar to the work \cite{bl} of J. M. Bismut and G. Lebeau. They use adiabatic estimates to examine in detail the case where ${\mathcal A}$ is a Witten deformation on a compact complex manifold with singular set a complex submanifold.

 \medskip

This paper has nine sections. Section~\ref{sec:Concentration_Principle_for_Dirac_Operators} reviews  the concentration condition, improves on some analytic consequences described in \cite{m} and states the main assumptions and results, which are proved in later sections. An important part of the story relies on the vector bundles $S^{0\pm}_\ell$ and $S^{1\pm}_\ell$ over the sub manifolds $Z_\ell$ that are introduced for the statement of Theorem~\ref{Th:mainT} and described in detail later.

Section~\ref{Sec:structure_of_A_near_the_singular_set} is divided into two subsections. In the first subsection, we examine the perturbation term ${\mathcal A}$, regarding it as a section of a bundle of linear maps, and requiring it to be transverse to certain  subvarieties, where the linear maps jump in rank. The transversality condition, allows us to write down a Taylor series expansion of ${\mathcal A}$ in the normal directions along each connected component $Z_\ell$ of $Z_{\mathcal A}$. The 1-jet terms of ${\mathcal A}$ together with the assumptions of Theorem~\ref{Th:mainT} indicate the existence of the vector bundles $S^{0\pm}_\ell$ and $S^{1\pm}_\ell$ and their geometry regarding the kernel bundles $\ker {\mathcal A}$ and $\ker {\mathcal A}^*$. In the second subsection we examine the geometry of the Clifford connection 1-form along the singular set and introduce a more appealing connection that ties better with the geometry of the bundles described earlier.  

In Section \ref{sec:structure_of_D_sA_along_the_normal_fibers},  we examine the geometric structure of $D + s{\mathcal A}$ near the singular set $Z_{\mathcal A}$ of ${\mathcal A}$. The transversality Assumption, allows us to write down a Taylor expansion of the coefficients of the operator $D$ in the normal directions along each connected component $Z_\ell$ of $Z_{\mathcal A}$. The expansion decomposes into the sum of a ``vertical'' operator $\slashed{D}_0$ that acts on sections along each fiber of the normal bundle, and a ``horizontal'' operator $\bar D^Z$ that acts on sections of the bundles $\pi^* S^+_\ell \to N$ on each normal bundle $N \to Z_\ell$ of each component $Z_\ell$ of  $Z_{\mathcal A}$. These local operators are, in turn, used to describe the local models of the solutions we will be using in the construction of the asymptotic formula of eigenvectors of the global low eigenvalues when $s$ is sufficiently large. 

In Section \ref{sec:Properties_of_the_operators_slashed_D_s_and_D_Z.} we review some well known properties of the horizontal and vertical operators introduced in Section \ref{sec:structure_of_D_sA_along_the_normal_fibers}. The vertical operator is well known to associate to a Euclidean harmonic oscillator in the normal directions. 

In Section \ref{sec:harmonic_oscillator} we introduce weighted Sobolev spaces related to the aforementioned harmonic oscillator. The mapping properties of the vertical operator allows to build a bootstrap argument for the weights of the Gaussian approximate solutions. In Section \ref{sec:The_operator_bar_D_Z_and_the_horizontal_derivatives} we examine the mapping properties of the horizontal operator with respect to these spaces.   

The technical analysis needed to prove the Spectral Separation Theorem is carried out in Section~\ref{sec:Separation_of_the_spectrum} and Section~\ref{sec:A_Poincare_type_inequality}. Using the horizontal operator $D^Z_+$ and a splicing procedure, we define a space of approximate eigensections, and work through estimates needed to show that these approximate eigensections are uniquely corrected to true eigensections.

In Section~\ref{sec:Morse_Bott_example} we provide an example from Morse-Bott homology theory where the Euler characteristic of closed manifold is written as a signed sum of the Euler characteristics of the critical submanifolds of a Morse-Bott function.    

Finally, we provide an appendix with the statements and proofs that fall long in the main text. In subsections \ref{subApp:tau_j_tau_a_frames}- \ref{subApp:The_pullback_bundle_E_Z_bar_nabla_E_Z_c_Z} of the Appendix we describe the construction and properties of an adapted connection that is an extension of the connection appeared in Section~\ref{Sec:structure_of_A_near_the_singular_set}, over the total space of the normal bundle $N\to Z$ of the singular component $Z$. The adapted connection is used for the analysis carried out in Sections~\ref{sec:Separation_of_the_spectrum} and \ref{sec:A_Poincare_type_inequality}.

The topology of the subvarieties that produce the singular set $Z_{\mathcal A}$ is the next step in the pursue of a connection of the Index Localization formula with characteristic numbers of these subvarieties. We will pursue this objective in another article. A good reference for them is \cite{k2}.

\section*{Acknowledgements} The author would like to express his gratitude to his advisor, Professor T.H. Parker for introducing him to the subject, for his suggestions and for his continuous encouragement and to \'{A}kos Nagy for numerous engaging conversations on the subject.

\medskip
   
  \vspace{1cm}
  
\setcounter{equation}{0}   
\section{Concentration Principle for Dirac Operators } 
\label{sec:Concentration_Principle_for_Dirac_Operators}

This section reviews some very general conditions described in \cite{m} in which a family $D_s$ of deformed Dirac operators concentrates. Furthermore it describes the necessary assumptions and states the two main theorems of the paper.     
  
\vspace{8mm}

Let $(X,\, g_X)$ be a closed Riemannian manifold and $E$ be a real Riemannian vector bundle over $X$. We say that $E$ admits a $\mathbb{Z}_2$ graded Clifford structure if there exist an orthogonal decomposition $E=E^0\oplus E^1$ and a bundle map $c: T^*X\to \mathrm{End}(E)$ so that that $c(u)$ interchanges $E^0$ with $E^1$ and satisfies the Clifford relations,
\begin{equation}
\label{eq:Clifford_relations}
c(u)c(v)+c(v)c(u)\ =\ -2 g_X(u, v)\ 1_E,
\end{equation}
and  for all $u, v\in T^*X$. We will often denote $u_\cdot = c(u)$. Let also $\nabla : \Gamma(E) \to \Gamma(T^*X\otimes E)$ be a connection compatible with the Clifford structure; by definition $\nabla$ is compatible with the Clifford structure if it is compatible with the Riemannian metric of $E$, if it preserves the $\mathbb{Z}_2$ grading and if it satisfies the parallelism condition $\nabla c = 0$. The composition
\begin{equation}
\label{eq:Def_Dirac}
D = c\circ \nabla : C^\infty(X; E^0)\rightarrow C^\infty(X; E^1),
\end{equation}
then defines a Dirac operator. Given a real bundle map ${\mathcal A}: E^0\to E^1$ we form the family of operators
\begin{align*}
D_s=D+s{\mathcal A},
\end{align*}
where $ s\in\mathbb{R}$.  Furthermore, using the Riemannian metrics on the bundles $E^0$ and $E^1$ and the Riemannian volume form $d\mathrm{vol}^X$, we can form the adjoint ${\mathcal A}^*$, and the formal $L^2$ adjoint $D_s^*= D^*+ s{\mathcal A}^*$ of $D_s$. We recall from \cite{m} the definition of a concentrating pair:

\begin{defn}[Concentrating pairs]
\label{defn:Main_Def_CP}
In the above context, we say that  $(c, {\mathcal A})$  is a {\em concentrating pair} if it satisfies the algebraic condition
\begin{align}
\label{eq:cond}
{\mathcal A}^*\circ c(u) = c(u)^*\circ {\mathcal A},  \qquad \text{for every $u\in T^*X$.}
\end{align}
\end{defn}

It is proven in \cite{m}[Lemma 2.2] that $(c, {\mathcal A})$ being a concentrating pair is equivalent to the differential operator
\begin{equation}
\label{eq:bundle_cross_terms}
B_{\mathcal A} = D^*\circ {\mathcal A} + {\mathcal A}^* \circ D,  
\end{equation}
being a bundle map. 

In the analysis of the family $D+s{\mathcal A}$, a key role is played by the {\em singular set of ${\mathcal A}$}, defined as 
\[
Z_{\mathcal A} : = \left\{p\in X:\, \ker {\mathcal A}(p)\ne 0 \right\},
\]
that is, the set where  ${\mathcal A}$ fails to be injective. Since the bundles $E^0$ and $E^1$ have the same rank, a bundle map ${\mathcal A}:E^0\to E^1$ will necessarily have index zero. However maps satisfying condition \eqref{eq:cond} are far from being generic and they may have a nontrivial kernel bundle everywhere in which case the former definition of $Z_{\mathcal A}$ will be equal to $X$. In particular, the kernel bundle $\ker{\mathcal A} \to X$ and $\ker{\mathcal A}^* \to X$, will be a $\text{Cl}(T^*X)$- submodules of $E^0$ and $E^1$ over $X$. In that case the critical set $Z_{\mathcal A}$ is defined as the jumping locus of of the generic dimension of the kernel in $X$. Note that the dimensions of the kernel and cockernel bundles of ${\mathcal A}$ will jump by the same amount in each of the connected components of $Z_{\mathcal A}$. In this case we can consider the kernel subbundles of $E^0$ and $E^1$ over $X\setminus Z_{\mathcal A}$ and we will assume that we can extend them to subbundles of constant rank over the whole $X$. The Clifford compatible connection can be chosen to preserve these subbundles. Therefore ${\mathcal A}$ and the Dirac equation can be broken to the part of $D$ and ${\mathcal A}$ living in the orthogonal complement of these bundles, reducing the problem to the case where ${\mathcal A} :E^0 \to E^1$ is generically an isomorphism except at the set $Z_{\mathcal A}$.      

 Fix $\delta>0$, set $Z_{\mathcal A}(\delta)$ to be the $\delta$-neighborhood of  $Z_{\mathcal A}$ where the distance function is well defined and set, 
\[
\Omega(\delta)=X\setminus Z_{\mathcal A}(\delta),
\] 
be its complement. The following proposition from \cite{m} shows the importance of the concentrating condition  \eqref{eq:cond}. 

\begin{prop}[Concentration Principle]
\label{prop:concentration_Prop}
There exist $C'=C'(\delta, {\mathcal A}, C)>0$ and $s_0 = s_0(\delta, {\mathcal A})>0$ such that whenever $\lvert s\rvert > s_0$ and $\xi\in C^{\infty}(E)$ is a section with $L^2$ norm 1 satisfying $\|D_s\xi\|^2_{L^2(X)}\leq C \lvert s \rvert $, one has the estimate  
\begin{equation}
\label{eq:concentration_estimate}
\int_{\Omega(\delta)} \lvert\xi\rvert^2\, d\mathrm{vol}^X\ <\frac{C'}{\vert s\rvert}.
\end{equation}
\end{prop}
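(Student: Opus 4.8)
The plan is to run a Bochner-type computation that isolates precisely where the concentrating condition \eqref{eq:cond} does its work. First I would write $D_s\xi=D\xi+s\,{\mathcal A}\xi$ and expand the square, obtaining
\[
\|D_s\xi\|_{L^2(X)}^2=\|D\xi\|_{L^2(X)}^2+s\,\langle B_{\mathcal A}\,\xi,\xi\rangle_{L^2(X)}+s^2\!\int_X|{\mathcal A}\xi|^2\,d\mathrm{vol}^X,
\]
recognizing the cross term, through $2\langle D\xi,{\mathcal A}\xi\rangle_{L^2}=\langle (D^*{\mathcal A}+{\mathcal A}^*D)\xi,\xi\rangle_{L^2}$, as the quadratic form of the operator $B_{\mathcal A}$ from \eqref{eq:bundle_cross_terms}. (The argument is the same for $D_s^*$, and for the total self-adjoint deformation on $E=E^0\oplus E^1$, where one gets the anticommutator of $D+D^*$ with ${\mathcal A}+{\mathcal A}^*$ in place of $B_{\mathcal A}$; I present the case $\xi\in\Gamma(E^0)$.) The key point is that, by \cite{m}[Lemma 2.2], condition \eqref{eq:cond} is exactly what makes $B_{\mathcal A}$ a genuine \emph{bundle map}, hence bounded in sup norm over the compact $X$ by a constant $C_B$ depending only on ${\mathcal A}$ and the fixed data $(c,\nabla,g_X)$.

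Next I would discard the nonnegative term $\|D\xi\|_{L^2}^2$, bound $|\langle B_{\mathcal A}\xi,\xi\rangle_{L^2}|\le C_B\|\xi\|_{L^2}^2=C_B$ using $\|\xi\|_{L^2}=1$, and insert the hypothesis $\|D_s\xi\|_{L^2}^2\le C|s|$, to get $s^2\int_X|{\mathcal A}\xi|^2\,d\mathrm{vol}^X\le (C+C_B)|s|$; that is,
\[
\int_X|{\mathcal A}\xi|^2\,d\mathrm{vol}^X\ \le\ \frac{C+C_B}{|s|}\qquad\text{for every }s\ne 0 .
\]
It then remains to convert this integrated smallness of ${\mathcal A}\xi$ into smallness of $\xi$ away from $Z_{\mathcal A}$, which is soft: on the compact set $\Omega(\delta)=X\setminus Z_{\mathcal A}(\delta)$ the map ${\mathcal A}(p):E^0_p\to E^1_p$ is injective at every point by the definition of $Z_{\mathcal A}$ (hence an isomorphism, since $\operatorname{rank}E^0=\operatorname{rank}E^1$), so the continuous positive function $p\mapsto\min_{|v|=1}|{\mathcal A}(p)v|$ attains a minimum $c_\delta>0$ on $\Omega(\delta)$ and $|{\mathcal A}\xi|\ge c_\delta|\xi|$ there. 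Hence
\[
\int_{\Omega(\delta)}|\xi|^2\,d\mathrm{vol}^X\ \le\ c_\delta^{-2}\!\int_{\Omega(\delta)}|{\mathcal A}\xi|^2\,d\mathrm{vol}^X\ \le\ c_\delta^{-2}\!\int_X|{\mathcal A}\xi|^2\,d\mathrm{vol}^X\ \le\ \frac{C+C_B}{c_\delta^{2}\,|s|},
\]
and I would finish by setting $C'=2(C+C_B)/c_\delta^{2}$ (the factor $2$ merely to make the inequality strict) and taking any $s_0>0$; the resulting $C'$ depends only on $\delta,{\mathcal A},C$ and the ambient geometry, as required.

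I do not anticipate a genuine obstacle: all of the mathematical content lives in the first paragraph, namely the fact — taken from \cite{m} — that \eqref{eq:cond} collapses the \emph{a priori} first-order operator $B_{\mathcal A}$ to a zeroth-order one. Without that, $s\langle B_{\mathcal A}\xi,\xi\rangle$ could only be controlled by $|s|\,\|\xi\|_{W^{1,2}}$, which is useless since no bound on $\|\nabla\xi\|$ is available, and the whole scheme breaks. The one thing to keep straight is whether the estimate is being applied to $D_s$ on $E^0$, to $D_s^*$ on $E^1$, or to the total operator on $E$; since all three obey a Weitzenböck identity of the same shape, this is harmless. I would also record the intermediate bound $\int_X|{\mathcal A}\xi|^2\le(C+C_B)/|s|$ in its own right, as that $L^2$-smallness of ${\mathcal A}\xi$ is the natural starting point for the finer Gaussian-decay estimates of the later sections.
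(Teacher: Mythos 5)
Your proof is correct and is essentially the same argument underlying the paper (which in turn cites \cite{m} for this proposition): expand $\|D_s\xi\|^2$, identify the cross term as $\langle B_{\mathcal A}\xi,\xi\rangle$, use that $B_{\mathcal A}$ is a bundle map to bound it in sup-norm, then use the uniform lower bound $\lvert {\mathcal A}(p)v\rvert\ge c_\delta \lvert v\rvert$ on the compact set $\Omega(\delta)$. Indeed the paper's own Appendix proof of the improved concentration principle starts from the identical Bochner-type inequality $\| D_s (\rho \xi)\|^2 \geq s\langle B_{\mathcal A} \xi, \rho^2 \xi \rangle + s^2\|{\mathcal A}(\rho \xi)\|^2 \geq (s^2\kappa_\delta^2 - \lvert s\rvert C_0)\|\rho \xi\|^2$ using precisely your two constants, so the approaches coincide.
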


We have the following improvement: 

\begin{prop}[Improved concentration principle]
\label{prop:improved_concentration_Prop}
Let $\ell \in \mathbb{N}_0,\ 0<\alpha<1$ and $C_1>0$, and choose $\delta>0$ small enough so that $100(\ell+1) \delta$ is a radius of a tubular neighborhood where the distance from $Z_{\mathcal A}$ is defined. Then there exist $s_0 = s_0(\delta, C_1, c, {\mathcal A}),\  \epsilon =  \epsilon(\delta, \ell, \alpha, {\mathcal A})$ and $C'=C'(\delta, \ell, \alpha, {\mathcal A})$ all positive numbers such that whenever $\lvert s \rvert > s_0$ and $\xi\in C^{\infty}(X;E)$ is a section satisfying $D_s^* D_s \xi  = \lambda_s \xi$ with $\lambda_s < C_1 \lvert s\rvert$, one has the estimate  
\begin{equation}
\label{eq:improved_concentration_estimate}
\|\xi\|_{C^{\ell, \alpha}(\Omega(\delta))}\ < C' \lvert s\rvert^{-\tfrac{\ell}{2}} e^{-\epsilon\lvert s\rvert}\|\xi\|_{L^2(X)}.
\end{equation}
\end{prop}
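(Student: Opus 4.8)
The plan is to prove exponential $L^2$ decay of $\xi$ on $\Omega(\delta)$ by a weighted (Agmon-type) energy estimate — a refinement, carrying an exponential weight, of the argument behind Proposition~\ref{prop:concentration_Prop} — and then to pass to $C^{\ell,\alpha}$ by an interior elliptic bootstrap whose loss (polynomial in $s$) is swallowed by the exponential gain.

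First I would record the Weitzenböck picture. After the reduction described in Section~\ref{sec:Concentration_Principle_for_Dirac_Operators} we may assume ${\mathcal A}$ is a bundle isomorphism on $X\setminus Z_{\mathcal A}$, so by compactness ${\mathcal A}^*{\mathcal A}\ge\kappa\,1_E$ on $\Omega(\delta/4)$ for some $\kappa=\kappa(\delta,{\mathcal A})>0$. Combining the Weitzenböck formula $D^*D=\nabla^*\nabla+R$ with \eqref{eq:bundle_cross_terms} gives
\[
D_s^*D_s \;=\; \nabla^*\nabla + R + sB_{\mathcal A} + s^2{\mathcal A}^*{\mathcal A},
\]
a Laplace-type operator whose zeroth order part $P_s:=R+sB_{\mathcal A}+s^2{\mathcal A}^*{\mathcal A}$ is a self-adjoint bundle endomorphism with $P_s\ge\tfrac12\kappa s^2\,1_E$ on $\Omega(\delta/4)$ and $P_s\ge -C|s|\,1_E$ on all of $X$, once $|s|\ge s_1(\delta,{\mathcal A})$.

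Next I would run the Agmon estimate. Fix a smooth $\rho\colon X\to[0,\infty)$ with $|d\rho|\le 1$, vanishing on $Z_{\mathcal A}(\delta/4)$ and with $\rho\ge\tfrac\delta4$ on $\Omega(\delta/2)$ and $\rho\ge\tfrac{3\delta}4$ on $\Omega(\delta)$ (a mollified, truncated distance function to $Z_{\mathcal A}$; the hypothesis that $100(\ell+1)\delta$ be a tubular radius makes this legitimate). Put $\eta=e^{\phi}\xi$ with $\phi=\varepsilon|s|\rho$. Pairing $D_s^*D_s\xi=\lambda_s\xi$ with $e^{2\phi}\xi$ and using the conjugation identity $e^{\phi}D_s e^{-\phi}=D_s-c(d\phi)$ together with $c(u)^*c(u)=|u|^2\,1_E$, all weighted cross terms cancel and one is left with
\[
\|\nabla\eta\|_{L^2}^2 + \langle P_s\eta,\eta\rangle_{L^2} - \varepsilon^2 s^2\!\int_X|d\rho|^2|\eta|^2 \;=\; \lambda_s\|\eta\|_{L^2}^2 .
\]
Splitting the integrals over $\{\rho>0\}\subset\Omega(\delta/4)$ and $\{\rho=0\}$ (where $\eta=\xi$ and $d\rho=0$), choosing $\varepsilon^2<\kappa/4$, and taking $|s|\ge s_2(\delta,C_1,{\mathcal A})$ so that $\tfrac12\kappa s^2-\varepsilon^2 s^2-\lambda_s\ge\tfrac18\kappa s^2$ (here $\lambda_s<C_1|s|$ is used), one obtains
\[
\|\nabla\eta\|_{L^2}^2 + \tfrac18\kappa s^2\!\int_X e^{2\varepsilon|s|\rho}|\xi|^2 \;\le\; C|s|\!\int_{Z_{\mathcal A}(\delta/4)}|\xi|^2 \;\le\; C|s|\,\|\xi\|_{L^2(X)}^2 .
\]
Reading off the exponential weight on $\Omega(\delta/2)$ and $\Omega(\delta)$ gives genuine exponential $L^2$ (indeed $W^{1,2}$) decay: $\|\xi\|_{W^{1,2}(\Omega(\delta/2))}\le C|s|^{1/2}e^{-\varepsilon|s|\delta/4}\|\xi\|_{L^2(X)}$.

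Finally I would bootstrap. Apply interior elliptic estimates for the Laplace-type operator $D_s^*D_s$ on a chain of compactly nested domains $\Omega(\delta)\subset\cdots\subset\Omega(\delta/2)$ with $\lceil M/2\rceil$ links, where $M>\ell+\alpha+\tfrac12\dim X$; again the tubular-radius hypothesis leaves ample room. Writing $\nabla^*\nabla\xi=\lambda_s\xi-P_s\xi$ and using the \emph{$s$-independent} elliptic estimate for $\nabla^*\nabla$, each link gains two derivatives at a cost $\lambda_s+\|P_s\|_{C^k}=O(s^2)$ — bounded polynomially in $s$ precisely because $R$, $B_{\mathcal A}$, ${\mathcal A}^*{\mathcal A}$ do not depend on $s$ — so after finitely many links $\|\xi\|_{W^{M,2}(\Omega(\delta))}\le C|s|^{M-1/2}e^{-\varepsilon|s|\delta/4}\|\xi\|_{L^2(X)}$, and Sobolev embedding bounds $\|\xi\|_{C^{\ell,\alpha}(\Omega(\delta))}$ by the same quantity. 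Absorbing the prefactor $|s|^{M-1/2+\ell/2}$ into a slightly smaller exponential rate (valid for $|s|\ge s_0$, the loss going into $C'$ and the rate) yields \eqref{eq:improved_concentration_estimate}. The heart of the argument is the weighted estimate: exponential decay is available only because the quadratic potential $s^2{\mathcal A}^*{\mathcal A}$ on $\Omega(\delta)$ simultaneously dominates the ``kinetic'' cost $\varepsilon^2 s^2$ of the Agmon weight and the eigenvalue $\lambda_s=O(|s|)$, which is what forces $\varepsilon$ to be small but positive; the bootstrap is then routine once one checks the elliptic constants grow only polynomially in $s$, and this same exponential-beats-polynomial mechanism is what produces the harmless extra factor $|s|^{-\ell/2}$.
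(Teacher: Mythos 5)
Your argument is the same Agmon-type exponential energy estimate that the paper uses, just phrased via the conjugated section $\eta=e^{\phi}\xi$ and the Weitzenb\"ock formula, whereas the paper pairs the eigenvalue equation directly with $\rho^2\xi$ for a test function $\rho$ that has the Gaussian weight built in; the resulting integral identities are algebraically identical (the cross terms in $\|\nabla\eta\|^2$ cancel in the same way as in $\|D_s(\rho\xi)\|^2-\||d\rho|\xi\|^2$), and both close the argument by the same interior elliptic/Sobolev bootstrap on a chain of nested annuli, paying polynomial-in-$s$ constants that are absorbed by the exponential gain. This is essentially the paper's proof in mildly different notation, and it is correct.
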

The proof is located in Appendix subsection~\ref{subApp:various_analytical_proofs}.

\begin{rem}
\begin{enumerate}
\item The dependence of the constant $C'$ in \eqref{eq:concentration_estimate} from $\delta$ can be made explicit when one has an  estimate of the form $\lvert{\mathcal A}\xi\rvert^2 \geq r^a \lvert\xi\rvert^2$ on a sufficiently small, tubular neighborhood of $Z_{\mathcal A}$, where  $r$ is the distance  from $Z_{\mathcal A}$ and $a>0$. Assumption~ \ref{Assumption:normal_rates} below gives such an estimate.     

\item  Propositions~\ref{prop:concentration_Prop} and \ref{prop:improved_concentration_Prop} are applicable to general concentration pairs $(\sigma, {\mathcal A})$ where the symbol $\sigma$ is just elliptic.
\end{enumerate}
\end{rem}

Propositions~\ref{prop:concentration_Prop} and \ref{prop:improved_concentration_Prop} also hold for the adjoint operator $D^*_s$ because of the following lemma: 
\begin{lemma}
\label{lem:lr}
The concentration condition \eqref{eq:cond} 
 is equivalent to  
\begin{equation}
\label{eq:cond_version2}
 c(u)\circ {\mathcal A}^* = {\mathcal A}\circ c(u)^*\  \quad  \forall u\in T^*X.
\end{equation}
Hence $D + s{\mathcal A}$ concentrates if and only if the adjoint operator  $D^* + s{\mathcal A}^*$ does.
\end{lemma}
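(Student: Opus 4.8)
The plan is to prove the pointwise equivalence $\eqref{eq:cond}\Leftrightarrow\eqref{eq:cond_version2}$ by a short fibrewise computation, and then to read off the statement about $D^{*}+s\mathcal{A}^{*}$ from the general form of the Concentration Principle. Fix a point of $X$ and a covector $u$, and set $T(u):=\mathcal{A}^{*}\circ c(u)-c(u)^{*}\circ\mathcal{A}$ as an endomorphism of $E^{0}$, and $S(u):=c(u)\circ\mathcal{A}^{*}-\mathcal{A}\circ c(u)^{*}$ as an endomorphism of $E^{1}$. Then \eqref{eq:cond} says exactly that $T(u)=0$ for all $u$, and \eqref{eq:cond_version2} says exactly that $S(u)=0$ for all $u$. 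Observe that $T(u)$ and $S(u)$ live on \emph{different} summands of $E$; keeping track of which restriction of $c(u)$ and $c(u)^{*}$ is being used at each step (as a map $E^{0}\to E^{1}$ or as a map $E^{1}\to E^{0}$) is the one point that requires care, because $\mathcal{A}$ and $\mathcal{A}^{*}$ move in opposite directions.

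The core of the proof is the conjugation identity
\[
S(u)\ =\ c(u)\circ T(u)\circ c(u)^{-1},\qquad u\neq 0 .
\]
Two elementary facts feed into it. First, taking $v=u$ in \eqref{eq:Clifford_relations} gives $c(u)^{2}=-\lvert u\rvert^{2}\,1_{E}$, so $c(u)$ is invertible for $u\neq 0$ with $c(u)^{-1}=-\lvert u\rvert^{-2}\,c(u)$. Second, compatibility of the Clifford structure with the metric on $E$ makes $c(u)$ skew-adjoint, $c(u)^{*}=-c(u)$, whence $T(u)=\mathcal{A}^{*}\circ c(u)+c(u)\circ\mathcal{A}$ and $S(u)=c(u)\circ\mathcal{A}^{*}+\mathcal{A}\circ c(u)$. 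Plugging $c(u)^{-1}=-\lvert u\rvert^{-2}c(u)$ into the right-hand side and collapsing each adjacent pair $c(u)\circ c(u)$ to $-\lvert u\rvert^{2}\,1$ on the appropriate summand, $c(u)\circ T(u)\circ c(u)^{-1}$ reduces to $c(u)\circ\mathcal{A}^{*}+\mathcal{A}\circ c(u)=S(u)$. Given this identity the equivalence is immediate: $T(u)=0$ forces $S(u)=0$ for $u\neq 0$, conversely $T(u)=c(u)^{-1}\circ S(u)\circ c(u)$ forces the reverse implication, and the case $u=0$ is vacuous on both sides. Hence $\eqref{eq:cond}\Leftrightarrow\eqref{eq:cond_version2}$.

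For the last assertion, note that $D^{*}\colon\Gamma(E^{1})\to\Gamma(E^{0})$ is again a first-order elliptic operator, and that \eqref{eq:cond_version2} is precisely the concentrating condition \eqref{eq:cond} written for the pair $(D^{*},\mathcal{A}^{*})$ in place of $(D,\mathcal{A})$: indeed the principal symbol of $D^{*}$ is $u\mapsto -c(u)^{*}$, and, equivalently, the cross-term $D\circ\mathcal{A}^{*}+\mathcal{A}\circ D^{*}$ (the analogue of \eqref{eq:bundle_cross_terms} for the data $(D^{*},\mathcal{A}^{*})$) has principal symbol $u\mapsto S(u)$, so by the criterion of \cite{m} it is a bundle map precisely when \eqref{eq:cond_version2} holds. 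Thus $(D^{*},\mathcal{A}^{*})$ is a concentrating pair iff \eqref{eq:cond_version2}, iff (by the equivalence just proved) \eqref{eq:cond}, iff $(D,\mathcal{A})$ is one. Since Propositions \ref{prop:concentration_Prop} and \ref{prop:improved_concentration_Prop} are valid for any concentrating pair with merely elliptic symbol, $D^{*}+s\mathcal{A}^{*}$ concentrates if and only if $D+s\mathcal{A}$ does; and it concentrates along the \emph{same} set, because $\dim\ker\mathcal{A}^{*}(p)=\dim\ker\mathcal{A}(p)$ for every $p$ (the bundles $E^{0}$ and $E^{1}$ having equal rank), so $Z_{\mathcal{A}^{*}}=Z_{\mathcal{A}}$.

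There is no substantial obstacle here beyond the bookkeeping noted above — which is, however, essential rather than cosmetic: \eqref{eq:cond} and \eqref{eq:cond_version2} are literally statements on different summands of $E$, and the passage between them genuinely uses skew-adjointness $c(u)^{*}=-c(u)$. (Dropping skew-adjointness, conjugating \eqref{eq:cond} by $c(u)$ only yields $\lvert u\rvert^{2}\,\mathcal{A}^{*}=-\,c(u)^{*}\circ\mathcal{A}\circ c(u)$, which does not by itself give \eqref{eq:cond_version2}.)
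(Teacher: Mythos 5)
Your proof is correct, and since the paper states this lemma without proof there is no author argument to compare against. The conjugation identity $S(u)=c(u)\circ T(u)\circ c(u)^{-1}$ is the natural mechanism, and you have the bookkeeping right: $T(u)$ lives on $E^{0}$, $S(u)$ on $E^{1}$, and the collapses $c(u)\circ c(u)=-\lvert u\rvert^{2}$ happen once on each summand as $c(u)$ is pushed through. The reduction of the final sentence of the lemma to ``\eqref{eq:cond_version2} is precisely the concentrating condition for the pair $(D^{*},\mathcal{A}^{*})$'' is also the right reading, and it is worth spelling out, as you do, that the sign in $\sigma(D^{*})=-c(u)^{*}$ cancels on both sides so the criterion is unambiguous, and that $Z_{\mathcal{A}^{*}}=Z_{\mathcal{A}}$ because $\mathcal{A}$ has index zero.

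One remark on the caveat you raise: you are right that the passage $T(u)=0\Leftrightarrow S(u)=0$ genuinely uses $c(u)^{*}=-c(u)$ (the Clifford relation alone only gives $c(u)^{-1}=-\lvert u\rvert^{-2}c(u)$, not how $c(u)^{*}$ relates to $c(u)$), and that the paper does not state skew-adjointness explicitly when it lists the hypotheses on the Clifford structure. It is nonetheless assumed throughout: the formula \eqref{eq:basic_identity}, the Morse--Bott example with $c(v)=v\wedge-\iota_{v^{\#}}$, and the Weitzenb\"ock computations all rely on it, so treating it as part of ``Riemannian Clifford structure'' in the sense of \cite{lm} is the intended reading. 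Making the dependence explicit, as you do in your closing paragraph, is the careful thing to do and costs nothing.
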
 

From Proposition~\ref{prop:concentration_Prop}, the  eigensections $\xi$ satisfying  $D_s^*D_s \xi = \lambda(s) \xi$ with $\lambda (s)=O(s)$, concentrate around $Z_{\mathcal A}$ for large $s$. An interesting question arises as to what extend these localized solutions can  be reconstructed using local data obtained from $Z_{\mathcal A}$. The answer will be given in Theorem~\ref{Th:mainT}. We start by describing the assumptions we will use throughout the paper in proving the main theorem. We start by imposing conditions on ${\mathcal A}$ that will guarantee that the connected components $Z_\ell$ of  the singular set  $Z_{\mathcal A}$ are submanifolds and that the rank of ${\mathcal A}$ is constant on each $Z_\ell$.  For this, we regard ${\mathcal A}$ as a section of a subbundle ${\mathcal L}$ of $\mathrm{Hom}(E^0,E^1)$ as in the following diagram:
\begin{align*}
\label{diag:Ldiagram}
 \xymatrix{
\ \ {\mathcal L}\ \ \ar@{^{(}->}[r]  \ar[d]  & \mathrm{Hom}(E^0, E^1) \supseteq {\mathcal F}^l\\
X \ar@/^1pc/[u]^{{\mathcal A}} & 
}
\end{align*}
Here ${\mathcal L}$ is a bundle that parametrizes some family of linear maps ${\mathcal A}:E^0\to E^1$ that  satisfy  the concentration condition \eqref{eq:cond}  for the operator \eqref{eq:Def_Dirac}, that is, each $A\in{\mathcal L}$ satisfies  $A^*\circ c(u) = c(u)^*\circ A$  for every $u\in T^*X$. Inside the total space of the bundle $\mathrm{Hom}(E^0, E^1)$,  the set of linear maps with $l$-dimensional kernel is a   submanifold $\mathcal{F}^l$;    because $E^0$ and $E^1$ have the same rank, this submanifold  has codimension~$l^2$. In all of our examples in \cite{m} the set ${\mathcal L}\cap {\mathcal F}^l$ is a manifold for every $l$.

\bigskip

\noindent\textbf{Transversality Assumptions:} 

\begin{enumerate}
\addtocounter{enumi}{0}
\item 
\label{Assumption:transversality1}
As a section of ${\mathcal L}$, ${\mathcal A}$ is transverse to  ${\mathcal L}\cap {\mathcal F}^l$ for every $l$, and these intersections occur at points where ${\mathcal L}\cap {\mathcal F}^l$ is a manifold.

\item    
\label{Assumption:transversality2}
 $Z_\ell$ is closed for all $\ell$.
\end{enumerate}

As a consequence of the Implicit Function Theorem, ${\mathcal A}^{-1}({\mathcal L}\cap {\mathcal F}^l)\subseteq X$ will be a submanifold of $X$ for every $l$. The singular set decomposes as a union of these critical submanifolds and, even further, as a union of connected components $Z_\ell$:
\begin{equation}
\label{eq:def_Z_l}
Z_{\mathcal A}\, =\, \bigcup_l {\mathcal A}^{-1}({\mathcal L}\cap {\mathcal F}^l)\, =\,\bigcup_\ell Z_\ell.
\end{equation}
The bundle map ${\mathcal A}$ has constant rank along each $Z_\ell$, and $\ker {\mathcal A}$ and $\ker {\mathcal A}^*$ are well defined bundles over $Z_\ell$.  Fix one critical $m$-dimensional submanifold $Z:= Z_\ell$  with normal bundle $\pi: N \to Z$. As explained in Appendix~\ref{App:Fermi_coordinates_setup_near_the_singular_set}, for small $\varepsilon>0$ the exponential map identify an open tubular neighborhood $B_Z(2\varepsilon)\subset X$ of $Z$ with the neighborhood of the zero section $\mathcal{N}:=\mathcal{N}_\varepsilon \subset N$. There are principal frame bundle isomorphisms and induced bundle isomorphisms,
\begin{equation} 
\label{eq:exp_diffeomorphism}
I:= T\exp: TN\vert_{\mathcal{N}} \to TX\vert_{B_Z(2\varepsilon)}\quad \text{and} \quad  {\mathcal I}: \tilde{E} \to E\vert_{B_Z(2\varepsilon)}.
\end{equation}
Note that when restricted to the zero set of $N$, these bundle maps become identities. Let $S^0$ and $S^1$ denote the bundles obtained by parallel translating  $\ker {\mathcal A}\to Z$ and $\ker {\mathcal A}^* \to Z$ along the rays emanating from the core set of $\mathcal{N}$, with corresponding bundles of orthogonal projections  $P^i: E^i\vert_Z \to S^i\vert_Z,\, i =0,1$. 

The next couple of assumptions involve the conditions for the infinitesimal behavior of ${\mathcal A}$ near each  $Z \subset Z_{\mathcal A}$: 

\bigskip

\noindent \textbf{Non-degeneracy Assumption:} 
\begin{enumerate}
\addtocounter{enumi}{2}
\item  
\label{Assumption:normal_rates}
Set $\tilde {\mathcal A} =  {\mathcal I}^{-1}{\mathcal A}  {\mathcal I}$ and $\bar{\mathcal A}$ the restriction of $\tilde{\mathcal A}$ to the zero set. We require 
\[
\left.\tilde{\mathcal A}^* \tilde{\mathcal A} \right\vert_{S^0} = r^2\left(Q^0 + \left.\frac{1}{2}\bar{\mathcal A}^* \bar A_{rr}\right\vert_{S^0} \right)+ O(r^3),
\]
where $r$ is the distance function from $Z$, and $Q^0$ is a positive-definite symmetric endomorphism of the bundle $S^0$.
\end{enumerate}

Comparing with the expansion of Proposition~\ref{prop:properties_of_perturbation_term_A}\eqref{prop:Taylor_expansions_ of perturbation_term} for $\left.\tilde{\mathcal A}^*\tilde{\mathcal A}\right\vert_{S^0}$, the condition replaces the $\text{Cl}_n^0(TX\vert_Z)$ invariant term $\left.\tfrac{x_\alpha x_\beta}{r^2} (A^*_\alpha A_\beta)\right\vert_{S^0}$ with a matrix $Q^0$. The statement of the non-degeneracy assumption is twofold: 1) For every $v = v_\alpha e_\alpha\in N,\ \lvert v\rvert=1$, by Schur's lemma, there are $\text{Cl}_n^0(TX\vert_Z)$-invariant decompositions
\[
S^0\vert_Z = \bigoplus^{\ell(v)}_{k=1} S_{k,v}\quad \text{and}\quad  \left.v_\alpha v_\beta (A^*_\alpha A_\beta)\right\vert_{S^0} = \sum_k \lambda^2(v) P_{S_{k, v}},
\] 
for some $\lambda(v)\in [0,\infty)$, where $P_{k, v}:S^0\vert_Z \to S_{k,v}$ is a $\text{Cl}_n^0(TX\vert_Z)$-invariant orthogonal projection. The assumption guarantees that the decomposition doesn't depend on the radial directions $v\in N,\ \lvert v\rvert=1$. 2) $Q^0$  has trivial kernel as an element of $\mathrm{End}(S^0)$.  

An eigenvalue of the section $Q^0: Z \to \mathrm{End}(S^0\vert_Z)$  is a function $\lambda^2:Z \to (0,\infty)$ so that the section $Q^0 - \lambda^2 1_{S^0\vert_Z}: Z \to \mathrm{End}(S^0\vert_Z)$ has image consisting of solely singular matrices. By \cite{k}[Ch.II, Th.6.8, pp.122] we can always choose a family of eigenvalues (possibly with repetitions) that are smooth functions $\{\lambda_\ell :Z \to (0, \infty)\}_{\ell=1}^d$. In Lemma~\ref{lem:basic_properties_of_M_v_w} it is proven that Assumption~\ref{Assumption:normal_rates} implies an analogue expansion for the bundle map,  
\[
\left.\tilde{\mathcal A}\tilde{\mathcal A}^*\right\vert_{S^1} = r^2\left(Q^1 + \left.\frac{1}{2}\bar{\mathcal A} \bar A^*_{rr}\right\vert_{S^1} \right)+ O(r^3).
\]
Also from equation \eqref{eq:Q0_vs_Q1} of that lemma follows that $Q^1: S^1\vert_Z \to S^1\vert_Z$ is a $\text{Cl}^0(T^*X\vert_Z)$-invariant map and that the matrices $Q^i,\ i=0,1$ have the same spectrum. By Schur's lemma, we have a decomposition 
\begin{equation}
\label{eq:eigenspaces_of_Q_i}
S^i\vert_Z = \bigoplus_\ell S^i_\ell,
\end{equation}
into the $\text{Cl}^0(T^*X\vert_Z)$-invariant eigenspaces of the distinct eigenvalues $\{\lambda_\ell^2\}_\ell$ of $Q^i$. Our final assumption guarantees that the eigenbundles of this decomposition have constant rank:    

\newpage

\noindent \textbf{Stable degenerations:} 
\begin{enumerate}
\addtocounter{enumi}{3}
\item  
\label{Assumption:stable_degenerations}
We require that every two members of the family $\{\lambda_\ell:Z \to (0,\infty)\}_{\ell=1}^d$ are either identical or their graphs do not intersect.
\end{enumerate}

 The summands of decomposition \eqref{eq:eigenspaces_of_Q_i} are also $\text{Cl}^0(T^*X\vert_Z)$-submodules. By Assumption~\ref{Assumption:stable_degenerations}, the graphs of eignevalues $\{\lambda_\ell\}_\ell$ do not intersect and therefore the bundle of vector spaces $\{(S_\ell^i)_p\}_{p \in Z}$ has constant rank and form a vector bundle over $Z$, for every $\ell$ and every $i=0,1$.

We introduce a $\text{Cl}^0(T^*Z)$-invariant section of $\mathrm{End}(S^i\vert_Z)$ defined by the composition, 
\begin{equation}
\label{eq:IntroDefCp}
 \xymatrix{
  C^i: S^i\vert_Z \ar[r]^-{\nabla{\mathcal A}^i}  & T^*X\vert_Z \otimes E^{1-i}\vert_Z \ar[r]^-{\iota_N^* \otimes P^{1-i}} & N^* \otimes S^{1-i}\vert_Z  \ar[r]^-{- c} & S^i\vert_Z 
},
\end{equation}
where $\iota_N : N \hookrightarrow TX\vert_Z$ is the inclusion and ${\mathcal A}^0={\mathcal A},\ {\mathcal A}^1={\mathcal A}^*$. We set $C = C^0\oplus C^1$. It is proven in Proposition~\ref{prop:properties_of_compatible_subspaces} that $C^i$ is a symmetric operator that respects the decompositions \eqref{eq:eigenspaces_of_Q_i}. It is further proven that $C^i$ has eigenvalues $\{(n-m-2 k)\lambda_\ell\}_{k=0}^{n-m}$ and that the corresponding eigenspaces are $\text{Cl}^0(T^*Z)$-modules of constant rank. It is the eigenspaces with eigenvalues $\{\pm (n-m) \lambda_\ell\}_\ell$ that are important:

\begin{defn}
\label{defn:IntroDefSp}
For each  component  $Z$ of $Z_{\mathcal A}$ with $S\vert_Z= \ker{\mathcal A} \oplus \ker{\mathcal A}^*$ and every $k\in \{0, \dots, n-m\}$, let $S^i_{\ell k}$ denote the eigenspace of $C^i$ with eigenvalue  $(n-m - 2k) \lambda_\ell$ when $i=0,1$ and set $S_{\ell k } = S^0_{\ell k } \oplus S^1_{\ell k}$. In particular, when $k=0$ or $k = \dim N$, we define 
\[
S^{i\pm}_\ell = \{\text{eigenspace of $C^i$ with eigenvalue $\pm (n-m) \lambda_\ell$}\}, 
\]
and set
\[
S^{i\pm} = \bigoplus_\ell S^{i\pm}_\ell, \qquad S^\pm_\ell = S_\ell^{0\pm} \oplus S_\ell^{1\pm}, \qquad S^{\pm} := S^{0\pm} \oplus S^{1\pm},
\]
for every $i=0,1$ with corresponding bundles of orthogonal projections  $P^{i\pm},\  P^{i\pm}_\ell$, etc, where the projection indices follow the same placing as the indices of the spaces they project to. 
\end{defn}  

In particular, $S^\pm=S^{0\pm} \oplus S^{1\pm}$ are bundles of $\mathbb{Z}_2$-graded, $\text{Cl}(T^*Z)$-modules, over $Z$ with  Clifford multiplication defined as the restriction 
\[
c_Z: = c:  T^*Z \otimes S^{i\pm} \to S^{(1-i)\pm},\ i=0,1.
\]
They posses a natural connection, compatible with its Clifford structure given by,
\[
\nabla^\pm:=\sum_\ell P^{i\pm}_\ell\circ \nabla\vert_{TZ} : C^\infty(Z; S^{i\pm}) \to C^\infty(Z;  T^*Z\otimes S^{i\pm}).
\]

\begin{defn}
\label{defn:Dirac_operator_component}
On each $m$-dimensional component $Z\subset Z_{\mathcal A}$ we have a triple  $( S^\pm, \nabla^\pm, c_Z) $. We define a Dirac operator,
\[
D^Z_\pm \ :=\  c_Z\circ \nabla^\pm : C^\infty(Z; S^{0\pm}) \to  C^\infty(Z; S^{1\pm}). 
\]
It's formal $L^2$-adjoint is denoted by $D^{Z*}_\pm$.  Finally let ${\mathcal B}_{i\pm}^Z: S^{i\pm} \to S^{(1-i)\pm}$,
\[
{\mathcal B}_{i\pm}^Z:= \begin{cases}  \sum_{\ell, \ell'} C_{\ell, \ell'} P^{1\pm}_\ell\circ\left( {\mathcal B}^0 + \frac{1}{2(\lambda_\ell + \lambda_{\ell'})} \sum_\alpha \bar A_{\alpha\alpha} \right) \circ P^{0\pm}_{\ell'}, & \quad \text{when $i=0$},
\\
\sum_{\ell, \ell'} C_{\ell, \ell'} P^{0\pm}_\ell\circ\left( {\mathcal B}^1 + \frac{1}{2(\lambda_\ell + \lambda_{\ell'})} \sum_\alpha \bar A_{\alpha\alpha}^* \right) \circ P^{1\pm}_{\ell'}, & \quad \text{when $i=1$},
\end{cases}
 \]
where ${\mathcal B}^i$ are defined in expansions  of Lemma~\ref{lem:Dtaylorexp} and,
\[
C_{\ell, \ell'} = (2\pi)^{\tfrac{n-m}{2}} \frac{(\lambda_\ell \lambda_{\ell'})^{\tfrac{n-m}{4}}}{ (\lambda_\ell + \lambda_{\ell'})^{\tfrac{n-m}{2}}}.
\]
\end{defn}

\vspace{4mm}

The main result of this paper is a  converse of Proposition~\ref{prop:concentration_Prop}.  Recall that Proposition~\ref{prop:concentration_Prop} shows that, for each $C$, the  eigensections $\xi$ satisfying  $D_s^*D_s \xi = \lambda(s) \xi$ with $\lvert\lambda(s)\rvert\le C$, concentrate around $\bigcup_\ell Z_\ell$ for large $\lvert s\rvert$. Let $\mathrm{span}^0(s, K)$ be the span of the eigenvectors corresponding to eignevalues $\lambda(s) \leq K$  of $D^*_s D_s$ and denote the dimension of this space by $N^0(s, K)$. Similarly denote by $\mathrm{span}^1(s, K)$ and $N^1(s,K)$ the span and dimension of the corresponding eignevectors for the operator $D_s D^*_s$.  The following Spectral Separation Theorem  shows that these localized solutions can be reconstructed using local data obtained from $Z_{\mathcal A}$.  

\begin{spectrumseparationtheorem*}
\label{Th:mainT}
Suppose that $D_s=D+s{\mathcal A}$ satisfies Assumptions~\ref{Assumption:transversality1}-\ref{Assumption:stable_degenerations} above. Then there exist $\lambda_0>0$ and $s_0>0$ with the following significance: For every $s>s_0$, there exist vector space isomorphisms
\[ 
\mathrm{span}^0(s, \lambda_0) \,\overset{\cong} \longrightarrow\,\bigoplus_{Z\in \mathrm{Comp}(Z_{\mathcal A})} \ker (D^Z_+ + {\mathcal B}_{0+}^Z),
\]
and
\[
\mathrm{span}^1(s , \lambda_0) \overset{\cong}\longrightarrow\bigoplus_{Z\in \mathrm{Comp}(Z_{\mathcal A})} \ker (D^{Z*}_+ + {\mathcal B}^Z_{1+}),
\]
where $Z$ runs through all the set $\mathrm{Comp}(Z_{\mathcal A})$ of connected components of the singular set $Z_{\mathcal A}$. Furthermore $N^i (s, \lambda_0) = N^i(s, C_1 s^{-1/2})$, for every $s> s_0$ and every $i=0,1$, where $C_1$ is the constant of Theorem~\ref{Th:hvalue} \eqref{eq:est1}. Also, for every $s< -s_0$, the isomorphisms above hold by replacing $\ker (D^Z_+ + {\mathcal B}_{0+}^Z)$ with  $\ker (D^Z_- + {\mathcal B}_{0-}^Z)$ and  $ \ker (D^{Z*}_+ + {\mathcal B}_{1+}^Z)$ with  $\ker (D^{Z*}_- + {\mathcal B}_{1-}^Z)$ and $N^i (s, \lambda_0) = N^i(s, C_1 \lvert s\rvert^{-1/2}),\ i=0,1$ .
\end{spectrumseparationtheorem*}
 
As a corollary we get the following localization for the index : 

\begin{indexlocalizationtheorem*}
\label{Th:index_localization_theorem}
Suppose that $D_s=D+s{\mathcal A}$ satisfies Assumptions~\ref{Assumption:transversality1}-\ref{Assumption:stable_degenerations} above. Then the index of $D$ can be written as a sum of local indices as  
\[
\mathrm{index\,} D = \sum_{Z\in \mathrm{Comp}(Z_{\mathcal A})} \mathrm{index\,} D^Z_\pm.
\]
\end{indexlocalizationtheorem*}
\begin{proof}
By the Spectral Separation Theorem there exist $\lambda_0>0$ and $s_0>0$ so that for every $\lvert s\rvert>s_0$
\begin{align*}
\mathrm{index\,} D_s &= \dim \ker D_s - \dim \ker D_s^* 
\\
&= \dim \ker D^*_sD_s - \dim \ker  D_sD_s^* 
\\ 
&= \dim \mathrm{span}^0(s, \lambda_0) - \dim \mathrm{span}^1(s, \lambda_0)
\\ 
&= \sum_{Z\in \mathrm{Comp}(Z_{\mathcal A})}\mathrm{index\,} D^Z_\pm,    
\end{align*}
where the third equality holds because $D^*_sD_s$ and $D_sD_s^*$ have the same spectrum and their eigenspaces corresponding to a common non zero eigenvalue are isomorphic. We also use the fact that the index remains unchanged under compact perturbations. This finishes the proof.
\end{proof}

\begin{rem}
\begin{enumerate}
\item Due to the conclusion of its last sentence the Spectral Separation Theorem is a direct generalization of the localization theorem proved in \cite{pr}.  

\item By Lemma~\ref{lemma:whew} of the Appendix, the term $\sum_\alpha \bar A_{\alpha\alpha}$ in Definition~\ref{defn:Dirac_operator_component} can be assumed to be zero without affecting the generality of the statement of Theorem~\ref{Th:mainT}.

\item The non - degeneracy assumption~\ref{Assumption:normal_rates} can be weaken for the proof of the Spectral Separation Theorem. It is included for making the construction of the approximation simpler. In general one has to examine the various normal vanishing rates of the eigenvalues of ${\mathcal A}^*{\mathcal A}$ and correct the expansions in Corollary~\ref{cor:taylorexp} up to higher order in $r\to \infty$. The re-scaling $\{w_\alpha= \sqrt{s} x_\alpha\}_\alpha$ required to change the problem into a regular one will change multiplicity.  In particular the bundles of solutions examined here will have a layer structure corresponding to those rates and possibly will have a jumping locus in their dimension.
\end{enumerate}
\end{rem}

The proof of Spectral Separation Theorem relies on the existence of a gap in the spectrum of the operators $D^*_s D_s$ and $D_s D^*_s$ for $\lvert s \rvert >>0$. The proof of the existence of this gap relies in a splicing construction of approximate eigenvectors associated  to the lower part of the spectrum.  Inequalities of Theorem~\ref{Th:hvalue} are used to prove this association. In particular, inequality \eqref{eq:est2} is an analogue of a Poincare inequality and its proof is the heart of the argument. By Lemma~\ref{lemma:local_implies_global}, the proof of inequality \eqref{eq:est2} is essentially local in nature and one has to study the perturbative behavior of the density function $\lvert D_s \eta\rvert^2\, d\mathrm{vol}^X$ for sections $\eta$ supported in tubular neighborhoods of a component $Z$ of the critical set $Z_{\mathcal A}$. Using the maps \eqref{eq:exp_diffeomorphism},  the metric tensor $g_X$ and volume form $d\mathrm{vol}^X$ pulled back to $g = \exp^* g_X$ and $d\mathrm{vol}^\mathcal{N} = \exp^* d\mathrm{vol}^X$ in $TN\vert_\mathcal{N}$ respectively. 

\begin{defn}
\label{eq:tilde_D}
Set $\tilde D$ for the Dirac operator ${\mathcal I}^{-1} D {\mathcal I}$ and $\tilde{\mathcal A}$ for ${\mathcal I}^{-1} {\mathcal A} {\mathcal I}$ and $\tilde c$ for ${\mathcal I}^{-1} \circ c \circ ((I^*)^{-1} \otimes  {\mathcal I})$. Also let $\tilde\nabla^{T{\mathcal N}}$ and $\tilde\nabla^{\tilde E}$ denote the corresponding connections induced by the Levi-Civita connection on $TX$ and the Clifford connection on $E$ respectively.
\end{defn}

The association   
\[
C^\infty\left(B_Z(2\varepsilon\right); E\vert_{B_Z(2\varepsilon)}) \to  C^\infty( \mathcal{N} ; \tilde E),\ \eta \mapsto \tilde\eta:= {\mathcal I}^{-1} \xi \circ \exp,
\]
satisfies $\widetilde{D \eta} = \tilde{D} \tilde \eta$. Also 
\[
\int_{B_Z(2\varepsilon)} \lvert D_s \eta\rvert^2\, d \mathrm{vol}^X = \int_\mathcal{N} \lvert\tilde D_s \tilde \eta\rvert^2\, d \mathrm{vol}^\mathcal{N} = \int_\mathcal{N}\lvert\tau^{-1}(\tilde D_s \tilde \eta)\rvert^2\, d\mathrm{vol}^\mathcal{N},
\]  
where in the last equality, we used the parallel transport map $\tau$ introduced in Appendix~\ref{App:Taylor_Expansions_in_Fermi_coordinates} \eqref{eq:parallel_transport_map}. Recall the  volume element $d\mathrm{vol}^N$ introduced in Appendix~\ref{subApp:The_expansion_of_the_volume_from_along_Z}. By \eqref{eq:density_comparison}, the density $d\mathrm{vol}^\mathcal{N}$ can be replaced by the density $d\mathrm{vol}^N$ and we have to prove the inequalities of Theorem~\ref{Th:hvalue} for the density function $\lvert\tau^{-1}(\tilde D_s \tilde \eta)\lvert^2\, d\mathrm{vol}^N$. We study the perturbative behavior of the operator  $\tau^{-1}\circ\tilde D_s : C^\infty(\mathcal{N}; \pi^*(E^0\vert_Z)) \to C^\infty(\mathcal{N}; \pi^*(E^1\vert_Z))$ in two parts: in Section~\ref{Sec:structure_of_A_near_the_singular_set} we analyze the infinitesimal data of the perturbation term $\tilde {\mathcal A}$ and in Section~\ref{sec:structure_of_D_sA_along_the_normal_fibers} we analyze the perturbative behavior of $\tau^{-1}\circ\tilde D_s$. 

\vspace{10 mm}

\medskip
   
  \vspace{1cm}

\section{Structure of \texorpdfstring{${\mathcal A}$}{} near the singular set}
\label{Sec:structure_of_A_near_the_singular_set}

In proving the Spectral Separation Theorem  we will have to analyze the geometry of the operator 
\[
\tilde D_s = \tilde c\circ \tilde\nabla + s \tilde{\mathcal A}: C^\infty(\mathcal{N}; \tilde E^0)\rightarrow C^\infty(\mathcal{N};\tilde E^1),
\]
near the singular set $Z$, that is the corresponding connected component of $Z_{\mathcal A}$ with tubular neighborhood $\mathcal{N}\subset N$. The idea is to expand into Taylor series along the normal directions $Z$.

\bigskip

\subsection{The data from the 1-jet of \texorpdfstring{$\tilde{\mathcal A}$}{} and \texorpdfstring{$\tilde{\mathcal A}^*\tilde{\mathcal A}$}{} along \texorpdfstring{$Z$}{}.}

Fix a connected component $Z$ of the critical set $Z_{\mathcal A}$, an $m$-dimensional submanifold of the $n$-dimensional manifold $X$. Recall that $\pi:N \rightarrow Z$ is the normal bundle of $Z$ in $X$ and set $S(N) \rightarrow Z$ to be the normal sphere bundle.  Our first task is to understand the perturbation term $\tilde{\mathcal A}$ on a tubular neighborhood $\mathcal{N}$ of $Z$. Note that $\tilde {\mathcal A}\vert_Z = {\mathcal A}\vert_Z$. 

\begin{defn}
\label{defn:kernel_bundles}
We introduce the rank $d$-subbundles,
\[
S_p^0:=\ker {\mathcal A}_p\subset E^0_p,\qquad  S^1_p : =\ker {\mathcal A}^*_p\subset E^1_p \qquad \text{and}\qquad S_p = S^0_p \oplus S^1_p,
\]
as $p$ runs in $Z$, and expand them by parallel transport, along the normal radial geodesics of $Z$, to  bundles $S^0, S^1 \to \mathcal{N}$, defined over a sufficiently small tubular neighborhood $\mathcal{N}$ of $Z$. We denote by $P^i$ and $P_\perp^i$,  the orthogonal projections to $S^i$ and its complement $(S^i)^\perp$, respectively. 
\end{defn}

Since $\mathrm{index\,} {\mathcal A}=0$, the bundles $S^i,\ i=0,1$ are of equal dimension and a consequence of the concentration condition \eqref{eq:cond} is
\begin{align}
\label{eq:spin_action_on_S}
u_\cdot S^i=  S^{1-i}, \qquad  u_\cdot (S^i)^\perp =  (S^{1-i})^\perp, 
\end{align}
for every $i=0,1$ and every $u\in T^*X\vert_\mathcal{N}$. Therefore the  bundles $S^0\oplus S^1$ and $(S^0)^\perp \oplus (S^1)^\perp$ are  both $\mathbb{Z}_2$- graded $\text{Cl}^0(T^*X\vert_Z)$-modules. By derivating relations \eqref{eq:cond} and \eqref{eq:cond_version2} we also get
\begin{align}
\label{eq:dcond}
v_\cdot \nabla_u {\mathcal A} = - \nabla_u{\mathcal A}^* v_\cdot \qquad \mbox{and}\qquad  \nabla_u {\mathcal A}\,  v_\cdot= - v_\cdot\nabla_u{\mathcal A}^*, 
\end{align}
for every $u\in N,\  v\in T^*X\vert_Z$. Since ${\mathcal A}$ is transverse to  ${\mathcal L}\cap {\mathcal F}^\ell$, along the normal directions of $Z_\ell = Z \subset X$, by Proposition~\ref{prop:properties_of_perturbation_term_A} \eqref{eq:transversality_assumption_with_respect_to_connection}, we have
\[
\nabla_u{\mathcal A}(S^0)\subseteq S^1 \qquad \mbox{and}\qquad \nabla_u{\mathcal A}^*(S^1)\subseteq S^0,
\]
for every $u \in N$. Using the preceding relations with equations  \eqref{eq:spin_action_on_S}, we make the following definition:

\begin{defn}
Let $S(N) \rightarrow Z$ be the normal sphere bundle of $Z$ in $X$. For every $v\in N$ let $v^*$ be its algebraic dual. The bundle maps, 
\begin{equation}
\begin{aligned}
\label{defn:IntroDefM}
M^i: S(N) &\rightarrow \mathrm{End}(S^i\vert_Z),\quad   v\mapsto  M_v^i := \begin{cases}- v^*_\cdot \nabla_v{\mathcal A}, & \quad \text{if $i=0$,}
\\ 
 - v^*_\cdot \nabla_v{\mathcal A}^*, & \quad \text{if $i=1$,} \end{cases}
\\
M:  S(N) &\rightarrow \mathrm{End}(S\vert_Z),\quad  v\mapsto  M_v^0 \oplus M_v^1,
\end{aligned}
\end{equation}
will be of the utmost importance.
\end{defn}

\begin{proposition}
\label{prop:spectrum_and_eigenspaces_as_Clifford_submodules}
Assume that $n = \dim X > \dim Z= m>0$. Fix $v\in S(N)$ and $w\in S(TX)\vert_Z$ perpendicular to $v$ and set $p=\pi(v) \in Z$. Given an eigenvalue $\lambda_v:=\lambda$ of $M^i_v$ we denote by $S_\lambda^i \subset S^i_p$, the associated eigenspace and set $S_\lambda = S_\lambda^0 \oplus S_\lambda^1$. The matrices $M^i_v,\ i=0,1$ enjoy the following properties:
\begin{enumerate}
\item The matrix $M_v^0$ is symmetric with spectrum a symmetric finite set around the origin. Moreover, the map 
\[
v^*_\cdot w^*_\cdot :S^0_\lambda \to S_{-\lambda}^0, 
\]
is an isomorphism.

\item  The matrices $M_v^i,\ i=0,1$ have the same spectrum and  the Clifford multiplication
\[
w^*_\cdot : S_\lambda^i \to S^{1-i}_\lambda \quad \text{and}\quad v^*_\cdot  : S_\lambda^i \to  S_{-\lambda}^{1-i},
\]
induce isomorphisms for every $i=0,1$.

\item We have the following submodules of $S_p$: 
\begin{align*}
S_\lambda\oplus S_{-\lambda} &\qquad \text{ is a $\text{Cl}(T^*_pX)$-module,}  
\\
S_\lambda^i\oplus S_{-\lambda}^i &\quad \text{ is a $\text{Cl}^0(N^*_p)$-module,}  
\\
S^i_\lambda \oplus S^{1-i}_\lambda &\quad \text{ is a $\mathbb{Z}_2$-graded $\text{Cl}(T^*Z_p)$-module,} 
\end{align*}
for every $i=0,1$.
\end{enumerate}
\end{proposition}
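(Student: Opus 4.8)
The plan is to establish each property from the two infinitesimal versions of the concentration condition in~\eqref{eq:dcond}, together with the transversality consequence $\nabla_u{\mathcal A}(S^0)\subseteq S^1$, $\nabla_u{\mathcal A}^*(S^1)\subseteq S^0$, and the module relation~\eqref{eq:spin_action_on_S}. First I would record the basic algebra. Since $|v|=1$, the Clifford relations give $v_\cdot^* v_\cdot = -1$ on $S$, so $v_\cdot^*$ is (up to sign) an involution and in particular invertible; likewise $w_\cdot^*$ is invertible, and since $w\perp v$ the elements $v_\cdot^*$ and $w_\cdot^*$ anticommute. From the first relation in~\eqref{eq:dcond}, applied with $u=v$ and the covector $v^*$, one gets $v_\cdot^*\nabla_v{\mathcal A} = -\nabla_v{\mathcal A}^*\, v_\cdot^*$, hence $M_v^0 = -v_\cdot^*\nabla_v{\mathcal A}$ and $M_v^1=-v_\cdot^*\nabla_v{\mathcal A}^*$ are exchanged by conjugation by $v_\cdot^*$: $v_\cdot^* M_v^0 = M_v^1 v_\cdot^*$. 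This already shows $M_v^0$ and $M_v^1$ are similar, hence have the same spectrum, and that $v_\cdot^*:S_\lambda^0\to S_{-\lambda}^1$ — note the sign flip, since conjugating by the anticommuting-with-nothing-in-particular $v_\cdot^*$ but using $v_\cdot^*M_v^0=M_v^1 v_\cdot^*$ sends the $\lambda$-eigenspace of $M^0$ to the $\lambda$-eigenspace of $M^1$; the sign flip to $-\lambda$ comes rather from pairing with $w_\cdot^*$, so I need to be careful about exactly which relation produces which sign, and that bookkeeping is the one genuinely fiddly point.

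For symmetry of $M_v^0$: using that $\nabla_v{\mathcal A}^* = (\nabla_v{\mathcal A})^*$ (the connection is metric and the perturbation is a section of $\mathrm{Hom}(E^0,E^1)$) and that $v_\cdot^* = -(v_\cdot^*)^*$ is skew (Clifford multiplication by a unit covector is skew-adjoint with respect to the bundle metric, since $c(u)^* = -c(u)$ for $u$ real of unit length — this follows from~\eqref{eq:Clifford_relations}), one computes $(M_v^0)^* = (-v_\cdot^*\nabla_v{\mathcal A})^* = -(\nabla_v{\mathcal A})^*(v_\cdot^*)^* = -\nabla_v{\mathcal A}^*\cdot(-(-v_\cdot^*))$... the point is that the two sign contributions combine with the second relation of~\eqref{eq:dcond} to give $(M_v^0)^* = M_v^0$; I would write this out carefully since the answer must come out symmetric. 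Symmetry gives a real spectrum and an orthogonal eigenspace decomposition. To see the spectrum is symmetric about $0$, I use the anticommutation of $v_\cdot^*$ and $w_\cdot^*$: from $v_\cdot^* M_v^0 = M_v^1 v_\cdot^*$ and the analogous relation obtained by conjugating $M_v^1$ back, together with $w_\cdot^*: S^i\to S^{1-i}$ intertwining $M_v^i$ up to sign (this is where the second relation $\nabla_u{\mathcal A}\,v_\cdot = -v_\cdot\nabla_u{\mathcal A}^*$ enters, with $u=v$, $v=w$), the composite $v_\cdot^* w_\cdot^*$ maps $S^0_\lambda$ to $S^0_{-\lambda}$, and it is an isomorphism because each factor is. This proves (1).

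Part (2) is then immediate: $M_v^0$ and $M_v^1$ have equal spectrum by the similarity above, and the two stated Clifford multiplications $w_\cdot^*:S_\lambda^i\to S_\lambda^{1-i}$ and $v_\cdot^*:S_\lambda^i\to S_{-\lambda}^{1-i}$ are the isomorphisms extracted in the course of proving (1) (the first preserves the eigenvalue, the second flips its sign, which matches the claim). For part (3), I would simply check closure under the relevant Clifford algebras: $S_\lambda\oplus S_{-\lambda}$ is closed under multiplication by any $u\in T^*_pX$ — decompose $u = u^\top + u^\perp$ into $T^*Z$ and $N^*$ parts; $w_\cdot^*$-type (tangential) multiplication preserves $\lambda$ and swaps the $E$-grading, $v_\cdot^*$-type (normal) multiplication flips $\lambda$ and swaps the grading, and a general normal covector is handled by writing it as a combination and using that any two unit normal covectors are related by a rotation commuting appropriately — so the sum $S_\lambda\oplus S_{-\lambda}$ is $\mathrm{Cl}(T_p^*X)$-stable. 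Restricting to $\mathrm{Cl}^0(N_p^*)$: even products of normal covectors preserve each of $S^i_\lambda\oplus S^i_{-\lambda}$ because two sign-flips cancel and the $E$-grading is preserved by an even product. And $S^i_\lambda\oplus S^{1-i}_\lambda$ is $\mathrm{Cl}(T^*Z_p)$-stable with its evident $\mathbb{Z}_2$-grading because tangential multiplication preserves $\lambda$ and swaps $i\leftrightarrow 1-i$. The main obstacle, as noted, is purely the sign bookkeeping — making sure each of the four relations in~\eqref{eq:spin_action_on_S}–\eqref{eq:dcond} is applied with the correct covector and that the resulting eigenvalue shifts ($\lambda\mapsto\lambda$ for tangential, $\lambda\mapsto-\lambda$ for normal) are consistent throughout; there is no analytic difficulty, everything is finite-dimensional linear algebra over a point $p\in Z$.
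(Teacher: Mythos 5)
There is a genuine sign error at the very point you flag as the one fiddly spot. You claim $v^*_\cdot M_v^0 = M_v^1 v^*_\cdot$; the correct relation carries a minus sign. Indeed, from the first identity in \eqref{eq:dcond} with $u=v$ and the covector $v^*$, one has $\nabla_v{\mathcal A}^*\, v^*_\cdot = -v^*_\cdot \nabla_v{\mathcal A} = M_v^0$, and hence
\[
M_v^1\, v^*_\cdot = -v^*_\cdot \nabla_v{\mathcal A}^*\, v^*_\cdot = -v^*_\cdot M_v^0,
\]
so the exchange relation is $v^*_\cdot M_v^0 = -M_v^1 v^*_\cdot$. This minus sign is precisely what sends $S^0_\lambda$ to $S^1_{-\lambda}$ under $v^*_\cdot$; no pairing with $w^*_\cdot$ is needed. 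By contrast, your claim that ``the sign flip to $-\lambda$ comes rather from pairing with $w^*_\cdot$'' has the roles of $v^*_\cdot$ and $w^*_\cdot$ exactly reversed: multiplication by $w^*_\cdot$ (for any unit $w$ perpendicular to $v$, tangential or normal) commutes without a sign and preserves $\lambda$, while only $v^*_\cdot$ itself produces the sign flip. You half-notice the inconsistency (``I need to be careful about exactly which relation produces which sign'') but do not resolve it; left as is, your argument would incorrectly conclude that $v^*_\cdot$ preserves the eigenvalue, contradicting the statement of the proposition.

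The paper's proof organizes all of this into the single commutation identity \eqref{eq:basic_identity} for Clifford monomials $e^I$, in which the sign is governed transparently by the parity of $\lvert I\rvert$ and by whether the index $1$ (the $v^*$-slot) belongs to $I$. That formulation disposes of the bookkeeping you found fiddly and immediately yields all three parts uniformly. Your proposal follows essentially the same route — derive commutation relations of $M^i_v$ with $v^*_\cdot$ and $w^*_\cdot$ from \eqref{eq:dcond} and then check module closure — so the fix is local: establish $v^*_\cdot M_v^0 = -M_v^1 v^*_\cdot$ with the minus sign and $w^*_\cdot M_v^0 = M_v^0$-commutation without it (equivalently prove \eqref{eq:basic_identity}), after which the eigenspace isomorphisms and the module statements in part (3) follow by the case-check you sketch. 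Your symmetry computation for $M_v^0$ and the closure argument in part (3) (decomposing a general covector into its $v^*$-component and a perpendicular component) are otherwise sound.
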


\begin{proof}
Let $\{e^A\}_{A= j, \alpha}$ be an orhonormal basis of $T^* X_p$ so that $e^1= v^*$ and $e^2=w^*$. For an ordered string $1\leq i_1 < \dots < i_k \leq n$ we set $I = (i_1, \dots, i_k),\ \lvert I\rvert = k$ and $e^I = e^{i_1}_\cdot \dots e^{i_k}_\cdot$. The proof of the proposition follow by the identity,
\begin{equation}
\label{eq:basic_identity}
M_v^i e^I = \begin{cases} e^I M_v^i,& \quad \text{if $\lvert I\rvert$ is even and $1\notin I$},
\\
 - e^I M_v^i,& \quad \text{if $\lvert I\rvert$ is even and $1\in I$},
\\
 e^I M_v^{1-i},& \quad \text{if $\lvert I\rvert$ is odd and $1\notin I$},
\\
-e^I M_v^{1-i},& \quad \text{if $\lvert I\rvert$ is odd and $1\in I$}.
\end{cases}
\end{equation}
The identity is a direct consequence of \eqref{eq:Clifford_relations}, \eqref{eq:dcond} and the definitions of $M^i_v$ in \eqref{defn:IntroDefM}. 
\end{proof}

 We now study the properties of the matrix functions $S(N)\ni v\mapsto M^i_v \in \mathrm{End}(S^i\vert_Z)$. By \cite{k}[Ch.II, Th. 6.8, pp.122], we can always choose a family of eigenvalues (possibly with repetitions) that are smooth functions $\{\lambda_\ell :S(N) \to \mathbb{R} \}_{\ell=1}^d$ so that $M^0_v- \lambda_\ell(v) 1_{S^0\vert_Z}$ is a singular matrix for every $v\in S(N)$ and every $\ell$. Associated to each such $\lambda_\ell$ is a bundle of eigenspaces $\{(S_\ell^0)_v\}_{v\in S(N)}$ that is a subbundle of $\pi^*(S^0\vert_Z)$ with varying dimensions in the fibers, and its jumping locus in $S(N)$ located where the different graphs of the eigenvalues intersect. Assumptions~\ref{Assumption:normal_rates} and \ref{Assumption:stable_degenerations} guarantee two main properties: 1) the bundles of eigenspaces have constant ranks and are pullbacks under $\pi: S(N)\to Z$ of eigenbundles defined on the core set $Z$ and 2) the eigenvalues are functions of the core set $Z$. The following couple of propositions explain why this is the case. The following second order term in the expansion of $\tilde{\mathcal A}^*\tilde{\mathcal A}$ along $Z$ will be important: 

\begin{defn}
\begin{enumerate}
\item We introduce the bundle map,
\begin{align*} 
M^0: N\otimes N &\to \mathrm{Sym}(S^0),\quad  v\otimes w \mapsto \left.\frac{1}{2}\left(\nabla_v{\mathcal A}^*\nabla_w{\mathcal A} + \nabla_w{\mathcal A}^*\nabla_v{\mathcal A}\right)\right\vert_{S^0\vert_Z},
\\
M^1: N\otimes N &\to \mathrm{Sym}(S^1),\quad  v\otimes w \mapsto \left.\frac{1}{2}\left(\nabla_v{\mathcal A} \nabla_w{\mathcal A}^* + \nabla_w{\mathcal A} \nabla_v{\mathcal A}^*\right)\right\vert_{S^1\vert_Z}. 
\end{align*}
We also introduce 
\[
M: N\otimes N \to \mathrm{Sym}(S^0)\oplus \mathrm{Sym}(S^1),\quad  v\otimes w  \mapsto  M^0_{v,w} \oplus M^1_{v,w}.
\]

\item We say that $\{M^i_{v,w}\}_{v,w\in S(N)}$ is compatible with the inner product $g_X$ or simply compatible, if 
\begin{equation}
\label{eq:compatibility_assumption_2}
M^i_{v,w} \equiv 0 \quad \text{whenever} \quad g_X(v,w)=0,
\end{equation}
for every $v,w\in N$. If both $M^0$ and $M^1$ are compatible, we say that $M$ is compatible.
\end{enumerate}
\end{defn}

\begin{lemma}
\label{lem:basic_properties_of_M_v_w}
The term $M_{v,w}$ satisfies the following identities:
 \begin{enumerate}
\item For every $v\in S(N)$, we have  $M_{v,v} = M_v^2$. 

\item For every $v,w\in N$ the equation,
\begin{equation}
\label{eq:commutator_vs_second_order_term}
[M_v, M_w] =  2 w^*_\cdot v^*_\cdot (M_{v,w} - g(v,w) M_v M_w),
\end{equation}
holds.

\item  For every $u\in TX\vert_Z,\ v,w\in N$ and every $i=0,1$, the relation,
\begin{equation}
\label{eq:quadratic_term_vs_adjoint_quadratic_term}
u^*_\cdot M^i_{v,w} = M^{1-i}_{v,w} u^*_\cdot,
\end{equation}
holds.

\item Assumption~\ref{Assumption:normal_rates} is equivalent to  $M^0_{v,w}$ being compatible and to $M^0_{v,v}$ being invertible  for some $v\in S(N)$. 

\item If $\tilde{\mathcal A}^*\tilde{\mathcal A}$ satisfies  Assumption~\ref{Assumption:normal_rates} then so does $\tilde{\mathcal A} \tilde{\mathcal A}^*$ that is, there exist a  positive-definite symmetric endomorphism $Q^1$ of the bundle $S^1$, so that 
 \[
\left.\tilde{\mathcal A}\tilde{\mathcal A}^*\right\vert_{S^1} = r^2\left(Q^1 + \left.\frac{1}{2}\bar{\mathcal A}\bar A^*_{rr}\right\vert_{S^1}\right)+ O(r^3).
\]
Furthermore 
\begin{equation}
\label{eq:Q0_vs_Q1}
u^*_\cdot Q^i = Q^{1-i}u^*_\cdot,
\end{equation}
and $Q^0$ and $Q^1$ share the same spectrum and they are $\text{Cl}^0(T^*X)$-equivariant.
\end{enumerate}
\end{lemma}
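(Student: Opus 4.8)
The plan is to establish the five items essentially in order, extracting everything from the two polarization identities \eqref{eq:cond}, \eqref{eq:cond_version2}, the derived relations \eqref{eq:dcond}, and the Clifford relations \eqref{eq:Clifford_relations}. For item (1), I would simply unwind the definitions: $M_v = -v^*_\cdot \nabla_v{\mathcal A} \oplus (-v^*_\cdot\nabla_v{\mathcal A}^*)$, so on the $S^0$-summand $M_v^2 = v^*_\cdot\nabla_v{\mathcal A}^*\, v^*_\cdot\nabla_v{\mathcal A}$ after using \eqref{eq:dcond} to move the middle $v^*_\cdot$ past $\nabla_v{\mathcal A}^*$ (this flips a sign, which is cancelled by the overall two minus signs), and then $v^*_\cdot v^*_\cdot = -\lvert v\rvert^2 = -1$ on the unit sphere bundle — wait, I must be careful with signs, so let me just say: a short computation using $\nabla_v{\mathcal A}^*\,v^*_\cdot = -v^*_\cdot\nabla_v{\mathcal A}^*$ and $(v^*_\cdot)^2=-1$ reduces $M_v^2\vert_{S^0}$ to $\tfrac12(\nabla_v{\mathcal A}^*\nabla_v{\mathcal A}+\nabla_v{\mathcal A}^*\nabla_v{\mathcal A})\vert_{S^0}=M^0_{v,v}$, and symmetrically on $S^1$.

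For item (2), I would compute $M_v M_w$ and $M_w M_v$ on each summand directly. On $S^0$, $M_v M_w\vert_{S^0} = v^*_\cdot\nabla_v{\mathcal A}^*\,w^*_\cdot\nabla_w{\mathcal A}$; pushing $w^*_\cdot$ to the left through $\nabla_v{\mathcal A}^*$ via \eqref{eq:dcond} produces $-v^*_\cdot w^*_\cdot\nabla_v{\mathcal A}^*\nabla_w{\mathcal A}$ (note here I need the version of \eqref{eq:dcond} with the derivative in the normal direction $v$ and the Clifford factor $w^*$ tangential-or-normal — both $v,w\in N\subset T^*X\vert_Z$, so the identity applies). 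Forming the commutator, the $v^*_\cdot w^*_\cdot$ and $w^*_\cdot v^*_\cdot$ pieces combine; using $v^*_\cdot w^*_\cdot = -w^*_\cdot v^*_\cdot - 2g(v,w)$ from \eqref{eq:Clifford_relations} one collects the symmetrized product $M^0_{v,w}$ plus the correction $-g(v,w)M_vM_w$, landing exactly on \eqref{eq:commutator_vs_second_order_term} after factoring out $2w^*_\cdot v^*_\cdot$. The $S^1$-summand is identical with ${\mathcal A}\leftrightarrow{\mathcal A}^*$.

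Item (3) is the Clifford-equivariance of the quadratic term: given $u\in T^*X\vert_Z$, one moves $u^*_\cdot$ across $\nabla_v{\mathcal A}^*\nabla_w{\mathcal A}$ using \eqref{eq:dcond} twice — first $u^*_\cdot\nabla_v{\mathcal A}^* $... actually I need the companion of \eqref{eq:dcond}, namely that $\nabla_w{\mathcal A}\,u^*_\cdot = -u^*_\cdot\nabla_w{\mathcal A}^*$ and $\nabla_v{\mathcal A}^*\,u^*_\cdot = -u^*_\cdot\nabla_v{\mathcal A}$ — applying these in succession the two sign flips cancel and $u^*_\cdot$ swaps the $S^0$ and $S^1$ versions, giving \eqref{eq:quadratic_term_vs_adjoint_quadratic_term}; one checks it is consistent under polarization (the symmetrized sum behaves the same way). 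Item (5) follows by combining item (3) with Assumption~\ref{Assumption:normal_rates}: applying $u^*_\cdot$ to the stated expansion of $\tilde{\mathcal A}^*\tilde{\mathcal A}\vert_{S^0}$, using \eqref{eq:quadratic_term_vs_adjoint_quadratic_term} to convert $Q^0$ (built from the $M^0_{v,w}$'s via the spectral decomposition) into an endomorphism $Q^1$ of $S^1$ with $u^*_\cdot Q^0 = Q^1 u^*_\cdot$, and noting that conjugation by an invertible $u^*_\cdot$ preserves spectrum and positive-definiteness; the $O(r^3)$ term and the $\bar{\mathcal A}\bar A^*_{rr}$ term transform compatibly because $\bar{\mathcal A}^*\bar A_{rr}$ and $\bar{\mathcal A}\bar A^*_{rr}$ are intertwined by the same relation. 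The $\text{Cl}^0$-equivariance of $Q^i$ is the special case where $u^*_\cdot$ is a product of two cotangent covectors.

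The main obstacle is item (4): showing Assumption~\ref{Assumption:normal_rates} is \emph{equivalent} to ($M^0_{v,w}$ compatible) \emph{and} ($M^0_{v,v}$ invertible for some $v$). The forward direction uses item (1) to identify $M^0_{v,v}$ with the leading $r^2$-coefficient along the ray through $v$, so the stated form of the expansion forces $Q^0$ (up to the $\tfrac12\bar{\mathcal A}^*\bar A_{rr}$ term, which by Remark/Lemma~\ref{lemma:whew} can be taken to vanish) to be the $v$-independent positive endomorphism; comparing with the general expansion of Proposition~\ref{prop:properties_of_perturbation_term_A}, the $\text{Cl}^0$-invariant term $\tfrac{x_\alpha x_\beta}{r^2}(A^*_\alpha A_\beta)\vert_{S^0} = M^0_{v,w}x_\alpha x_\beta/r^2$ must reduce to $Q^0$ for all unit $v$, which by polarization is exactly $M^0_{v,w}\equiv 0$ when $g(v,w)=0$; positive-definiteness of $Q^0$ gives invertibility of $M^0_{v,v}=Q^0$. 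Conversely, compatibility kills all cross terms $x_\alpha x_\beta$ with $\alpha\ne\beta$ in the quadratic part, so $\sum_{\alpha,\beta} x_\alpha x_\beta M^0_{e_\alpha,e_\beta} = \sum_\alpha x_\alpha^2 M^0_{e_\alpha,e_\alpha}$; the subtle point is that compatibility also forces $M^0_{e_\alpha,e_\alpha}$ to be independent of $\alpha$ — this should come out of item (2): with $g(v,w)=0$ the identity \eqref{eq:commutator_vs_second_order_term} gives $[M_v,M_w]=0$, and combined with $M^0_{v,v}=M_v^2$ and the $\text{Cl}^0(N^*)$-module structure (Proposition~\ref{prop:spectrum_and_eigenspaces_as_Clifford_submodules}) acting transitively enough on the sphere, one concludes all $M_v^2$ have the same value $Q^0$; then invertibility at one $v$ propagates to all $v$, and the $O(r^3)$ remainder is whatever is left. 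I expect the careful bookkeeping of which relation in \eqref{eq:dcond} applies (normal vs. tangential Clifford factor, and which slot carries the derivative) to be where most of the real work hides, together with justifying the "$M_v^2$ is $v$-independent" step rigorously rather than heuristically.
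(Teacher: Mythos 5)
Your plan for items (1), (2), (3), and (5) matches the paper's approach: the paper states these are direct consequences of the Clifford relations \eqref{eq:Clifford_relations} and the derived relations \eqref{eq:dcond}, and your computations (including the careful bookkeeping of which slot of \eqref{eq:dcond} carries the derivative direction and which carries the Clifford factor, and the two sign flips in item (3)) are the right ones. Item (5) deduced from item (3) and the definition of compatibility is also how the paper proceeds.

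The genuine gap is in the converse direction of item (4), and you yourself flag it: you need to show that compatibility of $\{M^0_{v,w}\}$ forces $M_v^2 = M^0_{v,v}$ to be independent of $v\in S(N)$. Your proposed route --- deriving $[M_v, M_w]=0$ for $v\perp w$ from \eqref{eq:commutator_vs_second_order_term} and then invoking ``the $\text{Cl}^0(N^*)$-module structure acting transitively enough on the sphere'' --- does not close this. Commuting matrices need not have equal squares, and the Clifford equivariance from Proposition~\ref{prop:spectrum_and_eigenspaces_as_Clifford_submodules} only tells you how $e^I$ conjugates the $M_v$ (e.g.\ $v^*_\cdot w^*_\cdot$ anti-commutes with $M_v$), which gives that $M_v^2$ commutes with various Clifford elements but not that $M_v^2 = M_w^2$. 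The paper's argument (carried out in the proof of Proposition~\ref{prop:more_properties}\eqref{rem:metric_compatibility1}, to which the Lemma~\ref{lem:basic_properties_of_M_v_w}(4) proof implicitly defers) is a second application of polarization, orthogonal to the one you already use in the forward direction: given perpendicular unit vectors $v,w\in S(N)$, the vectors $e^\pm = (v\pm w)/\sqrt{2}$ are again unit and mutually perpendicular, so compatibility gives $M_{e^+, e^-}=0$; but by bilinearity and symmetry of $M_{\cdot,\cdot}$ this equals $\tfrac{1}{2}(M_{v,v} - M_{w,w})$, hence $M_{v,v}=M_{w,w}$ for all perpendicular pairs, and connectedness of $S(N)$ (for $\dim N\geq 2$; trivial for $\dim N =1$) gives constancy. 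Invertibility at one $v$ then propagates. This is the ``subtle point'' you correctly identify as the crux, and the $e^\pm$ trick is the missing ingredient.
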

\begin{proof}
All the identities are direct consequences of  \eqref{eq:Clifford_relations} and \eqref{eq:dcond}. For the last couple of entries, working in a Fermi chart $(\mathcal{N}_U, (x_j, x_\alpha)_{j,\alpha})$ with $v = \sum_\alpha \tfrac{x_\alpha}{r}\partial_\alpha \in S(N)$, we have that
\[
M^0_{v,v} =  \sum_{\alpha, \beta}\frac{x_\alpha x_\beta}{r^2} M^0_{\alpha, \beta} =\sum_{\alpha, \beta}\frac{x_\alpha x_\beta}{r^2}\left. \bar A_\alpha^*  \bar A_\beta\right\vert_{S^0}. 
\]
By comparing terms in expansion of  Proposition~\ref{prop:properties_of_perturbation_term_A} \eqref{eq:jet1}, it follows that 
Assumption~\ref{Assumption:normal_rates} is equivalent to $M^0_{v,v} = Q^0$ for every $v\in S(N)$ for $Q^0$ a positive definite matrix, a quadratic relation in $S(N)$. By polarization, this is equivalent to $M^0_{v,w}$ satisfying condition \eqref{eq:compatibility_assumption_2} with $i=0$ and to $M^0_{v,v}$ being invertible  for some $v\in S(N)$. Equation \eqref{eq:quadratic_term_vs_adjoint_quadratic_term} then shows that $M^0_{v,w}$ being compatible is equivalent to $M^1_{v,w}$ being compatible. It follows that  if ${\mathcal A}^*{\mathcal A}$ satisfies  Assumption~\ref{Assumption:normal_rates} the so does $\tilde{\mathcal A} \tilde{\mathcal A}^*$ and there exist a  positive-definite symmetric endomorphism $Q^1$ of the bundle $S^1$, so that 
 \begin{equation*}
\left.\tilde{\mathcal A}\tilde{\mathcal A}^*\right\vert_{S^1} = r^2\left(\left.Q^1 + \frac{1}{2}\bar{\mathcal A}\bar A^*_{rr}\right\vert_{S^1}\right)+ O(r^3).
\end{equation*}
Finally for $v=w$, equation \eqref{eq:quadratic_term_vs_adjoint_quadratic_term} becomes \eqref{eq:Q0_vs_Q1}. The last couple of assertions follow then from this resulting equation.
\end{proof}

The following proposition shows that Assumption~\ref{Assumption:normal_rates}, implies that the eigenbundles of $M_v$ are pullbacks under $\pi:S(N)\to Z$ of eigenbundles defined on the core set $Z$ and that the corresponding eigenvalues are functions of the core set $Z$:

\begin{prop}
\label{prop:more_properties}
We use the notation from Proposition~\ref{prop:spectrum_and_eigenspaces_as_Clifford_submodules}.
\begin{enumerate}
\item  \label{lem:commutativity_of_M_v_w}
The family $\{M_{v,w}\}_{v,w\in S(N)}$ commutes if and only if the subfamily $\{M_v^2\}_{v\in S(N)}$ commutes and that happens if and only if, the direct sum of the eigenbundles $S_\lambda\oplus S_{-\lambda} \to S(N)$ can be pushed forward to subbundles of the bundle $S\vert_Z\to Z$, for every eigenvalue $\lambda: S(N) \to \mathbb{R}$ of the family $\{M_v: v\in S(N)\}$. 

\item \label{rem:metric_compatibility1}
If $\{M_{v,w}\}_{v,w \in S(N)}$ is compatible with $g_X$, then there exist $Q: Z \to \mathrm{Sym}\mathrm{End}(S^0) \oplus \mathrm{Sym}\mathrm{End}(S^1)\vert_Z$ so that
\[
M_{v,w} =  g_X(v,w)Q, \qquad \text{for every $v,w \in N$}.
\]

\item \label{rem:metric_compatibility2}
Assume the squares $\{M_v^2: v \in S(N)\}$ are invertible matrices,  independent of $v\in S(N)$ but dependent on $\pi(v)$. Call their common matrix value by $Q$.  Then $Q$ is a symmetric matrix and the family $\{M_v\}_{v\in S(N)}$ respects the eigenspaces of $Q$. Moreover, if $\lambda: S(N) \to \mathbb{R}$ is an eigenvalue of this family then $\lambda$ is a function on the core set $Z$.
\end{enumerate}
\end{prop}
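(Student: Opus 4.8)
The plan is to treat the three parts in turn, each being a linear-algebra statement about the finite families of symmetric endomorphisms $\{M_v\}_{v\in S(N)}$ and $\{M_{v,w}\}_{v,w\in S(N)}$, combined with the Clifford-equivariance already established in Proposition~\ref{prop:spectrum_and_eigenspaces_as_Clifford_submodules} and Lemma~\ref{lem:basic_properties_of_M_v_w}. For part~\eqref{lem:commutativity_of_M_v_w}, the key is the commutator identity \eqref{eq:commutator_vs_second_order_term}: since $M_{v,v}=M_v^2$ by Lemma~\ref{lem:basic_properties_of_M_v_w}(1), polarization shows that $\{M_{v,w}\}$ commutes iff $\{M_v^2\}$ commutes; and if $[M_v^2,M_w^2]=0$ for all $v,w$ then by \eqref{eq:commutator_vs_second_order_term} (applied with the roles making $M_{v,w}-g(v,w)M_vM_w$ appear) we recover commutativity of all $M_{v,w}$, so the two conditions are equivalent. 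For the equivalence with the push-forward statement, I would argue that a commuting family of symmetric endomorphisms on $S\vert_Z$ parametrized by $S(N)$ is simultaneously diagonalizable fiberwise; each joint eigenprojection is then locally constant along the sphere fibers because the $M_v^2$ vary continuously (indeed polynomially) in $v$ and their joint spectrum is finite (here I use that the eigenvalues $\lambda_\ell$ are smooth with non-crossing graphs only where needed — but for this part mere commutativity suffices to get $\pi$-pullback structure for $S_\lambda\oplus S_{-\lambda}$, since that direct sum is exactly an eigenspace of $M_v^2$ with eigenvalue $\lambda^2$). Conversely, if the $S_\lambda\oplus S_{-\lambda}$ descend to $Z$ then all $M_v$ preserve a common (v-independent) decomposition, forcing $[M_v^2,M_w^2]=0$.

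For part~\eqref{rem:metric_compatibility1}, I would work in a Fermi chart as in the proof of Lemma~\ref{lem:basic_properties_of_M_v_w}, writing $M_{v,w}=\sum_{\alpha,\beta}v_\alpha w_\beta M_{\alpha,\beta}$ with $M_{\alpha,\beta}$ symmetric in $\alpha,\beta$. Compatibility \eqref{eq:compatibility_assumption_2} says this bilinear form vanishes whenever $\sum_\alpha v_\alpha w_\alpha=0$; a standard argument (the symmetric bilinear form $M_{\bullet,\bullet}$ is, fiberwise in $\mathrm{End}(S)$, proportional to the Euclidean form $g_X$ on $N$) then gives $M_{\alpha,\beta}=\delta_{\alpha\beta}Q$ for a single endomorphism $Q=M_{v,v}$ (independent of the unit $v$ by taking $v=w$ along a coordinate axis), hence $M_{v,w}=g_X(v,w)Q$; symmetry of $Q$ in $\mathrm{Sym}(S^0)\oplus\mathrm{Sym}(S^1)$ is inherited from that of $M_{v,v}$.

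For part~\eqref{rem:metric_compatibility2}, the hypothesis is exactly that $M_v^2=Q$ for all $v\in S(N)$ with $Q$ invertible and depending only on $p=\pi(v)$; symmetry of $Q$ is clear since each $M_v$ is symmetric. Each $M_v$ commutes with $Q=M_v^2$, so $M_v$ preserves every eigenspace of $Q$; since $Q$ is $v$-independent, the whole family $\{M_v\}_{v\in S(N_p)}$ respects a fixed decomposition of $S_p$ into eigenspaces of $Q$. Finally, if $\lambda:S(N)\to\mathbb{R}$ is an eigenvalue of the family, then $\lambda(v)^2$ is an eigenvalue of $M_v^2=Q$, hence $\lambda(v)^2$ depends only on $p=\pi(v)$; to upgrade $\lambda$ itself to a function on $Z$ I would invoke the connectedness of each sphere fiber $S(N_p)$ together with continuity of the eigenvalue branch and the observation that $\lambda(v)$ and $-\lambda(v)=\lambda(-v)$ are both attained (by the isomorphism $v^*_\cdot w^*_\cdot:S^0_\lambda\to S^0_{-\lambda}$ of Proposition~\ref{prop:spectrum_and_eigenspaces_as_Clifford_submodules}(1)) — after a consistent choice of sign, $\lambda$ factors through $\pi$. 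The main obstacle I anticipate is precisely this last sign/monodromy bookkeeping in part~\eqref{rem:metric_compatibility2}: making the passage from ``$\lambda^2$ is a function on $Z$'' to ``$\lambda$ is a function on $Z$'' rigorous requires care about how eigenvalue branches are labelled across the sphere bundle and uses the symmetry of the spectrum around $0$; everything else reduces to the polarization/Clifford-identity manipulations already packaged in the lemmas above.
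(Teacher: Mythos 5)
Your approach matches the paper's in all three parts: polarization of $M_{v,v}=M_v^2$ for part~(1), the orthogonal-pair argument via $e^\pm=(v\pm w)/\sqrt{2}$ (which is exactly the ``standard argument'' you invoke, just not spelled out) for part~(2), and $[Q,M_v]=0$ followed by continuity and sign-preservation of $\lambda$ for part~(3).

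Two small corrections. In part~(1) the extra appeal to the commutator identity~\eqref{eq:commutator_vs_second_order_term} is unnecessary and does not do what you claim: polarization alone gives both implications, since each $M_{v,w}$ is a linear combination of the squares $M_u^2$, and conversely $M_v^2=M_{v,v}$ is a member of the family $\{M_{v,w}\}$. In part~(3) the claim that $\lambda(-v)=-\lambda(v)$ is wrong: from the definition $M^0_v=-v^*_\cdot\nabla_v{\mathcal A}$ the two sign changes under $v\mapsto -v$ cancel, so in fact $M_{-v}=M_v$ and hence $\lambda(-v)=\lambda(v)$. The isomorphism $v^*_\cdot w^*_\cdot: S^0_\lambda\to S^0_{-\lambda}$ from Proposition~\ref{prop:spectrum_and_eigenspaces_as_Clifford_submodules}(1) shows that the spectrum of each fixed $M_v$ is symmetric about $0$, which is a different statement from how the eigenvalue branch transforms under the antipodal map. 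This observation actually makes the monodromy step easier than you feared: for $n-m\geq 2$ the sphere fibers are connected and sign-constancy of the nowhere-vanishing continuous branch suffices (this is the paper's argument), while the potentially worrying disconnected case $n-m=1$ is handled outright by $M_{-v}=M_v$.
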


\begin{proof}
In proving \eqref{lem:commutativity_of_M_v_w}, let $v,w\in S(N)$ and set $e= (v+w)/\lvert v+w\rvert$ another unit vector. By polarization of the identity $M_{u,u} = M^2_u$, we obtain 
\[
M_{v,w} = \frac{1}{2}(\lvert v+w\rvert^2 M^2_e -M^2_v - M^2_w). 
\]
Hence, if the family $\{M_v^2\}_{v\in S(N)}$ commutes, then the family $\{M_{v,w}\}_{v,w \in S(N)}$ commutes as well. On the other hand, the family $\{M_v^2\}_{v\in S(N)}$ commutes, if and only if the matrices in the family are simultaneously diagonalizable. Since $M_v^2$ is a direct sum of the $M_v^i, i=0,1$, these matrices satisfy analogue commutativity properties. Thus, given an eigenvalue $\lambda = \lambda(v)$ of $M_v^i$, its eigenspace $S_\lambda^i\oplus S_{-\lambda}^i$, is common for all $(M^i_w)^2$ with $\pi(w)=\pi(v)\in Z$. 

Assume now that $\{M_{v,w}\}_{v,w\in S(N)}$ satisfies condition \eqref{eq:compatibility_assumption_2}. Given $v,w\in S(N)$ perpendicular vectors, we get another pair of unit perpendicular vectors $e^{\pm}= (v \pm  w)/\sqrt{2}$ and by the definition of compatibility we have 
\[
0=M_{e^+, e^-} = \frac{1}{2} (M^2_v - M^2_w) \quad \Longrightarrow \quad M_v^2 = M^2_w. 
\]
It follows that the family $\{M^2_v\}_{v\in S(N)}$ consists of only one positive definite symmetric matrix $Q: S\vert_Z \to S\vert_Z$. Given an orthonormal basis $\{e_\alpha\}$ of $N$ and arbitrary unit vectors $v= \sum_\alpha v_\alpha e_\alpha$ and $w= \sum_\alpha w_\alpha e_\alpha$, we obtain
\[
M_{v,w}= \sum_{\alpha, \beta} v_\alpha w_\beta M_{\alpha, \beta} = \sum_\alpha v_\alpha w_\alpha M_\alpha^2 =  g_X(v,w)Q,
\]
which is \eqref{rem:metric_compatibility1}

Finally the first part of \eqref{rem:metric_compatibility2} follows since $[Q, M_v] = M_v^3 - M_v^3 =0$, for every $v\in S(N)$.  Given  an eigenvalue $\lambda: S(N) \to \mathbb{R}$ of $\{M_v\}_{v\in S(N)}$ we have that $\lambda^2$ is an eigenvalue of $Q$ and therefore a non-vanishing function on the core set $Z$. But $\lambda$ is continuous and maintain sign, as function of $S(N)$ and therefore it is a function on the core set $Z$.   
\end{proof}

From now on we assume Assumptions~\ref{Assumption:normal_rates} and \ref{Assumption:stable_degenerations}. It is evident from Proposition~\ref{prop:more_properties} \eqref{rem:metric_compatibility2} that the family $\{M_v\}_{v\in S(N)}$ respects the decomposition  \eqref{eq:eigenspaces_of_Q_i} into eigenbundles of $Q$. Recall the definitions of the bundles $S^i_{\ell k}$ in Definition~\ref{defn:IntroDefSp}. We proceed to describe the structure of a single $S_\ell = S^0_\ell \oplus S^1_\ell$ for fixed eigenvalue $\lambda_\ell:Z \to (0, \infty)$:

\begin{prop}[Structure of compatible subspaces]
\label{prop:properties_of_compatible_subspaces}
Assume $0<m=\dim Z< n = \dim X$.

\begin{enumerate}
\item 
We have an alternate description of $S_\ell^{i\pm}$ as,
\begin{equation}
\label{defn:positive_negative_espaces1}
S_\ell^{i\pm} = \bigcap_{v\in S(N)}\{ \xi\in S^i_\ell: M_v^i\xi = \pm\lambda_\ell \xi\},
\end{equation}
for every $i=0,1$.

\item 
 $S^\pm_\ell$ are both, $\mathbb{Z}_2$-graded, $\text{Cl}(T^*Z)$-modules. Furthermore
\begin{equation}
\label{prop:structure_of_compatible_subspaces2}
\bigcap_{v,w \in S(N)} \ker[M^i_v, M^i_w] = S_\ell^{i+} \oplus S_\ell^{i-}, 
\end{equation}
for every  $i=0,1$.
\item 
\label{prop:structure_of_compatible_subspaces3}
We view $\text{Cl}(N^*)$ as an irreducible $\mathbb{Z}_2$-graded $\text{Cl}(N^*)$-left module  and $S^+_\ell = S_\ell^{0+} \oplus S_\ell^{1+}$ as an irreducible $\mathbb{Z}_2$-graded $\text{Cl}(T^*Z)$-module. Then there is a bundle isomorphism of $\text{Cl}(N^*)\hat\otimes \text{Cl}(T^*Z)$-modules
\begin{equation}
\label{eq:graded_tensor_product_decomposition}
c_\ell :   \text{Cl}(N^*) \hat\otimes S^+_\ell\to S_\ell,
\end{equation}
induced by the the Clifford multiplication of $S$ as a $\text{Cl}(T^*X\vert_Z)$-module, where $\hat \otimes$ represents the $\mathbb{Z}_2$-graded tensor product. Moreover the principal $\mathrm{SO}(\dim S_\ell)$- bundle of $S_\ell$ is reduced to the product of the principal $\mathrm{SO}$-bundles of $N$ and $S^+_\ell$ respectively.

\item  Under the identification of bundles $\Lambda^* N^* \simeq \text{Cl}(N^*)$, the bundle isomorphism \eqref{eq:graded_tensor_product_decomposition} restricts to a bundle isomorphism
\[
c_\ell: \Lambda^k N^* \otimes S^{+ i}_\ell \to \begin{cases} S^i_{\ell k},& \quad \text{if $k$ is even,}
\\
S^{1-i}_{\ell k},  & \quad \text{if $k$ is odd,}
\end{cases}
\]
for every $i=0,1$. In particular, there is an orthogonal decomposition 
\begin{equation}
\label{eq:decompositions_of_S_ell}
S^i_\ell =\left( \bigoplus_{k\, \text{even}} S^i_{\ell k} \right) \oplus \left( \bigoplus_{k\, \text{odd}} S^{1-i}_{\ell k} \right), 
\end{equation}
for every $i=0,1$.
\end{enumerate} 
\end{prop}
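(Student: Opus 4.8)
The plan is to reduce the statement to fibrewise linear algebra on the $Q$-eigenbundle $S_\ell=S^0_\ell\oplus S^1_\ell$ over a fixed $p\in Z$, on which $Q=\lambda_\ell^2\,\mathrm{id}$ with $\lambda_\ell>0$ (Assumption~\ref{Assumption:normal_rates}), and then to read the resulting identities back as statements about bundles over $Z$. Three inputs drive the argument. (i) Unwinding the composition \eqref{eq:IntroDefCp} shows $C^i|_{S_\ell}=\sum_\alpha M^i_{e_\alpha}$ for \emph{any} orthonormal frame $\{e_\alpha\}_{\alpha=1}^{n-m}$ of $N_p$, the sum being frame-independent. (ii) On $S_\ell$ one has $(M^i_v)^2=M^i_{v,v}=Q=\lambda_\ell^2$ for unit $v$ (Lemma~\ref{lem:basic_properties_of_M_v_w}(1) and Proposition~\ref{prop:more_properties}(2)--(3)), and by \eqref{eq:commutator_vs_second_order_term} together with compatibility, $[M^i_v,M^i_w]=2\,w^*_\cdot v^*_\cdot M^i_{v,w}=0$ whenever $v\perp w$, since $M_{v,w}=g_X(v,w)Q$; thus $\{M^i_{e_\alpha}\}_\alpha$ is a commuting family of self-adjoint operators (Proposition~\ref{prop:spectrum_and_eigenspaces_as_Clifford_submodules}), each with $(M^i_{e_\alpha})^2=\lambda_\ell^2$, so $S_\ell$ splits orthogonally into their joint eigenspaces $S^\epsilon_\ell$, $\epsilon\in\{\pm1\}^{\,n-m}$, on which $M^i_{e_\alpha}$ acts by $\epsilon_\alpha\lambda_\ell$. (iii) Specialising \eqref{eq:basic_identity} to $|I|=1$ and extending $\mathbb{R}$-linearly gives the conjugation rule $M^i_v\circ c(u)=c(\rho_v u)\circ M^{1-i}_v$ for all $u\in T^*X_p$, where $\rho_v\in\mathrm{O}(T^*X_p)$ is the reflection negating $v^*$ and fixing $(v^*)^\perp$; in particular $\rho_v$ fixes $T^*Z_p$ pointwise and restricts to a reflection of $N^*_p$.

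Items (1) and the first part of (2) follow quickly; the commutator identity \eqref{prop:structure_of_compatible_subspaces2} will be deduced afterwards from (3)--(4). On $S^\epsilon_\ell$, $C^i=\sum_\alpha M^i_{e_\alpha}$ acts by $(n-m-2k)\lambda_\ell$ with $k=\#\{\alpha:\epsilon_\alpha=-1\}$; hence $S^i_{\ell k}$ is the span of the $S^\epsilon_\ell\cap S^i$ with exactly $k$ minus signs, the spectrum of $C^i|_{S_\ell}$ lies in $\{(n-m-2k)\lambda_\ell\}_{k=0}^{n-m}$, and the top (resp.\ bottom) eigenspace $S^{i+}_\ell$ (resp.\ $S^{i-}_\ell$) is the common $+\lambda_\ell$- (resp.\ $-\lambda_\ell$-) eigenspace of all $M^i_{e_\alpha}$. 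Because $C^i$ does not depend on the frame and any unit $v$ completes to an orthonormal frame of $N_p$, this is precisely \eqref{defn:positive_negative_espaces1}. For $u\in T^*Z_p$ we have $\rho_v u=u$, so the conjugation rule reads $M^i_v c(u)=c(u)M^{1-i}_v$; hence $c(u)$ preserves joint eigenvalues and interchanges $S^{0\pm}_\ell\leftrightarrow S^{1\pm}_\ell$, so $S^\pm_\ell$ are $\mathbb{Z}_2$-graded $\text{Cl}(T^*Z)$-modules (and $\text{Cl}^0(T^*Z)$ preserves each $S^{i\pm}_\ell$).

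For (3) and (4), use $\Lambda^*N^*\cong\text{Cl}(N^*)$ and define $c_\ell$ by $c_\ell\big((e^{\beta_1}\wedge\dots\wedge e^{\beta_k})\,\hat\otimes\,\xi\big)=c(e^{\beta_1})\cdots c(e^{\beta_k})\xi$ on distinct $\beta$'s, i.e.\ the map \eqref{eq:graded_tensor_product_decomposition}. Applying the conjugation rule $k$ times and using (1), for $\xi\in S^{i+}_\ell$ the vector $c(e^{\beta_1})\cdots c(e^{\beta_k})\xi$ is a joint eigenvector of the $M_{e_\alpha}$ with eigenvalue $-\lambda_\ell$ exactly for $\alpha\in\{\beta_1,\dots,\beta_k\}$, lying in grading $i+k\bmod2$; hence $c_\ell(\Lambda^kN^*\otimes S^{+i}_\ell)\subseteq S^{(i+k)\bmod2}_{\ell k}$. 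Each $c(e^{\beta_1})\cdots c(e^{\beta_k})$ is invertible on $S$ with inverse $\pm c(e^{\beta_k})\cdots c(e^{\beta_1})$, which by the same tracking sends the relevant joint eigenspace back into $S^{+i}_\ell$; so $c_\ell$ restricts to isomorphisms $\Lambda^kN^*\otimes S^{+i}_\ell\overset{\cong}{\longrightarrow}S^{(i+k)\bmod2}_{\ell k}$ for all $i,k$. As $(i,k)$ varies, these targets run without repetition over the joint-eigenspace decomposition $S_\ell=\bigoplus_{g,k}S^g_{\ell k}$, so $c_\ell$ is a bundle isomorphism (the hypothesis $m>0$ makes $c(u)\colon S^{+0}_\ell\to S^{+1}_\ell$, $0\ne u\in T^*Z_p$, an isomorphism, so both $S^{+i}_\ell$ are nonzero when $S_\ell\ne0$ and every eigenvalue $(n-m-2k)\lambda_\ell$ occurs). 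It intertwines the $\text{Cl}(N^*)$-actions because $c$ is an algebra homomorphism, and the $\text{Cl}(T^*Z)$-actions because the sign $(-1)^{|b||\omega|}$ of the graded tensor product is exactly the sign incurred when moving $c(b)$, $b\in\text{Cl}(T^*Z)$, past $c(\omega)$, $\omega\in\text{Cl}(N^*)$ (orthogonal subspaces anticommute under $c$). The structure-group reduction is the frame-bundle restatement of the naturality of $c_\ell$, and regrouping the above isomorphisms by the parity of $k$ gives \eqref{eq:decompositions_of_S_ell}.

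It remains to establish \eqref{prop:structure_of_compatible_subspaces2} and constant rank. Transporting $M^i_v$ through $c_\ell$, the conjugation rule and (1) give that $M^i_v$ corresponds to $\lambda_\ell\,(\rho_v^{\wedge}\otimes\mathrm{id})$ on $\Lambda^*N^*\hat\otimes S^+_\ell$, where $\rho_v^{\wedge}$ is the induced action on $\Lambda^*N^*_p$; hence $[M^i_v,M^i_w]$ corresponds to $\lambda_\ell^2\,[\rho_v^{\wedge},\rho_w^{\wedge}]\otimes\mathrm{id}$. Reflections generate $\mathrm{O}(N^*_p)$; on $\Lambda^0$ and $\Lambda^{n-m}N^*_p$ each $\rho_v^{\wedge}$ is scalar ($+1$, resp.\ $-1$), while for $0<k<n-m$ the common kernel of the $[\rho_v^{\wedge},\rho_w^{\wedge}]$ on $\Lambda^kN^*_p$ is the invariant space of the commutator subgroup $\mathrm{SO}(n-m)$, which is zero because $(\Lambda^k\mathbb{R}^{n-m})^{\mathrm{SO}(n-m)}=0$ for $0<k<n-m$. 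Therefore $\bigcap_{v,w}\ker[M^i_v,M^i_w]=c_\ell\big((\Lambda^0\oplus\Lambda^{n-m})N^*\otimes S^+_\ell\big)\cap S^i=S^{i+}_\ell\oplus S^{i-}_\ell$, which is \eqref{prop:structure_of_compatible_subspaces2}. Finally, $\lambda_\ell$ is smooth and strictly positive on the connected manifold $Z$ (Assumptions~\ref{Assumption:normal_rates} and \ref{Assumption:stable_degenerations}), so each eigenvalue $(n-m-2k)\lambda_\ell$ of the smooth endomorphism $C^i$ is separated from the others by at least $2\lambda_\ell>0$; the corresponding spectral projections are smooth, hence $S^i_{\ell k}$---in particular $S^{i\pm}_\ell$---and $S^+_\ell$ are subbundles of constant rank over $Z$. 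The fiddliest points are the $\mathbb{Z}_2$-grading bookkeeping through the repeated Clifford multiplications and the representation-theoretic input $(\Lambda^k\mathbb{R}^{n-m})^{\mathrm{SO}}=0$ for $0<k<n-m$ needed for \eqref{prop:structure_of_compatible_subspaces2}.
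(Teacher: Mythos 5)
Your proof is correct, and it reaches the same conclusions as the paper's, but via a genuinely different route for part (2). You prove items (3)--(4) first by tracking joint $\{M^i_{e_\alpha}\}$-eigenvalues through the explicit map $c_\ell$ via the conjugation rule $M^i_v c(u) = c(\rho_v u) M^{1-i}_v$, then deduce the commutator identity \eqref{prop:structure_of_compatible_subspaces2} by transporting $[M^i_v,M^i_w]$ to $\lambda_\ell^2[\rho_v^\wedge,\rho_w^\wedge]\otimes\mathrm{id}$ and invoking $(\Lambda^k\mathbb{R}^{n-m})^{\mathrm{SO}(n-m)}=0$ for $0<k<n-m$. The paper instead proves (1)--(2) directly from the dichotomy $\xi\in\ker[M^0_v,M^0_w]\Leftrightarrow v\perp w$ or $M^0_v\xi=M^0_w\xi$ (a consequence of \eqref{eq:com_1}), which forces $\{M^0_v\xi\}_v$ to be a singleton, and then reads off via the explicit decomposition \eqref{eq:decomposition_of_S_0_given_frame_e_a} that $\xi$ must sit in $|I|=0$ or $|I|=n-m$; items (3)--(4) come afterward. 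The paper's argument is more elementary and self-contained; yours is more structural and makes the $\mathrm{SO}(n-m)$-equivariance of the whole picture explicit, at the price of invoking the representation theory of $\Lambda^*\mathbb{R}^{n-m}$. Two small points you should spell out: (i) the identity $\ker[\rho_v^\wedge,\rho_w^\wedge]=\mathrm{Fix}\big((\rho_v^\wedge\rho_w^\wedge)^2\big)$ (using $\rho_v^2=\mathrm{id}$), together with the fact that the rotations $(\rho_v\rho_w)^2$ generate $\mathrm{SO}(n-m)$, is what makes the passage from the additive commutators to $\mathrm{SO}(n-m)$-invariants legitimate; and (ii) the nontriviality of $S^{i+}_\ell$, which you dispatch in one clause, deserves the sentence you implicitly rely on (some joint $\epsilon$-eigenspace is nonzero since $S_\ell\ne0$, and Clifford multiplication by $e^J$ and by $0\ne u\in T^*Z$ moves it isomorphically to every other joint eigenspace and across the grading), paralleling the paper's explicit construction of $\xi^{0\pm}$ in \eqref{eq:xi_minus_from_xi_plus}.
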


\begin{proof}
Throughout the proof, we fix an orthonormal basis $\{e_\alpha\}_\alpha$ of $N$ with dual basis $\{e^\alpha\}_\alpha$, we work with $i=0$ and set $M_\alpha^0:= M^0_{e_\alpha}$. Similar proofs holds for $i=1$. 
Combining equation from  Proposition~\ref{prop:more_properties} \eqref{rem:metric_compatibility1} with equation \eqref{eq:commutator_vs_second_order_term}, we have that,
\begin{equation}
\label{eq:com_1}
[ M_v^0, M_w^0 ] =  2 w^*_\cdot v^*_\cdot g(v, w)(Q^0 - M_v^0 M_w^0) = 2 w^*_\cdot v^*_\cdot g(v, w)M_v^0(M_v^0 -  M_w^0),
\end{equation}
for any $v,w\in S(N)$. Hence
\begin{equation}
\label{eq:dihotomy}
\xi \in \ker[M_v^0, M_w^0] \qquad \Longleftrightarrow \qquad v\perp w \quad \text{or if}\quad  M_v^0 \xi = M_w^0\xi.
\end{equation}
In particular we have that $[M_\alpha^0, M_\beta^0] = 0$ for every $\alpha, \beta$ so that the family $\{M_\alpha^0\}_\alpha$ commutes. Recall the bundle map $C^0$ introduced in \eqref{eq:IntroDefCp}. By Proposition~\ref{prop:properties_of_perturbation_term_A} \ref{eq:transversality_assumption_with_respect_to_connection}, $C^0 =  \sum_\alpha M_\alpha^0$. In particular
\[
[C^0, M_\alpha^0]= \sum_\beta [M_\beta^0, M_\alpha^0] =0,
\]
and since $v= e_\alpha$ is an arbitrary vector completed to a basis of $N$, we have that $[C^0, M_v^0]=0$ and therefore $M^0_v$ preserves the eigenspaces of $C^0$, for every $v \in S(N)$. We calculate these eigenspaces by using the description of $C^0$ with respect to the orthonormal base $\{e_\alpha\}_\alpha$.

The members of the family $\{M_\alpha^0\}_\alpha$ are commuting symmetric matrices that preserve $S_\ell^0$ so that 
\[
(M^0_\alpha)^2 = Q^0\rvert_{S_\ell^0} = \lambda_\ell^2 1_{S^0_\ell}. 
\]
Hence there exist a common eigenvector $\eta \in S^0_\ell$ and a string $I$, with 
\[
M_\alpha^0 \eta =\begin{cases} \lambda_\ell \eta,& \quad \text{if $\alpha\notin I$,}\\ - \lambda_\ell \eta,& \quad \text{if $\alpha\in I$.} \end{cases}
\]
Recall that for every string $I= (\alpha_1, \cdots,\alpha_h)$ of ordered positive integers we denote $e^I_\cdot = e^{\alpha_1}_\cdot \dots e^{\alpha_h}_\cdot \in \text{Cl}(N^*)$. Define
\begin{equation}
\label{eq:xi_minus_from_xi_plus}
\xi^{0+} = \begin{cases} e^I_\cdot \eta,& \quad \text{if $\lvert I\rvert$ is even,}\\  e^I_\cdot e_\cdot \eta,& \quad \text{if $\lvert  I\rvert$ is odd,} \end{cases} \quad \text{and}\quad  \xi^{0-} = \begin{cases} d\mathrm{vol}^N_\cdot \xi^{0+},& \quad \text{if $\dim N$ is even,}\\  d\mathrm{vol}^N_\cdot e_\cdot \xi^{0+},& \quad \text{if $\dim N$ is odd,} \end{cases}
\end{equation}
so that, by \eqref{eq:basic_identity}, $M_\alpha^0 \xi^{0\pm} = \pm \lambda_\ell \xi^{0\pm}$, for every $\alpha$, where $e\in T^*Z$ is a unit covector (since $m>0$). In particular $\xi^{0\pm} \in S^{0\pm}_\ell$ so that $S^{0\pm}_\ell$ are nontrivial subspaces of $S^0_\ell$ and therefore eigenspaces of $C^0$.

Given the string $I= (\alpha_1, \cdots,\alpha_h)$, we define a $\text{Cl}^0(T^*Z)$-module by
\[
S^0_{\ell I} := \bigcap_{j=1}^{\lvert  I\rvert}\{\xi \in S^0_\ell: M_{\alpha_j}^0 \xi =  -\lambda_\ell \xi\},
\]
for every $1\leq \lvert  I\rvert \leq \dim N-1$. Notice that $S^0_{\ell I}$ and $S^0_{\ell J}$ are orthogonal when $I\neq J$. A similar construction holds for $S^1_\ell$, constructing $S^1_{\ell I}$. Finally, there are isometries 
\[
A_I : = \begin{cases}e^I_\cdot : S^{i+}_\ell \to S^i_{\ell I} &\quad \text{when $\lvert  I\rvert$ is even,}\\  e^I_\cdot : S^{i+}_\ell \to S^{1-i}_{\ell I}  &\quad \text{when $\lvert  I\rvert$ is odd,} \end{cases}
\]
for every $i=0,1$. It follows that the $S^i_{\ell I}$ eigenspaces are nontrivial, they all have equal dimensions and we obtain decompositions  
\begin{equation}
\label{eq:decomposition_of_S_0_given_frame_e_a}
S^i_\ell\vert_Z  = \bigoplus_I  S^i_{\ell I} = \bigoplus_{\{I: \lvert  I\rvert= \text{even}\}} A_I S^{i+}_\ell \oplus \bigoplus_{\{I: \lvert  I\vert= \text{odd}\}} A_I S^{(1- i)+}_\ell, 
\end{equation}
for every $i=0,1$. This decomposition depends on our choice of frame $\{e^\alpha\}_\alpha$. However, for each $0\leq k \leq n-m$, the component $  \bigoplus_{\{I:\lvert  I\rvert=k\}}  S^i_{\ell I}$ is the eigenspace $S^i_{\ell k}$ of the matrix $C^i$, corresponding to the eigenvalue $(n-m-2k)\lambda_\ell$. Therefore for every $k$ the component is independent of the choice of the frame. Decomposition \eqref{eq:decompositions_of_S_ell} follows. When $k=0$ or $k=\dim N$, then every $\xi \in S^0_{\ell k}$ satisfies 
\[
M^0_\alpha \xi = \begin{cases} \lambda_\ell \xi, & \quad \text{if $k=0$,} \\  -\lambda_\ell \xi , & \quad \text{if $k=\dim N$,} \end{cases}
\]
for every $\alpha$. Given $v= \sum_\alpha v_\alpha e_\alpha \in N$ with $\lvert  v\rvert=1$ and using the expansion, 
\begin{equation}
\label{eq:M_v_expansion}
M_v = \sum_\alpha v_\alpha^2M_\alpha + \sum_{\alpha<\beta} v_\alpha v_\beta e^\alpha_\cdot e^\beta_\cdot (M_\alpha - M_\beta), 
\end{equation}
we obtain 
\[
M_v^0 \xi = \begin{cases} \lambda_\ell \xi, & \quad \text{if $k=0$,} \\  -\lambda_\ell \xi , & \quad \text{if $k=\dim N$,} \end{cases}
\]
for every $v\in S(N)$. This  proves \eqref{defn:positive_negative_espaces1}. 

By \eqref{eq:dihotomy} 
\[
\xi \in \bigcap_{v,w\in S(N)} \ker[M^0_v, M^0_w]  \qquad \Longleftrightarrow \qquad  \{M_v^0\xi\}_{v\in S(N)} \quad \text{is a singleton.}
\]
This implies the first inclusion in \eqref{prop:structure_of_compatible_subspaces2}. For the reverse inclusion of \eqref{prop:structure_of_compatible_subspaces2}, assume $\xi \in S^0_\ell$ so that $[M_v^0, M^0_w]\xi=0$ for all $v,w$. Using decomposition \eqref{eq:decomposition_of_S_0_given_frame_e_a}, there exist $a_I^i\in \mathbb{R}$ and $\xi_I^i\in S^{i+}_\ell$, linearly independent vectors, so that
\[
\xi = \sum_{\lvert  I\rvert\ \text{even}} a^0_I e^I_\cdot \xi^0_I +  \sum_{\lvert  I\rvert\ \text{odd}} a^1_I e^I_\cdot \xi^1_I. 
\]
Clearly, whenever $\lvert  I\rvert\neq 0 , \dim N$ is even and $a_I \neq 0$, we have that there exist $a\in I$ and $\beta\notin I$. But then 
\[
M^0_\alpha e^I_\cdot \xi^0_I = \lambda_\ell e^I_\cdot \xi^0_I\quad \text{and} \quad M^0_\beta e^I_\cdot \xi^0_I = -\lambda_\ell e^I_\cdot \xi^0_I,
\]
so that $\{M_\gamma^0\xi\}_\gamma$ is not singleton and cannot belong to the left hand side of \eqref{prop:structure_of_compatible_subspaces2}. A similar statement is true when $\lvert  I\rvert\neq 0 , \dim N$ and $\lvert  I\rvert$ is odd. Therefore $\lvert  I\rvert=0$ or $\lvert  I\rvert= \dim N= n-m$ and
\[
\xi = \begin{cases} a_0^0 \xi_0^0 + a_{n-m}^0 d\mathrm{vol}^N_\cdot\xi^0_{n-m} ,& \quad \text{if $n-m$ is even,}
\\
a_0^0 \xi_0^0 + a_{n-m}^1 d\mathrm{vol}^N_\cdot \xi^1_{n-m} ,& \quad \text{if $n-m$ is odd.}
\end{cases}
\]
Observe that in each case $ d\mathrm{vol}^N_\cdot\xi^0_{n-m},\  d\mathrm{vol}^N_\cdot \xi^1_{n-m}\in S^{0-}_\ell$. This finishes the proof of the other inclusion in \eqref{prop:structure_of_compatible_subspaces2}.

To prove \eqref{eq:graded_tensor_product_decomposition} we observe the inclusion bundle maps,
\begin{equation*}
(\text{Cl}^0(N^*) \otimes S^{0+}_\ell) \oplus (\text{Cl}^1(N^*)\otimes S^{1+}_\ell) \hookrightarrow 
(\text{Cl}^0(T^*X\vert_Z) \otimes S^{0+}_\ell) \oplus (\text{Cl}^1(T^*X\vert_Z)\otimes S^{1+}_\ell ) \rightarrow S^0_\ell\vert_Z,
\end{equation*}
and
\begin{equation*}
(\text{Cl}^0(N^*) \otimes S^{1+}_\ell) \oplus (\text{Cl}^1(N^*)\otimes S^{0+}_\ell) \hookrightarrow 
(\text{Cl}^0(T^*X\vert_Z) \otimes S^{1+}_\ell) \oplus (\text{Cl}^1(T^*X\vert_Z)\otimes S^{0+}_\ell ) \rightarrow S^1_\ell\vert_Z,
\end{equation*}
define isomorphisms on the fibers as a consequence of the decomposition \eqref{eq:decompositions_of_S_ell}  and its analogue for $S^1_\ell\vert_Z$. Under the identification $\Lambda^* N^* \simeq \text{Cl}(N^*)$ we have that $\Lambda^k N^*\otimes S^{0+}_\ell$ is a bundle isomorphic to the eigenbunlde $S^0_{\ell k}$, for every $0\leq k \leq n-m$. This finishes the proof of \eqref{eq:graded_tensor_product_decomposition} and \eqref{eq:decompositions_of_S_ell} and the proof of the proposition.  
\end{proof}

\medskip

\subsection{Connection \texorpdfstring{$\bar\nabla$}{}  emerges from the 1-jet of the connection \texorpdfstring{$\bar \nabla^{E\vert_Z}$}{} along \texorpdfstring{$Z$}{}.}

Recall now the Fermi coordinates $(\mathcal{N}_U, (x_j, x_\alpha)_{j,\alpha})$ and the frames $\{e_j, e_\alpha\}_{j,\alpha}$ of $N\vert_U$, centered at $p\in Z$, introduced in Appendix~\ref{App:Fermi_coordinates_setup_near_the_singular_set}. Let $\{\sigma_\ell\}_\ell,\ \{f_k\}_k$ orthonormal frames trivializing $E^0\vert_U$ and $E^1\vert_U$. Using Proposition~\ref{prop:properties_of_compatible_subspaces}, we choose the frames $\{\sigma_\ell\}_\ell,\ \{f_k\}_k$ to respect the decomposition \eqref{eq:decompositions_of_S_ell} and the decomposition of each $S^i_{\ell k}$ into simultaneous eigenspaces of $\{M_\alpha\}_\alpha$. In particular we obtain a trivialization of $S^{i+}_\ell\vert_U$.  

From Proposition~\ref{prop:properties_of_compatible_subspaces}, the bundle maps, 
\begin{equation}
 \label{eq:decompositions}   
\begin{aligned}
S^0\vert_Z &= \bigoplus_\ell S_\ell^0, \qquad  c_\ell : \text{Cl}^0(N^*)\otimes S^{0+}_\ell\oplus \text{Cl}^1(N^*)\otimes S^{1+}_\ell \rightarrow S_\ell^0  , 
\\
S^1\vert_Z &= \bigoplus_\ell S^1_\ell, \qquad c_\ell: \text{Cl}^0(N^*)\otimes S^{1+}_\ell\oplus \text{Cl}^1(N^*)\otimes S^{0+}_\ell \rightarrow  S_\ell^1. 
\end{aligned}
\end{equation}
are isometric isomorphisms and the structure group of $S^i\vert_Z$ is reduced to the product of $SO(n-m)$ and the structure group of $S^{i+}$. The Clifford bundles $\text{Cl}^i (N^*)$ admit an $SO(n-m)$ connection, induced by $\nabla^N$. Based on the decompositions \eqref{eq:decompositions}, we introduce a new connection on $E^i\vert_Z$:

\begin{defn}
\label{eq:connection_bar_nabla}
Let $v\in TZ$. By decompositions \eqref{eq:decompositions}, a given section $\xi \in C^\infty(Z; S^0_\ell \oplus S^1_\ell)$ is a sum of elements of the form $c(w_i) \xi^{i+}$ for some uniquely defined $w_i\in C^\infty( Z ; \text{Cl}(N^*))$ and some $\xi^{i+} \in C^\infty(Z; S^{i+}_\ell),\, i =0,1$. We define the connection 
\[
\bar\nabla_v (c(w_i) \xi^{i+}):=  c(\nabla^{N^*}_v w_i) \xi^{i+} + ( c(w_i) \circ  P_\ell^{i+})(\nabla^{E^i\vert_Z}_v \xi^{i+}),
\]
for every $i=0,1$. When $\xi \in C^\infty(Z; (S^i\vert_Z)^\perp)$, we define
\[
\bar\nabla_v \xi =  (1_{E^i\vert_Z}- P^i)(\nabla^{E^i\vert_Z}_v\xi).
\]
\end{defn}

The connection $\bar\nabla$ satisfies the following basic properties:
\begin{prop}
 \label{prop:basic_restriction_connection_properties}
\begin{enumerate}
\item By definition, $\bar\nabla$ preserves the space of sections of the bundles $S^i\vert_Z,\ (S^i\vert_Z)^\perp$ and $S^i_\ell,\ S^{i+}_\ell$, for every $\ell$ and every $i=0,1$. Moreover it is compatible with the metrics of $E^i\vert_Z, \ i=0,1$ and reduces, by definition, to a sum of connections, each of which is associated to each summand of the decomposition of $S^i\vert_Z$ into eigenbundles $S^i_{\ell k}$ introduced in decompositions \eqref{eq:eigenspaces_of_Q_i} and \eqref{eq:decompositions_of_S_ell}. .

\item
Let $\xi\in C^\infty(Z; S^i\vert_Z)$. Then $\bar\nabla$ satisfies,
\begin{equation}
\label{prop:basic_restriction_connection_properties2}
[\bar\nabla,  c(w)] \xi = \begin{cases} c( \nabla^{N^*} w) \xi,& \quad \text{if $w\in C^\infty(Z; N^*)$,}
\\ 
 c(\nabla^{T^*Z}w)\xi,& \quad \text{if $w\in C^\infty(Z; T^*Z)$.}
\end{cases}
\end{equation}
When $\xi\in C^\infty(Z; (S^i\vert_Z)^\perp)$ and $w\in C^\infty(Z; T^*\mathcal{N}\vert_Z)$, then
\begin{equation}
\label{prop:basic_restriction_connection_properties1}
[\bar\nabla,   c(w)] \xi = c(\nabla^{T^*\mathcal{N}\vert_Z} w) \xi.
\end{equation}
\end{enumerate}
\end{prop}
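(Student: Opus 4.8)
The plan is to verify Proposition~\ref{prop:basic_restriction_connection_properties} directly from Definition~\ref{eq:connection_bar_nabla}, treating the two items separately and, within each, splitting into the two cases according to whether the section lies in $S^i\vert_Z$ (decomposed further via \eqref{eq:decompositions}) or in its orthogonal complement $(S^i\vert_Z)^\perp$. For item~(1), the fact that $\bar\nabla$ preserves sections of $S^i\vert_Z$, of $(S^i\vert_Z)^\perp$, and of each sub-bundle $S^i_\ell$, $S^{i+}_\ell$, $S^i_{\ell k}$, is essentially immediate from the defining formula: each term $c(\nabla^{N^*}_v w_i)\xi^{i+}$ and $(c(w_i)\circ P^{i+}_\ell)(\nabla^{E^i\vert_Z}_v\xi^{i+})$ stays inside $S^i_\ell$ because $c$ maps $\text{Cl}(N^*)\otimes S^{i+}_\ell$ isomorphically onto $S^i_\ell$ by Proposition~\ref{prop:properties_of_compatible_subspaces}, and the grading by $k$ is respected because $\nabla^{N^*}$ preserves form-degree under $\Lambda^*N^*\simeq\text{Cl}(N^*)$ while $P^{i+}_\ell$ lands in the $k=0$ summand; on the complement the formula $\bar\nabla_v\xi=(1-P^i)(\nabla^{E^i\vert_Z}_v\xi)$ manifestly preserves $(S^i\vert_Z)^\perp$. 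Metric compatibility follows because $\nabla^{N^*}$, $\nabla^{E^i\vert_Z}$ are metric connections, the isomorphisms $c_\ell$ in \eqref{eq:decompositions} are isometric, and orthogonal projection is self-adjoint; so $\bar\nabla$ is an orthogonal direct sum of metric connections, one per summand $S^i_{\ell k}$.

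For item~(2), I would compute the commutator $[\bar\nabla_v, c(w)]\xi = \bar\nabla_v(c(w)\xi) - c(w)\bar\nabla_v\xi$ by plugging in the definition. Take first $\xi\in C^\infty(Z;S^i_\ell)$, which by \eqref{eq:decompositions} we may write as a sum of terms $c(w_i)\xi^{i+}$ with $\xi^{i+}\in C^\infty(Z;S^{i+}_\ell)$ and $w_i\in C^\infty(Z;\text{Cl}(N^*))$; by linearity it suffices to treat one such term. If $w\in C^\infty(Z;N^*)$, then $c(w)c(w_i) = c(w\wedge w_i \mp \ \text{lower})$ is again of the form $c(\tilde w_i)$ with $\tilde w_i\in\text{Cl}(N^*)$, so $\bar\nabla_v(c(w)c(w_i)\xi^{i+}) = c(\nabla^{N^*}_v(w w_i))\xi^{i+} + (c(w w_i)\circ P^{i+}_\ell)(\nabla^{E^i\vert_Z}_v\xi^{i+})$; expanding $\nabla^{N^*}_v(w w_i) = (\nabla^{N^*}_v w)w_i + w(\nabla^{N^*}_v w_i)$ via the Leibniz rule for the Clifford-module connection, and subtracting $c(w)\bar\nabla_v(c(w_i)\xi^{i+}) = c(w)c(\nabla^{N^*}_v w_i)\xi^{i+} + c(w)(c(w_i)\circ P^{i+}_\ell)(\nabla^{E^i\vert_Z}_v\xi^{i+})$, the $w_i$-derivative terms and the horizontal-derivative terms cancel, leaving exactly $c(\nabla^{N^*}_v w)(c(w_i)\xi^{i+}) = c(\nabla^{N^*}_v w)\xi$, which is the first line of \eqref{prop:basic_restriction_connection_properties2}. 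If instead $w\in C^\infty(Z;T^*Z)$, then $c(w)$ is odd but anticommutes with every $c(e^\alpha)$, $e^\alpha\in N^*$, so one verifies that $c(w)$ maps the summand $c(w_i)\xi^{i+}$ to $c(w_i)(c(w)\xi^{i+})$ up to a sign absorbed by reordering, and $c(w)\xi^{i+}\in C^\infty(Z;S^{(1-i)+}_\ell)$ since $S^{\pm}_\ell$ is a $\text{Cl}(T^*Z)$-module; then $\bar\nabla_v(c(w_i)c(w)\xi^{i+}) = c(\nabla^{N^*}_v w_i)(c(w)\xi^{i+}) + (c(w_i)\circ P^{(1-i)+}_\ell)(\nabla^{E\vert_Z}_v(c(w)\xi^{i+}))$, and using that $\nabla^{E\vert_Z}$ is a Clifford connection so $\nabla^{E\vert_Z}_v(c(w)\xi^{i+}) = c(\nabla^{T^*Z}_v w)\xi^{i+} + c(w)\nabla^{E\vert_Z}_v\xi^{i+}$ together with the fact that $P^{(1-i)+}_\ell\circ c(w) = c(w)\circ P^{i+}_\ell$ on the relevant summand (since $c(w)$ intertwines the $C^i$-eigenspace decompositions), the subtraction of $c(w)\bar\nabla_v(c(w_i)\xi^{i+})$ leaves exactly $c(\nabla^{T^*Z}_v w)\xi$. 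For the complement case $\xi\in C^\infty(Z;(S^i\vert_Z)^\perp)$ with $w\in C^\infty(Z;T^*\mathcal{N}\vert_Z)$, one uses $c(w)$ maps $(S^i\vert_Z)^\perp$ to $(S^{1-i}\vert_Z)^\perp$ by \eqref{eq:spin_action_on_S}, so $\bar\nabla_v(c(w)\xi) = (1-P^{1-i})\nabla^{E\vert_Z}_v(c(w)\xi) = (1-P^{1-i})(c(\nabla^{T^*\mathcal{N}\vert_Z}_v w)\xi + c(w)\nabla^{E\vert_Z}_v\xi)$, and since $(1-P^{1-i})c(w) = c(w)(1-P^i)$, this equals $c(\nabla^{T^*\mathcal{N}\vert_Z}_v w)\xi + c(w)\bar\nabla_v\xi$ up to checking that $(1-P^{1-i})c(\nabla_v w)\xi = c(\nabla_v w)\xi$, which holds because $\nabla^{T^*\mathcal{N}\vert_Z}_v w\in T^*\mathcal{N}\vert_Z$ still sends the complement into the complement.

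The main obstacle I anticipate is bookkeeping rather than conceptual: carefully tracking the signs produced when commuting $c(w)$ past $c(w_i)$ (depending on the parity of the string underlying $w_i$ and on whether $w\in N^*$ or $w\in T^*Z$), and verifying the intertwining identities $P^{i+}_\ell\circ c(w) = c(w)\circ P^{(1-i)+}_\ell$ and $(1-P^i)\circ c(w) = c(w)\circ (1-P^{1-i})$ that make the projection terms align. These intertwining facts are exactly the content that $c(w)$ preserves the $C^i$-eigenspace structure and the $S^i\vert_Z$-versus-complement splitting, which follow from \eqref{eq:spin_action_on_S}, from Proposition~\ref{prop:properties_of_compatible_subspaces}, and from the definition \eqref{eq:IntroDefCp} of $C^i$ together with $C^i$ commuting with Clifford multiplication by $T^*Z$; I would isolate them as a short preliminary observation before doing the commutator computation, so that the rest reduces to Leibniz-rule expansions and cancellation.
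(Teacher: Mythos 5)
Your argument is structurally the same as the paper's (reduce to the decomposition \eqref{eq:decompositions}, compute commutators using the Leibniz rule and the Clifford compatibility of the ambient connection, and let the projections $P^{i+}_\ell$ clean up the projection terms), and the $N^*$-case and the $(S^i\vert_Z)^\perp$-case are handled correctly. But in the $T^*Z$-case there is a genuine gap: you assert
\[
\nabla^{E\vert_Z}_v(c(w)\xi^{i+}) = c(\nabla^{T^*Z}_v w)\xi^{i+} + c(w)\nabla^{E\vert_Z}_v\xi^{i+},
\qquad w\in C^\infty(Z;T^*Z),
\]
and this is false. The Clifford compatibility of $\nabla^{E\vert_Z}$ is with the full ambient cotangent bundle, so the correct identity is $[\nabla^{E\vert_Z}_v, c(w)]\xi^{i+} = c(\nabla^{T^*\mathcal{N}\vert_Z}_v w)\xi^{i+}$, and for $v\in TZ$, $w\in T^*Z$ the covector $\nabla^{T^*\mathcal{N}\vert_Z}_v w$ has a nonzero normal component $p_{N^*}\nabla^{T^*\mathcal{N}\vert_Z}_v w$ coming from the second fundamental form of $Z\subset X$. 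You drop this term without comment, and your list of ``intertwining identities to isolate'' does not include anything that would account for it.

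The point --- which is the whole reason $P^{i+}_\ell$ appears in Definition~\ref{eq:connection_bar_nabla} --- is that $c(p_{N^*}\nabla^{T^*\mathcal{N}\vert_Z}_v w)\xi^{i+}$ is Clifford multiplication by an element of $N^*$ on a section of $S^{i+}_\ell$, which by Proposition~\ref{prop:properties_of_compatible_subspaces} lands in the eigenspace $S^{1-i}_{\ell\,1}$ of $C^{1-i}$ with eigenvalue $(n-m-2)\lambda_\ell$, pointwise orthogonal to $S^{(1-i)+}_\ell$; hence the projection $P^{(1-i)+}_\ell$ in the formula for $\bar\nabla$ annihilates it, leaving exactly the $c(\nabla^{T^*Z}_v w)$ contribution. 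The paper's proof makes exactly this observation (``the term $p_{N^*}\nabla^{T^*\mathcal{N}}w_\cdot\xi$ is a section pointwise orthogonal to $S^+_\ell$''), and it is the conceptual crux of the $T^*Z$-case, not mere bookkeeping: $\bar\nabla$ is not Clifford compatible with $T^*X\vert_Z$, only with $T^*Z$, and it is the projection that effects this downgrade. You should state this explicitly rather than writing a Leibniz formula that only holds after the projection has already been applied.
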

The proof is provided in Appendix subsection \ref{subApp:The_expansion_of_the_Spin_connection_along_Z}

Finally, we consider the difference
\begin{equation}
\label{eq:remainder_term}
B^i:  TZ \to \mathrm{End}(E^i\vert_Z),\quad v \mapsto \nabla_v - \bar\nabla_v, 
\end{equation} 
for every $i=0,1$. By the definition of $\bar \nabla$, it follows that
\begin{equation}
\label{eq:properties_of_remainder_term} 
 B_v^i(S^{i+}_\ell) \perp S^{i+}_\ell \qquad \text{and} \qquad    B_v^i ((S^i\vert_Z)^\perp) \subset S^i\vert_Z,
\end{equation}
for every $\ell$ and every $i=0,1$. By the Proposition \ref{prop:basic_restriction_connection_properties}, $B_v^i$ is a skew-symmetric bundle map satisfying
\[
 B_v^i( w_\cdot \xi) = (p_Z\nabla^{T^*X\vert_Z}_v w)_\cdot \xi +  w_\cdot B_v^{1-i} \xi,
\]
for every $v\in TZ,\ i=0,1$ and every $w\in C^\infty(Z; N^*)$ and $\xi\in  C^\infty(Z; S^i\vert_Z)$, where $p_Z:T^*X\vert_Z \to T^*Z$ is the orthogonal projection. The bundle maps $B^i,\ i=0,1$ appear in the definition of the terms ${\mathcal B}^i$ in the expansions  of Lemma~\ref{lem:Dtaylorexp} which will be the main objective of the following section.

\medskip

\vspace{1cm}

\setcounter{equation}{0}
\section{Structure of \texorpdfstring{$D + s{\mathcal A}$}{} along the normal fibers} 
\label{sec:structure_of_D_sA_along_the_normal_fibers}

Recall that $Z\subset X$ is an $m$-dimensional submanifold with normal bundle $\pi:\mathbb{R}^{n-m} \hookrightarrow N\to Z$.  We study the perturbative behavior of the operator  $\tau^{-1}\circ\tilde D_s : C^\infty(\mathcal{N}; \pi^*(E^0\vert_Z)) \to C^\infty(\mathcal{N}; \pi^*(E^1\vert_Z))$. The expansions will be carried out for the diffeomorphic copies $g = \exp^* g_X$, connection $\tilde \nabla$,  volume form $d\mathrm{vol}^N$ and Clifford structure $\tilde c = {\mathcal I}^{-1} \circ c \circ (I \otimes {\mathcal I})$ and $\tilde {\mathcal A}$ as defined in Definition~\ref{eq:tilde_D}. Throughout the section, we use bundle coordinates $(N\vert_U, (x_j, x_\alpha)_{j,\alpha})$ of the total space $N$ that are restricted to the tubular neighborhood $\mathcal{N}_\varepsilon = \mathcal{N}$ and frames $\{\sigma_\ell\}_\ell$ of $E^0\vert_U$ and $\{f_k\}_k$ of $E^1\vert_U$ that were introduced in Appendix~\ref{App:Taylor_Expansions_in_Fermi_coordinates}, obeying relations \eqref{eq:Clifford_connection_one_form_local_representations_in_on_frames} and \eqref{eq:comparing_tilde_nabla_orthonormal_to_bar_nabla_orthonormal}.

Assumption~\ref{Assumption:normal_rates} guarantees that the singular behavior of the perturbation term $\tilde {\mathcal A}$  operator $ \tau^{-1} \circ\tilde D_s$ becomes regular after rescaling $\{w_\alpha = \sqrt{s} x_\alpha\}_\alpha$. Recall the decomposition $TN = {\mathcal V} \oplus {\mathcal H}$ into vertical and horizontal distributions introduced in Appendix~\ref{subApp:The_total_space_of_the_normal_bundle_N_to_Z}. The terms in the perturbation series of $\tilde D$ involving  derivation fields in the normal distribution will re-scale to order $O(\sqrt{s})$, after the re-scaling is applied, introducing operator $\slashed D_0$. Since the metric in the fibers of $N$ becomes Euclidean in the blow up, this is a Euclidean Dirac operator in the normal directions. The terms involving derivation fields in the horizontal distribution will contain the fields of order $O(1)$ introducing the horizontal operator $\bar D^Z$. Each of these differential operators is originally defined to act on sections of the bundles $\pi^*(E\vert_Z)\to \mathcal{N}$ of the tubular neighborhood $ \mathcal{N}\subset N$. However these operators make sense on sections of the bundles $\pi^*(E\vert_Z)\to N$ over the total space of the normal bundle $\pi : N \to  Z$. Recall the operators $\mathfrak{c}_N,\ \nabla^{\mathcal V}$ and $\nabla^{\mathcal H}$ introduced in Appendix from subsections~ \ref{subApp:tau_j_tau_a_frames} to \ref{subApp:The_pullback_bundle_E_Z_bar_nabla_E_Z_c_Z}. We have the following definitions:  

\begin{defn}
\label{defn:vrtical_horizontal_Dirac}

 We define $\slashed D_0$ by composing 
\begin{equation*}
 \xymatrix{
 C^\infty(N; \pi^*(E^0\vert_Z)) \ar[r]^-{\bar\nabla^{\mathcal V}} & C^\infty(N; {\mathcal V}^* \otimes \pi^*(E^0\vert_Z)) \ar[r]^-{\mathfrak{c}_N} & C^\infty(N; \pi^*(E^1\vert_Z)),
}
\end{equation*}
and $\bar D^Z$ by composing 
\begin{equation*}
 \xymatrix{
 C^\infty(N; \pi^*(E^0\vert_Z)) \ar[r]^-{\bar\nabla^{\mathcal H}} & C^\infty(N; {\mathcal H}^* \otimes \pi^*(E^0\vert_Z)) \ar[r]^-{\mathfrak{c}_N} & C^\infty(N; \pi^*(E^1\vert_Z)).
}
\end{equation*}
We restrict $\slashed D_0$ to sections of the sub-bundles $\pi^*(S^i\vert_Z)$ of $\pi^*(E^i\vert_Z),\, i =0,1$ and recall the term $\bar A_r$ introduced in \eqref{eq:1st_jet_2nd_jet_of_A}. Define  
\[
\slashed{D}_s  : C^\infty(N; \pi^*(S^0\vert_Z)) \rightarrow C^\infty(N; \pi^*(S^1\vert_Z)), \quad \xi\mapsto \slashed D_0\xi + s r \bar A_r\xi, 
\]
for every $s\in \mathbb{R}$.
\end{defn}

\begin{rem}
\label{rem:properties_of_horizontal_and_vertical_operators}
 Given a section $\xi: N \to \pi^*(E\vert_Z)$, we use the same letter $\xi=(\xi_1,\dots, \xi_d)$ to denote is coordinates with respect to the frames $\{\sigma_\ell\}_\ell,\ \{f_k\}_k$. Also recall the lifts $\{h_A = H e_A\}_{A=j,\alpha}$ with their coframes $\{h^A = (H^*)^{-1}(e^A) \}_{A=j,\alpha}$ and the matrix expressions $c^A$ of $\mathfrak{c}_N(h^A)$ with respect to these frames introduced in Appendix from subsections~ \ref{subApp:tau_j_tau_a_frames} to \ref{subApp:The_pullback_bundle_E_Z_bar_nabla_E_Z_c_Z}:

\begin{enumerate}
\item  
\label{rem:local_symbol_expressions}
Let $v = v_j h^j + v_\alpha h^\alpha\in T^*N$. The symbol maps are described in local coordinates by
\begin{align*}
\sigma_{\slashed D_0}: T^*N\otimes \pi^*(E^0\vert_Z) &\to \pi^* (E^1\vert_Z), \quad v\otimes \xi \mapsto v_\alpha c^\alpha \xi,
 \\
\sigma_{\bar D^Z}: T^*N\otimes \pi^*(E^0\vert_Z) &\to \pi^* (E^1\vert_Z), \quad v\otimes \xi \mapsto (v_j -  v_\alpha x_\beta \bar\omega_{j\beta}^\alpha) c^j \xi. 
\end{align*}

\item \label{rem:local_expression_of_slashed_D_0} For fixed $z\in Z$, the operator $\slashed D_0:  C^\infty(N_z; E_z^0) \to C^\infty(N_z; E^1_z)$ is a Euclidean Dirac operator because its expression in local coordinates is 
\[
\mathfrak{c}_N(h^\alpha) \bar\nabla_{h_\alpha} = c^\alpha (h_\alpha \xi) = c^\alpha \partial_\alpha \xi. 
\]
Recalling that $ r \bar A_r= x_a \bar A_\alpha = c^\alpha M^0_\alpha$, we have the local expression
\[
\slashed D_s \xi = c^a( h_\alpha + s x_\alpha M_\alpha^0) \xi.
\]

\item \label{rem:local_expression_of_D_Z} By using \eqref{eq:pullback_bar_connection_components}, the operator $\bar D^Z$ has local expression 
\[
\bar D^Z \xi = \mathfrak{c}_N(h^j) \bar\nabla_{h_j}\xi = c^j( h_j + \phi^0_j)\xi.
\]

\item \label{rem:respectful_eigenspaces} Recall the eigenbundles $S^i_{\ell k} \to Z$ of $C^i,\, i =0,1$ in Definition~\ref{defn:IntroDefSp}. Recall also the construction of the covariant derivative $ \bar\nabla_{h_j}$ in Appendix~\ref{subApp:The_pullback_bundle_E_Z_bar_nabla_E_Z_c_Z}  By Proposition~\ref{prop:basic_extension_bar_connection_properties} \eqref{prop:basic_extension_bar_connection_properties3}, the operators $\slashed D_s$ and $\bar D^Z$ decompose into blocks with each block being a differential operator on sections $C^\infty(N; \pi^*S^0_{\ell k}) \to C^\infty(N; \pi^*S^1_{\ell k})$, for every $\ell$ and every $k=0,\dots, n-m$. 
\end{enumerate}
\end{rem}

The following expansions are proven in \cite{bl}[Theorem 8.18, p. 93]. We include the proof for completeness:
 
\begin{lemma}
\label{lem:Dtaylorexp}
Let $\eta = \tau \xi$ where $\xi : \mathcal{N}_U\rightarrow \pi^*(E^i\vert_Z)$. At the fibers of $\mathcal{N}_U \to U$, we have the expansions
\[
\tau^{-1} \tilde D \eta = \slashed D_0 \xi + \bar D^Z \xi + {\mathcal B}^0\xi + O(r^2 \partial^{\mathcal V} + r \partial^{\mathcal H} + r)\xi,
\]
where $r$ is the distance function from the core set.
\end{lemma}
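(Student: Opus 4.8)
The statement is a Taylor expansion of the operator $\tau^{-1}\tilde D$ along the normal directions to $Z$, and the natural strategy is to combine three inputs that have already been set up: (i) the Taylor expansion of the metric $g=\exp^*g_X$ in Fermi coordinates (Appendix), which controls the symbol $\tilde c$ and the volume form; (ii) the Taylor expansion of the Clifford connection $\tilde\nabla^{\tilde E}$ along $Z$, written in terms of the adapted connection $\bar\nabla$ plus the remainder $B^i$ of \eqref{eq:remainder_term}; and (iii) the conjugation by the parallel transport map $\tau$ from Appendix~\ref{App:Taylor_Expansions_in_Fermi_coordinates}, whose derivative contributes the connection coefficients $\phi^0_j$ appearing in Remark~\ref{rem:properties_of_horizontal_and_vertical_operators}\eqref{rem:local_expression_of_D_Z}. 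So the plan is: write $\tilde D=\tilde c\circ\tilde\nabla$ in the bundle coordinates $(N\vert_U,(x_j,x_\alpha))$ with the frames $\{\sigma_\ell\},\{f_k\}$, conjugate by $\tau$, and then group the resulting terms by their vanishing order in $r$ and by whether the derivative field is vertical ($\partial_\alpha$) or horizontal ($h_j$).

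First I would expand $\tilde D\eta = \tilde c(e^A)\,\tilde\nabla_{\tilde e_A}\eta$ over the frame $\{\tilde e_A\}=\{\tilde e_j,\tilde e_\alpha\}$ dual-adapted to the Fermi chart. The symbol matrices $\tilde c(e^A)$ equal the constant Clifford matrices $c^A$ at $r=0$ with an $O(r)$ (indeed $O(r^2)$ in the vertical block, by the structure of $g$ near the zero section) correction; this is exactly the content of the Appendix metric expansions. Next, conjugating by $\tau$ turns $\tilde\nabla_{\tilde e_A}$ into $h_A + (\text{connection 1-form terms})$: on the vertical fields this is $\partial_\alpha$ up to $O(r)$ terms, giving the operator $\slashed D_0\xi=c^\alpha\partial_\alpha\xi$ of Remark~\ref{rem:properties_of_horizontal_and_vertical_operators}\eqref{rem:local_expression_of_slashed_D_0}; on the horizontal fields the connection one-form splits, by the expansion of the Clifford connection along $Z$ (Appendix subsection~\ref{subApp:The_expansion_of_the_Spin_connection_along_Z} and the definition of $\bar\nabla$, $B^i$ in Section~\ref{Sec:structure_of_A_near_the_singular_set}), into the adapted horizontal coefficient $\phi^0_j$ — giving $\bar D^Z\xi=c^j(h_j+\phi^0_j)\xi$ — plus the zeroth-order term built from $B^i\vert_Z$, which by definition is ${\mathcal B}^0$, plus terms that vanish to first order in $r$. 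Everything left over is bundled into the remainder $O(r^2\partial^{\mathcal V}+r\partial^{\mathcal H}+r)\xi$, where the weights come from: vertical symbol corrections are $O(r^2)$ (the fiber metric is Euclidean to second order), horizontal symbol and connection corrections are $O(r)$, and the zeroth-order connection curvature-type terms are $O(r)$.

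The bookkeeping is mostly a matter of organizing the Appendix expansions, so the main obstacle is pinning down the precise $r$-orders of each correction — in particular verifying that the vertical block of the symbol and of $\tau$-conjugation really contributes only $O(r^2)$ against $\partial^{\mathcal V}$ (so that after the rescaling $w_\alpha=\sqrt{s}\,x_\alpha$ these terms remain controlled, which is the whole point of the expansion) and that no $O(r^0)$ term other than ${\mathcal B}^0$ survives. This rests on: the vanishing of the zero-order Christoffel-type terms of $g$ at $r=0$, the fact (from Section~\ref{Sec:structure_of_A_near_the_singular_set}) that $\bar\nabla$ absorbs exactly the part of $\nabla^{E^i\vert_Z}$ compatible with the decomposition \eqref{eq:decompositions_of_S_ell}, and Proposition~\ref{prop:basic_restriction_connection_properties} relating $[\bar\nabla,c(w)]$ to $\nabla^{N^*}w$ and $\nabla^{T^*Z}w$, which is what guarantees the leftover zero-order term is precisely the ${\mathcal B}^0$ defined via $B^i$. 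Finally, since the claimed identity is cited from \cite{bl}[Theorem 8.18], I would simply re-derive it in the present notation by matching these three groups of terms and collecting the rest into the stated remainder; no new analytic input beyond the Appendix expansions is needed.
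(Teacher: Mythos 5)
Your plan has the right ingredients and arrives at the correct decomposition, but it diverges from the paper on one structural point: how the symbol is handled. The paper does \emph{not} expand the Clifford matrices in a Fermi-chart frame and track symbol corrections. Instead it writes $\tilde D\eta = \tau^\alpha_\cdot\tilde\nabla_{\tau_\alpha}\eta + \tau^j_\cdot\tilde\nabla_{\tau_j}\eta$ in the orthonormal frame $\{\tau_A\}$ obtained by radial $\tilde\nabla$-parallel transport, and then observes that, since the Clifford multiplication is $\tilde\nabla$-parallel and both $\tau^A$ and $\sigma_k$ are radial parallel transports of their restrictions to $Z$, one has $\tau^\alpha_\cdot\sigma_k = \tau(c^\alpha\sigma_k)$ exactly. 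After conjugating by $\tau$ the symbol is literally the constant matrix $c^A$ with \emph{no} $r$-dependent error at all; every remainder term comes from (i) the field expansion $\tau_A = h_A + O(r^2\partial^{\mathcal V} + r\partial^{\mathcal H})$ of \eqref{eq:on_frames_expansions}, and (ii) the connection one-form $\theta^0_A$, via $\bar\theta^0_\alpha = 0$ (so $\theta^0_\alpha = O(r)$) and $\bar\theta^0_j = \phi^0_j + B^0_j$ from \eqref{eq:comparing_tilde_nabla_orthonormal_to_bar_nabla_orthonormal}. The term ${\mathcal B}^0 = c^j B^0_j$ is then the only surviving zeroth-order piece, exactly as you anticipated.

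Your route instead expands $\tilde D$ over a Fermi-chart-adapted frame and assigns $O(r)$ (and $O(r^2)$ in the vertical block) corrections to the Clifford matrices, citing the metric expansion. This is precisely what you flag as the ``main obstacle'', and with reason: from \eqref{eq:Fermi_Riemannian_metric} one has $g_{j\alpha}=O(r)$, so an orthonormal coframe built from the coordinate coframe mixes $dx^\alpha$ and $dx^j$ at order $O(r)$, and the vertical block of the symbol correction is not cleanly $O(r^2)$ without further argument. That makes it harder to ensure those terms stay under control after the rescaling $w_\alpha=\sqrt{s}\,x_\alpha$. Both routes should ultimately close, but yours requires a sharper analysis of the coframe than you have sketched; the paper's choice of the parallel-transported frame is exactly the move that makes that bookkeeping vanish.
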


\begin{proof}
We use Hitchin's dot notation $\tilde c(v) = v_\cdot$ throughout the proof. By linearity, it suffices to work with $\eta=\tau\xi = f \sigma_k$, for some $f:\mathcal{N}_U \to \mathbb{R}$ and use the expression of $\tilde D$ in local frames,
\[
\tilde D \eta = \tau^\alpha_\cdot \tilde \nabla_{\tau_\alpha}  \eta + \tau^j_\cdot \tilde\nabla_{\tau_j} \eta.
\] 
Since the Clifford multiplication is $\tilde\nabla^\mathcal{N}$-parallel, we have
\[
\tau^\alpha_\cdot \sigma_k = \tau( c^\alpha \sigma_k) \quad \text{and}\quad \tau^j_\cdot \sigma_k = \tau ( c^j \sigma_k). 
\]
By \eqref{eq:on_frames_expansions},
\[
\tau_\alpha =  h_\alpha +  O(r^2\partial^{\mathcal V} + r\partial^{\mathcal H}),
\]
and by \eqref{eq:Clifford_connection_one_form_local_representations_in_on_frames},
\begin{align*}
\tilde\nabla_{\tau_\alpha} \eta &= (\tau_\alpha f) \sigma_k + f\theta_\alpha^0 \sigma_k
\\
&=(h_\alpha f) \sigma_k + O( (r^2\partial^{\mathcal V} + r\partial^{\mathcal H}+ r) (f \sigma_k).
\end{align*}
 
Hence, for the normal part, we estimate
\begin{align*}
\tau^\alpha_{\cdot}\tilde \nabla_{\tau_\alpha}\eta &=  ( h_\alpha  f) \tau^\alpha_\cdot \sigma_k \, +\, + O( (r^2\partial^{\mathcal V} + r\partial^{\mathcal H}+ r) (f \sigma_k) 
\\ 
&=  (h_\alpha f) \tau  c^\alpha \sigma_k +\, O(r^2\partial^{\mathcal V} + r\partial^{\mathcal H} + r) (f \sigma_k)
\\
&= \tau \left[ \slashed D_0 \xi +\,  O(r^2\partial^{\mathcal V} + r\partial^{\mathcal H} + r)\xi \right],
\end{align*}
where the last equality follows by Remark~\ref{rem:properties_of_horizontal_and_vertical_operators} \ref{rem:local_expression_of_slashed_D_0}.

By  \eqref{eq:on_frames_expansions},
\[
\tau_j = h_j +  O(r^2\partial^{\mathcal V} + r\partial^{\mathcal H}),
\]
and by \eqref{eq:Clifford_connection_one_form_local_representations_in_on_frames} and \eqref{eq:comparing_tilde_nabla_orthonormal_to_bar_nabla_orthonormal}, the connection on the horizontal directions expands as
\begin{align*}
\tilde \nabla_{\tau_j}\eta &= (\tau_j f) \sigma_k + f\theta_j^0 \sigma_k 
\\
&= (h_j f) \sigma_k + f{(\phi_j^0 + B_j^0)}_k^l\sigma_l  +   O(r^2\partial^{\mathcal V} + r\partial^{\mathcal H}+ r)(f\sigma_k),
\end{align*}
where the term $B_j^0 = B_{jk}^{0l} \sigma^k\otimes \sigma_l$ is introduced in \eqref{eq:remainder_term}.
It follows that
\begin{align*}
\tau^j_{\cdot} \tilde \nabla_{\tau_j}\eta &=  (h_j f) \tau^j_\cdot\sigma_k + f{(\phi_j^0 + B_j^0)}_k^l \tau^j_\cdot\sigma_l  +   O(r^2\partial^{\mathcal V} + r\partial^{\mathcal H}+ r)(f\sigma_k) 
\\
 &=  \tau[  (h_j f)  c^j\sigma_k +  f{(\phi_j^0 + B_j^0)}_k^l c^j \sigma_l + O(r^2\partial^{\mathcal V} + r\partial^{\mathcal H}+ r)(f\sigma_k)],
\end{align*}
that is, by  using Remark~\ref{rem:properties_of_horizontal_and_vertical_operators} \ref{rem:local_expression_of_D_Z},
\begin{align*}
\tau^j_{\cdot}\tilde\nabla_{\tau_j}\eta = \tau \left[ \bar D^Z\xi\,+  {\mathcal B}^0\xi  +\,  O(r^2 \partial^{\mathcal V} + r\partial^{\mathcal H} + r)\xi \right].
\end{align*}
Here ${\mathcal B}^0:= c^j B_j^0\in \mathrm{Hom}(E^0\vert_Z; E^1\vert_Z)$. Adding up the two preceding expressions and using the local expressions for $\slashed D_0$ and $\bar D^Z$ in Remark~\ref{rem:properties_of_horizontal_and_vertical_operators}, we obtain the required expansion.
\end{proof}
 
In Proposition~\ref{prop:Weitzenbock_identities_and_cross_terms}, the properties of these vertical and horizontal operators are presented. The horizontal operator satisfies a Weitzenboock formula. This is a naturally occurring formula if one introduces the metric $g^{TN}$ and the Clifford action $\mathfrak{c}_N$ on $TN$. The connections $\bar\nabla^{\pi^*(E\vert_Z)}$ and $\bar\nabla^{TN}$  introduced in Appendix subsection~\ref{subApp:The_total_space_of_the_normal_bundle_N_to_Z}  become compatible with the Riemannian metric and the Clifford multiplication per Proposition~\ref{prop:basic_extension_bar_connection_properties}.  

The following proposition calculates in local frames, the formal adjoints of  $\slashed D_s$ and $\bar D^Z$ on the total space $(N, g^{TN}, d\mathrm{vol}^N)$. 

\begin{proposition}
\label{prop:cross_adjoint}
The formal adjoints of the operators $\slashed D_s$ and $\bar D^Z$ with respect to the metric on $\pi^*(S^0\vert_Z)$ and volume form $d\mathrm{vol}^N$ are computed in local coordinates by  
\begin{equation}
\label{eq:local_expressions_for_formal_adjoints}
\slashed D^*_s = c^\alpha (\partial_\alpha + sx_\alpha M^1_a) \qquad \text{and}\qquad \bar D^{Z*} = c^j(h_j + \phi_j^1).  
\end{equation}
\end{proposition}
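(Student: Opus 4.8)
The plan is to compute both formal adjoints directly in the bundle coordinates $(N\vert_U,(x_j,x_\alpha))$ and the adapted frames $\{\sigma_\ell\},\{f_k\}$ fixed above, feeding in three elementary inputs: (i) the Clifford matrices $c^A=\mathfrak{c}_N(h^A)$ are skew-adjoint, i.e.\ $(c^\alpha)^*=-c^\alpha$ and $(c^j)^*=-c^j$ as fibrewise maps interchanging $\pi^*(S^0\vert_Z)$ and $\pi^*(S^1\vert_Z)$, and, being pulled back from $Z$, satisfy $\partial_\alpha c^A=0$ (both are part of the construction of $\mathfrak{c}_N$ in Appendix subsections~\ref{subApp:tau_j_tau_a_frames}--\ref{subApp:The_pullback_bundle_E_Z_bar_nabla_E_Z_c_Z}); (ii) the endomorphisms $M^i_\alpha$ are symmetric (Proposition~\ref{prop:spectrum_and_eigenspaces_as_Clifford_submodules}); and (iii) the intertwining identity
\[
c^\alpha M^1_\alpha\;=\;-\,M^0_\alpha\,c^\alpha\qquad(\text{no sum on }\alpha),
\]
which follows from $c(e^\alpha)^2=-1$, the definitions in \eqref{defn:IntroDefM} and the differentiated relation \eqref{eq:dcond} (both sides equal the restriction of $\nabla_{e_\alpha}{\mathcal A}^*$ to $S^1$). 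I will also use that, by the construction of $d\mathrm{vol}^N$ in Appendix subsection~\ref{subApp:The_expansion_of_the_volume_from_along_Z}, $d\mathrm{vol}^N$ restricts to Lebesgue measure on each fibre $N_z$ and equals the Riemannian volume of the model metric $g^{TN}$, hence is $\bar\nabla^{TN}$-parallel.

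For $\slashed{D}^*_s$ I would work fibrewise. On $N_z$ the operator $\slashed{D}_0$ is the Euclidean Dirac operator $c^\alpha\partial_\alpha$ (Remark~\ref{rem:properties_of_horizontal_and_vertical_operators} \eqref{rem:local_expression_of_slashed_D_0}) and $d\mathrm{vol}^N$ is Lebesgue measure, so integration by parts in $x_\alpha$ together with $\partial_\alpha c^\alpha=0$ and $(c^\alpha)^*=-c^\alpha$ yields $\slashed{D}_0^*=c^\alpha\partial_\alpha$. The zeroth-order term $s\,r\bar A_r$, which on $\pi^*(S^0\vert_Z)$ equals $s\,x_\alpha c^\alpha M^0_\alpha$ (same remark), has adjoint $s\,x_\alpha(c^\alpha M^0_\alpha)^*=s\,x_\alpha(M^0_\alpha)^*(c^\alpha)^*=-s\,x_\alpha M^0_\alpha c^\alpha=s\,x_\alpha c^\alpha M^1_\alpha$ by (i)--(iii). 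Adding and integrating over $Z$ gives $\slashed{D}^*_s=c^\alpha(\partial_\alpha+s\,x_\alpha M^1_\alpha)$, the first formula of \eqref{eq:local_expressions_for_formal_adjoints}.

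For $\bar D^{Z*}$ I would regard $\bar D^Z$ as the $\pi^*(S^0\vert_Z)\to\pi^*(S^1\vert_Z)$ block of the full horizontal operator ${\mathcal D}^{\mathcal H}:=\mathfrak{c}_N(h^j)\bar\nabla_{h_j}$ on $\pi^*(E\vert_Z)=\pi^*(E^0\vert_Z)\oplus\pi^*(E^1\vert_Z)$. By Remark~\ref{rem:properties_of_horizontal_and_vertical_operators} \eqref{rem:respectful_eigenspaces}, ${\mathcal D}^{\mathcal H}$ preserves $\pi^*(S^i\vert_Z)$ and its orthogonal complement, so the adjoint of the restriction is the restriction of the adjoint; it therefore suffices to show that ${\mathcal D}^{\mathcal H}$ is formally self-adjoint with respect to $d\mathrm{vol}^N$, after which $\bar D^{Z*}$ is the complementary block $\pi^*(S^1\vert_Z)\to\pi^*(S^0\vert_Z)$ of ${\mathcal D}^{\mathcal H}$, which in the chosen frames is $c^j\bar\nabla_{h_j}=c^j(h_j+\phi^1_j)$. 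By Proposition~\ref{prop:basic_extension_bar_connection_properties} the data $(g^{TN},\mathfrak{c}_N,\bar\nabla^{TN},\bar\nabla^{\pi^*(E\vert_Z)})$ is Clifford-compatible and $\bar\nabla^{TN}$ preserves the splitting $TN={\mathcal H}\oplus{\mathcal V}$; hence the standard formal self-adjointness argument for Dirac operators goes through even though ${\mathcal D}^{\mathcal H}$ sums only over the (parallel) horizontal directions — the integration-by-parts defect equals $\mathfrak{c}_N\bigl(\sum_j[\mathrm{div}_{d\mathrm{vol}^N}(h_j)\,h^j+\bar\nabla^{TN}_{h_j}h^j]\bigr)$, which vanishes once the horizontal part of the torsion trace of $\bar\nabla^{TN}$ is checked to vanish.

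The one genuinely delicate point — as opposed to the purely algebraic inputs (i)--(iii) — is this horizontal integration by parts: since $d\mathrm{vol}^N$ is not the pulled-back Riemannian volume $d\mathrm{vol}^{\mathcal N}$, one must verify that $d\mathrm{vol}^N$ is exactly the volume form matched to the adapted connection $\bar\nabla^{TN}$ and that the relevant torsion trace vanishes, so that the divergences of the lifts $h_j$ are absorbed by the connection terms and $\phi^0_j$ is correctly traded for $\phi^1_j$. This is precisely what the explicit construction of $g^{TN}$, $\bar\nabla^{TN}$ and $d\mathrm{vol}^N$ in Appendix subsection~\ref{subApp:The_total_space_of_the_normal_bundle_N_to_Z} is designed to provide; everything else is routine.
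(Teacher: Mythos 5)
Your proposal is correct and, at its core, does the same work as the paper's proof; the organization is what differs. For $\bar D^{Z*}$ the paper introduces the vector field $Y(v)=\langle\mathfrak{c}_N(v)\xi_1,\xi_2\rangle$, splits $Y=Y_{\mathcal V}+Y_{\mathcal H}$, and applies Stokes' theorem to $\omega^{\mathcal H}=\iota_{Y_{\mathcal H}}d\mathrm{vol}^N$, using the computation ${\mathcal L}_{Y_{\mathcal H}}d\mathrm{vol}^N=h_j(Y_j)\,d\mathrm{vol}^N$ from Proposition~\ref{prop:basic_extension_bar_connection_properties}\eqref{prop:basic_extension_bar_connection_properties4}. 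You instead phrase the same divergence computation as formal self-adjointness of the full odd operator ${\mathcal D}^{\mathcal H}=\mathfrak{c}_N(h^j)\bar\nabla_{h_j}$ on $\pi^*(E\vert_Z)$ and then read $\bar D^{Z*}$ off as the complementary off-diagonal block. These are two names for one integration-by-parts argument; the paper's version is perhaps cleaner because it never has to say what "the defect" is as an endomorphism — it just observes that ${\mathcal L}_{h_j}d\mathrm{vol}^N=0$ over $N_p$. Your phrasing of the defect as involving a "torsion trace" of $\bar\nabla^{TN}$ is imprecise (the torsion of $\bar\nabla^{TN}$ sends ${\mathcal H}\times{\mathcal H}\to{\mathcal V}$, so its horizontal trace vanishes trivially by skew-symmetry; the real input is $\mathrm{div}_{d\mathrm{vol}^N}(h_j)=0$ together with $\bar\nabla^{T^*N}_{h_j}h^j=0$ over $N_p$), but since you explicitly defer the verification to Appendix subsection~\ref{subApp:The_total_space_of_the_normal_bundle_N_to_Z} — which is exactly where the paper proves it — this is a cosmetic rather than substantive issue.

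On the $\slashed D_s^*$ side your treatment is actually more explicit than the paper's: the paper's displayed identity only mentions $\slashed D_0^*$ and leaves the adjoint of the zeroth-order piece $s\,r\bar A_r$ unstated, whereas you carry out the algebra $(c^\alpha M^0_\alpha)^*=-M^0_\alpha c^\alpha=c^\alpha M^1_\alpha$ and correctly attribute it to the symmetry of $M^i_\alpha$, the skew-adjointness of $c^\alpha$, and the differentiated concentration identity \eqref{eq:dcond}. One small caveat worth flagging if you write this up: the "restriction of the adjoint equals adjoint of the restriction" step relies on $\bar\nabla$ preserving the orthogonal splitting $\pi^*(E^i\vert_Z)=\pi^*(S^i\vert_Z)\oplus\pi^*(S^i\vert_Z)^\perp$; that is part of Definition~\ref{eq:connection_bar_nabla} rather than of Remark~\ref{rem:properties_of_horizontal_and_vertical_operators}\eqref{rem:respectful_eigenspaces} (which only addresses the finer decomposition into the $S^i_{\ell k}$), so the citation should be adjusted.
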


\begin{proof}
Let $\xi_i: N \to \pi^*(S^i\vert_Z),\ i=0,1$ smooth sections so that at least one of them is compactly supported. We define vector field $Y\in C^\infty(N; TN)$ by acting on covectors,
\[
Y: T^*N \to \mathbb{R},\ v \mapsto \langle \mathfrak{c}_N(v) \xi_1, \xi_2\rangle, 
\]
and we decompose it into its vertical and horizontal parts $Y = Y_{\mathcal V} + Y_{\mathcal H}$. We then define the $n-1$-forms
\[
\omega^{\mathcal V} = \iota_{Y_{\mathcal V}} d \mathrm{vol}^N \quad \text{and} \quad \omega^{\mathcal H} = \iota_{Y_{\mathcal H}} d \mathrm{vol}^N.
\]
We have the equations,
\begin{align*}
d \omega^{\mathcal V} &= ( \langle \slashed D_0 \xi_1, \xi_2\rangle - \langle \xi_1, \slashed D_0^* \xi_2 \rangle)\, d \mathrm{vol}^N,
\\
d \omega^{\mathcal H} &= ( \langle \bar D^Z \xi_1, \xi_2\rangle - \langle \xi_1, \bar D^{Z*} \xi_2 \rangle)\, d \mathrm{vol}^N.
\end{align*}
We prove the second identity. Calculating over $N_p$, as in the proof of Proposition~\ref{prop:basic_extension_bar_connection_properties}, we have ${\mathcal L}_{Y_{\mathcal H}} d \mathrm{vol}^N = h_j(Y_j)\, d\mathrm{vol}^N$ and 
\[
h_j Y_j  = \langle \mathfrak{c}_N(h^j) \bar\nabla_{h_j} \xi_1, \xi_2\rangle + \langle \mathfrak{c}_N(h^j)  \xi_1, \bar\nabla_{h_j}\xi_2\rangle = \langle \bar D^Z \xi_1, \xi_2\rangle - \langle \xi_1, \bar D^{Z*} \xi_2\rangle.
\]
The first identity is proven analogously. The proof of the proposition follows. 
\end{proof}

Combining Lemma~\ref{lem:Dtaylorexp} with expansion in Appendix~\ref{subApp:The_expansion_of_A_A*A_nabla_A_along_Z} \eqref{eq:jet0} we obtain the expansions for $\tilde D_s$. The same computation and the local expresions given in Proposition~\ref{prop:cross_adjoint} give analogue expansions for $\tilde D_s^*$, the $L^2(\mathcal{N})$ formal adjoint of $\tilde D_s$ with respect to the density function $d\mathrm{vol}^\mathcal{N}$. The expansions are described in the following corollary:

\begin{cor}
\label{cor:taylorexp}
$\tilde D + s\tilde {\mathcal A}$ expands along the normal directions of the singular set $Z$ with respect to $\xi_i \in C^\infty(\mathcal{N}; \pi^*(S^i\vert_Z)),\, i=0,1$, as 
\begin{equation}
\label{eq:taylorexp}
(\tilde D+ s \tilde{\mathcal A})\tau\xi_0 =  \tau \left(\slashed{D}_s+ \bar D^Z + {\mathcal B}^0 + \frac{1}{2} sr^2 \bar A_{rr}\right) \xi_0
+  O( r^2\partial^{\mathcal V} + r\partial^{\mathcal H }+r + sr^3)\tau\xi_0,
\end{equation}
and
\begin{equation*}
(\tilde D^*+ s\tilde {\mathcal A}^*)\tau\xi_1 =  \tau \left(\slashed{D}_s^*+ \bar D^{Z*} + {\mathcal B}^1 + \frac{1}{2} sr^2 \bar A^*_{rr}\right) \xi_1
+  O( r^2\partial^{\mathcal V} + r\partial^{\mathcal H} +r + sr^3)\tau\xi_1,
\end{equation*}
as $r \to 0^+$. When $\xi_i\in C^\infty(\mathcal{N} ; \pi^*(S^i\vert_Z)^\perp)$ then,
\begin{equation}
\label{eq:taylorexp1}
(\tilde D + s \tilde {\mathcal A}) \tau \xi_0 = \tau(\slashed D_0 + \bar D^Z + {\mathcal B}^0) \xi_0 + s\bar{\mathcal A} \tau \xi_0
+ O( r^2\partial^{\mathcal V}+ r\partial^{\mathcal H} +(1+s)r)\tau\xi_0,
\end{equation}
and
\begin{equation*}
(\tilde D + s\tilde {\mathcal A})^* \tau \xi_1 = \tau(\slashed D_0^* + \bar D^{Z*} + {\mathcal B}^1) \xi_1 + s \bar{\mathcal A}^* \tau \xi_1
+O( r^2\partial^{\mathcal V} + r\partial^{\mathcal H} +(1+s)r)\tau\xi_1,
\end{equation*}
as $r\to 0^+$.
\end{cor}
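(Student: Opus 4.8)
The plan is to superpose the Taylor expansion of $\tilde D$ along the normal directions of $Z$ recorded in Lemma~\ref{lem:Dtaylorexp} with the jet expansion of the bundle map $\tilde{\mathcal A}$ supplied by \eqref{eq:jet0} of Appendix~\ref{subApp:The_expansion_of_A_A*A_nabla_A_along_Z}, and then to regroup the resulting terms according to their combined order in the distance $r$ to the core set and in $s$. Write \eqref{eq:jet0} in the form $\tilde{\mathcal A}\tau\xi = \tau\bigl(\bar{\mathcal A} + r\bar A_r + \tfrac12 r^2\bar A_{rr}\bigr)\xi + O(r^3)\tau\xi$, where $\bar{\mathcal A} = {\mathcal A}\vert_Z$ and $\bar A_r,\ \bar A_{rr}$ are the bundle maps over $Z$ introduced in \eqref{eq:1st_jet_2nd_jet_of_A} (pulled back to $\mathcal{N}$). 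Since Lemma~\ref{lem:Dtaylorexp} already records $\tilde D\tau\xi$ in exactly this $\tau$-conjugated form, it remains only to add $s$ times this jet of $\tilde{\mathcal A}$ and collect terms.

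Take $\xi_0 \in C^\infty(\mathcal{N}; \pi^*(S^0\vert_Z))$. Here the zeroth-order coefficient vanishes, $\bar{\mathcal A}\vert_{S^0} = 0$, because $S^0 = \ker{\mathcal A}$ over $Z$; hence $s\tilde{\mathcal A}\tau\xi_0 = \tau\bigl(sr\bar A_r + \tfrac12 sr^2\bar A_{rr}\bigr)\xi_0 + O(sr^3)\tau\xi_0$. Adding this to Lemma~\ref{lem:Dtaylorexp} and recalling from Definition~\ref{defn:vrtical_horizontal_Dirac} that $\slashed{D}_s = \slashed{D}_0 + sr\bar A_r$ on sections of $\pi^*(S^0\vert_Z)$, the linear term is absorbed into $\slashed{D}_0$, the quadratic term is retained, and the cubic remainder merges with the remainder of Lemma~\ref{lem:Dtaylorexp} into $O(r^2\partial^{\mathcal V} + r\partial^{\mathcal H} + r + sr^3)$; this is \eqref{eq:taylorexp}. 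The expansion for $\tilde D_s^*$ follows by the identical computation: by Lemma~\ref{lem:lr} the pair $(c, {\mathcal A}^*)$ is again concentrating, so every structural result of Sections~\ref{Sec:structure_of_A_near_the_singular_set}--\ref{sec:structure_of_D_sA_along_the_normal_fibers} has its counterpart for ${\mathcal A}^*$ — in particular the analogue of Lemma~\ref{lem:Dtaylorexp} and the local formulas $\slashed{D}_s^* = c^\alpha(\partial_\alpha + sx_\alpha M_\alpha^1)$, $\bar D^{Z*} = c^j(h_j + \phi_j^1)$ of Proposition~\ref{prop:cross_adjoint} — and one repeats the above with the starred analogues of $M^0, \bar A_r, \bar A_{rr}, {\mathcal B}^0$. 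The one new point is that the corollary's $\tilde D_s^*$ is the $L^2(\mathcal{N})$-adjoint with respect to $d\mathrm{vol}^\mathcal{N}$, whereas Proposition~\ref{prop:cross_adjoint} computes the adjoint with respect to $d\mathrm{vol}^N$; by \eqref{eq:density_comparison} these differ by a zeroth-order term of order $O(r)$, which is absorbed in the remainder.

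If instead $\xi_0 \in C^\infty(\mathcal{N}; \pi^*(S^0\vert_Z)^\perp)$ the coefficient $\bar{\mathcal A}$ does not vanish, so $s\tilde{\mathcal A}\tau\xi_0 = s\bar{\mathcal A}\tau\xi_0 + O(sr)\tau\xi_0$; now $\slashed{D}_s$ cannot be formed — the linear term of $\tilde{\mathcal A}$ is swallowed by the remainder — so one keeps the block $\tau(\slashed{D}_0 + \bar D^Z + {\mathcal B}^0)\xi_0$ of Lemma~\ref{lem:Dtaylorexp} together with the separate term $s\bar{\mathcal A}\tau\xi_0$, and the two remainders combine to $O(r^2\partial^{\mathcal V} + r\partial^{\mathcal H} + r + sr) = O(r^2\partial^{\mathcal V} + r\partial^{\mathcal H} + (1+s)r)$, which is \eqref{eq:taylorexp1}; the adjoint version is obtained exactly as in the preceding paragraph.

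Once the inputs are assembled the regrouping is purely formal, so the real content is three routine checks: that $\bar{\mathcal A}$, $\bar A_r$, $\bar A_{rr}$ and ${\mathcal B}^0$ are genuinely pullbacks of bundle maps on $Z$, so that conjugation by $\tau$ introduces only errors already of the stated order; that $\bar{\mathcal A}\vert_{S^0} = 0$ and, through Assumption~\ref{Assumption:normal_rates}, that the quadratic piece retained is exactly $\tfrac12 sr^2\bar A_{rr}$; and that the remainder of \eqref{eq:jet0}, the remainder of Lemma~\ref{lem:Dtaylorexp}, and the $d\mathrm{vol}^\mathcal{N}$--$d\mathrm{vol}^N$ discrepancy all fit inside the advertised $O$-symbols. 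The main obstacle is precisely this bookkeeping — tracking the two volume forms, the conjugating map $\tau$, and the mixed orders in $r$ and $s$ simultaneously — rather than any single hard estimate.
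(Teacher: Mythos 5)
Your main argument — superpose Lemma~\ref{lem:Dtaylorexp} with the jet expansion \eqref{eq:jet0}, use $\bar{\mathcal A}\vert_{S^0}=0$ to kill the zeroth-order term, absorb $sr\bar A_r$ into $\slashed D_0$ to form $\slashed D_s$, and keep $\tfrac12 sr^2\bar A_{rr}$ explicit while sweeping cubics into the remainder — is exactly the route the paper takes and is sound.

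The paragraph you flag as the ``one new point'' about the volume forms, however, is off. You cite \eqref{eq:density_comparison}, which only asserts the two densities are comparable up to a factor of $2$; it gives no $r$-expansion. The actual expansion is \eqref{eq:volume-expansion}, $d\mathrm{vol}^\mathcal{N}=(1+\sum_\alpha x_\alpha H_\alpha + O(r^2))\,d\mathrm{vol}^N$, and if you form the $d\mathrm{vol}^\mathcal{N}$-adjoint versus the $d\mathrm{vol}^N$-adjoint of the same operator the correction is $f^{-1}\sigma(\cdot)(df)$ with $df=\sum_\alpha H_\alpha\,dx^\alpha+O(r)$ — a bundle map of order $O(1)$, not $O(r)$, so it could not be absorbed into a remainder that vanishes at $Z$. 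Fortunately no such comparison of adjoints of the same operator is needed. The operator $\tilde D^*$ on the left is $\tilde c\circ\tilde\nabla$ acting on sections of $\tilde E^1$, and because $\tilde c\circ\tilde\nabla$ is formally self-adjoint with respect to $d\mathrm{vol}^\mathcal{N}$ it has the identical local expression $\tau^A_\cdot\tilde\nabla_{\tau_A}$ as $\tilde D$; running the proof of Lemma~\ref{lem:Dtaylorexp} verbatim on $E^1$-sections yields leading terms with local formulas $c^\alpha\partial_\alpha$, $c^j(h_j+\phi^1_j)$ and ${\mathcal B}^1$. The role of Proposition~\ref{prop:cross_adjoint} is purely to identify these formulas with the named operators $\slashed D_0^*$ and $\bar D^{Z*}$ computed with $d\mathrm{vol}^N$; the coincidence of local formulas is the entire content, and there is no residual correction to absorb. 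The remark following the corollary in the paper is cautioning the reader about which volume form defines which side's adjoint — it is not implying that a discrepancy term has been silently dropped.

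Apart from that, the appeal to Lemma~\ref{lem:lr} and the analogues for ${\mathcal A}^*$ is reasonable (the paper supports it via Proposition~\ref{prop:properties_of_perturbation_term_A}(1)), and your treatment of the $(S^0)^\perp$ case, where $s\bar{\mathcal A}$ is kept explicit and the linear term of $\tilde{\mathcal A}$ falls into the $O((1+s)r)$ remainder, matches the paper.
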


\begin{rem}
\begin{enumerate}
\item  We note that in the preceding expansions, the $L^2$-adjoint of $\tilde D_0 : C^\infty (\mathcal{N}; \tilde E^0\vert_\mathcal{N}) \to C^\infty(\mathcal{N}; \tilde E^1\vert_\mathcal{N})$ on the left hand side, denoted by $\tilde D^*_0$ is computed with respect to the metrics on $\tilde E$ and $T^*\mathcal{N}$ and with respect to the volume form $d\mathrm{vol}^\mathcal{N}$. On right hand side, the adjoints $\slashed{D}_0^*$ and $\bar D^{Z*}$ are computed with respect to the pullback metric, on $\pi^*E$, the metric $g^{TN}$ and the volume form $d\mathrm{vol}^N$.
\end{enumerate}
\end{rem}

\medskip
   
  \vspace{1cm}
\setcounter{equation}{0}
\section{Properties of the operators \texorpdfstring{$\slashed D_s$}{} and \texorpdfstring{$D^Z$}{}.}
\label{sec:Properties_of_the_operators_slashed_D_s_and_D_Z.}

In this section we review some well known Weitzenbock type formulas from \cite{bl} for the operators $\slashed D_s$ and $\bar D^Z$ introduced in the preceding paragraph. In particular the Weitzenbock identity of the operator $\slashed D_0$ involves a harmonic oscillator. As proven in Proposition~\ref{prop:basic_extension_bar_connection_properties} \eqref{prop:basic_extension_bar_connection_properties1} of the Appendix, the connection $\bar\nabla$ on the bundle $\pi^*E \to N$ is Clifford compatible to the connection $\bar\nabla^{TN}$ (introduced in Appendix~\ref{subApp:The_total_space_of_the_normal_bundle_N_to_Z}). The latter has nontrivial torsion $T$ that is described explicitly in Proposition~\ref{prop:basic_extension_bar_connection_properties}. 
Throughout the section we will use the constructions frames of Appendices~\ref{App:Fermi_coordinates_setup_near_the_singular_set} and \ref{App:Taylor_Expansions_in_Fermi_coordinates}. We work on a bundle chart $(N\vert_U, (x_j, x_\alpha)_{j,\alpha})$ where the normal coordinates $(U, (x_j)_j)$ are centered at $p \in U$. Recall the frames $\{e_j\}_j$ being $\nabla^{TZ}$-parallel at $p\in Z$, the frames $\{e_\alpha\}_\alpha$ that are $\nabla^N$-parallel at $p$ and the horizontal lifts $\{h_A = {\mathcal H}(e_A)\}_{A=j,\alpha}$. By using relations \eqref{eq:local_components_for_bar_nabla_TN} and \eqref{eq:connection_comp_rates}, the connection components of $\bar\nabla^{TN}$ vanish over $N_p$ so that $\bar\nabla_{h_A} = h_A, \ A=j, \alpha$. 

Recall also the frames $\{\sigma_k, f_\ell\}_{k,\ell}$ trivializing $\tilde E_{\mathcal{N}_U}$, introduced in Appendix~\ref{App:Taylor_Expansions_in_Fermi_coordinates}. Similarly, by using relations \eqref{eq:pullback_bar_connection_components} and \eqref{eq:comparing_tilde_nabla_orthonormal_to_bar_nabla_orthonormal}, the connection components of $\bar\nabla^{\pi^*(E^i\vert_Z)},\ i=0,1$ vanish over $N_p$ so that $\bar\nabla_{h_A} = h_A,\ A=j,\alpha$. 

Recall $Q= Q^0 \oplus Q^1$ introduced in Proposition~\ref{prop:more_properties} \eqref{rem:metric_compatibility1} and $C = C^0 \oplus C^1$ introduced in \eqref{eq:IntroDefCp}. Recall also that $M_\alpha =M_\alpha^0 \oplus M_\alpha^1$ satisfies $C = \sum_\alpha M_\alpha$ and $M_\alpha^2 = Q$, for every $\alpha$. 

Recall the Clifford map $\mathfrak{c}_N$, introduced in Definition~\ref{defn:definition_of_mathfrac_Clifford} with local representation as the matrix $c^A,\ A=j,\alpha$. By employing relations \eqref{eq:horizontal_lift}, \eqref{eq:basic_extension_bar_connection_properties1} and \eqref{eq:connection_comp_rates} over $N_p$,   
\begin{equation}
\label{eq:c_N_commutator_identities}
\begin{aligned} 
 \relax[\bar\nabla_{h_j}, c^k] &= [\bar\nabla_{h_j}, \mathfrak{c}_N(h^k)] = \bar\omega_{jk}^l(p)c^l =0,
\\
[\bar\nabla_{h_j}, c^\alpha] &= [\bar\nabla_{h_j}, \mathfrak{c}_N(h^\alpha)] = \bar\omega_{j\alpha}^\beta(p) c^\beta =0, 
\\
[\bar\nabla_{h_j} , x_\alpha c^\alpha ] &= [ h_j, x_\alpha ] c^\alpha+ x_\alpha[\bar\nabla_{h_j}, c^\alpha]=  x_\beta\bar\omega_{j\beta}^\alpha(p) c^\alpha =0,
\\
[\bar\nabla_{h_\alpha}, c^A]&= [\bar\nabla_{h_\alpha}, \mathfrak{c}_N(h^A)] = \begin{cases} \bar\omega_{\alpha j}^k(p)c^l, & \quad \text{if $A=j$},\\ \bar\omega_{\alpha \beta}^\gamma(p) c^\gamma,&\quad \text{if $A=\beta$} \end{cases} =0.
\end{aligned}
\end{equation}

Finally recall the expressions in local frames of the operators $\slashed D_0$ and $\bar D^Z$ and their symbols described in Remark~\ref{rem:properties_of_horizontal_and_vertical_operators}.
In the aforementioned frames, above the fiber $N_p$, they become 
\begin{equation}
\label{eq:local_expressions_over_N_p}
\begin{aligned}
\sigma_{\slashed D_0}(v)\xi &= v_\alpha c^\alpha\xi, \quad \slashed D_0 \xi = \mathfrak{c}_N(h^\alpha) \bar\nabla_{h_\alpha}\xi = c^\alpha \partial_\alpha\xi,   
\\
\sigma_{\bar D^Z}(v)\xi &= v_j c^j\xi, \quad \bar D^Z\xi = \mathfrak{c}_N(h^j)\bar\nabla_{h_j}\xi = c^jh_j \xi,
\end{aligned}
\end{equation}
where $v= v_jh^j+v_\alpha h^\alpha\in T_p^*N$ and $\xi\in C^\infty(N; \pi^*(E^0\vert_Z))$.

\begin{proposition}
\label{prop:Weitzenbock_identities_and_cross_terms}
\begin{enumerate}
\item  For every $\xi \in C^\infty(N; \pi^* (S^0\vert_Z))$,
\begin{equation}
\label{eq:slashed_D_s_Weitzenbock}
\slashed D_s^* \slashed D_s \xi =  (- \Delta  +  s^2 r^2 Q^0 - sC^0)\xi,
\end{equation}
where $\Delta = \sum_\alpha \partial^2_\alpha$ is the Euclidean Laplacian in the fibers $N_z,\, z \in Z$ and
\begin{equation}
\label{eq:bar_D_z_Weitzenbock}
\bar D^{Z*}\bar D^Z\xi = \bar\nabla^{{\mathcal H}*}\bar\nabla^{\mathcal H} \xi - (c_T\bar\nabla) \xi + F\xi , 
\end{equation}
where $F$ is the Clifford contraction of the curvature of the bundle $(S^0\vert_Z,  \bar\nabla) \to Z$ and $c_T \bar\nabla$ is the Clifford contraction $\frac{1}{2}\mathfrak{c}_N(h^j) \mathfrak{c}_N(h^k) T_{jk}^\alpha \bar\nabla_{h_\alpha}$.

\item On sections of the bundle $\pi^* (E^0\vert_Z))\to N$,  the term 
\begin{equation}
 \label{eq:cross_terms3}
(\slashed D_0 + \bar D^Z)^* \circ \bar{\mathcal A} + \bar{\mathcal A}^* \circ(\slashed D_0 + \bar D^Z),
\end{equation}
is a zeroth order operator.

\item For every $\xi \in C^\infty(N; \pi^*(S^{0+}_\ell))$,
\begin{equation}
\label{eq:cross_terms}
(\slashed  D_s^*  \bar D^Z  +  \bar D^{Z*} \slashed D_s )\xi=- s  x_\alpha \mathfrak{c}_N(h^\alpha) \mathfrak{c}_N( \pi^*d\lambda_\ell)\xi.
\end{equation}

\item On sections of the bundle $\pi^* (E^0\vert_Z))\to N$,  the term 
\begin{equation}
\label{eq:cross_terms1}
\slashed  D_s^*  \bar D^Z  +  \bar D^{Z*} \slashed D_s, 
\end{equation}
 is 0-th order operator with coefficients of order $sO(r)$. In particular
\begin{equation}
\label{eq:cross_terms2}
\slashed D^*_0  \bar D^Z  +  \bar D^{Z*} \slashed D_0 \equiv 0.
\end{equation}

\item 
For every $v\in C^\infty(N;TN)$ and every $\xi\in C^\infty(N;\pi^*E^1)$,
\begin{equation}
\label{eq:cross_terms25}
[\slashed D_s^*, \bar\nabla_v] \xi = - s[\bar\nabla_v, r \bar A_r^*]  \xi. 
\end{equation}
\end{enumerate}
Analogous facts fold for  the operators $ \slashed D_s \slashed D_s^*,\ \bar D^Z \bar D^{Z*}$ and $\slashed  D_s  \bar D^{Z*}  +  \bar D^Z \slashed D_s^*$.

\end{proposition}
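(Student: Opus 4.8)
The strategy is to prove each of the five items by a direct computation in the local frames set up above the fiber $N_p$, where all the connection components of $\bar\nabla^{TN}$ and $\bar\nabla^{\pi^*(E\vert_Z)}$ vanish and the commutator identities \eqref{eq:c_N_commutator_identities} hold. Since all the claimed identities are tensorial (zeroth-order conclusions), verifying them pointwise over each $N_p$ suffices.

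First I would establish \eqref{eq:slashed_D_s_Weitzenbock}. Using the local expression $\slashed D_s\xi = c^\alpha(\partial_\alpha + sx_\alpha M^0_\alpha)\xi$ from Remark~\ref{rem:properties_of_horizontal_and_vertical_operators}\ref{rem:local_expression_of_slashed_D_0} together with the adjoint expression $\slashed D_s^* = c^\alpha(\partial_\alpha + sx_\alpha M^1_\alpha)$ from Proposition~\ref{prop:cross_adjoint}, I would multiply out $\slashed D_s^*\slashed D_s$. The Clifford relations $c^\alpha c^\beta + c^\beta c^\alpha = -2\delta^{\alpha\beta}$ collapse the second-order part to $-\Delta$; the cross terms $s x_\alpha(M^1_\beta c^\beta c^\alpha \partial_\alpha + \dots)$ are handled using the intertwining $c^\alpha M^0_\beta = M^1_\alpha c^\alpha$ (a consequence of \eqref{eq:dcond} and the definition of $M^i_\alpha$ in \eqref{defn:IntroDefM}), producing the $-sC^0 = -s\sum_\alpha M^0_\alpha$ term from the diagonal pieces and killing the off-diagonal pieces by antisymmetry of $c^\alpha c^\beta$ against the symmetric $\partial_\alpha$-derivative; the zeroth-order piece $s^2 x_\alpha x_\beta c^\alpha M^1_\alpha c^\beta M^0_\beta$ reduces to $s^2 r^2 Q^0$ using $M^0_\alpha M^0_\beta$-symmetrization and $(M^0_\alpha)^2 = Q^0$. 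For \eqref{eq:bar_D_z_Weitzenbock} this is the standard Lichnerowicz--Weitzenböck argument for the Dirac operator $\bar D^Z = \mathfrak{c}_N(h^j)\bar\nabla_{h_j}$ built from the Clifford-compatible connection $\bar\nabla^{TN}$, where the nontrivial torsion $T$ contributes the extra Clifford contraction $c_T\bar\nabla$; I would cite Proposition~\ref{prop:basic_extension_bar_connection_properties} for the compatibility and torsion formula and carry out the contraction of the curvature term. Item (2) then follows from Lemma~\ref{lem:lr}: since $(\slashed D_0 + \bar D^Z, \bar{\mathcal A})$ is a concentrating pair (the concentration condition \eqref{eq:cond} is inherited by $\bar{\mathcal A}$ and the symbol of $\slashed D_0 + \bar D^Z$), \cite{m}[Lemma 2.2] forces the operator \eqref{eq:cross_terms3} to be a bundle map.

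The core computation is item (3), equation \eqref{eq:cross_terms}. Here I would use $\slashed D_s\xi = c^\alpha(\partial_\alpha + sx_\alpha M^0_\alpha)\xi$ and $\bar D^Z\xi = c^j h_j\xi$, compute $\slashed D_s^*\bar D^Z + \bar D^{Z*}\slashed D_s$, and observe that over $N_p$ the $\bar\nabla_{h_j}$'s and $\partial_\alpha$'s all commute (by \eqref{eq:c_N_commutator_identities} and vanishing of connection components), so the second-order and first-order derivative terms cancel by the anticommutation $c^\alpha c^j + c^j c^\alpha = 0$; what survives is the zeroth-order term involving $s x_\alpha[\bar\nabla_{h_j}, M^0_\alpha]$, i.e. the horizontal derivative of the endomorphism $M^0_\alpha$. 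Restricting to sections of $\pi^*(S^{0+}_\ell)$, on which $M^0_\alpha$ acts as $\lambda_\ell 1$ (by the characterization \eqref{defn:positive_negative_espaces1}), the derivative $\bar\nabla_{h_j}M^0_\alpha$ picks out $h_j(\lambda_\ell)\,1 = (\pi^*d\lambda_\ell)(h_j)\,1$ on that subbundle (modulo terms that push out of $S^{0+}_\ell$, which vanish after the projection built into $\bar\nabla^{\pi^*S^{0+}_\ell}$ by Remark~\ref{rem:properties_of_horizontal_and_vertical_operators}\ref{rem:respectful_eigenspaces}), assembling into $-s x_\alpha \mathfrak{c}_N(h^\alpha)\mathfrak{c}_N(\pi^*d\lambda_\ell)$. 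Item (4) is the immediate consequence: the same computation on all of $\pi^*(E^0\vert_Z)$ leaves a zeroth-order operator whose coefficients involve one horizontal derivative of the bounded endomorphisms $M^0_\alpha$ multiplied by $s x_\alpha = s O(r)$, and when $s = 0$ (or, more precisely, looking at the $s$-independent part, which is the $\bar D^Z$-with-$\slashed D_0$ cross term) it vanishes identically since $\slashed D_0$ carries no $M_\alpha$-term and $[\slashed D_0^*, \bar D^Z]$-type brackets annihilate by anticommutation — giving \eqref{eq:cross_terms2}. Finally item (5), \eqref{eq:cross_terms25}, follows by writing $\slashed D_s^* = \slashed D_0^* + s r\bar A_r^*$ and noting $[\slashed D_0^*, \bar\nabla_v] = 0$ over $N_p$ (again by \eqref{eq:c_N_commutator_identities} and the commuting of $\partial_\alpha$ with $\bar\nabla_v$ in this frame), so only the $s r\bar A_r^*$ term contributes its commutator. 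The closing sentence about the analogous facts for $\slashed D_s\slashed D_s^*$ etc. is proved identically after swapping the roles of $E^0$/$E^1$, $M^0$/$M^1$, and using $(M^1_\alpha)^2 = Q^1$ together with \eqref{eq:quadratic_term_vs_adjoint_quadratic_term}.

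\textbf{Main obstacle.} The delicate point is item (3): correctly tracking how $\bar\nabla_{h_j}$ differentiates the endomorphism-valued $M^0_\alpha$ and confirming that, after restriction to $\pi^*(S^{0+}_\ell)$ and use of the eigenvalue-respecting connection $\bar\nabla^\pm$, the off-diagonal pieces (those mapping $S^{0+}_\ell$ to other eigenbundles) genuinely drop out so that the result is exactly $-s x_\alpha\mathfrak{c}_N(h^\alpha)\mathfrak{c}_N(\pi^*d\lambda_\ell)$ with no residual terms. This requires careful bookkeeping of the identities \eqref{eq:dcond}, Proposition~\ref{prop:basic_restriction_connection_properties}, and the compatibility of $\bar\nabla$ with the decomposition \eqref{eq:decompositions_of_S_ell}; everything else is routine Clifford algebra.
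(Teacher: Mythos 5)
Your proposal takes the same route as the paper: compute over a single fiber $N_p$ in the adapted frame, use the local expressions for $\slashed D_s,\ \bar D^Z$ and their adjoints from Remark~\ref{rem:properties_of_horizontal_and_vertical_operators} and Proposition~\ref{prop:cross_adjoint}, eliminate cross terms via \eqref{eq:dcond}/\eqref{eq:cond}, and isolate the $h_j(\lambda_\ell)$ term in item (3) by restricting $M^0_\alpha$ to $\pi^*(S^{0+}_\ell)$ where it acts as $\lambda_\ell$. The appeal to Lemma~\ref{lem:lr} for item (2) is a legitimate shortcut: it is the abstract form of the symbol cancellation the paper writes out explicitly.

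Two small points worth tightening. First, the intertwining relation you invoke, $c^\alpha M^0_\beta = M^1_\alpha c^\alpha$, has the wrong index on the right; from \eqref{eq:dcond} the correct statement is $c^\alpha M^0_\beta = M^1_\beta c^\alpha$ for $\alpha\neq\beta$ (and $c^\alpha M^0_\alpha = -M^1_\alpha c^\alpha$ when $\alpha=\beta$). The paper sidesteps this bookkeeping by working directly with $\bar A_\alpha = c^\alpha M^0_\alpha$ (no sum) and the anticommutation $c^\beta\bar A_\alpha + \bar A_\alpha^* c^\beta = 0$, which is cleaner. Second, and more substantively: in items (4) and (5) you justify the vanishing of mixed second-order terms by saying the covariant derivatives $\bar\nabla_{h_\alpha}$, $\bar\nabla_{h_j}$ ``commute over $N_p$,'' citing \eqref{eq:c_N_commutator_identities}. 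But \eqref{eq:c_N_commutator_identities} only gives $[\bar\nabla_{h_A}, c^B]=0$ over $N_p$; it does not give $[\bar\nabla_{h_\alpha}, \bar\nabla_{h_j}]=0$, since the connection components vanish at $N_p$ but their first derivatives generically do not. The commutation that is actually needed is
\[
\bar\nabla_{h_\alpha}\bar\nabla_{h_j} - \bar\nabla_{h_j}\bar\nabla_{h_\alpha} = \mathrm{Hess}(h_\alpha,h_j) - \mathrm{Hess}(h_j,h_\alpha) = F^{\pi^*S}(h_\alpha,h_j) - \bar\nabla_{T(h_\alpha,h_j)},
\]
and the right-hand side vanishes precisely because Proposition~\ref{prop:basic_extension_bar_connection_properties}\eqref{prop:basic_extension_bar_connection_properties5} gives $T(h_\alpha,h_j)=0$ and $F^{\pi^*S}(h_\alpha,h_j)=0$ (the pullback curvature is trivial in mixed vertical--horizontal directions by \eqref{eq:pullback_curvature_universal_property}). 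The paper makes this explicit; your sketch should cite the Hessian symmetry rather than a bare commutation of derivatives in a frame.
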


\begin{proof}
Most of the calculations involved in the proof of the proposition are carried over the fiber $N_p$ of the total space $N$ where expressions \eqref{eq:local_expressions_over_N_p} hold. To prove \eqref{eq:slashed_D_s_Weitzenbock} we use \eqref{eq:local_expressions_over_N_p} over $N_p$ and calculate
\begin{align*}
\slashed D_s^* \slashed D_s \xi &= \sum_{\alpha, \beta}(c^\alpha\partial_\alpha + s x_\alpha \bar A^*_\alpha)  (c^\beta \partial_\beta + s x_\beta \bar A_\beta) \xi
\\
&=\sum_{\alpha, \beta} ( c^\alpha c^\beta \partial^2_{\alpha \beta}  + sx_\alpha(c^\beta  \bar A_\alpha + \bar A^*_\alpha c^\beta) \partial_\beta  + s^2 x_\alpha x_\beta \bar A^*_\alpha \bar A_\beta ) \xi + s \sum_\alpha c^\alpha \bar A_\alpha \xi
\\
&= \frac{1}{2}\sum_{\alpha, \beta} [(c^\alpha c^\beta + c^\beta c^\alpha) \partial^2_{\alpha \beta}  + s^2 x_\alpha x_\beta (\bar A^*_\alpha \bar A_\beta +\bar A_\beta^* \bar A_\alpha)] \xi - s\sum_\alpha M^0_\alpha \xi,
\end{align*}
where in the last line we used equation \eqref{eq:dcond} to eliminate the cross terms. By Proposition~\ref{prop:more_properties} \ref{rem:metric_compatibility1}, we have $M_{\alpha,\beta}^0 = \frac{1}{2}( \bar A^*_\alpha \bar A_\beta +\bar A_\beta^* \bar A_\alpha) = \delta_{\alpha \beta}Q$ and by the Clifford relations \eqref {eq:Clifford_relations} we obtain $c^\alpha c^\beta + c^\beta c^\alpha = -2 \delta_{\alpha \beta}$. Identity \eqref{eq:slashed_D_s_Weitzenbock} follows.

Identity \eqref{eq:bar_D_z_Weitzenbock} is treated as the usual calculation for the Weitzenbock type formula. We use the local expressions \eqref{eq:local_expressions_over_N_p} of $\bar D^Z$ and $\bar D^{Z*}$ over $N_p$, together with \eqref{eq:c_N_commutator_identities} and calculate:
\begin{align*}
\bar D^{Z*} (\bar D^Z\xi) &=  \mathfrak{c}_N(h^j) \bar\nabla_{h_j} (\mathfrak{c}_N(h^k) \bar\nabla_{h_k}\xi)
\\
&= \mathfrak{c}_N(h^j) \mathfrak{c}_N(h^k) (\bar\nabla_{h_j}\bar\nabla_{h_k} \xi)
\\
&= - \bar\nabla_{h_k}\bar\nabla_{h_k} \xi + \sum_{j<k} \mathfrak{c}_N(h^j) \mathfrak{c}_N(h^k) \left(\bar\nabla_{h_j}\bar\nabla_{h_k} -\bar\nabla_{h_j}\bar\nabla_{h_k}\right) \xi
\\
&= \bar\nabla^{{\mathcal H}*}\bar\nabla^{\mathcal H} \xi  + \sum_{j<k} \mathfrak{c}_N(h^j) \mathfrak{c}_N(h^k) \left( \mathrm{Hess}(h_j, h_k) - \mathrm{Hess}(h_k, h_j)\right) \xi
\\
&= \bar\nabla^{{\mathcal H}*}\bar\nabla^{\mathcal H} \xi  + \sum_{j<k} \mathfrak{c}_N(h^j) \mathfrak{c}_N(h^k) \left(F^{\pi^*S}(h_j, h_k)\xi - \bar\nabla_{T(h_j, h_k)}\xi \right). 
\end{align*}
In the last equality we used \eqref{eq:basic_extension_bar_connection_properties5}.

For the proof of \eqref{eq:cross_terms3} we calculate over $N_p$, the symbols of the involved operators,   
\begin{align*}
 [\sigma_{\slashed D_0 + \bar D^Z} (v)\circ \bar {\mathcal A}+  \bar{\mathcal A}^* \circ\sigma_{\slashed  D_0 + \bar D^Z} (v)] \xi  &= [ (v_\alpha c^\alpha + v_j c^j) \circ \bar {\mathcal A} +  \bar{\mathcal A}^* \circ  (v_\alpha c^\alpha + v_j c^j)]\xi, 
\end{align*}
where the last expression vanishes by equation \eqref{eq:cond}. Hence the associated differential operator is of zeroth order.

To prove \eqref{eq:cross_terms2} we use Clifford relations and \eqref{eq:c_N_commutator_identities} over $N_p$ to calculate, 
\begin{align*}
(\slashed D_0^*  \bar D^Z + \bar D^{Z*} \slashed D_0)\xi  &= \mathfrak{c}_N(h^\alpha) \bar\nabla_{h_\alpha}(\mathfrak{c}_N(h^j) \bar\nabla_{h_j} \xi) +  \mathfrak{c}_N(h^j) \bar\nabla_{h_j}( \mathfrak{c}_N(h^\alpha) \bar\nabla_{h_\alpha} \xi)
\\
&=\mathfrak{c}_N(h^\alpha)\mathfrak{c}_N(h^j) \left(\bar\nabla_{h_\alpha} \bar\nabla_{h_j} - \bar\nabla_{h_j} \bar\nabla_{h_\alpha} \right) \xi
\\
&= \mathfrak{c}_N(h^\alpha)\mathfrak{c}_N(h^j) \left(\mathrm{Hess}(h_\alpha, h_j) -  \mathrm{Hess}(h_j, h_\alpha) \right) \xi.
\end{align*}
But the last term is the zero section, by using the symmetry of the Hessian in Proposition~\ref{prop:basic_extension_bar_connection_properties}  \eqref{prop:basic_extension_bar_connection_properties5}. 

To prove \eqref{eq:cross_terms25} we calculate over $N_p$,
\begin{align*}
[\slashed D_s^*, \bar\nabla_v] \xi &= \mathfrak{c}_N(h^\alpha) \left(\bar\nabla_{h_\alpha} \bar\nabla_v - \bar\nabla_v \bar\nabla_{h_\alpha} \right) \xi - s[\bar\nabla_v, r \bar A_r^*]  \xi
\\
&= \mathfrak{c}_N(h^\alpha) \left(\mathrm{Hess}(h_\alpha, v) -  \mathrm{Hess}(v, h_\alpha) \right) \xi - s[\bar\nabla_v, r \bar A_r^*]  \xi
\\
&= - s[\bar\nabla_v, r \bar A_r^*]  \xi, 
\end{align*}
where in the intermediate equality we used again the symmetry of the Hessian.

To prove \eqref{eq:cross_terms1}, we use the local expression in frames, $r \bar A_r = x_a \bar A_\alpha$ and we employ \eqref{eq:horizontal_lift} to obtain $[h_j , x_\alpha ]= - x_\beta\bar\omega_{j \beta}^\alpha $. We then calculate at any point of $N_U$,
\begin{align*}
(\bar D^Z)^* (r \bar A_r \xi) + r \bar A^*_r (\bar D^Z \xi) &= c^j (h_j + \phi_j^1)( x_\alpha \bar A_a\xi) + x_\alpha \bar A^*_\alpha ( c^j(h_j  + \phi_j^0) \xi )
\\
&=  c^j [h_j + \phi_j , x_\alpha\bar A_\alpha] \xi  + x_\alpha ( c^j \bar A_\alpha + \bar A_\alpha^* c^j) (h_j + \phi_j^0) \xi  
\\
&= x_\alpha( c^j [e_j + \phi_j , \bar A_\alpha] - \bar\omega_{j\alpha}^\beta \bar A_\beta ) \xi, 
\end{align*}
where in the last equality $c^j \bar A_\alpha + \bar A_\alpha^* c^j =0$  by equation \eqref{eq:cond}. It follows that the term is a 0-th order operator with coefficients of order $O(r)$, as $r \to 0$.

To prove \eqref{eq:cross_terms}, we write $\bar A_\alpha = c^\alpha M^0_\alpha$, so that $M_\alpha^0 \xi = \lambda_\ell \xi$. Hence continuing the preceding calculation in this case over $N_p$,

\begin{align*}
(\bar D^Z)^* (r \bar A_r \xi) + r \bar A^*_r (\bar D^Z \xi) &=  c^j [\bar\nabla_{h_j} , x_\alpha\bar A_\alpha]\xi
\\
&= c^j[\bar\nabla_{h_j} , x_\alpha c^\alpha M^0_\alpha] \xi 
\\
&=  x_\alpha   c^j c^\alpha [h_j , \lambda_\ell  ] \xi, \quad (\text{by \eqref{eq:c_N_commutator_identities}})
\\
&=- x_\alpha c^\alpha (e_j(\lambda_\ell) c^j)\xi 
\\
&= - x_\alpha \mathfrak{c}_N(h^\alpha) \mathfrak{c}_N( \pi^* d\lambda_\ell) \xi.
\end{align*}

Both the left and the right hand sides of the preceding equations are independent of coordinates and frames and therefore the equations hold at any reference frame and are true everywhere on the total space $N$.  The proof is now complete.
\end{proof}

Using Proposition~\ref{prop:properties_of_compatible_subspaces} we decompose $S\vert_Z$ into eigenbundles $S_\ell$ of $Q$ and further into eigenbundles $S_{\ell k}$ of $C$ in \eqref{eq:decompositions_of_S_ell} of eigenvalue $(n-m- 2k)\lambda_\ell$. Fixing $z\in Z$, we calculate explicitly the eigenvalues of $\slashed D^2_s\vert_z: L^2(N_z; (S_{\ell k})_z) \to L^2(N_z; (S_{\ell k})_z)$. By equation  \eqref{eq:slashed_D_s_Weitzenbock},
\[
( - \Delta  +  s^2 r^2 Q^2 - sC )\xi =  ( - \Delta +  s^2 r^2\lambda_\ell^2   -  s(n-m-2k)\lambda_\ell )\xi = \lambda \xi.
\]
Changing variables  $\{w_\alpha =(s\lambda_\ell)^{1/2} x_\alpha\}_\alpha$ and setting $\tilde \xi(w) = \xi(x)$, we obtain 
\[
-\Delta \tilde \xi + r^2 \tilde \xi = \left[(n-m - 2k) + \frac{\lambda}{s\lambda_\ell} \right] \tilde\xi.
\]
It follows by \cite{p}[Th. 2.2.2] that 
\[
(n-m - 2k) + \frac{\lambda}{s\lambda_\ell} \in \{ 2\lvert a\rvert + n-m:\ a \in \mathbb{Z}^{n-m}_{\geq 0}\},
\]
so that the spectrum of $\slashed D^2_s\vert_z$ as unbounded operator on $L^2(N_z; (S_{\ell})_z)$ is given by, 
\begin{equation}
\label{eq:spectrum_of_Delta_s}
\{2s\lambda_\ell (\lvert a\rvert + k):\ a \in \mathbb{Z}^{n-m}_{\geq 0},\  k\in \{0,\dots ,n-m\}\},
\end{equation}
for every $\ell$. In particular $\lambda=0$ is an eigenvalue if and only if  $a=0$ and $k=0$  in which case the kernel is 
\[
 \ker (\slashed {\mathcal D}_s\vert_z)= \bigoplus_\ell \varphi_{s\ell}\cdot (S^+_\ell)_z,
\]
where,
\[
\varphi_{s\ell}:N \to \mathbb{R}, \quad  \varphi_{s\ell}(v)=  (s\lambda_\ell)^{\tfrac{n-m}{4}} e^{- \tfrac{1}{2} s \lambda_\ell \lvert v\rvert^2} .
\]
Consider the map $N \to Z\times [0, +\infty),\ v \mapsto (\pi(v), \lvert v\rvert)$. We then see that $\varphi_{s\ell}$ is the pullback under the preceding map, of $(z,r) \mapsto  (s\lambda_\ell)^{\tfrac{n-m}{4}}  e^{- \tfrac{1}{2}s\lambda_\ell r^2} $.   

\medskip
   
  \vspace{1cm}

\section{The harmonic oscillator in normal directions} 
\label{sec:harmonic_oscillator}

In this section we obtain estimates for the derivatives in the normal directions of $\xi: N \to \pi^*S^0$. We begin with the following definition:
\begin{defn}
\label{defn:eighted_norms_spaces}
Given $\xi\in C_c^\infty(N)$ and $k, l\in \mathbb{N}_0$, we define the norms
\begin{align*}
\|\xi\|_{0,2,k,0}^2:&= \int_N (1+ r^2)^{k+1}\lvert\xi\rvert^2\, d\mathrm{vol}^N, 
\\
\|\xi\|_{1,2,k,l}^2 :&=\|\xi\|_{0,2,k,0}^2 + \int_{N} \left[(1+r^2)^k \lvert\bar\nabla^{\mathcal V} \xi\rvert^2 + l(1+r^2)^{l-1} \lvert\bar\nabla^{\mathcal H} \xi\rvert^2\right]\, d\mathrm{vol}^N, 
\end{align*}
and define the spaces $L^2_k(N)$ and $W^{1,2}_{k, l}(N)$ by completion of $C_c^\infty(N)$ in each of these  respective norms. When $k=-1$ we set $L^2_{-1}(N) := L^2(N)$.  
\end{defn}

 Note that with these definitions, the space $W^{1,2}_{k,0}(N)$ has sections admitting weak derivatives in the normal directions. By Theorem~\ref{thm:approximation_theorem_for_weighted_spaces},
\begin{align*}
L^2_k(N) &:= \{ \xi\in L^2(N): \| \xi\|_{0,2,k,0}<\infty\}, 
\\
W^{1,2}_{k,l}(N) &:= \{ \xi\in  W^{1,2}(N): \| \xi\|_{1,2,k,l}<\infty\}, \quad l \in \mathbb{N},
\\
W^{1,2}_{k,0}(N) &:= \{ \xi\in L^2(N) : \bar\nabla^{\mathcal V}\xi\in L^2(N), \ \text{and}\ \| \xi\|_{1,2,k,0}<\infty\},
\end{align*}
for every $k\in \mathbb{N}_0$ and therefore we can use approximations by test functions with compact support. We have embeddings $W^{1,2}_{k+1, l}(N) \subset W^{1,2}_{k, l}(N)$. Furthermore $L^2_k(N),\ W^{1,2}_{k,0}(N)$ and $W^{1,2}_{k,l}(N),\ l \leq k+2$ are $C^\infty(Z;\mathbb{R})$-modules and multiplication by a fixed function is a continuous map in these spaces. Finally, the operator $\slashed D_s: L^2(N) \to L^2(N)$ is densely defined with domain $W^{1,2}_{0,0}(N)$ and satisfies $\slashed D_s( W^{1,2}_{k+1,0}(N)) \subset L^2_k(N)$, for every $k\geq -1$. We summarize now some basic properties following from well known facts about the harmonic oscillator:

\begin{prop}
\label{prop:vertical_cross}
\begin{enumerate}
\item Let $k\geq 0$ and set $C_0 = \min_{\ell , Z} \lambda_\ell$ and $C_1 = \max_{Z, \ell} \lambda_\ell$. There exist constant $C = C( n-m, C_0, C_1)>0 $ so that,
\begin{equation} 
\label{eq:elliptic_estimate1_k>=0}
 s\|r^{k+1}\xi\|_{L^2(N)} +  \| r^k \bar\nabla^{\mathcal V} \xi \|_{L^2(N)} \leq
 C\left(\sum_{u=0}^k s^{-\tfrac{u}{2}} \| r^{k-u} \slashed D_s \xi\|_{L^2(N)} +  s^{\tfrac{1-k}{2}}\|\xi \|_{L^2(N)}\right),
\end{equation} 
for every $s>0$ and every $\xi \in W^{1,2}_{k,0}(N;\pi^*(S^0\vert_Z))$.

\item The operator,
\[
\slashed D_s  : L^2(N; \pi^*( S^0\vert_Z))\to L^2(N; \pi^* (S^1\vert_Z)), 
\]
is closed and $\ker \slashed D_s \subset L^2(N; \pi^* S^0)$ is a closed subspace.

\item 
There exist $s_0>0$ so that, when $s>s_0$, the kernel of $\slashed D_s$,  is given explicitly by,  
\begin{equation}
\label{eq:L_2_kernel_D_s_calculation}
\ker \slashed D_s = \bigoplus_\ell \varphi_{s\ell}\cdot L^2 (Z; S^{0+}_\ell),
\end{equation}
and
\begin{equation}
\label{eq:kernel_D_s_calculation}
    \ker \slashed D_s \cap W^{1,2}(N; \pi^*(S^0\vert_Z)) = \bigoplus_\ell \varphi_{s\ell} \cdot W^{1,2}(Z; S^{0+}_\ell).
\end{equation}

\item
For every $s> 0$ and every $\xi\in (\ker \slashed D_s)^{\perp_{L^2}}$, we have the spectral estimate
\begin{equation}
\label{eq:spectral_estimate}
2s C_0\|\xi\|_{L^2(N)}^2 \leq \|\slashed D_s\xi\|_{L^2(N)}^2. 
\end{equation}

\item
$\slashed D_s$ has closed range and we have the relations,
\begin{equation}
\label{eq:Fredholm_alternative}
(\ker \slashed D_s)^{\perp_{L^2}} = \mathrm{Im\,}\slashed D_s^* \quad \text{and} \quad  (\ker \slashed D_s^*)^{\perp_{L^2}} = \mathrm{Im\,}\slashed D_s. 
\end{equation}
\end{enumerate}
\end{prop}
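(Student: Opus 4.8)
## Proof Plan for Proposition~\ref{prop:vertical_cross}

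The plan is to prove the five assertions in sequence, since each one feeds into the next. I would begin with the elliptic estimate \eqref{eq:elliptic_estimate1_k>=0}, which is the analytic engine. The key point is that by the Weitzenb\"ock identity \eqref{eq:slashed_D_s_Weitzenbock}, $\slashed D_s^*\slashed D_s = -\Delta + s^2r^2Q^0 - sC^0$ is, fiberwise, a shifted harmonic oscillator, and after the rescaling $w_\alpha = \sqrt{s\lambda_\ell}\,x_\alpha$ (done blockwise on each eigenbundle $S_{\ell k}$ as in the computation leading to \eqref{eq:spectrum_of_Delta_s}) it becomes the standard oscillator $-\Delta + r^2$ on $\mathbb{R}^{n-m}$. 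For the standard oscillator one has the classical estimates $\|r^2u\|_{L^2} + \|\nabla u\|_{L^2} \lesssim \|(-\Delta+r^2)u\|_{L^2} + \|u\|_{L^2}$ and their iterates controlling $\|r^{k+1}u\|$ and $\|r^k\nabla u\|$; I would cite \cite{p} for these. Undoing the rescaling produces the powers of $s$ in \eqref{eq:elliptic_estimate1_k>=0}: each factor $r$ scales like $(s\lambda_\ell)^{-1/2}$ and each vertical derivative like $(s\lambda_\ell)^{1/2}$, and since $C_0 \le \lambda_\ell \le C_1$ uniformly on $Z$, the constant depends only on $n-m, C_0, C_1$. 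The induction on $k$ is the routine bookkeeping of raising the power of $r$ and tracking which lower-order $r^{k-u}\slashed D_s\xi$ terms appear; I would not grind through it. One subtlety is that the estimate is stated for $\xi\in W^{1,2}_{k,0}(N;\pi^*(S^0\vert_Z))$, so I would first prove it for $\xi\in C_c^\infty$ (where integration by parts over the fibers is unproblematic, the horizontal part of the metric and the torsion of $\bar\nabla^{\mathcal H}$ playing no role since only vertical derivatives and multiplication by $r$ enter) and then pass to the completion using the density built into Definition~\ref{defn:eighted_norms_spaces} and Theorem~\ref{thm:approximation_theorem_for_weighted_spaces}.

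For part (2), closedness of $\slashed D_s\colon L^2 \to L^2$ with domain $W^{1,2}_{0,0}(N)$: this follows from the $k=0$ case of \eqref{eq:elliptic_estimate1_k>=0}, which for compactly supported $\xi$ reads $s\|r\xi\|_{L^2} + \|\bar\nabla^{\mathcal V}\xi\|_{L^2} \lesssim \|\slashed D_s\xi\|_{L^2} + s^{1/2}\|\xi\|_{L^2}$, i.e. the graph norm of $\slashed D_s$ controls (and is controlled by) the $W^{1,2}_{0,0}$ norm; a Cauchy sequence in the graph norm is then Cauchy in $W^{1,2}_{0,0}$, hence converges, giving closedness. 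Alternatively $\slashed D_s$ is essentially self-adjoint/closed because $\slashed D_s^*\slashed D_s$ is a Schr\"odinger operator with confining potential $s^2r^2Q^0$; either route works. That $\ker\slashed D_s$ is closed in $L^2$ is then immediate.

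For part (3), the explicit kernel: fix $z\in Z$ and use the spectral computation already carried out in the text. Decomposing $(S^0\vert_Z)_z = \bigoplus_{\ell,k} (S^0_{\ell k})_z$, the operator $\slashed D_s^2\vert_z$ acts on each block as the rescaled oscillator with spectrum \eqref{eq:spectrum_of_Delta_s}, namely $\{2s\lambda_\ell(|a|+k) : a\in\mathbb{Z}^{n-m}_{\ge 0}\}$, so $0$ is attained only on the $k=0$ block with ground state $\varphi_{s\ell}(v) = (s\lambda_\ell)^{(n-m)/4}e^{-\frac12 s\lambda_\ell|v|^2}$ (the unique $L^2$ null vector of the Euclidean oscillator up to scale). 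Since $S^0_{\ell 0} = S^{0+}_\ell$ by Definition~\ref{defn:IntroDefSp}, integrating over $z\in Z$ and using that $\slashed D_s$ only differentiates in the fibers gives $\ker\slashed D_s = \bigoplus_\ell \varphi_{s\ell}\cdot L^2(Z; S^{0+}_\ell)$; here $s_0$ is chosen so that $s>s_0$ makes the rescaling valid on a tubular neighborhood (equivalently so that $\varphi_{s\ell}$ is genuinely concentrated, Gaussian-decaying). The statement $\ker\slashed D_s\cap W^{1,2} = \bigoplus_\ell \varphi_{s\ell}\cdot W^{1,2}(Z;S^{0+}_\ell)$ follows because $\bar\nabla^{\mathcal H}(\varphi_{s\ell}\,u) = \varphi_{s\ell}\,\bar\nabla u + u\,d(\varphi_{s\ell})$ and $\varphi_{s\ell}$ together with its horizontal derivative is bounded with Gaussian decay, so the horizontal $W^{1,2}$-integrability of $\varphi_{s\ell}u$ over $N$ is equivalent to that of $u$ over $Z$.

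For part (4), the spectral gap: for $\xi\perp_{L^2}\ker\slashed D_s$, write $\xi = \sum_{\ell,k}\xi_{\ell k}$ in the fiberwise eigenbundle decomposition; orthogonality to the kernel means $\xi$ has no component along the lowest eigenvalue $0$ of $\slashed D_s^2\vert_z$, so by \eqref{eq:spectrum_of_Delta_s} the smallest eigenvalue available to $\xi$ is $2s\lambda_\ell \cdot 1 \ge 2sC_0$, whence $\|\slashed D_s\xi\|^2 = \langle \slashed D_s^*\slashed D_s\xi,\xi\rangle \ge 2sC_0\|\xi\|^2$. (One should note that $\xi\mapsto \slashed D_s\xi$ commutes with the $L^2(Z)$-direction, so the fiberwise spectral decomposition integrates to a genuine spectral decomposition of the unbounded operator $\slashed D_s^*\slashed D_s$ on $L^2(N)$.) Finally part (5): \eqref{eq:spectral_estimate} applied with the same estimate for $\slashed D_s^*$ (the ``analogous facts'' clause) shows both $\slashed D_s$ and $\slashed D_s^*$ are bounded below on the orthogonal complements of their kernels, hence have closed range; then the standard Hilbert-space identities $(\ker\slashed D_s)^{\perp} = \overline{\mathrm{Im}\,\slashed D_s^*} = \mathrm{Im}\,\slashed D_s^*$ and $(\ker\slashed D_s^*)^{\perp} = \mathrm{Im}\,\slashed D_s$ follow. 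The main obstacle is the first part: carefully justifying the blockwise rescaling (the eigenbundles $S_{\ell k}$ are $\bar\nabla$-parallel subbundles by Proposition~\ref{prop:basic_restriction_connection_properties}, so $\slashed D_s$ respects them and the rescaling on each has a well-defined, $z$-independent-modulo-$\lambda_\ell$ effect) and pushing the compactly-supported estimate through the weighted completions — everything downstream is soft functional analysis once \eqref{eq:elliptic_estimate1_k>=0} is in hand.
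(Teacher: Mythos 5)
Your proposal is correct and follows the same overall architecture as the paper: prove the weighted elliptic estimate first, then extract closedness, explicit kernel, spectral gap, and Fredholm alternative in sequence. The one genuine departure is in part (1). The paper does \emph{not} pass to the standard oscillator via the rescaling $w_\alpha=\sqrt{s\lambda_\ell}\,x_\alpha$; instead it works with the Weitzenb\"ock identity directly in the unrescaled variables, multiplies by $r^{2k}$, integrates by parts, absorbs cross-terms via Peter--Paul, and induces on $k$. Your rescaling approach delegates this bookkeeping to the classical weighted estimates for $-\Delta+r^2$, which is cleaner conceptually (the $s$-powers on the right of \eqref{eq:elliptic_estimate1_k>=0} drop out of dimensional analysis) but requires you to carry the $z$-dependent conformal factor $\lambda_\ell(z)^{\pm}$ through the computation, using $C_0\le\lambda_\ell\le C_1$ at the end; both routes produce the same estimate. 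A minor caveat: you cite \cite{p} for the weighted oscillator estimates $\|r^{k}\nabla u\|+\|r^{k+1}u\|\lesssim\sum\|r^{k-u}Hu\|+\|u\|$, which are standard but may not appear verbatim there; \cite{p} is used in the paper only for the spectrum.

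For part (4), the assertion you flag in parentheses --- that the fiberwise spectral decomposition integrates to a global one, or equivalently that $\xi\perp_{L^2(N)}\ker\slashed D_s$ forces $\xi\vert_{N_p}\perp_{L^2(N_p)}\ker(\slashed D_s\vert_p)$ for a.e.\ $p$ --- is exactly the content the paper isolates as Lemma~\ref{lemma:restriction_to_fibers_of_N} and proves by a shrinking cutoff and Fubini. You are right that it needs noting; if you wrote the proof out you would want to supply that argument rather than leave it as a parenthetical. Everything else (using Definition~\ref{defn:eighted_norms_spaces} and Theorem~\ref{thm:approximation_theorem_for_weighted_spaces} for the density step, the Gaussian $L^2$-norm computation to pass from smooth to $L^2$/$W^{1,2}$ kernel, and Kato for closed range) matches the paper.
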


\begin{proof}
We will use Hitchin's dot notation $v_\cdot$ throughout the proof instead of the $\mathfrak{c}_N(v)$. Recall the decomposition \eqref{eq:decompositions_of_S_ell} into eigenbundles of $Q^0$ and further into eigenbundles of $C^0$. We work with smooth sections $\xi: N \to \pi^*S_{\ell t}^0$ with compact support. There we have
\begin{align}
 \label{eq:N_Weitzenboock_2} 
\slashed D_s^* \slashed D_s\xi =  (- \Delta  +  s^2 r^2 \lambda_\ell^2 - s(n-m-2t)\lambda_\ell )\xi.
\end{align}
Multiplying \eqref{eq:N_Weitzenboock_2} by $r^{2k}$, taking inner products with $\xi$ and integrating by parts, we obtain,

\begin{align*}
  \| r^k   \slashed D_s\xi \|_2^2 + 2k\langle  \slashed D_s \xi,  r^{2k-1} dr_\cdot \xi\rangle_2 
  =& \sum_\alpha(\| r^k \partial_\alpha \xi\|_2^2 + 2k\langle\partial_\alpha \xi, x_\alpha r^{2k-2} \xi\rangle_2)
\\
&+ s^2\|r^{k+1} \lambda_\ell \xi \|_2^2 +  s(2t-n+m)\langle r^{2k} \lambda_\ell\xi,\xi\rangle_2.  
\end{align*}
By applying Peter-Paul inequality to the first couple of cross terms and absorbing alike terms we obtain an estimate,
\begin{equation}
\label{eq:basic_ineq_for_S}
  s^2 C_0^2\|r^{k+1} \xi\|_2^2 +  \|r^k \bar\nabla^{\mathcal V} \xi\|_2^2 \leq 
  6(C_1+1)(n-m)(\|r^k\slashed D_s \xi\|_2^2 + \|(k + \sqrt{s} r) r^{k-1} \xi\|_2^2),
\end{equation}
for every $k\in \mathbb{N}_0$, every $t\in \{0, \dots, n-m\}$ and every $s>0$.  Estimate \eqref{eq:elliptic_estimate1_k>=0} is then proven, by induction on $k$, using \eqref{eq:basic_ineq_for_S} in inductive step.

The fact that $\slashed D_s: L^2(N; \pi^*(S^0\vert_Z)) \to L^2(N; \pi^*(S^0\vert_Z))$ is closed, follows from estimate \eqref{eq:elliptic_estimate1_k>=0} for $k=0$. Also $\ker \slashed D_s$ is the kernel of a closed operator and therefore, a closed subspace of $L^2$.
We have already shown that,
\[
\ker \slashed D_s\cap C^\infty (N; \pi^* (S^0\vert_Z)) = \bigoplus_\ell \varphi_{s\ell} \cdot C^\infty(Z; S^{0+}_\ell).
\]
Given $\xi = \varphi_{s\ell}\cdot \eta$, with $\eta \in C^\infty(Z; S^{0+}_\ell)$ and using the substitution $t = \sqrt{s \lambda_\ell} r $, the $L^2$-norms are computed explicitly to give, 
\begin{equation*}
\|\xi\|_{L^2(N)}^2 = \left\lvert S^{n-m-1}\right\rvert\left(\int_0^\infty t^{n-m-1} e^{-t^2}\, dr\right)  \|\eta\|_{L^2(Z)}^2
= \frac{1}{2} \left\lvert S^{n-m-1} \right\rvert \Gamma \left(\frac{n-m}{2} \right) \|\eta\|_{L^2(Z)}^2, 
\end{equation*}
that is,
\begin{equation}
  \label{eq:Gaussian_evaluation}
\|\xi\|_{L^2(N)}^2 = \pi^{\tfrac{n-m }{2}} \|\eta\|_{L^2(Z)}^2.  
\end{equation}
Since $\bar\nabla \xi = d \varphi_{s\ell } \otimes \eta + \varphi_{s\ell} \bar\nabla \eta$, a computation using  estimate \eqref{eq:elliptic_estimate1_k>=0} with $k=0, 1$ shows that  there exist constants $C_2, C_3$ with
\[
C_2 \| \eta\|_{W^{1,2}(Z)} \leq \|\xi\|_{W^{1,2}(N)} \leq \sqrt{s} C_3 \|\eta\|_{W^{1,2}(Z)}.
\]
Since $\ker \slashed D_s$ is the $L^2$ closure of  $ \bigoplus_\ell\varphi_{s\ell} \cdot C^\infty(Z; S^{0+}_\ell)$, equations \eqref{eq:L_2_kernel_D_s_calculation} and \eqref{eq:kernel_D_s_calculation} follow. 

Recall that, by the calculation of the spectrum of $\slashed D^2_s\vert_z$ in \eqref{eq:spectrum_of_Delta_s}, the constant $2s\min_{\ell , Z} \lambda_\ell$ is a lower bound.  Given now $\xi \in C^\infty(N; \pi^*(S^0\vert_Z)) \cap(\ker\slashed D_s)^{\perp_{L^2}}$, Lemma~\ref{lemma:restriction_to_fibers_of_N} implies that the restriction on the fiber $N_z,\  \xi_z \in (\ker\slashed D_s\vert_z)^{\perp_{L^2}}$. Using the spectral decomposition of $\Delta_s\vert_z : L^2(N_z) \to L^2(N_z)$ in \cite{p}[Th. 2.2.2], we have, 
\[
2s\min_{\ell , Z} \lambda_\ell \|\xi\|_2^2(z) \leq \langle \Delta_s \xi, \xi\rangle_2(z) = \| \slashed D_s \xi\|_2^2(z), 
\]
for every $z\in Z$. Hence integrating over $Z$, 
\[
2s\min_{\ell , Z} \lambda_\ell \|\xi\|_2^2 \leq \|\slashed D_s\xi\|_2^2. 
\]
By completion, this inequality holds for every $\xi\in  W^{1,2}_{1,0}(N; \pi^*(S^0\vert_Z)) \cap(\ker\slashed D_s)^{\perp_{L^2}}$. This proves \eqref{eq:spectral_estimate}.

Finally, estimate \eqref{eq:spectral_estimate} shows that $\slashed D_s$ has closed range. Relations \eqref{eq:Fredholm_alternative} then follow as general results for closed operators admitting adjoints with closed range (see \cite{k}[Ch. IV, Th. 5.13, pp.234]).
\end{proof}

\begin{rem}
\label{rem:observations_on_W_1,2_k_0}
\begin{enumerate}
\item We have inclusions,
\label{rem:kernel_infinite_inclusions}
\[
\ker\slashed D_s \subset \bigcap_{\ell \in \mathbb{N}_0} W^{1,2}_{\ell,0}(N).
\]

\item 
\label{rem:decompositions}
Because of Remark \ref{rem:observations_on_W_1,2_k_0} \eqref{rem:kernel_infinite_inclusions}, we have decompositions
\begin{align*}
L^2_k(N) &= (L^2_k(N)\cap\ker \slashed D_s) \oplus [L^2_k(N)\cap(\ker \slashed D_s)^{\perp_{L^2}}], \quad k\geq -1
\\
W^{1,2}_{k,0}(N) &= (W^{1,2}_{k,0}(N)\cap\ker \slashed D_s) \oplus [W^{1,2}_{k,0}(N)\cap(\ker \slashed D_s)^{\perp_{L^2}}], \quad k \geq 0.
\end{align*}
Given any $\xi \in L^2(N)$ we decompose it into, 
\[
\xi = \xi^0 + \xi^1, \quad \xi^0\in \ker \slashed D_s  \quad \text{and} \quad \xi^1 \in (\ker \slashed D_s)^{\perp_{L^2}}.
\]
Henceforth we denote these projections with these notations. 

\item \label{rem:module_structures} The subspace $\ker \slashed D_s$ is a $C^\infty(Z; \mathbb{R})$-module. The same holds true for $W^{1,2}_{k,\ell}(N; \pi^*(S^0\vert_Z)) \cap(\ker\slashed D_s)^{\perp_{L^2}}$, for every $k, \ell\geq 0$.

\item Analogue estimates and decompositions hold for $\slashed D_s^*$.
\end{enumerate}
\end{rem}

We used the following lemma:
\begin{lemma}
\label{lemma:restriction_to_fibers_of_N}
Let $\xi \in C^\infty(N; \pi^*(S^0\vert_Z)) \cap(\ker\slashed D_s)^{\perp_{L^2}}$. Then the restriction on the fiber $\xi\vert_{N_p}=:  \xi_p$ is a section of $(\ker\slashed D_s\vert_p)^{\perp_{L^2}}$, for every $p\in Z$.
\end{lemma}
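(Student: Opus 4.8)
The plan is to decompose $\xi$ fiberwise using the spectral decomposition of the vertical operator. Recall from Proposition~\ref{prop:vertical_cross}\eqref{eq:L_2_kernel_D_s_calculation} that $\ker\slashed D_s = \bigoplus_\ell \varphi_{s\ell}\cdot L^2(Z; S^{0+}_\ell)$, and from the computation of the spectrum of $\slashed D^2_s\vert_z$ in \eqref{eq:spectrum_of_Delta_s} that, for each fixed $z\in Z$, the fiberwise operator $\slashed D_s\vert_z : L^2(N_z; (S^0\vert_Z)_z)\to L^2(N_z; (S^1\vert_Z)_z)$ has kernel exactly $\bigoplus_\ell \varphi_{s\ell}\vert_{N_z}\cdot (S^{0+}_\ell)_z$, a finite-dimensional space spanned by the Gaussian ground states tensored with the fixed vector space $(S^{0+}_\ell)_z$. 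Since the $\varphi_{s\ell}$ depend only on $(\pi(v),\lvert v\rvert)$, the orthogonal projection $\Pi_z : L^2(N_z)\to \ker\slashed D_s\vert_z$ onto this fiber kernel is given by integrating $\xi_z$ against these Gaussians over $N_z$; concretely, for $\eta\in C^\infty(Z;S^{0+}_\ell)$ one has $\langle \xi, \varphi_{s\ell}\cdot\eta\rangle_{L^2(N)} = \int_Z \langle (\Pi_z\xi_z), \varphi_{s\ell}\cdot\eta(z)\rangle_{L^2(N_z)}\, d\mathrm{vol}^Z(z)$ by Fubini (using that $d\mathrm{vol}^N$ splits, up to the smooth density factors already absorbed, as an integral over fibers followed by an integral over $Z$).

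First I would make precise that the hypothesis $\xi \perp_{L^2} \ker\slashed D_s$ means $\langle\xi, \varphi_{s\ell}\cdot\eta\rangle_{L^2(N)} = 0$ for every $\ell$ and every $\eta\in L^2(Z;S^{0+}_\ell)$, in particular for every smooth compactly supported $\eta$. Fix $\ell$ and a fixed vector $e\in (S^{0+}_\ell)_p$ at a point $p\in Z$. I want to show $\langle \Pi_p\xi_p, \varphi_{s\ell}\vert_{N_p}\cdot e\rangle_{L^2(N_p)} = 0$; since $e$ and $\ell$ are arbitrary and these exhaust $\ker\slashed D_s\vert_p$, this gives $\Pi_p\xi_p = 0$, i.e. $\xi_p \in (\ker\slashed D_s\vert_p)^{\perp_{L^2}}$. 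Define $f(z) := \langle \Pi_z\xi_z, \varphi_{s\ell}\vert_{N_z}\cdot \eta_0(z)\rangle_{L^2(N_z)}$ for a local smooth section $\eta_0$ of $S^{0+}_\ell$ with $\eta_0(p) = e$; since $\xi$ is smooth and the Gaussians depend smoothly on $z$, $f$ is continuous (indeed smooth) on $Z$. By the displayed Fubini identity, $\int_Z f(z)\,\chi(z)\, d\mathrm{vol}^Z = \langle \xi, \varphi_{s\ell}\cdot(\chi\eta_0)\rangle_{L^2(N)} = 0$ for every smooth compactly supported scalar $\chi$ supported in the chart. Since $f$ is continuous and this holds for all such $\chi$, the fundamental lemma of the calculus of variations forces $f\equiv 0$ on that neighborhood, and in particular $f(p) = \langle \Pi_p\xi_p, \varphi_{s\ell}\vert_{N_p}\cdot e\rangle_{L^2(N_p)} = 0$.

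The only genuinely delicate point is the Fubini step: one must check that the integrand $\langle \mathfrak{c}_N$-independent pairing of $\xi_z$ against a Gaussian is integrable in $z$, so that the fiber-then-base iterated integral equals the integral over $N$. This is where the weighted decay of $\varphi_{s\ell}$ together with $\xi\in L^2(N)$ and Cauchy--Schwarz on each fiber does the work: $\lvert f(z)\rvert \le \|\xi_z\|_{L^2(N_z)}\,\|\varphi_{s\ell}\vert_{N_z}\cdot\eta_0(z)\|_{L^2(N_z)}$, and $z\mapsto \|\xi_z\|_{L^2(N_z)}$ lies in $L^2(Z)$ by Fubini for $\lvert\xi\rvert^2$, while the Gaussian factor is bounded on the (relatively compact) chart; so $f\in L^1$ locally and the interchange is justified by the classical Fubini--Tonelli theorem applied to $d\mathrm{vol}^N$ written in bundle coordinates. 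I expect this measure-theoretic bookkeeping — not any new geometry — to be the main (modest) obstacle; everything else is the fiberwise Hermite spectral theory already recorded in \eqref{eq:spectrum_of_Delta_s} and \eqref{eq:L_2_kernel_D_s_calculation}.
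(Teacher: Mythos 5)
Your proof is correct and takes essentially the same route as the paper: both reduce to the fiber via Fubini, exploit that $\ker\slashed D_s$ is closed under multiplication by functions on $Z$, and deduce the pointwise conclusion from a density-of-test-functions argument — you phrase this as the fundamental lemma of the calculus of variations applied to the continuous function $f(z)$, while the paper uses the equivalent averaging/Lebesgue-point argument (pair against $\varrho_\varepsilon\cdot\eta$ with a bump $\varrho_\varepsilon$ concentrated at $p$, normalize by $\mathrm{vol}(B(p,\varepsilon)\cap Z)$, and let $\varepsilon\to 0^+$). The auxiliary projection $\Pi_z$ you introduce is harmless but redundant, since pairing $\xi_z$ directly against elements of $\ker\slashed D_s\vert_z$ already gives what is needed.
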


\begin{proof}
Let $\varrho_\varepsilon(z) = \varrho( d_Z(z,p)/ \varepsilon)$ where $d_Z(z,p)$ is the distance of $z$ from $p$ in $Z$ and $\varrho$ is a cutoff function with $\rho(0)=1$ and $\mathrm{supp\,} \varrho \subset [0,1]$. For every $\eta \in \ker\slashed D_s$ we have $\varrho_\varepsilon \cdot \eta \in\ker\slashed D_s$.  By Fubini's Theorem,
\bigskip 

\begin{align*}
0 &= \frac{1}{\mathrm{vol}(B(p, \varepsilon)\cap Z)} \int_N \langle \xi, \varrho_\varepsilon \cdot \eta\rangle\, d\mathrm{vol}^N 
\\
&= \frac{1}{\mathrm{vol}(B(p, \varepsilon)\cap Z)} \int_{B(p, \varepsilon)\cap Z}\varrho_\varepsilon(z)\cdot \int_{N_z} \langle \xi, \eta\rangle\, dv dz \to \int_{N_p} \langle \xi, \eta\rangle\, dv, 
\end{align*}
as $\varepsilon \to 0^+$. Hence for every $p\in Z$, we obtain $\xi\vert_{N_p} \in (\ker(\slashed D_s)\vert_p)^{\perp_{L^2}}$. 
\end{proof}

As a corollary we prove the following proposition with a bootstrap argument on $k\in \mathbb{N}_0$ for $\slashed D_s$ on $W^{1,2}_{k,0}$-spaces:

\begin{prop}
\label{prop:bootstrap_on_k}
Assume $ s>0$ and $k\geq 0$. Then 
\[
\slashed D_s : W^{1,2}_{k,0}(N)\cap (\ker\slashed D_s)^{\perp_{L^2}} \to L^2_{k-1}(N)\cap (\ker\slashed D_s^*)^{\perp_{L^2}},
\]
defines an isomorphism of $C^\infty(Z;\mathbb{R})$-modules,  for every $k \geq 0$. The inverse operator $G_s$ is defined and is bounded. More precisely, there exist constant $C = C(n-m, C_0, C_1, k)>0$ so that every $\xi\in W^{1,2}_{k,0}(N)\cap (\ker\slashed D_s)^{\perp_{L^2}} $ obeys an estimate,
\begin{equation}
\label{eq:bootstrap_estimate}
 s\|r^{k+1}\xi\|_2 +  \| r^k \bar\nabla^{\mathcal V} \xi \|_2 \leq C\sum_{u=0}^k s^{-\tfrac{u}{2}} \| r^{k-u} \slashed D_s \xi\|_2.
\end{equation}
\end{prop}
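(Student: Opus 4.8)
The plan is to argue by induction on $k\ge0$, using the closed‑range and spectral facts of Proposition~\ref{prop:vertical_cross} for the base case and a cut‑off/limiting argument for the inductive step. For $k=0$ everything is already in place: by \eqref{eq:Fredholm_alternative} the target space $L^2(N)\cap(\ker\slashed D_s^*)^{\perp_{L^2}}$ equals $\mathrm{Im\,}\slashed D_s$, so given $\eta$ there I pick $\tilde\xi$ in the domain $W^{1,2}_{0,0}(N)$ of $\slashed D_s$ with $\slashed D_s\tilde\xi=\eta$ and replace it by its component $\xi$ in the summand $W^{1,2}_{0,0}(N)\cap(\ker\slashed D_s)^{\perp_{L^2}}$ of the decomposition of Remark~\ref{rem:observations_on_W_1,2_k_0}; this $\xi$ still solves $\slashed D_s\xi=\eta$, giving surjectivity, while injectivity is immediate from the spectral estimate \eqref{eq:spectral_estimate}. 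Estimate \eqref{eq:bootstrap_estimate} for $k=0$ is \eqref{eq:elliptic_estimate1_k>=0} with $k=0$ once the term $s^{1/2}\|\xi\|_{L^2(N)}$ is absorbed via \eqref{eq:spectral_estimate} (which bounds it by $(2C_0)^{-1/2}\|\slashed D_s\xi\|_{L^2(N)}$), and boundedness of the inverse $G_s$ into $W^{1,2}_{0,0}(N)$ follows by combining the two estimates.

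For the inductive step I assume the statement for $k-1$ and take $\eta\in L^2_{k-1}(N)\cap(\ker\slashed D_s^*)^{\perp_{L^2}}$. Since $L^2_{k-1}(N)\subseteq L^2_{k-2}(N)$, the inductive hypothesis produces a unique $\xi\in W^{1,2}_{k-1,0}(N)\cap(\ker\slashed D_s)^{\perp_{L^2}}$ with $\slashed D_s\xi=\eta$, so the remaining work is to show $\xi\in W^{1,2}_{k,0}(N)$ and to prove \eqref{eq:bootstrap_estimate}. Take cut‑offs $\chi_R$ in the fibre radius $r$, equal to $1$ on $\{r\le R\}$, supported in $\{r\le 2R\}$, whose derivatives in the fibre directions are of size $O(1/R)$. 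Since $sr\bar A_r$ is $C^\infty(N)$‑linear (pointwise multiplication by an endomorphism), $\slashed D_s$ commutes with multiplication by $\chi_R$ up to $[\slashed D_0,\chi_R]$, which is a bundle map of size $O(1/R)$ supported in the shell $\{R\le r\le 2R\}$; thus $\slashed D_s(\chi_R\xi)=\chi_R\eta+[\slashed D_0,\chi_R]\xi$. Because $\chi_R\xi$ has compact fibrewise support and $\xi\in W^{1,2}_{k-1,0}(N)\subseteq W^{1,2}_{0,0}(N)$, it lies in $W^{1,2}_{k,0}(N)$ (on its support the polynomial weights are comparable to constants), so \eqref{eq:elliptic_estimate1_k>=0} applies to $\chi_R\xi$ and gives
\[
s\|r^{k+1}\chi_R\xi\|_{L^2(N)} + \|r^k\bar\nabla^{\mathcal V}(\chi_R\xi)\|_{L^2(N)} \le C\Big(\sum_{u=0}^{k} s^{-u/2}\|r^{k-u}\slashed D_s(\chi_R\xi)\|_{L^2(N)} + s^{(1-k)/2}\|\chi_R\xi\|_{L^2(N)}\Big).
\]
The decisive point is that membership of $\xi$ in $W^{1,2}_{k-1,0}(N)$ already means $r^k\xi\in L^2(N)$, so the commutator contributions on the right, supported in $\{r\ge R\}$ and carrying a factor $1/R$, are bounded by $R^{-1-u}\big(\int_{\{r\ge R\}}r^{2k}|\xi|^2\big)^{1/2}\to0$ as $R\to\infty$. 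Hence the right‑hand side is uniformly bounded in $R$ and converges to $C\big(\sum_u s^{-u/2}\|r^{k-u}\eta\|_{L^2(N)}+s^{(1-k)/2}\|\xi\|_{L^2(N)}\big)$; letting $R\to\infty$ — monotone convergence for the term $r^{k+1}\chi_R\xi$, and splitting off from $r^k\bar\nabla^{\mathcal V}(\chi_R\xi)$ the summand $\chi_R\,r^k\bar\nabla^{\mathcal V}\xi$, the remainder going to $0$ in $L^2(N)$ — yields $r^{k+1}\xi,\ r^k\bar\nabla^{\mathcal V}\xi\in L^2(N)$, i.e.\ $\xi\in W^{1,2}_{k,0}(N)$. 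Now \eqref{eq:elliptic_estimate1_k>=0} may be applied to $\xi$ itself, and \eqref{eq:bootstrap_estimate} follows by absorbing the term $s^{(1-k)/2}\|\xi\|_{L^2(N)}$ through \eqref{eq:spectral_estimate}, which turns it into a constant multiple of the $u=k$ term $s^{-k/2}\|\slashed D_s\xi\|_{L^2(N)}$.

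Injectivity on $W^{1,2}_{k,0}(N)\cap(\ker\slashed D_s)^{\perp_{L^2}}$ is inherited from the case $k=0$, so $\slashed D_s$ is the asserted isomorphism and $G_s$ its inverse, bounded into $W^{1,2}_{k,0}(N)$ by combining \eqref{eq:bootstrap_estimate}, \eqref{eq:spectral_estimate} and interpolation of the intermediate weights; it is a map of $C^\infty(Z;\mathbb{R})$‑modules because $\slashed D_s$ commutes with multiplication by functions pulled back from $Z$ (the vertical lifts annihilate such functions, and $sr\bar A_r$ is a bundle map) and the kernel‑complement spaces in play are $C^\infty(Z;\mathbb{R})$‑modules by Remark~\ref{rem:observations_on_W_1,2_k_0}; hence so is $G_s$. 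The main obstacle is precisely the decay/regularity bootstrap in the inductive step: the a priori estimate \eqref{eq:elliptic_estimate1_k>=0} is only available for sections already known to lie in $W^{1,2}_{k,0}(N)$, so the real content is to upgrade a solution known a priori only to be in $W^{1,2}_{k-1,0}(N)$, which is exactly what the cut‑off argument achieves, the crucial input being that $W^{1,2}_{k-1,0}(N)$‑membership already controls $\int_N r^{2k}|\xi|^2$ and thus kills the shell errors produced by $[\slashed D_0,\chi_R]$ in the limit.
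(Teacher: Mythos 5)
Your proof is correct and takes essentially the same route as the paper's. Both proofs obtain a candidate solution $\xi$ via the closed‑range/Fredholm facts of Proposition~\ref{prop:vertical_cross}, truncate in the fiber radius with a cut‑off whose derivative is $O(1/R)$ and supported in the shell $\{R\le r\le 2R\}$, apply the a priori estimate \eqref{eq:elliptic_estimate1_k>=0} to the truncation, and then let the cut‑off scale go to infinity to upgrade the weighted regularity of $\xi$; the final estimate \eqref{eq:bootstrap_estimate} is obtained in both cases by absorbing the $s^{(1-k)/2}\|\xi\|_2$ term through \eqref{eq:spectral_estimate}. The only structural difference is cosmetic: you run an outer induction on $k$ (using the inductive hypothesis to place $\xi$ in $W^{1,2}_{k-1,0}(N)$ before cutting off), whereas the paper fixes $\xi\in W^{1,2}_{0,0}(N)$ once via the $k=0$ case and runs an inner induction on the weight index $\ell=0,\dots,k$ to show the truncated sequence is Cauchy in $W^{1,2}_{\ell,0}$; and you pass to the limit by monotone convergence and a triangle‑inequality split of $\bar\nabla^{\mathcal V}(\chi_R\xi)$ rather than by Cauchyness. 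Both formulations are equivalent and the crucial point — that membership in the lower weighted space already makes the shell contribution of the cut‑off commutator vanish in the limit — is identical.
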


\begin{proof}
The operator $\slashed D_s : W^{1,2}_{k,0}(N)\cap (\ker\slashed D_s)^{\perp_{L^2}} \to L^2_{k-1}(N)\cap (\ker\slashed D_s^*)^{\perp_{L^2}}$ is injective and we have to prove that it is surjective. Let $\eta \in  L^2_k(N)\cap (\ker\slashed D_s^*)^{\perp_{L^2}}$. By Proposition~\ref{prop:vertical_cross} \eqref{eq:Fredholm_alternative}, there exist  unique $\xi \in W^{1,2}_{0,0}(N)\cap  (\ker \slashed D_s)^{\perp_{L^2}}$ with $\slashed D_s\xi = \eta$. We prove that $\xi$ is of class $W^{1,2}_{k,0}$.

Let $\rho : [0, +\infty) \to [0,1]$ with $\mathrm{supp\,} \rho \subset [0,2]$ and $ \rho^{-1}(\{1\}) = [0,1]$ and $\lvert d\rho\rvert \leq 1$. Define sequence $\rho_j(r) = \rho(r/ j)$ so that $\rho_j \to 1$ uniformly on compact subsets of $N$ and $\lvert d\rho_j\rvert \leq 1/j$. Then $\xi_j = \rho_j \cdot \xi \in \bigcap_\ell W^{1,2}_{\ell,0}(N)$ and by Dominated convergence, $\xi_j \to \xi$ and $\bar\nabla_\alpha \xi_j \to \bar\nabla_\alpha \xi$ in $ L^2(N)$ for every $\alpha$. Setting $\eta_j = \slashed D_s \xi_j$, by Dominated convergence, we also have that $\eta_j \to \eta$ in $L^2(N)$. 

\hfill

\textit{Claim:} $\{\xi_j\}_j$ is a Cauchy sequence in $W^{1,2}_{k,0}$-norm.

\proof[Proof of claim] 
By \eqref{eq:elliptic_estimate1_k>=0} with $k=0$,
\[
s\|r(\xi_j- \xi)\|_2 \leq C(\|\eta_j - \eta\|_2 +  \sqrt{s}\|\xi_j- \xi\|_2),
\] 
so that $\xi_j \to \xi$ in $W^{1,2}_{0,0}(N)$. We work using induction. Suppose that $\{\xi_j\}_j$ is Cauchy in $W^{1,2}_{\ell-1,0}(N)$ for some $1\leq\ell\leq k$. Then $\xi \in W^{1,2}_{\ell-1,0}(N)$ and $\{r^\ell \eta_j\}_j$ is Cauchy in $L^2(N)$, since
\[
\|r^\ell (\eta_j - \eta)\|_2 \leq \|r^\ell d \rho_{j\cdot} \xi\|_2 + \| (1- \rho_j) r^\ell \eta\|_2 \leq \frac{1}{j} \|r^\ell  \xi\|_2 +  \| (1- \rho_j) r^\ell \eta\|_2 \to 0.
\]
The convergence holds since $r^\ell \xi \in  L^2(N)$ by inductive assumption and $r^\ell \eta \in L^2(N)$ by our initial assumption, so that Dominated convergence applies to the last term. But then, by \eqref{eq:elliptic_estimate1_k>=0} with $k=\ell$,
\begin{equation*} 
s\|r^{\ell+1}(\xi_j- \xi_i)\|_2 + \|r^\ell \bar\nabla^{\mathcal V} (\xi_j- \xi_i)\|_2 \leq 
C\left(\sum_{u=0}^\ell s^{-\tfrac{u}{2}}\|r^{\ell - u}(\eta_j - \eta_i)\|_2 +  s^{\tfrac{1-\ell}{2}}\|\xi_j- \xi_i\|_2\right),
\end{equation*}
which proves that $\{\xi_j\}_j$ is Cauchy in $W^{1,2}_{\ell,0}(N)$ and therefore $\xi \in W^{1,2}_{\ell,0}(N)$. It follows that $\{\xi_j\}_j$ is Cauchy in $W^{1,2}_{k,0}(N)$ and therefore $\xi \in W^{1,2}_{k,0}(N)$. Estimate \eqref{eq:bootstrap_estimate} follows from an application of \eqref{eq:elliptic_estimate1_k>=0}, followed by an application of \eqref{eq:spectral_estimate}. The boundedness of the inverse operator follows from the Open Mapping Theorem.
\end{proof}

\medskip
   
  \vspace{1cm}
  
\setcounter{equation}{0}   
\section{The operator \texorpdfstring{$\bar D^Z$}{} and the horizontal derivatives}
\label{sec:The_operator_bar_D_Z_and_the_horizontal_derivatives}

Throughout the paragraph we will work with function spaces of sections of the bundle $\pi^*(S^0\vert_Z)\to N$, unless if we specify otherwise. The domain of $\bar D^Z : L^2(N) \to L^2(N)$ is the space $W^{1,2}_{1,1}(N)$ so that $\bar D^Z$ is densely defined. Also $\bar D^Z(W^{1,2}_{k, l}(N)) \subset L^2_{\min\{k-2, l-2\}}(N)$, for every $k\geq 1,\ l\geq 1$. Fix coordinates $ (\pi^{-1}(U), (x_j, x_\alpha)_{j,\alpha})$, orthonormal frames $\{e_j\}_j$ with their lifts $\{h_j\}_j$ and orthonormal frame $\{\sigma_k\}_k$ on $S^0\vert_U$. Recall operator $D^Z_+$ from Definition~\ref{defn:Dirac_operator_component}.

\begin{prop}
\begin{enumerate}
\item There exist a constant $C>0$ so that, for every $s>0$ and every $\xi\in W^{1,2}_{1,1}(N)$, 
\begin{equation}
\label{eq:elliptic_estimate2}
\|\bar \nabla^{\mathcal H} \xi\|_{L^2(N)}^2 \leq   \|\bar D^Z \xi\|_{L^2(N)}^2 
+  C( s^{-1}\|\slashed D_s \xi\|_{L^2(N)}^2  + \|r\slashed D_s \xi\|_{L^2(N)}^2 + \|\xi \|_{L^2(N)}).
\end{equation}

\item The operators $\bar D^Z$ and $D^Z_+$ are linked in the following way; for every $\xi = \sum_\ell \xi_\ell,\ \xi_\ell \in C^\infty(Z; S^{0+}_\ell)$, the operator $\xi \mapsto \xi^0 = \sum_\ell \varphi_{s \ell} \cdot \xi_\ell \in C^\infty(N; \pi^* S^{0+}_\ell)$ is defined in Remark~\ref{rem:observations_on_W_1,2_k_0} \eqref{rem:decompositions}. We have 
\begin{equation}
\label{eq:br_D_Z_versus_D_Z_+}
\bar D^Z \xi^0 = \sum_\ell \mathfrak{c}_N(\pi^* d \ln \varphi_{s\ell}) P_\ell^{0+} \xi^0 + (D^Z_+ \xi)^0,
\end{equation}
where $P^{0+}_\ell S^0 \to S^{0+}_\ell$ is the orthogonal projection.

\item 
For every $k\geq 0$
\begin{equation}
\label{eq:aux_estimate_2}
\| r^k\bar\nabla^{\mathcal H}(\xi^0) \|_{L^2(N)} \leq Cs^{-\tfrac{k}{2}} (\|[(D^Z_++{\mathcal B}_{0+}^Z)\xi]^0\|_{L^2(N)} + \|\xi^0\|_{L^2(N)})
\end{equation}

\end{enumerate}
\end{prop}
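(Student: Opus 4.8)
The plan is to prove the three items in order, leveraging the Weitzenböck machinery from Proposition~\ref{prop:Weitzenbock_identities_and_cross_terms} and the already-established spectral theory of $\slashed D_s$. For item~(1), the estimate \eqref{eq:elliptic_estimate2}, I would start from the integration-by-parts identity. Writing $\xi$ in terms of a local orthonormal frame over $N_p$ so that the connection components of $\bar\nabla^{\mathcal H}$ vanish there, one computes $\|\bar D^Z\xi\|_{L^2}^2$ by the standard Bochner trick, getting $\|\bar\nabla^{\mathcal H}\xi\|_{L^2}^2$ plus curvature and torsion terms coming from \eqref{eq:bar_D_z_Weitzenbock}. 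The torsion term $c_T\bar\nabla$ differentiates only in the vertical directions, so it is controlled by $\|\bar\nabla^{\mathcal V}\xi\|_{L^2}$, which in turn is bounded by the right-hand side of \eqref{eq:elliptic_estimate1_k>=0} with $k=0$; the curvature term $F$ is a zeroth-order operator bounded uniformly on $Z$, hence absorbed into $\|\xi\|_{L^2}$. The cross-terms linking the horizontal and vertical operators are governed by \eqref{eq:cross_terms1}: the operator $\slashed D_s^*\bar D^Z + \bar D^{Z*}\slashed D_s$ is zeroth order with coefficients of size $sO(r)$, which is why the term $\|r\slashed D_s\xi\|_{L^2}^2$ and the $s^{-1}\|\slashed D_s\xi\|_{L^2}^2$ term appear after a Peter–Paul split. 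The bookkeeping is to expand $\|(\slashed D_s + \bar D^Z)\xi\|^2 = \|\slashed D_s\xi\|^2 + \|\bar D^Z\xi\|^2 + 2\operatorname{Re}\langle\slashed D_s\xi, \bar D^Z\xi\rangle$ and rearrange.

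For item~(2), the identity \eqref{eq:br_D_Z_versus_D_Z_+}, I would argue directly. A section of the form $\xi^0 = \sum_\ell \varphi_{s\ell}\cdot\xi_\ell$ has its fiberwise part a Gaussian times a parallel lift of a section on $Z$. Applying $\bar D^Z = \mathfrak c_N(h^j)\bar\nabla_{h_j}$ and using the product rule (with $\bar\nabla$ on $\pi^*(E\vert_Z)$ acting on $\varphi_{s\ell}$ which is a function pulled back from $Z\times[0,\infty)$, so $\bar\nabla_{h_j}\varphi_{s\ell} = h_j(\varphi_{s\ell})$), one sees that differentiating the Gaussian factor produces the term $\mathfrak c_N(\pi^*d\ln\varphi_{s\ell})P_\ell^{0+}\xi^0$, while differentiating $\xi_\ell$ produces $\varphi_{s\ell}$ times $\mathfrak c_N(h^j)\bar\nabla^{E^0\vert_Z}_{h_j}$ composed with the projection $P_\ell^{0+}$ — which is exactly $(D^Z_+\xi)^0$ by Definition~\ref{defn:Dirac_operator_component} and the definition of $\nabla^\pm$. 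One must keep track that the two terms land in complementary summands so there is no interference; the fact that $\mathfrak c_N(\pi^*d\ln\varphi_{s\ell})$ is vertical Clifford multiplication (it involves only $dr$, since $\varphi_{s\ell}$ depends on $z$ and $r$ only) while $D^Z_+$ is horizontal makes this clean.

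For item~(3), the estimate \eqref{eq:aux_estimate_2}, I would combine (1) and (2) with the Gaussian evaluation \eqref{eq:Gaussian_evaluation} and the bootstrap from Proposition~\ref{prop:bootstrap_on_k}. Apply \eqref{eq:elliptic_estimate2} to $\xi^0$: since $\slashed D_s\xi^0 = 0$ (as $\xi^0\in\ker\slashed D_s$), the right-hand side collapses to $\|\bar D^Z\xi^0\|_{L^2}^2 + C\|\xi^0\|_{L^2}$. By \eqref{eq:br_D_Z_versus_D_Z_+}, $\bar D^Z\xi^0$ splits into the $d\ln\varphi_{s\ell}$ term plus $(D^Z_+\xi)^0$; the first of these is $sr$ times a projection of $\xi^0$ (since $d\ln\varphi_{s\ell} = -s\lambda_\ell r\,dr$), so its $L^2$-norm is controlled by $s\|r\xi^0\|_{L^2}$, which by the $k=0$ case of \eqref{eq:elliptic_estimate1_k>=0} applied to the $(\ker\slashed D_s)^\perp$ part — or directly by the explicit Gaussian computation — is $O(s^{1/2}\|\xi^0\|_{L^2})$... \emph{but} this term should in fact be absorbed or handled together with $(D^Z_+\xi)^0$, since the combination $D^Z_+ + {\mathcal B}_{0+}^Z$ is what appears. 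The cleanest route: control $\|r^k\bar\nabla^{\mathcal H}\xi^0\|$ by using \eqref{eq:elliptic_estimate2} after multiplying by $r^k$ and integrating (or by a weighted version), pushing the weights onto the Gaussian factor, where each power of $r$ against $\varphi_{s\ell}$ costs a factor $s^{-1/2}$ by the substitution $t=\sqrt{s\lambda_\ell}\,r$; this is the source of the $s^{-k/2}$ in \eqref{eq:aux_estimate_2}.

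\textbf{The main obstacle} I anticipate is item~(3): correctly tracking how the weights $r^k$ interact with the Gaussian to produce precisely the $s^{-k/2}$ decay while simultaneously replacing $\bar D^Z$-norms by $(D^Z_+ + {\mathcal B}_{0+}^Z)$-norms. The subtlety is that \eqref{eq:br_D_Z_versus_D_Z_+} gives $\bar D^Z\xi^0$ in terms of $D^Z_+$, not $D^Z_+ + {\mathcal B}_{0+}^Z$, so one needs the definition of ${\mathcal B}_{0+}^Z$ (through the constants $C_{\ell,\ell'}$ and the expansion terms ${\mathcal B}^i$) to see that the extra $d\ln\varphi$-term, after projecting onto the Gaussian sector and using \eqref{eq:Gaussian_evaluation}, reassembles into the full operator — or else that it is lower-order in $s$ and harmless. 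Getting this reconciliation right, together with the iterated use of the bootstrap estimate \eqref{eq:bootstrap_estimate} to absorb the weighted vertical-derivative terms generated along the way, is where the real work lies; the rest is the standard Bochner-plus-Peter–Paul routine.
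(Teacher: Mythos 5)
Your item~(2) is correct and matches the paper's direct product-rule computation. Items~(1) and~(3) have real gaps.

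\textbf{Item (1).} The Weitzenb\"ock strategy is the right one, but your handling of the torsion term is wrong in a way that would produce a strictly weaker estimate. The torsion $T$ from Proposition~\ref{prop:basic_extension_bar_connection_properties} is $H F^{\nabla^N}(\pi_* X,\pi_* Y)v$, which is \emph{linear in the fiber variable $v$}, i.e. $O(r)$. So the term $c_T\bar\nabla$ has coefficients of size $O(r)$ in front of a vertical derivative, and the paper's bound is $\lvert\langle c_T\bar\nabla\xi,\xi\rangle\rvert \le C\|r\,\bar\nabla^{\mathcal V}\xi\|\,\|\xi\|$. You instead bound the torsion by $\|\bar\nabla^{\mathcal V}\xi\|$ and invoke \eqref{eq:elliptic_estimate1_k>=0} with $k=0$, which yields $\|\bar\nabla^{\mathcal V}\xi\| \lesssim \|\slashed D_s\xi\| + s^{1/2}\|\xi\|$; after Cauchy--Schwarz this produces an $s\|\xi\|^2$ on the right-hand side, not the $\|\xi\|$ of \eqref{eq:elliptic_estimate2}. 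What you must use is the $k=1$ case, $\|r\,\bar\nabla^{\mathcal V}\xi\| \lesssim \|r\slashed D_s\xi\| + s^{-1/2}\|\slashed D_s\xi\| + \|\xi\|$, and then square; this is exactly where the terms $\|r\slashed D_s\xi\|^2$ and $s^{-1}\|\slashed D_s\xi\|^2$ originate. Your attribution of these terms to the cross-term operator $\slashed D_s^*\bar D^Z + \bar D^{Z*}\slashed D_s$ of \eqref{eq:cross_terms1} is a red herring: the Bochner formula \eqref{eq:bar_D_z_Weitzenbock} for $\bar D^Z$ alone does not involve $\slashed D_s$, and the paper never expands $\|(\slashed D_s+\bar D^Z)\xi\|^2$ in this proof.

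\textbf{Item (3).} The obstacle you flag --- reassembling the $d\ln\varphi_{s\ell}$ term into $D^Z_+ + {\mathcal B}^Z_{0+}$ --- is not actually present, and your route through a weighted version of item~(1) is not the paper's route. The paper proves \eqref{eq:aux_estimate_2} \emph{without} using \eqref{eq:elliptic_estimate2} or \eqref{eq:br_D_Z_versus_D_Z_+}. It expands $\bar\nabla^{\mathcal H}(\xi^0)$ directly by the product rule as $\sum_\ell ( d_Z\varphi_{s\ell}\otimes\xi_\ell + \varphi_{s\ell}\,\bar\nabla\xi_\ell)$, bounds $\|r^k(1+sr^2)\xi^0\|$ and $\|r^k(\bar\nabla\xi)^0\|$ by $C s^{-k/2}(\|\xi^0\| + \|(\bar\nabla\xi)^0\|)$ via \eqref{eq:elliptic_estimate1_k>=0}, converts to $Z$-norms via the Gaussian evaluation \eqref{eq:Gaussian_evaluation}, and then applies the \emph{elliptic estimate on $Z$} for the first-order operator $D^Z_+ + {\mathcal B}^Z_{0+}$: $\|\xi\|_{W^{1,2}(Z)} \le C(\|(D^Z_+ + {\mathcal B}^Z_{0+})\xi\|_{L^2(Z)} + \|\xi\|_{L^2(Z)})$. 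Since ${\mathcal B}^Z_{0+}$ is a zeroth-order bundle map, this is standard; the $d\ln\varphi_{s\ell}$ contribution is absorbed into the weighted $\|r^k(1+sr^2)\xi^0\|$ term and never has to be matched against ${\mathcal B}^Z_{0+}$. The only place ${\mathcal B}^Z_{0+}$ enters is through that elliptic estimate, after the transfer to $Z$.
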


\begin{proof}
 Let $\xi\in C^\infty_c(\pi^{-1}(U);\pi^*(S^0\vert_Z))$. Applying $L^2$-norms with $\xi$ in \eqref{eq:bar_D_z_Weitzenbock} and integrating by parts,
\[
\|\bar\nabla^{\mathcal H}\xi\|_{L^2(N)}^2  \leq \|\bar D^Z\xi\|_{L^2(N)}^2  - \langle c_T \bar\nabla \xi, \xi\rangle_{L^2(N)} +  C\|\xi\|_{L^2(N)}^2.
\]
However by the expression of the torsion $T$ in Proposition~\ref{prop:basic_extension_bar_connection_properties}, we have that
\[
\lvert\langle c_T \bar\nabla \xi, \xi\rangle_{L^2(N)}\rvert \leq C\|r \bar\nabla^{\mathcal V} \xi\|_{L^2(N)} \|\xi\|_{L^2(N)}.
\]
Applying Cauchy-Schwartz followed by estimate \eqref{eq:elliptic_estimate1_k>=0}, we have
\[
\|\bar\nabla^{\mathcal H}\xi\|_{L^2(N)}^2 \leq   \|\bar D^Z \xi\|_{L^2(N)}^2+  C( s^{-1}\|\slashed D_s \xi\|_{L^2(N)}^2  + \|r\slashed D_s \xi\|_{L^2(N)}^2 + \|\xi \|_{L^2(N)}),
\]
as required. 

Proving \eqref{eq:br_D_Z_versus_D_Z_+} is a straightforward calculation:
\begin{align*}
\bar D^Z \xi^0 &= \mathfrak{c}_N(h^j)( h_j(\varphi_{s\ell}) \cdot \xi_\ell + \varphi_{s\ell} \cdot \bar\nabla_{h_j} \xi_\ell)
\\
&= e_j(\varphi_{s\ell}) \cdot c_j\xi_\ell +  \varphi_{s\ell} \cdot c_j\bar\nabla_{e_j} \xi_\ell, \qquad (\text{since $\pi_* h_j = e_j$})
\\
&= \mathfrak{c}_N(\pi^* d \ln \varphi_{s\ell}) P_\ell^{0+} \xi^0 +  (D^Z_+ \xi)^0.
\end{align*}
Also, for every $k\geq 0$,
\begin{align*}
\| r^k\bar\nabla^{\mathcal H}(\xi^0) \|_{L^2(N)}&\leq  C\sum_\ell(\|r^k \lvert d_Z\varphi_{s\ell}\rvert \xi_\ell\|_{L^2(N)} + \|r^k\varphi_{s\ell} \bar\nabla\xi_\ell\|_{L^2(N)})
\\
&\leq  C\|r^k(1+s r^2)\xi^0\|_{L^2(N)} + C\|r^k (\bar\nabla\xi)^0\|_{L^2(N)})
\\
&\leq Cs^{-\tfrac{k}{2}}( \|\xi^0\|_{L^2(N)} + \|(\bar\nabla \xi)^0 \|_{L^2(N)})\qquad (\text{by \eqref{eq:elliptic_estimate1_k>=0}})
\\
&= C\pi^{\tfrac{n-m}{4}}s^{-\tfrac{k}{2}} \|\xi\|_{W^{1,2}(Z)},\qquad (\text{by \eqref{eq:Gaussian_evaluation}})
\\
&\leq C\pi^{\tfrac{n-m}{4}}s^{-\tfrac{k}{2}}(\|(D^Z_+ + {\mathcal B}^Z_{0+})\xi\|_{L^2(Z)} +\|\xi\|_{L^2(Z)})
\\
&=  Cs^{-\tfrac{k}{2}}(\|[(D^Z_+ + {\mathcal B}^Z_{0+})\xi]^0\|_{L^2(N)} + \|\xi^0\|_{L^2(N)}),\qquad (\text{by \eqref{eq:Gaussian_evaluation}}) 
\end{align*}
 where in the fifth row we used an elliptic estimate for $D^Z_+ + {\mathcal B}^Z_{0+}$. The proof is complete.
\end{proof}

We now prove an analogue of Proposition~\ref{prop:bootstrap_on_k} but involving  both the vertical and horizontal derivatives: 
\begin{prop}
\label{prop:horizontal_regularity}
Let $s> 0$ and $\xi \in W^{1,2}_{0,0}(N) \cap (\ker \slashed D_s)^{\perp_{L^2}}$ so that $\slashed D_s \xi\in W^{1,2}_{k-1,k}(N)$ for some $k\geq 1$. Then $\xi \in  W^{1,2}_{k,k+1}(N)$ and there are estimates:
\begin{equation}
\label{eq:horizontal_regularity_1}
\|\bar\nabla^{\mathcal H} \xi \|_{L^2(N)} \leq Cs^{-1/2}\left(\|\bar\nabla^{\mathcal H} ( \slashed D_s \xi)\|_{L^2(N)}  + \|\slashed D_s\xi\|_{L^2(N)}\right).
\end{equation}
\end{prop}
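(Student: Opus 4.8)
The plan is to split the horizontal derivative of $\xi$ along the $L^2$-decomposition $L^2(N)=\ker\slashed D_s\oplus(\ker\slashed D_s)^{\perp_{L^2}}$ of Remark~\ref{rem:observations_on_W_1,2_k_0} \eqref{rem:decompositions} and to control the two pieces separately: the component in $(\ker\slashed D_s)^{\perp_{L^2}}$ via the spectral gap \eqref{eq:spectral_estimate}, and the component in $\ker\slashed D_s$ via the explicit Gaussian description \eqref{eq:L_2_kernel_D_s_calculation}. Write $\eta:=\slashed D_s\xi$, so $\eta\in W^{1,2}_{k-1,k}(N)\subseteq W^{1,2}_{0,1}(N)\cap L^2_{k-1}(N)$ for $k\ge1$. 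I first record the vertical bounds that feed every step below: by \eqref{eq:spectral_estimate}, $\|\xi\|_{L^2(N)}\le(2sC_0)^{-1/2}\|\eta\|_{L^2(N)}$, and by \eqref{eq:elliptic_estimate1_k>=0} with $k=0$, $s\|r\xi\|_{L^2(N)}\le C\big(\|\eta\|_{L^2(N)}+s^{1/2}\|\xi\|_{L^2(N)}\big)\le C\|\eta\|_{L^2(N)}$; the weighted bounds \eqref{eq:bootstrap_estimate} are kept in reserve for the regularity claim. Fix a local orthonormal frame $\{e_j\}$ of $TZ$ with horizontal lifts $\{h_j\}$ and, for each $j$, decompose $\bar\nabla_{h_j}\xi=(\bar\nabla_{h_j}\xi)^0+(\bar\nabla_{h_j}\xi)^1$ with $(\bar\nabla_{h_j}\xi)^0\in\ker\slashed D_s$. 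A standard regularization (replace $\xi$ by $G_s(\rho_j\eta)$ with $G_s$ the inverse from Proposition~\ref{prop:bootstrap_on_k} and let $j\to\infty$) reduces the proof of \eqref{eq:horizontal_regularity_1} to the case where $\xi$ is smooth and rapidly decreasing, making all the manipulations below legitimate.

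For the orthogonal-complement piece, since $\slashed D_s$ annihilates $\ker\slashed D_s$ we have $\slashed D_s(\bar\nabla_{h_j}\xi)^1=\slashed D_s\bar\nabla_{h_j}\xi=\bar\nabla_{h_j}\eta+[\slashed D_s,\bar\nabla_{h_j}]\xi$. By the analogue of \eqref{eq:cross_terms25} for $\slashed D_s$ (with the same proof: $[\slashed D_0,\bar\nabla_v]=0$ by symmetry of the Hessian, and $[sr\bar A_r,\bar\nabla_v]=-s[\bar\nabla_v,r\bar A_r]$), $[\slashed D_s,\bar\nabla_{h_j}]\xi=-s[\bar\nabla_{h_j},r\bar A_r]\xi$; writing $r\bar A_r=x_\alpha\bar A_\alpha$ and using $h_j(x_\alpha)=O(r)$ from \eqref{eq:horizontal_lift} together with $\bar\nabla_{h_j}\bar A_\alpha=O(1)$ shows $[\bar\nabla_{h_j},r\bar A_r]$ is a bundle map of pointwise size $O(r)$. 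Hence $\|\slashed D_s\bar\nabla_{h_j}\xi\|_{L^2(N)}\le\|\bar\nabla_{h_j}\eta\|_{L^2(N)}+Cs\|r\xi\|_{L^2(N)}\le\|\bar\nabla_{h_j}\eta\|_{L^2(N)}+C\|\eta\|_{L^2(N)}$, and \eqref{eq:spectral_estimate} gives $\|(\bar\nabla_{h_j}\xi)^1\|_{L^2(N)}\le(2sC_0)^{-1/2}\|\slashed D_s\bar\nabla_{h_j}\xi\|_{L^2(N)}\le Cs^{-1/2}\big(\|\bar\nabla_{h_j}\eta\|_{L^2(N)}+\|\eta\|_{L^2(N)}\big)$.

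The kernel piece is the crux. For any $\zeta=\sum_\ell\varphi_{s\ell}g_\ell\in\ker\slashed D_s$, $g_\ell\in L^2(Z;S^{0+}_\ell)$, integrating by parts with the metric-compatible $\bar\nabla$ gives $\langle(\bar\nabla_{h_j}\xi)^0,\zeta\rangle_{L^2(N)}=\langle\bar\nabla_{h_j}\xi,\zeta\rangle_{L^2(N)}=-\langle\xi,\bar\nabla_{h_j}\zeta\rangle_{L^2(N)}-\langle\xi,(\mathrm{div}_{d\mathrm{vol}^N}h_j)\,\zeta\rangle_{L^2(N)}$. Since $h_j r\equiv0$ (horizontal parallel transport is a fibre-isometry) and $\bar\nabla$ preserves $\Gamma(S^{0+}_\ell)$ (Proposition~\ref{prop:basic_restriction_connection_properties} together with its pullback extension), a direct computation of $h_j\varphi_{s\ell}$ gives $\bar\nabla_{h_j}\zeta=\sum_\ell\varphi_{s\ell}\big(\tfrac{n-m}{4\lambda_\ell}(e_j\lambda_\ell)g_\ell+\bar\nabla_{h_j}g_\ell\big)-\tfrac s2\sum_\ell r^2(e_j\lambda_\ell)\varphi_{s\ell}g_\ell$, where the first sum lies in $\ker\slashed D_s$ and is therefore $L^2$-orthogonal to $\xi$. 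For the remaining term — the only place $s$ enters with the wrong power — and for the divergence term I use $|\langle\xi,r^2\varphi_{s\ell}g_\ell\rangle|\le\|r\xi\|_{L^2(N)}\|r\varphi_{s\ell}g_\ell\|_{L^2(N)}$ together with the elementary Gaussian bound $\|r^m\varphi_{s\ell}g_\ell\|_{L^2(N)}\le Cs^{-m/2}\|\varphi_{s\ell}g_\ell\|_{L^2(N)}\le Cs^{-m/2}\|\zeta\|_{L^2(N)}$ (from \eqref{eq:Gaussian_evaluation}, the substitution $t=\sqrt{s\lambda_\ell}\,r$ on each fibre, and mutual orthogonality of the $S^{0+}_\ell$); with $s\|r\xi\|\le C\|\eta\|$ and $\|\xi\|\le Cs^{-1/2}\|\eta\|$ this yields $|\langle(\bar\nabla_{h_j}\xi)^0,\zeta\rangle_{L^2(N)}|\le Cs^{-1/2}\|\eta\|_{L^2(N)}\|\zeta\|_{L^2(N)}$. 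Taking $\zeta=(\bar\nabla_{h_j}\xi)^0$ gives $\|(\bar\nabla_{h_j}\xi)^0\|_{L^2(N)}\le Cs^{-1/2}\|\eta\|_{L^2(N)}$; adding the two pieces and summing over $j$ proves \eqref{eq:horizontal_regularity_1}.

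Finally, the membership $\xi\in W^{1,2}_{k,k+1}(N)$ should follow from the cutoff--bootstrap scheme of the proof of Proposition~\ref{prop:bootstrap_on_k}: $\xi\in W^{1,2}_{k,0}(N)$ already holds by that proposition since $\eta\in L^2_{k-1}(N)$, which gives $\|r^{k+1}\xi\|$ and $\|r^k\bar\nabla^{\mathcal V}\xi\|$ finite; the missing weighted horizontal control $\|(1+r^2)^{k/2}\bar\nabla^{\mathcal H}\xi\|_{L^2(N)}<\infty$ is produced by running the decomposition of the previous two paragraphs with an $r^k$-weight inserted, inducting on the weight and feeding in \eqref{eq:bootstrap_estimate} at each stage, after which $\rho_j\xi\to\xi$ in the $W^{1,2}_{k,k+1}$-norm by dominated convergence exactly as in Proposition~\ref{prop:bootstrap_on_k}. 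The main obstacle is the kernel piece, and in its weighted version a genuine additional difficulty appears: multiplying $\zeta\in\ker\slashed D_s$ by $r^k$ removes it from $\ker\slashed D_s$, so the ``clean'' part of $\bar\nabla_{h_j}\zeta$ is no longer orthogonal to the weighted section; one then re-decomposes $r^k\varphi_{s\ell}g_\ell$ into its $\ker\slashed D_s$ and $(\ker\slashed D_s)^{\perp_{L^2}}$ parts and must balance the $s^{-1/2}$ gained from each Gaussian factor against the $s$ lost from the $s$-scaled perturbation — keeping this balance is exactly what forces the use of the sharp weighted bounds \eqref{eq:bootstrap_estimate} rather than crude ones.
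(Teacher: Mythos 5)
The computational core of your proof is sound and parallels the paper's: both arguments decompose along $\ker\slashed D_s$, control the orthocomplement piece via the spectral gap together with the commutator $[\slashed D_s,\bar\nabla_{h_j}]=-s[\bar\nabla_{h_j},r\bar A_r]$ (the paper's \eqref{eq:cross_terms25} for the adjoint), and control the kernel piece via Gaussian-weighted bounds that produce the crucial $s^{-1/2}$. Your identity for $\bar\nabla_{h_j}\zeta$ matches \eqref{eq:H_derivatives_of_kernel_sections}, the bound $\|r\varphi_{s\ell}g_\ell\|\le Cs^{-1/2}\|\varphi_{s\ell}g_\ell\|$ is correct, and the power counting closes. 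Your calculation is essentially dual to the paper's: they pair $\xi$ against $\bar\nabla_{h_j}\eta^0$ and $\bar\nabla_{h_j}\eta^1$ for arbitrary test $\eta$; you pair $\zeta\in\ker\slashed D_s$ against $\bar\nabla_{h_j}\xi$ and invoke the spectral gap for the rest.

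The genuine gap is that your decomposition $\bar\nabla_{h_j}\xi=(\bar\nabla_{h_j}\xi)^0+(\bar\nabla_{h_j}\xi)^1$ presupposes $\bar\nabla_{h_j}\xi\in L^2(N)$, which is precisely part of the conclusion. Your proposed regularization, replacing $\xi$ by $G_s(\rho_j\eta)$, does not supply this: $G_s$ inverts the purely vertical operator $\slashed D_s$ and maps $L^2_{k-1}(N)$ into $W^{1,2}_{k,0}(N)$, a space carrying weak derivatives only in the fibre directions (Definition~\ref{defn:eighted_norms_spaces}). So $G_s(\rho_j\eta)$ is not ``smooth and rapidly decreasing'' in the sense your manipulations require; it is no better behaved horizontally than $\xi$ itself. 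The paper circumvents this by testing against an arbitrary test section $\eta$, decomposing $\eta=\eta^0+\eta^1$ (not $\bar\nabla_{h_j}\xi$!), and reading off an explicit representative of the weak derivative, $\bar\nabla_{h_j}\xi=(K_j\xi)^0+G_s^*(\bar\nabla_{h_j}\slashed D_s\xi+O(sr)\xi)^1$, which simultaneously proves existence in $L^2$ and membership in weighted spaces. If you want to salvage your direct decomposition, you would need to establish that weak-derivative formula first, at which point your argument is the paper's.

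The $W^{1,2}_{k,k+1}(N)$ membership is also incomplete, and you correctly identify the obstruction ($r^k\zeta\notin\ker\slashed D_s$). But the hard ``inductive weighted horizontal estimate'' you outline is unnecessary once the explicit formula is available: $(K_j\xi)^0\in\ker\slashed D_s\subset\bigcap_\ell W^{1,2}_{\ell,0}(N)$ by Remark~\ref{rem:observations_on_W_1,2_k_0}, and $\bar\nabla_{h_j}\slashed D_s\xi+O(sr)\xi\in L^2_{k-1}(N)$ (using the hypothesis and $\xi\in W^{1,2}_{k,0}(N)$), so Proposition~\ref{prop:bootstrap_on_k} gives $G_s^*(\cdots)^1\in W^{1,2}_{k,0}(N)$; combining, $\bar\nabla^{\mathcal H}\xi\in L^2_{k-1}(N)$ and hence $\xi\in W^{1,2}_{k,k+1}(N)$ with no further bootstrap.
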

\begin{proof}
By Remark~\ref{rem:observations_on_W_1,2_k_0} \eqref{rem:module_structures}, the spaces $W^{1,2}_{k-1,k}(N)\cap (\ker \slashed D_s)^{\perp_{L^2}},\ k\geq 1$ are $C^\infty(Z; \mathbb{R})$-modules. Hence by multiplying $\xi$ with members of a partition of unity of $Z$ subordinated in a trivializations of $N \to Z$, we may assume without loss of generality that $\xi$ is supported in a single chart $\pi^{-1}(U)$ of $N$ with coordinates $(x_j, x_\alpha)_{j, \alpha}$.  First we prove that $\xi$ posses weak derivatives whose weighted Sobolev class we compute. 

Let $\eta\in C^\infty(N)$ be a test function and decompose $\eta= \eta^0 + \eta^1$.  Set $\xi_1 = G_s \eta^1$, where $G_s$ is the Green's operator of $\slashed D^*_s$, defined in Proposition~\ref{prop:bootstrap_on_k}. Then $\xi_1,\ \eta^1$ are both of class $W^{1,2}_{\ell,0}(N)$, for every $\ell\geq 0$. For fixed $j$,
\[
\langle \bar\nabla_{h_j} \eta, \xi\rangle_{L^2(N)} = \langle \bar\nabla_{h_j} \eta^0, \xi\rangle_{L^2(N)} + \langle \bar\nabla_{h_j} \eta^1, \xi\rangle_{L^2(N)}, 
\]
and we proceed to evaluate the two terms on the right hand side. 

Write $\eta^0 = \sum_\ell \varphi_{s\ell} \eta_\ell$ for some $\eta_\ell \in C^\infty(Z; S^{0+}_\ell)$. Then
\[
\bar\nabla_{h_j} \eta^0 = \sum_\ell \left[ \left( \frac{n-m}{2} - s \lambda_\ell r^2\right) \frac{e_j (\lambda_\ell)}{2\lambda_\ell} \varphi_{s\ell} \eta_\ell + \varphi_{s\ell}\bar\nabla_{e_j} \eta_\ell \right],
\]
where the last term of the right hand side is again an element of $\ker\slashed D_s$. Since $\xi \in (\ker\slashed D_s)^{\perp_{L^2}}$, we obtain 
\[
\langle \bar\nabla_{h_j} \eta^0, \xi\rangle_{L^2(N)} = \langle \eta^0,  K_j\xi\rangle_{L^2(N)},
\]
where
\[
K_j\xi := \sum_\ell  \left[ \frac{n-m}{2} - s \lambda_\ell r^2\right] \frac{e_j(\lambda_\ell)}{2\lambda_\ell} P_\ell^{0+}\xi.
\]
On the other hand, using the identity  \eqref{eq:cross_terms25} that is
\[
\slashed D_s^* (\bar\nabla_{h_j} \xi_1) = \bar\nabla_{h_j} ( \slashed D_s^* \xi_1) - sr[\bar\nabla_{h_j},  \bar A_r^*]  \xi_1, 
\]
and integrating by parts, we obtain,

\begin{align*}
\langle \bar\nabla_{h_j} \eta^1, \xi\rangle_{L^2(N)} &= \langle \bar\nabla_{h_j} \slashed D_s^* \xi_1, \xi\rangle_{L^2(N)}
\\
&= \langle \slashed D_s^* \bar\nabla_{h_j} \xi_1, \xi\rangle_{L^2(N)}  - s \langle r [\bar\nabla_{h_j},  \bar A_r^*]  \xi_1, \xi \rangle_{L^2(N)}
\\
&= - \langle \eta^1,  G_s^* (\bar\nabla_{h_j} \slashed D_s \xi)^1\rangle_{L^2(N)} + \langle \eta^1,  G_s^* (O(sr) \xi)^1\rangle_{L^2(N)}.
\end{align*}
Therefore, the weak derivative in the direction $h_j$ is defined as a section $\pi^{-1}(U) \to \pi^*S^{0+}$, by 
\begin{equation}
\label{eq:weak_derivatives}
\bar\nabla_{h_j} \xi = (K_j\xi)^0 + G_s^* (\bar\nabla_{h_j} \slashed D_s \xi + O(sr)\xi)^1 
\end{equation}
and the global horizontal derivative is defined as $\bar\nabla^{\mathcal H} \xi = h^j\otimes \bar\nabla_{h_j} \xi \in C^\infty(N; {\mathcal H}\otimes \pi^*S^{0+})$ and is supported again in $\pi^{-1}(U)$. 

Combining assumption $\slashed D_s \xi \in W^{1,2}_{k-1, k}(N) \subset L^2_{k-1}(N)$ with Proposition~\ref{prop:bootstrap_on_k}, we have that $\xi \in W^{1,2}_{k,0}(N)$. By expression \eqref{eq:weak_derivatives} we have $\bar\nabla^{\mathcal H} \xi \in W^{1,2}_{k-1,0}(N) \subset L^2_{k-1}(N)$ so that $\xi\in W_{k,k+1}^{1,2}(N)$ as required. 

We next prove estimate \eqref{eq:horizontal_regularity_1}. We start by obtaining estimates on the first term of \eqref{eq:weak_derivatives}: let $\eta\in \ker\slashed D_s \cap C^\infty(N)$, with $\|\eta\|_{L^2(N)} \leq1$ and $\eta = \sum_\ell\varphi_{s\ell} \cdot \eta_\ell$ for some $\eta_\ell \in C^\infty (Z; S^+_\ell)$. Then
\begin{align*}
\lvert\langle  K_j\xi , \eta\rangle_{L^2(N)}\rvert &= \lvert\langle \xi , K_j\eta\rangle_{L^2(N)}\rvert
\\
&\leq C \|\xi\|_{L^2(N)}  \|(1+ sr^2)\eta\|_{L^2(N)}
\\
&\leq  C\|\xi\|_{L^2(N)} \|\eta\|_{L^2(N)},\qquad  (\text{by using \eqref{eq:elliptic_estimate1_k>=0} on $\eta$ with $k=1$})
\\
&\leq C s^{-1/2}\|\slashed D_s\xi\|_{L^2(N)}. \qquad (\text{by \eqref{eq:spectral_estimate} applied on $\xi$})
\end{align*}
Therefore the $L^2$-norm of the projection to $\ker \slashed D_s$, estimates,
\begin{equation}
\label{eq:projection1_estimate}
\|(K_j\xi)^0 \|_{L^2(N)} = \sup_{\eta\in \ker \slashed D_s, \|\eta\|_2 \leq 1} \lvert\langle  K_j\xi , \eta\rangle_{L^2(N)}\rvert 
\leq  Cs^{-1/2}\|\slashed D_s\xi\|_2.
\end{equation}
To estimate the $L^2$-norm of the second term of the right hand side of \eqref{eq:weak_derivatives}, we use \eqref{eq:spectral_estimate} so that,
\[
\| G_s^* (\bar\nabla_{h_j} \slashed D_s \xi + O( sr)\xi)\|_{L^2(N)} \leq  Cs^{-1/2}\|\bar\nabla_{h_j} ( \slashed D_s \xi) +  O( sr)\xi\|_{L^2(N)}.
\]
By using \eqref{eq:bootstrap_estimate} on the second term of the right hand side of the preceding inequality, we obtain
\[
 \| G_s^* (\bar\nabla_{h_j} \slashed D_s \xi + O( sr)\xi)\|_{L^2(N)} \leq  Cs^{-1/2}\left(\| \bar\nabla_{h_j} ( \slashed D_s\xi)\|_{L^2(N)} + \|\slashed D_s\xi\|_{L^2(N)}\right).
\]
Combining the preceding estimate with \eqref{eq:projection1_estimate} we obtain \eqref{eq:horizontal_regularity_1}. 
\end{proof}

\medskip
   
  \vspace{1cm}
  
\setcounter{equation}{0}   
\section{Separation of the spectrum}
\label{sec:Separation_of_the_spectrum}

Our main goal for this section is to prove the Spectrum Separation Theorem stated in the introduction. For that purpose we will use the bundles $S^+_\ell$ introduced in Definition~\ref{defn:IntroDefSp} and define a space of approximate solutions to the equation $D_s\xi = 0$. The space of approximate solutions is linearly isomorphic to a certain ``thickening'' of $\ker D_s$ by ``low'' eigenspaces of $D_s^*D_s$ for large $s$. The same result will apply to $\ker D_s^*$. The ``thickening'' will occur by a phenomenon of separation of the spectrum of $D_s^*D_s$ into low and high eigenvalues for large $s$. The following lemma will be enough for our purposes:

\begin{lemma}
\label{lemma:spectrum}
Let $L : H\rightarrow H'$ be a densely defined closed operator with between the Hilbert spaces $H, H'$ so that $L^*L$ has descrete spectrum. Denote by $E_\mu$ the $\mu$- eigenspace of $L^*L$. Suppose $V$ is a $k$-dimensional subspace of $H$ so that  
\[
\lvert Lv\rvert^2 \leq C_1 \lvert v\rvert^2  \qquad\mbox{and}\qquad \lvert Lw\rvert^2 \geq C_2 \lvert w\rvert^2
\]
for every $v\in V$ and every $w\in V^\perp$. Then there exist consecutive eigenvalues $\mu_1, \mu_2$ of $L^*L$ so that $\mu_1 \leq C_1, \,\mu_2 \geq C_2$. If in addition $4C_1<C_2$ then the \textbf{orthogonal projection}
\[
P : \bigoplus_{\mu\leq \mu_1} E_\mu \rightarrow V,
\]
is an isomorphism.
\end{lemma}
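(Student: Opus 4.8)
The plan is to treat this as an abstract spectral gap argument for the positive operator $T = L^*L$, which is self-adjoint, nonnegative, with discrete spectrum. First I would establish the existence of the two consecutive eigenvalues $\mu_1,\mu_2$ with $\mu_1\le C_1$ and $\mu_2\ge C_2$. Since $\langle Tv,v\rangle = \lvert Lv\rvert^2 \le C_1\lvert v\rvert^2$ on the $k$-dimensional space $V$, the min-max (Courant–Fischer) characterization of the $k$-th eigenvalue gives $\mu_{(k)} := $ ($k$-th eigenvalue counted with multiplicity) $\le C_1$; symmetrically, $\langle Tw,w\rangle \ge C_2\lvert w\rvert^2$ on $V^\perp$, which has codimension $k$, so by min-max the $(k+1)$-st eigenvalue satisfies $\mu_{(k+1)}\ge C_2$. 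I would then let $\mu_1$ be the largest eigenvalue that is $\le C_1$ (so $\mu_1\ge\mu_{(k)}$, hence well-defined and $\le C_1$) and $\mu_2$ the smallest eigenvalue $>\mu_1$; since $\mu_{(k+1)}\ge C_2 > C_1 \ge \mu_1$, we get $\mu_2\le\mu_{(k+1)}$... wait, I need $\mu_2\ge C_2$: since every eigenvalue in $(\mu_1,\infty)$ is $>C_1$ hence must be $\ge\mu_{(k+1)}\ge C_2$ (no eigenvalue lies strictly between $\mu_{(k)}$-ish and $\mu_{(k+1)}$ other than... careful: actually the eigenvalues $\le C_1$ are exactly $\mu_{(1)},\dots,\mu_{(j)}$ for some $j\ge k$, and then $\mu_{(j+1)}> C_1$; but $\mu_{(j+1)}$ could be $< C_2$ if $j > k$. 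I should instead argue: there are at most $k$ eigenvalues $\le C_1$? No — use that $\mu_{(k+1)}\ge C_2>C_1$ so $j\le k$, combined with $\mu_{(k)}\le C_1$ so $j\ge k$, giving $j=k$; then $\mu_1=\mu_{(k)}$, $\mu_2=\mu_{(k+1)}\ge C_2$.) So the clean statement is $\mu_1=\mu_{(k)}\le C_1$ and $\mu_2=\mu_{(k+1)}\ge C_2$, and these are consecutive.

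Next, under the extra hypothesis $4C_1<C_2$, I would show $P\colon W\to V$ is an isomorphism, where $W:=\bigoplus_{\mu\le\mu_1}E_\mu$ is the span of eigenvectors with eigenvalue $\le\mu_1$ and $P$ is orthogonal projection onto $V$. From the dimension count above, $\dim W = k = \dim V$ (the number of eigenvalues $\le\mu_1=\mu_{(k)}$, with multiplicity, is exactly $k$). So it suffices to prove $P$ is injective, i.e. $W\cap V^\perp = 0$. Take $w\in W$. Decompose $w = v + u$ with $v\in V$, $u\in V^\perp$. On one hand, since $w\in W$, $\langle Tw,w\rangle\le\mu_1\lvert w\rvert^2\le C_1\lvert w\rvert^2$. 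On the other hand, I would like a lower bound forcing $u$ small. The standard trick: write $\lvert Lw\rvert = \lvert Lv + Lu\rvert$; but $Lv,Lu$ need not be orthogonal, so instead bound $\lvert Lu\rvert\le\lvert Lw\rvert+\lvert Lv\rvert\le\sqrt{C_1}\lvert w\rvert+\sqrt{C_1}\lvert v\rvert\le 2\sqrt{C_1}\lvert w\rvert$ (using $\lvert v\rvert\le\lvert w\rvert$). Combined with $\lvert Lu\rvert^2\ge C_2\lvert u\rvert^2$, this gives $C_2\lvert u\rvert^2\le 4C_1\lvert w\rvert^2$, so $\lvert u\rvert^2\le(4C_1/C_2)\lvert w\rvert^2 < \lvert w\rvert^2$. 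Hence $\lvert v\rvert^2 = \lvert w\rvert^2-\lvert u\rvert^2 > 0$ unless $w=0$, so $Pw = v\ne 0$ for $w\ne 0$, proving injectivity. Since $\dim W=\dim V<\infty$, injective implies bijective, so $P$ is an isomorphism.

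The main obstacle, and the only place requiring genuine care, is the bookkeeping in the first step: making sure that the number of eigenvalues (with multiplicity) that are $\le C_1$ is \emph{exactly} $k$, not merely $\ge k$, so that $W$ and $V$ have equal dimension and so that $\mu_1$ and $\mu_2$ are genuinely consecutive. This is where the two hypotheses $\langle Tv,v\rangle\le C_1\lvert v\rvert^2$ on $V$ (forcing $\mu_{(k)}\le C_1$) and $\langle Tw,w\rangle\ge C_2\lvert w\rvert^2$ on $V^\perp$ (forcing $\mu_{(k+1)}\ge C_2>C_1$) must be combined via min-max; once both are in hand the count is pinned down. Everything else — the projection argument — is the elementary estimate above. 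I would present the min-max facts as standard (Courant–Fischer for self-adjoint operators with discrete spectrum), cite no external reference beyond what is already implicit, and keep the computation to the few lines sketched.
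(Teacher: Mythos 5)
Your proof is correct and takes essentially the same route as the paper: both parts use the Courant--Fischer min-max characterization to pin down $\mu_1=\mu_{(k)}\le C_1$ and $\mu_2=\mu_{(k+1)}\ge C_2$, and both prove injectivity of $P$ by decomposing $w\in W$ into its $V$- and $V^\perp$-components and using the triangle inequality on $L$ applied to that decomposition to bound the $V^\perp$-part by $(4C_1/C_2)\lvert w\rvert^2<\lvert w\rvert^2$. The only cosmetic difference is that the paper packages the triangle-inequality step as $\lvert Lv_1\rvert^2\le 2(\lvert Lw\rvert^2+\lvert Lv_0\rvert^2)$ before inserting $\mu_1\le C_1$, whereas you insert the bounds $\lvert Lw\rvert,\lvert Lv\rvert\le\sqrt{C_1}\lvert w\rvert$ first and square afterward; both yield the factor $4C_1$. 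Your careful bookkeeping verifying $\dim W=k$ is a point the paper states but does not spell out; it does hold because $C_1<C_2$ forces $\mu_{(k)}<\mu_{(k+1)}$.
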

\begin{proof}
Let $\mu_1$ be the $k$-th eigenvalue of the self-adjoint operator $L^*L$, counted with  multiplicity, and $\mu_2$ be the next eigenvalue. Denote by $G_k(H)$ the set of $k$- dimensional subspaces of $H$ and set $W = \oplus_{\mu\leq \mu_1} E_\mu$, also $k$-dimensional. By the Rayleigh quotients we have
\[
\mu_2 = \max_{S\in G_k(H)}\left\{ \inf_{v\in S^\perp, \lvert v\rvert=1}\lvert Lv\rvert^2\right\} \geq  \inf_{v\in V^\perp, \lvert v\rvert=1}\lvert Lv\rvert^2 \geq C_2,
\]
and also
\[
\mu_1 = \max_{S\in G_{k-1}(H)}\left\{ \inf_{v\in S^\perp, \lvert v\rvert=1}\lvert Lv\rvert^2\right\}.
\]
But for any $k-1$-dimensional subspace $S\subset H$ there exist a vector $v_S\in S^\perp\cap V$ of  unit length so that
\[
\mu_1 \leq \max_{S\in G_{k-1}(H)}\left\{\lvert Lv_S\rvert^2\right\}\leq C_1,
\]
as required. Finally, given $w\in W$ write $w = v_0 + v_1$ with $v_0 = P(w)$ and $v_1\in V^\perp$. Then 
\begin{equation*} 
C_2\lvert w - P(w)\rvert^2 = C_2\lvert v_1\rvert^2 \leq 
\lvert Lv_1\rvert^2\leq 2(\lvert Lw\rvert^2 + \lvert Lv_0\rvert^2) \leq 2(\mu_1 + C_1)\lvert w\rvert^2 \leq 4C_1 \lvert w\rvert^2,
\end{equation*} 
and so $\lvert 1_W - P\rvert^2 \leq 4\tfrac{C_1}{C_2}$. If additionally $4C_1<C_2$ and $P(w)=0$ for some $w\neq 0$ then 
\[ 
\lvert w\rvert^2 = \lvert w-P(w)\rvert^2 \leq \lvert 1_W - P\rvert^2\lvert w\rvert^2 < \lvert w\rvert^2,
\]
a contradiction. Hence $P$ is injective and by dimension count an isomorphism.  
\end{proof}

We have to construct an appropriate space $V$ that will be viewed as the subspace of approximate solutions to the problem $L\xi = D_s \xi =0$. This is achieved by the following splicing construction on a fixed $m$ - dimensional component $Z=Z_\ell$ of the critical set $Z_{\mathcal A}$. Recall the subspaces $S^{+i} = \bigoplus_\ell S^{+i}_\ell$ with projections $P^{i+} = \sum_\ell P^{i+}_\ell$ and $P^{i-} :S^i \vert_Z \to  (S^{i+})^\perp \subset S^i,\ i=0,1$. Recall also the operators $D^Z_+$ and ${\mathcal B}^Z_+$ introduced in Definition~\ref{defn:Dirac_operator_component}. Fix a cutoff function $\rho_\varepsilon^Z: X \to [0,1]$,  supported in $B_Z(2\varepsilon) = \{ p\in X: r_Z(p) < 2\varepsilon\}$, where $r_Z$ is the distance function of a component $Z$ and taking the value $1$ in $B_Z(\varepsilon)$, so that $\lvert d\rho_\varepsilon\rvert \leq 1/\varepsilon$. Define also the bundle map ${\mathcal P}_s: \pi^*(S^{0+}\vert_Z) \to \pi^*(S^1\vert_Z)$, by
\begin{align*}
{\mathcal P}_s:=  \sum_\ell c_Z(d_Z (\ln\varphi_{s \ell})) P_\ell^{0+} + P^1\circ \left({\mathcal B}^0+ \frac{1}{2} s r^2 \bar A_{rr} \right) - {\mathcal B}^Z_{0+},
\end{align*}
where  ${\mathcal B}^Z_{0+}$ is introduced in Definition~\ref{defn:Dirac_operator_component}.

\begin{defn}
\label{def:space_of_approx_solutions}
Given $s>0,\ \ell$ and section $\xi\in \ker (D^Z_+ + {\mathcal B}^Z_{0+}),\ \xi = \sum_\ell \xi_\ell$, set 
\[
\xi^0 := \sum_\ell \varphi_{s\ell} \cdot\xi_\ell,
\]
and define $\xi^1 \in (\ker\slashed D_s)^{\perp_{L^2}}$ and $\xi^2\in C^\infty(N; \pi^*(S^0\vert_Z)^\perp)$, by solving
\begin{align}
\label{eq:balansing_condition_1}
\slashed D_s \xi^1  + {\mathcal P}_s\xi^0 &=0,
\\
\label{eq:balansing_condition_2}
s\bar{\mathcal A} \xi^2 + \left(1_{E^1\vert_Z} - P^1\right)\circ \left( {\mathcal B}^0 + \frac{1}{2}s r^2 \bar A_{rr}\right) \xi^0 &=0. 
\end{align}

We define an approximate low eigenvector of $\tilde D_s$, by 
\[
\xi_s := \xi^0 + \xi^1 + \xi^2 \in C^\infty(N; \pi^*(S^0\vert_Z)).
\]
Given $\varepsilon>0$, we define the spaces of approximate low eigenvectors of $D_s$ by,
\begin{align*}
V_{s,\varepsilon}^Z &:= \left\{ (\rho_\varepsilon^Z\cdot{\mathcal I}\circ\tau)(\xi_s \circ \exp^{-1}) : \xi \in \ker (D^Z_+ + {\mathcal B}^Z_+) \right \} 
\\
V_{s, \varepsilon} &:= \bigoplus_{ Z\in \mathrm{Comp}(Z_{\mathcal A})}  V_{s,\varepsilon}^Z,
\end{align*}
where  ${\mathcal I}$ is introduced in \eqref{eq:exp_diffeomorphism} and $\tau: C^\infty (\mathcal{N}_\varepsilon; \pi^*(E\vert_Z)) \to C^\infty( \mathcal{N}; \tilde E\vert_{\mathcal{N})_\varepsilon}$ is the parallel transport map with respect to the $\tilde\nabla^{\tilde E}$ introduced in \eqref{eq:parallel_transport_map}. We have analogue definitions of approximate low eigenvector for $D_s^*$ and we denote the subspace of approximate solutions by $W^Z_{s, \varepsilon}$. 
\end{defn}

Note that by expansion \eqref{eq:taylorexp} elements of $W^Z_{s, \varepsilon}$ will be associated to sections in the kernel 
of $D^{Z*}_+ + {\mathcal B}^Z_{1+}$. Elements of $V_{s, \varepsilon}$ are smooth sections of the bundle $E^0\to X$ that are compactly supported on the tubular neighborhood $B(Z, 2\varepsilon) \subset X$ of $Z$. Let 
\[
V_{s, \varepsilon}^\perp :=V_{s, \varepsilon}^{\perp_{L^2}}\cap W^{1,2}(X; E). 
\]

\begin{theorem}
\label{Th:hvalue}
If $D_s$ satisfies Assumptions~\ref{Assumption:transversality1}-\ref{Assumption:stable_degenerations} then there exist $\varepsilon_0>0$ and $s_0>0$ and constants $C_i = C_i(s_0)>0, \,i =1,2$  so that for every $0<\varepsilon<\varepsilon_0$ and every $s>s_0$, 
\begin{enumerate}[(a)]
\item for every $\eta \in V_{s, \varepsilon}$, 
\begin{align}
\label{eq:est1}
\|D_s\eta\|_{L^2(X)} \leq C_1 s^{-1/2}\|\eta\|_{L^2(X)},
\end{align}

\item or every $\eta \in V_{s, \varepsilon}^\perp$,
\begin{align}
\label{eq:est2}
\| D_s\eta\|_{L^2(X)} \geq  C_2\|\eta\|_{L^2(X)}.
\end{align}
\end{enumerate}
Since the $L^2$-adjoint operator $D_s^*$satisfies the same assumptions, the constants $s_0,\, C_1,\, C_2$ can be chosen to satisfy simultaneously the analogue estimates for $D_s^*$ in place of $D_s$ and the space $W^Z_{s, \varepsilon}$ in place of $V^Z_{s, \varepsilon}$.
\end{theorem}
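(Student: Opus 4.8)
The plan is to prove (a) by a direct estimate on the explicitly spliced approximate eigenvectors of Definition~\ref{def:space_of_approx_solutions}, and (b) by a localization argument reducing to three separate spectral gaps on the normal bundle, the latter being the technical core that is carried out in Section~\ref{sec:A_Poincare_type_inequality}.

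For \eqref{eq:est1}, fix a component $Z$ and $\xi\in\ker(D^Z_++{\mathcal B}^Z_{0+})$, and let $\xi_s=\xi^0+\xi^1+\xi^2$ and $\eta$ be the transplanted section, whose pull-back to $\mathcal N$ is $\tilde\eta=\rho_\varepsilon^Z\,\tau\xi_s$. Then $\tau^{-1}\tilde D_s\tilde\eta=\rho_\varepsilon^Z\,\tau^{-1}\tilde D_s(\tau\xi_s)+\tau^{-1}\big(\tilde c(d\rho_\varepsilon^Z)\,\tau\xi_s\big)$; the commutator term is supported where $r\ge\varepsilon$, where $\varphi_{s\ell}$ and the (smaller) $\xi^1,\xi^2$ decay like $e^{-\frac12 s\lambda_0\varepsilon^2}$, so it contributes $O(e^{-cs})\|\eta\|_{L^2(X)}$. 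For the main term I would expand $\tilde D_s(\tau\xi^0)$ and $\tilde D_s(\tau\xi^1)$ by \eqref{eq:taylorexp} and $\tilde D_s(\tau\xi^2)$ by \eqref{eq:taylorexp1}. Using $\slashed D_s\xi^0=0$, relation \eqref{eq:br_D_Z_versus_D_Z_+}, and the two balancing conditions \eqref{eq:balansing_condition_1}--\eqref{eq:balansing_condition_2}, the terms with $O(1)$ and $O(s)$ coefficients cancel and the surviving leading contribution is $\tau\big[(D^Z_++{\mathcal B}^Z_{0+})\xi\big]^0=0$. What remains are: the $O(r^2\partial^{\mathcal V}+r\partial^{\mathcal H}+r+sr^3)$ remainders on $\tau\xi^0$; the terms $\bar D^Z\xi^1,\,{\mathcal B}^0\xi^1$ and remainders on $\tau\xi^1$; and $\slashed D_0\xi^2,\,\bar D^Z\xi^2,\,{\mathcal B}^0\xi^2$ and remainders on $\tau\xi^2$. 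The Gaussian scaling $\|r^k\xi^0\|_{L^2(N)}\approx s^{-k/2}\|\xi^0\|$ and $\|\bar\nabla^{\mathcal V}\xi^0\|\approx s^{1/2}\|\xi^0\|$ make every remainder on $\tau\xi^0$ of size $O(s^{-1/2})\|\xi^0\|$; the spectral estimate \eqref{eq:spectral_estimate} and the bootstrap \eqref{eq:bootstrap_estimate} give $\|\xi^1\|\lesssim s^{-1/2}\|\xi^0\|$, Proposition~\ref{prop:horizontal_regularity} controls $\bar\nabla^{\mathcal H}\xi^1$, and invertibility of $\bar{\mathcal A}$ on $(S^0\vert_Z)^\perp$ gives $\|\xi^2\|\lesssim s^{-1}\|\xi^0\|$ together with its vertical and horizontal derivatives. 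Hence $\|\tau^{-1}\tilde D_s(\tau\xi_s)\|_{L^2(N)}\lesssim s^{-1/2}\|\xi^0\|_{L^2(N)}$; since $\xi^1,\xi^2$ are lower order, $\|\eta\|_{L^2(X)}\approx\|\xi^0\|_{L^2(N)}=\pi^{(n-m)/4}\|\xi\|_{L^2(Z)}$ by \eqref{eq:Gaussian_evaluation}, and summing over components of $Z_{\mathcal A}$ yields \eqref{eq:est1}.

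For \eqref{eq:est2}, by Lemma~\ref{lemma:local_implies_global} it suffices to prove the inequality for $\eta\in W^{1,2}$ supported in a single tubular neighborhood $B_Z(2\varepsilon)$ and orthogonal to $V^Z_{s,\varepsilon}$; by the improved concentration principle Proposition~\ref{prop:improved_concentration_Prop} one may moreover discard the part of any near-null $\eta$ living on $\Omega(\delta)$. Pulling back to $\mathcal N$, set $\xi=\tau^{-1}\tilde\eta$ and decompose $\xi=\xi_{(0)}+\xi_{(\perp)}$ along $\pi^*(S^0\vert_Z)$ and $\pi^*(S^0\vert_Z)^\perp$; then $\xi_{(0)}=\xi_{(0)}^{\mathrm k}+\xi_{(0)}^{\perp}$ along $\ker\slashed D_s$ and its $L^2$-complement; and $\xi_{(0)}^{\mathrm k}=\sum_\ell\varphi_{s\ell}\cdot\zeta_\ell$ with $\zeta_\ell\in W^{1,2}(Z;S^{0+}_\ell)$ by \eqref{eq:kernel_D_s_calculation}, splitting $\zeta=\sum_\ell\zeta_\ell$ further along $\ker(D^Z_++{\mathcal B}^Z_{0+})$ and its orthogonal complement in $L^2(Z;S^{0+})$. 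Three coercivity mechanisms then apply, each providing a genuine lower bound on $\|\tau^{-1}\tilde D_s\xi\|$ for the corresponding component: on $\xi_{(\perp)}$ the term $s\bar{\mathcal A}$ of \eqref{eq:taylorexp1} is invertible and gives a gain of order $s$; on $\xi_{(0)}^{\perp}$ the vertical spectral gap \eqref{eq:spectral_estimate} gives a gain of order $s^{1/2}$; and on $\xi_{(0)}^{\mathrm k}$ the identity \eqref{eq:br_D_Z_versus_D_Z_+} combined with \eqref{eq:taylorexp} identifies the leading operator with $D^Z_++{\mathcal B}^Z_{0+}$ on the closed manifold $Z$, whose ellipticity provides a spectral gap away from its kernel — and orthogonality of $\eta$ to $V^Z_{s,\varepsilon}$ forces $\zeta$ to be orthogonal to that kernel up to an $O(s^{-1/2})$ error coming from the corrections $\xi^1,\xi^2$ hidden inside $V^Z_{s,\varepsilon}$. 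Combining these yields \eqref{eq:est2}.

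The main obstacle is the last combination step: one must show that the three pieces above are sufficiently close to orthogonal under $\tilde D_s$, so that the three separate gains add up rather than cancel. Here I would invoke Proposition~\ref{prop:Weitzenbock_identities_and_cross_terms}: the mixed term $\slashed D_s^*\bar D^Z+\bar D^{Z*}\slashed D_s$ is zeroth order with coefficients of order $sr$ by \eqref{eq:cross_terms1} (with the sharp form \eqref{eq:cross_terms} on the relevant eigenbundles), and $(\slashed D_0+\bar D^Z)^*\bar{\mathcal A}+\bar{\mathcal A}^*(\slashed D_0+\bar D^Z)$ is zeroth order by \eqref{eq:cross_terms3}; the weighted elliptic estimates of Propositions~\ref{prop:vertical_cross}, \ref{prop:bootstrap_on_k} and \ref{prop:horizontal_regularity} then let one absorb these cross-terms after multiplying by appropriate powers of $r$ and $s$. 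One must also pay attention to the passage from $d\mathrm{vol}^{\mathcal N}$ to $d\mathrm{vol}^N$ and from $\tilde D_s$ to its model via the Taylor remainders of Corollary~\ref{cor:taylorexp}, to the exponentially small cutoff commutators, and to the fact that orthogonality to $V^Z_{s,\varepsilon}$ is only approximate; making all these errors uniformly $o(\|\xi\|)$ as $s\to\infty$ for a fixed small $\varepsilon$ is the long argument in Section~\ref{sec:A_Poincare_type_inequality}. Finally, the closing sentence of the theorem follows because $D_s^*=D^*+s{\mathcal A}^*$ again satisfies Assumptions~\ref{Assumption:transversality1}--\ref{Assumption:stable_degenerations} by Lemma~\ref{lem:lr}, with the roles of $S^0,S^1$ and of $D^Z_+,{\mathcal B}^Z_{0+}$ replaced by $S^1,S^0$ and $D^{Z*}_+,{\mathcal B}^Z_{1+}$; running the identical argument and taking the larger of the two $s_0$ and the extremal constants $C_1,C_2$ gives the simultaneous bounds.
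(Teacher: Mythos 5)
Your outline for part (a) matches the paper's proof: expand via Corollary~\ref{cor:taylorexp}, use $\slashed D_s\xi^0=0$, identity \eqref{eq:br_D_Z_versus_D_Z_+}, and the balancing conditions \eqref{eq:balansing_condition_1}--\eqref{eq:balansing_condition_2} to remove the $O(1)$ and $O(s)$ terms, bound the remainders via Lemma~\ref{lem:balancing_condition} and the Gaussian scalings, and compare $\|\eta\|$ to $\|\xi^0\|$ via \eqref{eq:Gaussian_evaluation} and \eqref{eq:aux_estimate_4}. One imprecision: $\xi^1$ does not decay like $e^{-\tfrac12 s\lambda_0\varepsilon^2}$ on the annulus supporting $d\rho^Z_\varepsilon$; it is a $W^{1,2}_{k,l}$ solution of \eqref{eq:balansing_condition_1} and only decays polynomially in $s$, so the cutoff commutator on $\xi^1+\xi^2$ must be bounded by the weighted estimates of Lemma~\ref{lem:balancing_condition}, not by exponential decay, as the paper does in the lines after \eqref{eq:aux_estimate_3}.

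For part (b) you propose a genuinely different route. Your plan is a direct decomposition-and-coercivity argument: split $\xi$ into the $\pi^*(S^0)^\perp$-component, the $(\ker\slashed D_s)^{\perp_{L^2}}$-component, and the $\ker\slashed D_s$-component, get gains of order $s$, $s^{1/2}$ and $O(1)$ respectively, and combine via cross-term absorption. The paper instead proves the local estimate (Lemma~\ref{lemma:aux_estimate}) by contradiction and compactness: a putative sequence with $\|\eta_j\|_{L^2(N)}=1$ and $\|\tilde D_{s_j}\eta_j\|_{L^2(N)}\to0$ is rescaled by $\{y_\alpha=\sqrt{s_j}\,x_\alpha\}$, weak/strong limits are extracted (Lemma~\ref{lemma:perps_of_app_solutions}), the limit $\sum_\ell\varphi_{1\ell}\xi_\ell$ is shown to satisfy $(D^Z_++{\mathcal B}^Z_{0+})\bar\xi=0$, the orthogonality to $V^Z_{s_j,\varepsilon_j}$ forces $\bar\xi\equiv0$, and a contradiction is drawn from the concentration estimate on $B(Z,T/\sqrt{s_j})^c$.

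The gap in your direct route is precisely where you flag ``the main obstacle,'' and it is more than a technicality. The Weitzenb\"ock cross-term identities \eqref{eq:cross_terms}--\eqref{eq:cross_terms3} and the weighted elliptic estimates of Propositions~\ref{prop:vertical_cross}, \ref{prop:bootstrap_on_k}, \ref{prop:horizontal_regularity} yield, at best, the combined estimate \eqref{eq:Taylor_estimate}, which necessarily carries an extra $+\|\eta\|_{L^2(N)}$ on the right-hand side: that term is produced by the zeroth-order pieces ${\mathcal B}^0$, $\tfrac12 sr^2\bar A_{rr}$, the torsion, the Peter--Paul absorptions, and by the fact that orthogonality to $V^Z_{s,\varepsilon}$ translates only into an $O(s^{-1/2})$-approximate orthogonality to $\ker(D^Z_++{\mathcal B}^Z_{0+})$. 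An estimate of the shape $\|D_s\eta\|+\|\eta\|\gtrsim\|\eta\|$ does not give coercivity; eliminating the spurious $\|\eta\|$ is exactly what the blow-up step accomplishes, by localizing the $L^2$-mass of a unit-norm almost-null sequence in the rescaled limit and showing it must vanish. Your appeal to ``the long argument in Section~\ref{sec:A_Poincare_type_inequality}'' to close this step is circular, since that section \emph{is} the contradiction argument, not an execution of your direct plan. A smaller point: Proposition~\ref{prop:improved_concentration_Prop} is stated for eigensections of $D_s^*D_s$, not for arbitrary $\eta\in V_{s,\varepsilon}^\perp$, so it cannot be invoked to discard the part of $\eta$ on $\Omega(\delta)$; the paper's Lemma~\ref{lemma:local_implies_global} performs the localization via a cutoff argument together with the basic concentration constants of \eqref{eq:useful_constants}.
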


\begin{rem}
\label{rem:comments_on_Th_hvalue}
\begin{enumerate}
\item It suffices to prove estimate \eqref{eq:est2} for an $L^2$-dense subspace of $V_{s, \varepsilon}^{\perp_{L^2}}$. Also notice that $V_{s, \varepsilon}^{Z_1}$ is $L^2$-perpendicular to $V_{s, \varepsilon}^{Z_2}$ for $Z_1 \neq Z_2$ since their corresponding sections have disjoint supports.

\item($L^2$-norms using different densities)
Estimate \eqref{eq:est1} refers to sections supported in tubular neighborhoods of the components of the critical set. Also in Lemma~\ref{lemma:local_implies_global}, we prove that we only have to show estimate \eqref{eq:est2} for sections supported in these tubular neighborhoods. Let $\xi\in C^\infty_c(\mathcal{N}; \pi^*(E\vert_Z))$ and $\eta = ({\mathcal I} \circ \tau) \xi \circ \exp^{-1}$.  We have that, 
\[
\int_X \lvert\eta\rvert^2 \, d\mathrm{vol}^X = \int_{\mathcal{N}_\varepsilon} \lvert\xi\rvert^2\, d\mathrm{vol}^{\mathcal{N}_\varepsilon} 
\]
and that, 
\[
\int_X \lvert D_s \eta\rvert^2 \, d\mathrm{vol}^X = \int_{\mathcal{N}_\varepsilon} \lvert\tilde D_s (\tau \xi)\rvert^2 \, d\mathrm{vol}^{\mathcal{N}_\varepsilon}.
\]
By estimate  \eqref{eq:density_comparison}, these norms are equivalent to the corresponding $L^2(N)$ norms in the total space $N$ with volume form $d\mathrm{vol}^N$, of the normal bundles of the components of the critical set $Z_{\mathcal A}$. The later norms are the ones used in the consequent analytic proofs.
\end{enumerate}
\end{rem}

The rest of this section is devoted to proving estimate \eqref{eq:est1}. The proof of estimate \eqref{eq:est2} will be given in the next section. The following lemma establishes existence and uniqueness of equations \eqref{eq:balansing_condition_1} and \eqref{eq:balansing_condition_2} and provide estimates of the solutions: 
\begin{lemma}
\label{lem:balancing_condition}
For every section $\xi\in \ker (D^Z_+ + {\mathcal B}^Z_{0+})$, there exist unique sections $\psi \in (\ker \slashed D_s^*)^{\perp_{L^2}}\cap \left(\bigcap_{k,l} W^{1,2}_{k,l}(N; \pi^*(S^0\vert_Z))\right)$ and $\zeta \in \bigcap_{k,l} W^{1,2}_{k,l}(N; \pi^*(S^0\vert_Z)^\perp)$ satisfying equations \eqref{eq:balansing_condition_1} and \eqref{eq:balansing_condition_2} respectively. Moreover $\psi$ and $\zeta$ obey the following estimates,

\begin{align}
\label{eq:Gaussian_estimate_1}
\|\psi\|_{L^2(N)} &\leq C s^{-1/2}\|\xi^0\|_{L^2(N)},
\\
\label{eq:Gaussian_estimate_2}
s\|r^{k+1}\psi\|_{L^2(N)} + \|r^k \bar\nabla^{\mathcal V} \psi\|_{L^2(N)}  &\leq C s^{-\tfrac{k}{2}}\|\xi^0\|_{L^2(N)}, \, k\geq 0,
\\
\label{eq:Gaussian_estimate_3}
\|\bar\nabla^{\mathcal H} \psi\|_{{L^2(N)}} &\leq  C s^{-1/2}\|\xi^0\|_{L^2(N)},
\\
\label{eq:Gaussian_estimate_4}
s\|r^{k+1}\zeta \|_{L^2(N)}+ (k+1)\|r^k \bar\nabla^{\mathcal V} \zeta\|_{L^2(N)}  &\leq s^{-\tfrac{k+1}{2}}\|\xi^0\|_{L^2(N)}, \, k\geq -1,
\\
\label{eq:Gaussian_estimate_5}
\|r^k \bar\nabla^{\mathcal H} \zeta\|_{L^2(N)} &\leq Cs^{-1 - \tfrac{k}{2}}\| \xi^0\|_{L^2(N)}, \, k\geq 0. 
\end{align} 
\end{lemma}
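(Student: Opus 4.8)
The plan is to solve equations \eqref{eq:balansing_condition_1} and \eqref{eq:balansing_condition_2} separately, using the Fredholm theory already established for the vertical operator $\slashed D_s$ and the pointwise (fiberwise) invertibility of $\bar{\mathcal A}$ off the zero section. First I would treat \eqref{eq:balansing_condition_2}: since $\bar{\mathcal A}$ restricted to $(S^0\vert_Z)^\perp$ is a bundle isomorphism onto $(S^1\vert_Z)^\perp$ (this is the content of the reduction of ${\mathcal A}$ to its non-kernel part and the fact that $\bar{\mathcal A}\bar{\mathcal A}^*\vert_{(S^1)^\perp}$ is invertible away from $Z$ — indeed bounded below by a positive constant in a neighborhood), one can simply set $\zeta = \xi^2 := -\tfrac{1}{s}(\bar{\mathcal A}\vert_{(S^0)^\perp})^{-1}(1-P^1)({\mathcal B}^0 + \tfrac12 sr^2\bar A_{rr})\xi^0$. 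This is an explicit algebraic formula: no PDE is solved. Uniqueness is immediate since $s\bar{\mathcal A}$ is injective on $(S^0\vert_Z)^\perp$. The estimates \eqref{eq:Gaussian_estimate_4} and \eqref{eq:Gaussian_estimate_5} then follow because $\zeta$ is a fixed zeroth-order (bundle-map) expression applied to $\xi^0$ with an explicit factor $s^{-1}$; the $sr^2\bar A_{rr}$ term, being multiplied by $\tfrac1s$, contributes an $r^2$-weight, so applying the weighted estimate \eqref{eq:elliptic_estimate1_k>=0} for $\slashed D_s$ to control $\|r^{k+j}\xi^0\|$ and $\|r^k\bar\nabla^{\mathcal V}\xi^0\|$, $\|r^k\bar\nabla^{\mathcal H}\xi^0\|$ in terms of $\|\xi^0\|_{L^2(N)}$ (recall $\xi^0 = \sum_\ell \varphi_{s\ell}\xi_\ell$ is a Gaussian, so every weighted vertical norm is controlled by a power of $s^{-1/2}$ times $\|\xi^0\|$) gives exactly the claimed powers of $s$. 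Membership in $\bigcap_{k,l} W^{1,2}_{k,l}$ follows because $\xi^0$ lies in that intersection (Remark~\ref{rem:observations_on_W_1,2_k_0}\eqref{rem:kernel_infinite_inclusions}) and $\zeta$ is a smooth bundle map applied to it, with $\bar\nabla\zeta$ expressible through $\bar\nabla\xi^0$ plus lower order.

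Next I would treat \eqref{eq:balansing_condition_1}. Here the key point is that the inhomogeneous term ${\mathcal P}_s\xi^0$ is $L^2$-orthogonal to $\ker\slashed D_s^*$, so by Proposition~\ref{prop:vertical_cross}\eqref{eq:Fredholm_alternative} (the relation $(\ker\slashed D_s^*)^{\perp_{L^2}} = \mathrm{Im\,}\slashed D_s$) there is a unique $\psi \in (\ker\slashed D_s)^{\perp_{L^2}}$ with $\slashed D_s\psi = -{\mathcal P}_s\xi^0$. I should note the lemma statement asks for $\psi\in(\ker\slashed D_s^*)^{\perp_{L^2}}$; I read the content of \eqref{eq:balansing_condition_1} as producing $\psi$ in the source space, i.e. $(\ker\slashed D_s)^{\perp_{L^2}}$, which is where the inverse Green operator $G_s$ of Proposition~\ref{prop:bootstrap_on_k} lands. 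The \emph{orthogonality check} — that $\langle {\mathcal P}_s\xi^0, \eta\rangle_{L^2(N)} = 0$ for all $\eta\in\ker\slashed D_s = \bigoplus_\ell\varphi_{s\ell}L^2(Z;S^{0+}_\ell)$ — is the crux of why ${\mathcal P}_s$ was defined the way it is. Writing $\eta = \sum_\ell\varphi_{s\ell}\eta_\ell$ and expanding $\langle{\mathcal P}_s\xi^0,\eta\rangle$ term by term, the piece $\sum_\ell c_Z(d_Z\ln\varphi_{s\ell})P^{0+}_\ell\xi^0$ pairs (via the identity \eqref{eq:br_D_Z_versus_D_Z_+}, $\bar D^Z\xi^0 = \sum_\ell\mathfrak{c}_N(\pi^*d\ln\varphi_{s\ell})P^{0+}_\ell\xi^0 + (D^Z_+\xi)^0$) with $\eta$ to produce, after the Gaussian fiber integration \eqref{eq:Gaussian_evaluation}, a multiple of $\langle D^Z_+\xi,\eta\rangle_{L^2(Z)}$ shifted by $\langle\bar D^Z\xi^0,\eta\rangle_{L^2(N)}$; the remaining pieces $P^1({\mathcal B}^0 + \tfrac12 sr^2\bar A_{rr})\xi^0$ integrate against the Gaussians to produce exactly the operator ${\mathcal B}^Z_{0+}$ (this is the definition of ${\mathcal B}^Z_{0+}$ in Definition~\ref{defn:Dirac_operator_component}, with the constants $C_{\ell,\ell'}$ coming from $\int\varphi_{s\ell}\varphi_{s\ell'}$). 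Hence $\langle{\mathcal P}_s\xi^0,\eta\rangle$ reduces to a multiple of $\langle(D^Z_+ + {\mathcal B}^Z_{0+})\xi,\eta'\rangle_{L^2(Z)}$, which vanishes by the hypothesis $\xi\in\ker(D^Z_+ + {\mathcal B}^Z_{0+})$. \emph{This orthogonality computation is the main obstacle} — it is where all the bookkeeping of the Gaussian normalizations, the Clifford-module structure of $S^{0+}_\ell$, and the precise form of ${\mathcal B}^Z_{0+}$ must be reconciled; everything else is soft functional analysis.

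With $\psi$ produced, the estimates \eqref{eq:Gaussian_estimate_1}–\eqref{eq:Gaussian_estimate_3} follow from the machinery of Sections~\ref{sec:harmonic_oscillator}–\ref{sec:The_operator_bar_D_Z_and_the_horizontal_derivatives}. First, $\|{\mathcal P}_s\xi^0\|_{L^2(N)} \leq Cs^{1/2}\|\xi^0\|_{L^2(N)}$: the term $c_Z(d_Z\ln\varphi_{s\ell})$ is $O(sr)$, so $\|sr\,\xi^0\| \leq Cs^{1/2}\|\xi^0\|$ by \eqref{eq:elliptic_estimate1_k>=0} with $k=1$ applied to $\xi^0\in\ker\slashed D_s$ (which, having $\slashed D_s\xi^0 = 0$, gives $s\|r^2\xi^0\| + \|r\bar\nabla^{\mathcal V}\xi^0\| \leq Cs^{-1/2+1/2}\|\xi^0\| $ hence $\|sr\xi^0\| \le C s^{1/2}\|\xi^0\|$); the $\tfrac12 sr^2\bar A_{rr}$ term is $O(sr^2)$, contributing $\|sr^2\xi^0\| \leq Cs^{-1/2}\|\xi^0\|$; the ${\mathcal B}^0$ and ${\mathcal B}^Z_{0+}$ terms are bounded. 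Then \eqref{eq:spectral_estimate} gives $\|\psi\|_{L^2(N)} \leq (2sC_0)^{-1/2}\|\slashed D_s\psi\|_{L^2(N)} = (2sC_0)^{-1/2}\|{\mathcal P}_s\xi^0\|_{L^2(N)} \leq Cs^{-1/2}\|\xi^0\|_{L^2(N)}$, which is \eqref{eq:Gaussian_estimate_1}. For \eqref{eq:Gaussian_estimate_2} apply the bootstrap estimate \eqref{eq:bootstrap_estimate} of Proposition~\ref{prop:bootstrap_on_k}: $s\|r^{k+1}\psi\| + \|r^k\bar\nabla^{\mathcal V}\psi\| \leq C\sum_{u=0}^k s^{-u/2}\|r^{k-u}\slashed D_s\psi\| = C\sum_u s^{-u/2}\|r^{k-u}{\mathcal P}_s\xi^0\|$, and each $\|r^{k-u}{\mathcal P}_s\xi^0\|$ is handled as above by \eqref{eq:elliptic_estimate1_k>=0} (with appropriate $k$) applied to the Gaussian $\xi^0$, tracking that the worst term carries weight $sr^{k-u+2}$ or $sr^{k-u+1}$ and thereby producing the power $s^{-k/2}$ after summation; the regularity $\psi\in\bigcap W^{1,2}_{k,l}$ is exactly the statement that the bootstrap closes for every $k$, combined with the horizontal regularity Proposition~\ref{prop:horizontal_regularity} to upgrade the $l$-index. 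Finally \eqref{eq:Gaussian_estimate_3} follows from \eqref{eq:horizontal_regularity_1}: $\|\bar\nabla^{\mathcal H}\psi\| \leq Cs^{-1/2}(\|\bar\nabla^{\mathcal H}(\slashed D_s\psi)\| + \|\slashed D_s\psi\|) = Cs^{-1/2}(\|\bar\nabla^{\mathcal H}({\mathcal P}_s\xi^0)\| + \|{\mathcal P}_s\xi^0\|)$, and $\bar\nabla^{\mathcal H}({\mathcal P}_s\xi^0)$ involves $\bar\nabla^{\mathcal H}\xi^0$ plus lower-order terms, all controlled by $Cs^{1/2}\|\xi^0\|$ via \eqref{eq:elliptic_estimate1_k>=0} and \eqref{eq:aux_estimate_2}, giving the stated bound. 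Uniqueness of both $\psi$ and $\zeta$ is clear: $\psi$ is the unique preimage in $(\ker\slashed D_s)^{\perp_{L^2}}$, and $\zeta$ is determined pointwise by the invertibility of $s\bar{\mathcal A}$ on the orthogonal complement.
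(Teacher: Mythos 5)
Your proposal follows the same overall strategy as the paper: solve \eqref{eq:balansing_condition_2} algebraically by fiberwise invertibility of $\bar{\mathcal A}$ on the orthogonal complement, solve \eqref{eq:balansing_condition_1} by checking that ${\mathcal P}_s\xi^0\perp\ker\slashed D_s^*$ and invoking the Fredholm alternative and the Green operator $G_s$, then derive the estimates from \eqref{eq:elliptic_estimate1_k>=0}, \eqref{eq:bootstrap_estimate}, \eqref{eq:spectral_estimate}, \eqref{eq:horizontal_regularity_1}, and \eqref{eq:aux_estimate_2}. Your catch of the typo $(\ker\slashed D_s^*)^{\perp_{L^2}}$ vs.\ $(\ker\slashed D_s)^{\perp_{L^2}}$ in the statement is also correct. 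So the broad architecture matches.

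There is, however, one conceptual misattribution you should be aware of. You write that the pairing $\langle{\mathcal P}_s\xi^0,\eta\rangle$ ``reduces to a multiple of $\langle(D^Z_+ + {\mathcal B}^Z_{0+})\xi,\eta'\rangle_{L^2(Z)}$, which vanishes by the hypothesis $\xi\in\ker(D^Z_+ + {\mathcal B}^Z_{0+})$.'' That is not how the orthogonality is established, and it cannot be: if the projection of ${\mathcal P}_s\xi^0$ onto $\ker\slashed D_s^*$ were literally a nonzero multiple of $(D^Z_+ + {\mathcal B}^Z_{0+})\xi$, then the whole splicing apparatus (which needs $G_s$ to apply unconditionally) would only work on the kernel, and other invocations of the same projection identity later in the paper would fail. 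The paper establishes ${\mathcal P}_s\xi^0\perp\ker\slashed D_s^*$ \emph{unconditionally}: the first piece $\sum_\ell c_Z(d_Z\ln\varphi_{s\ell})P^{0+}_\ell\xi^0$ pairs with $\varphi_{s\ell'}\theta$ to give (after polarization, with $\delta_{\ell\ell'}$) a radial integral proportional to $\int_0^\infty(\tfrac{n-m}{2}-r^2)r^{n-m-1}e^{-r^2}dr$, which is identically zero by the $\Gamma$-function recursion; the second and third pieces cancel against each other because ${\mathcal B}^Z_{0+}$ is \emph{defined} as the $\ker\slashed D_s^*$-projection of $P^1({\mathcal B}^0+\tfrac12sr^2\bar A_{rr})\xi^0$, with the constants $C_{\ell,\ell'}$ coming from the mixed Gaussian integrals $\int\varphi_{s\ell}\varphi_{s\ell'}$. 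The hypothesis $\xi\in\ker(D^Z_+ + {\mathcal B}^Z_{0+})$ is genuinely used, but only in the \emph{estimates}: it is what lets \eqref{eq:aux_estimate_2} collapse $\|r^k\bar\nabla^{\mathcal H}\xi^0\|_{L^2(N)}$ to $Cs^{-k/2}\|\xi^0\|_{L^2(N)}$, which you need for \eqref{eq:Gaussian_estimate_3} and \eqref{eq:Gaussian_estimate_5}. Your treatment of those estimates is otherwise correct, but you should disentangle ``orthogonality holds because of the kernel hypothesis'' (false) from ``the horizontal-derivative bounds simplify because of the kernel hypothesis'' (true). As written, your argument arrives at the right conclusion by a circuitous and partly incorrect route: the term $\langle\bar D^Z\xi^0,\eta\rangle$ you leave ``shifted'' does not vanish on its own and is in fact exactly $\langle(D^Z_+\xi)^0,\eta\rangle$, so the first piece of ${\mathcal P}_s\xi^0$ already integrates to zero without any hypothesis on $\xi$.
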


\begin{proof}
First we claim that
\[
{\mathcal P}_s\xi^0  \in (\ker \slashed D_s^*)^{\perp_{L^2}} \cap W^{1,2}_{k,0}(N),\ \forall k\in \mathbb{N}_0.
\]
By direct calculation, for every $\ell$,
\begin{equation}
\label{eq:H_derivatives_of_kernel_sections}
d_Z(\ln\varphi_{s\ell}) =   \left(\frac{n-m}{2} - s \lambda_\ell r^2\right)\frac{d_Z\lambda_\ell}{2\lambda_\ell}.
\end{equation}
This is a Hermite polynomial in $r$ that is $L^2$ perpendicular to the $\ker \slashed D_s$: an arbitrary section in $\ker \slashed D_s$ is a section of the form $\phi_{s\ell'}\cdot \theta$, with $\theta\in L^2(Z ; S_{\ell'}^+)$. By applying change of coordinates $\{y_\alpha = \sqrt{s \lambda_\ell} x_\alpha\}_\alpha$,
\begin{equation*}
\langle c_Z(d_Z(\ln \varphi_{s\ell}))P^{1+}_\ell \xi^0  , \varphi_{s\ell'}\cdot \theta\rangle_{L^2(N)} 
= \delta_{\ell , \ell'} \int_N  \left(\frac{n-m}{2} -   \lvert y\rvert^2\right)\frac{1}{2\lambda_\ell} e^{-\lvert y\rvert^2} \langle c_Z(d \lambda_\ell) \xi, \theta\rangle\, d\mathrm{vol}^N.
\end{equation*}
The integral is zero when $\ell = \ell'$, since its polar part is
\[
\int_0^\infty \left(\frac{n-m}{2} - r^2\right) r^{n-m-1} e^{-r^2} dr = \left(\frac{n-m}{2}\right) \Gamma\left(\frac{n-m}{2}\right) -  \Gamma\left(\frac{n-m}{2} +1\right) =0.
\]
This proves the claim for the first term in the expression of ${\mathcal P}_s\xi^0$. For the rest of the terms we calculate the the $L^2$-projections to the subspace $\ker \slashed D_s$ as
\begin{equation*}
\langle {\mathcal B}^0 \xi^0 , \varphi_{s \ell'} \cdot \theta \rangle_{L^2(N)} = s^{\tfrac{n-m}{2}}\sum_\ell \int_N (\lambda_\ell \lambda_{\ell'})^{\tfrac{n-m}{4}} \exp\left(-\frac{1}{2}s(\lambda_\ell + \lambda_{\ell'})\lvert x\rvert^2\right) \langle {\mathcal B}^0 \xi_\ell, \theta\rangle\, d\mathrm{vol}^N.
\end{equation*}
By applying change of coordinates $\{y_\alpha = [\tfrac{1}{2}s (\lambda_\ell+ \lambda_{\ell'})]^{1/2} x_\alpha\}_\alpha$ and then calculating the polar part of the resulting integral, we obtain the constant $C_{\ell, \ell'}$ of Definition~\ref{defn:Dirac_operator_component}. Similarly, using the expression  $r^2 \bar A_{rr} = x_\alpha x_\beta \bar A_{\alpha \beta}$, 
\begin{equation*}
s\langle r^2 \bar A_{rr} \xi^0 , \varphi_{s \ell'} \cdot \theta \rangle_{L^2(N)} =  s^{\tfrac{n-m}{2}+1}\sum_{\ell, \alpha, \beta} \int_N (\lambda_\ell \lambda_{\ell'})^{\tfrac{n-m}{4}} x_\alpha x_\beta \exp\left(-\frac{1}{2}s(\lambda_\ell + \lambda_{\ell'})\lvert x\rvert^2\right) \langle \bar A_{\alpha \beta} \xi_\ell, \theta\rangle\, d\mathrm{vol}^N.
\end{equation*}
The resulting integral is zero when $\alpha \neq \beta$ and when $\alpha = \beta$ we apply the same change of variables as with the preceding integral and we write the resulting integral as product of $n-m-1$  one dimensional Gaussian integrals, one for each normal coordinate different than the $x_\alpha$-coordinate and an integral that is $\frac{1}{2}\Gamma\left(\frac{3}{2}\right)$. The final result of the integration along the normal directions is $\frac{C_{\ell, \ell'}}{\lambda_\ell + \lambda_{\ell'}}$. Hence the $L^2$-projection of the term $\left({\mathcal B}^0 + \frac{1}{2} sr^2 \bar A_{rr}\right)\xi^0$ onto $\ker \slashed D_s$ is ${\mathcal B}^Z_{0+} \xi^0$. This finishes the proof of the claim.

The claim together with Proposition~\ref{prop:bootstrap_on_k} prove that equation \eqref{eq:balansing_condition_1} has a unique solution $\psi$. Since the right hand side, is a Hermite polynomial in $r$, it belongs to the space $\bigcap_{k,l} W^{1,2}_{k,l}(N)$. By Propositions~\ref{prop:horizontal_regularity}, we have that $\psi\in \bigcap_{k,l} W^{1,2}_{k,l}(N)$ and by \eqref{eq:spectral_estimate} and \eqref{eq:elliptic_estimate1_k>=0} with $k=1$ we obtain,  
\begin{align}
\sqrt{s}\|\psi\|_{L^2(N)}  &\leq C\|\slashed D_s \psi\|_{L^2(N)} \nonumber
\\
\|\slashed D_s \psi\|_{L^2(N)} &= C \| {\mathcal P}_s \xi^0\|_{L^2(N)} \leq  C\| (1+ sr^2)  \xi^0\|_{L^2(N)} \leq C\|\xi^0\|_{L^2(N)},\label{eq:aux_estimate_1}
\end{align}
so that estimate \eqref{eq:Gaussian_estimate_1} is proved. Estimate \eqref{eq:Gaussian_estimate_2} follows by combining \eqref{eq:bootstrap_estimate} with \eqref{eq:aux_estimate_1}.

Finally
\begin{align*}
\|\bar\nabla^{\mathcal H}\psi\|_{L^2(N)} &\leq  Cs^{-1/2}\left(\|\bar\nabla^{\mathcal H} ({\mathcal P}_s \xi^0)\|_{L^2(N)} + \| \slashed D_s \psi\|_{L^2(N)}\right), \quad \text{by \eqref{eq:horizontal_regularity_1},}
\\
&\leq  Cs^{-1/2}\left(\| (1+sr^2)(\lvert\xi^0\rvert+\lvert \bar\nabla^{\mathcal H} (\xi^0)\rvert)\|_{L^2(N)} + \| \slashed D_s \psi\|_{L^2(N)}\right)
\\
&\leq  Cs^{-1/2}\|\xi^0\|_{L^2(N)}, \qquad (\text{by \eqref{eq:elliptic_estimate1_k>=0},  \eqref{eq:aux_estimate_1}and \eqref{eq:aux_estimate_2}})
\end{align*}
so that estimate \eqref{eq:Gaussian_estimate_3} is proved.

Equation \eqref{eq:balansing_condition_2} is solvable in $\zeta$ since ${\mathcal A}$ is invertible in the subspaces where the equation is defined. Moreover we have poinwise estimates
\begin{align*}
sr^{k+1}\lvert\zeta\rvert &\leq (1+ sr^2) r^{k+1} \lvert\xi^0\rvert
\\
r^k \lvert\bar\nabla^{\mathcal V} \zeta\rvert &\leq Cr^{k+1} (1+sr^2) \lvert\xi^0\rvert  
\\
sr^k\lvert\bar\nabla^{\mathcal H} \zeta\rvert&\leq r^k(1 + sr^2)[\lvert\xi^0\rvert +  \lvert\bar\nabla^{\mathcal H} (\xi^0)\rvert]. 
\end{align*}
Applying $L^2$-norms and using \eqref{eq:elliptic_estimate1_k>=0} and \eqref{eq:aux_estimate_2}, we obtain estimates \eqref{eq:Gaussian_estimate_4} and \eqref{eq:Gaussian_estimate_5}.
\end{proof}

We now proceed to the
\begin{proof}[Proof of estimate \eqref{eq:est1} in Theorem \ref{Th:hvalue}]
Choose $\xi = \sum_\ell \xi_\ell\in \ker (D^Z_+ + {\mathcal B}^Z_{0+})$ and set $\eta = {\mathcal I} \tilde \eta$ where $\tilde\eta = \rho_\varepsilon^Z \cdot \tau \xi_s$ and $\xi_s = \xi^0 + \xi^1 + \xi^2$. Denote $\rho_\varepsilon^Z = \rho_\varepsilon$. The Taylor expansion from  Corollary~\ref{cor:taylorexp} gives
\begin{equation}
\label{eq:cor}
\begin{aligned}
(\tilde D + s \tilde {\mathcal A})\eta &= d\rho_{\varepsilon\cdot} \tau\xi_s  +  \rho_\varepsilon \cdot \tau O\left( r^2\partial^{\mathcal V}  + r\partial^{\mathcal H}  + r  + sr^3\right)\xi^0 
\\
&\quad + \rho_\varepsilon \cdot \tau O\left( r\partial^{\mathcal V} + \partial^{\mathcal H} + 1 + sr^2\right)\xi^1
\\
&\quad +  \rho_\varepsilon \cdot \tau O\left(\partial^{\mathcal V} + \partial^{\mathcal H} + 1 + sr\right)\xi^2.
\end{aligned}
\end{equation}

Here the lower order terms vanished because we used the simplifications coming from equations  
\begin{align*}
\slashed D_s \xi^0 &=0
\\ 
(\bar D^Z +{\mathcal B}^Z_{0+}) \xi^0 &= \left(\sum_\ell c_Z(d_Z(\ln \varphi_{s\ell})) P^{0+}_\ell\right) \xi^0, \qquad  (\text{by \eqref{eq:br_D_Z_versus_D_Z_+}}) 
\end{align*}
together with the equations \eqref{eq:balansing_condition_1} for $\xi^1$ and and \eqref{eq:balansing_condition_2} for $\xi^2$. By using \eqref{eq:elliptic_estimate1_k>=0} and   \eqref{eq:aux_estimate_2} for $\xi^0$ and \eqref{eq:Gaussian_estimate_1} up to  \eqref{eq:Gaussian_estimate_3} for $\xi^1$ and estimates \eqref{eq:Gaussian_estimate_4} and \eqref{eq:Gaussian_estimate_5} for $\xi^2$, we obtain that the $L^2(N)$ norm of the error terms of expansion \eqref{eq:cor} are bounded by $Cs^{-1/2}\|\xi^0\|_{L^2(N)}$.

Finally, because $d\rho$ has support outside the $\varepsilon$-neighborhood of $Z_{\mathcal A}$, the $L^2(N)$ norm of the first term on the right hand side is bounded as 
\begin{equation*}
\int_\mathcal{N} \left\lvert d\rho_{\varepsilon\cdot}\tau\xi^0\right\rvert^2\, d\mathrm{vol}^N 
\leq C\|\xi\|_{L^2(Z)}^2 \int_{\varepsilon\sqrt{s}}^\infty r^{n-m-1}e^{-r^2}\, dr \leq C \pi^{\tfrac{m-n}{2}} e^{- \tfrac{s\varepsilon^2}{2}}\|\xi^0\|_{L^2(N)}^2, \quad \text{(by \eqref{eq:Gaussian_evaluation}),}
\end{equation*}
and
\[
\|d\rho_{\varepsilon\cdot}\tau(\xi^1 + \xi^2)\|_{L^2(N)} \leq  Cs^{-1}\|\xi^0\|_{L^2(N)}^2.
\]
Putting all together we obtain, 
\begin{equation}
\label{eq:aux_estimate_3}
\int_\mathcal{N} \lvert(\tilde D + s \tilde {\mathcal A}) \tilde\eta\rvert^2 \, d\mathrm{vol}^N \leq C s^{-1}\|\xi^0\|_{L^2(N)}^2.
\end{equation}
Finally we show that
\begin{equation}
\label{eq:aux_estimate_4}
\|\xi^0\|_{L^2(N)} \leq 2\|\rho_\varepsilon \cdot\xi^0\|_{L^2(N)} 
\end{equation}
for every $s$ sufficiently large. Indeed, by using the change to $\{y_\alpha = \sqrt{s \lambda_\ell} x_\alpha\}_\alpha$ on each component of the section $\xi^0 = \sum_\ell (\xi_\ell)^0$, we estimate
\begin{equation*}
\|\rho_\varepsilon \cdot (\xi_\ell)^0\|^2_{L^2(N)} \geq \int_{B(Z, \varepsilon \sqrt{s})} e^{-\lvert y\rvert^2}\lvert\xi_\ell\rvert^2\, d\mathrm{vol}^N
=  \left\lvert S^{n-m-1}\right\rvert\|\xi_\ell\|^2_{L^2(Z)} \int^{\epsilon\sqrt{s}}_0r^{n-m-1}e^{- r^2}\, dr.
\end{equation*}
But there exist $s_0 = s_0(\varepsilon)>0$, so that 
\[
\int^{\epsilon\sqrt{s}}_0 r^{n-m-1}e^{- r^2}\, dr> \tfrac{1}{4} \int_0^\infty r^{n-m-1}e^{- r^2}\, dr,
\]
for every $s>s_0$. Estimate \eqref{eq:aux_estimate_4} follow.
Finally, using that $\tilde\eta = \rho_\varepsilon \tau(\xi^0+ \xi^1+ \xi^2)$ and \eqref{eq:aux_estimate_4},
\[
\|\tilde\eta\|_{L^2(N)}^2 \geq \frac{1}{4}\|\xi^0\|_{L^2(N)}^2  + 2 \langle\rho_\varepsilon \xi^0, \rho_\varepsilon (\xi^1+ \xi^2)\rangle_{L^2(N)}
\]
where the cross terms estimate as, 
\begin{align*}
\lvert 2 \langle\rho_\varepsilon \xi^0, \rho_\varepsilon (\xi^1+ \xi^2)\rangle_{L^2(N)}\rvert &\leq  \frac{1}{16}\|\xi^0\|_{L^2(N)}^2 + 16\| \xi^1 + \xi^2\|_{L^2(N)}^2
\\
&\leq   (\frac{1}{16} + Cs^{-1})\|\xi^0\|_{L^2(N)}^2, \qquad (\text{by \eqref{eq:Gaussian_estimate_1} and  \eqref{eq:Gaussian_estimate_4}})
\\
&\leq \frac{1}{8}\|\xi^0\|_{L^2(N)}^2,
\end{align*}
for $s>0$ sufficiently large. Therefore, we obtain
\[
\|\tilde\eta\|_{L^2(N)}^2 \geq \frac{1}{8} \|\xi^0\|_{L^2(N)}^2. 
\]
Combining this last inequality with \eqref{eq:aux_estimate_3}, we obtain 
\[
\int_\mathcal{N}\lvert\tilde D_s \tilde \eta\rvert^2\, d\mathrm{vol}^N \leq C s^{-1}\int_\mathcal{N} \lvert\tilde \eta\rvert^2\, d\mathrm{vol}^N.
\]
By \eqref{eq:density_comparison}, the volume densities $d\mathrm{vol}^\mathcal{N}$ and $d\mathrm{vol}^N$ are equivalent. Therefore, the preceding inequality holds for the density $d\mathrm{vol}^\mathcal{N}$. Since $\| D_s \eta\|_{L^2(X)} = \|\tilde D_s \tilde\eta\|_{L^2(\mathcal{N})}$ and $\|\eta\|_{L^2(X)} = \|\tilde \eta\|_{L^2(\mathcal{N})}$, estimate \eqref{eq:est1} follows.
\end{proof}

\vspace{2mm}

Applying Lemma \ref{lemma:spectrum} we get a proof of Spectrum Separation Theorem stated in the introduction:
\begin{proof}[Proof of Spectral Separation Theorem.] 
By Theorem~\ref{Th:hvalue}, we may choose $s_0>0$ so that the constants satisfy $4\tfrac{C_1}{s}< C_2$ for every $s> s_0$. We apply Lemma~\ref{lemma:spectrum} with $L = D_s$ and $H = L^2(X, E^0),\, H' = L^2(X, E^1)$ and $V_{s, \varepsilon}$ constructed in Definition~\ref{def:space_of_approx_solutions}. The analogue versions of Theorem~\ref{Th:hvalue} for the adjoint operator $D_s^*$ are also applied. As a result, we obtain that the for the first eigenvalue $\lambda_0$ of $D_s^* D_s$ and $D_s D^*_s$ satisfying $\lambda_0 \leq C_1 s^{-1/2}$, the orthogonal projections,
\begin{equation*}
\Pi^0 :\mathrm{span}^0(s, \lambda_0) \simeq V_{s, \varepsilon} 
\simeq \bigoplus_{Z \in \mathrm{Comp}(Z_{\mathcal A})} \ker\{ D^Z_+ + {\mathcal B}^Z_{0+} : C^\infty(Z ; S^{0+}\vert_Z) \rightarrow C^\infty(Z; S^{1+}\vert_Z)\},
\end{equation*}
and 
\begin{equation*}
\Pi^1:\mathrm{span}^1(s, \lambda_0) \simeq W_{s, \varepsilon} 
\simeq \bigoplus_{Z \in \mathrm{Comp}(Z_{\mathcal A})} \ker\{ D^{Z*}_+ + {\mathcal B}^Z_{1+} : C^\infty(Z ; S^{1+}\vert_Z) \rightarrow C^\infty(Z; S^{0+}\vert_Z)\},
\end{equation*}
are both linear isomorphisms, for every $s>s_0$. It also follows that $N^i(s,\lambda_0) = N^i(s, C_1 s^{-1/2})$,\ i =0,1. This completes the proof when $s> s_0$. By replacing ${\mathcal A}$ with $- {\mathcal A}$, the preceding considerations prove an analogue theorem for $D_s$ with $s$ being large and negative. The bundle where approximate sections are constructed, is then changing from $S^{i+}$ to $S^{i-},\, i =0,1$.   
\end{proof}

\begin{rem} Combining  Theorem \ref{Th:hvalue} with the proof of Lemma \ref{lemma:spectrum} we obtain a bound on the error of the approximate eigensections that is if $\|\xi\|_{L^2(X)} =1$ and $D_s\xi = 0$ we have, 
\[
\| \xi - \Pi^i(\xi)\|_{L^2(X)}^2 \leq \frac{4C_1}{sC_2} \rightarrow 0 \quad \mbox{as}\quad s\rightarrow \infty,\quad i =0,1.
\]

\end{rem}

\medskip
   
  \vspace{1cm}
  
\setcounter{equation}{0}   
\section{A Poincar\'{e} type inequality}

\label{sec:A_Poincare_type_inequality}

This section is entirely devoted to the proof of estimate \eqref{eq:est2} of Theorem \ref{Th:hvalue}. We start by reducing the proof of the estimate to a local estimate for sections supported in the tubular neighborhood $B_{Z_{\mathcal A}}(4\varepsilon)$:  

\begin{lemma}
\label{lemma:local_implies_global}
If estimate (\ref{eq:est2}) is true for $\eta\in V_{s, \varepsilon}^\perp$ supported in $B_{Z_{\mathcal A}}(4\varepsilon)$ then it is true for every $\eta\in V_{s, \varepsilon}^\perp$.
\end{lemma}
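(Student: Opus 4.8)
The plan is to reduce the global estimate to the local one by a partition-of-unity argument on the closed manifold $X$, separating the region near the singular set $Z_{\mathcal A}$ from the region far from it, where the Concentration Principle (Proposition~\ref{prop:concentration_Prop}) provides control directly. First I would fix a smooth cutoff $\chi: X \to [0,1]$ that equals $1$ on $B_{Z_{\mathcal A}}(2\varepsilon)$ and is supported in $B_{Z_{\mathcal A}}(4\varepsilon)$, with $\lvert d\chi\rvert \le C/\varepsilon$, and write $\eta = \chi\eta + (1-\chi)\eta$ for $\eta \in V_{s,\varepsilon}^\perp$. The key point is that $\chi\eta$ is again (approximately) $L^2$-orthogonal to $V_{s,\varepsilon}$: since the elements of $V_{s,\varepsilon}$ are supported in $B_{Z_{\mathcal A}}(2\varepsilon)$ where $\chi\equiv 1$, and $\eta \perp V_{s,\varepsilon}$, we get $\langle \chi\eta, v\rangle_{L^2} = \langle \eta, v\rangle_{L^2} = 0$ for all $v\in V_{s,\varepsilon}$; hence $\chi\eta \in V_{s,\varepsilon}^\perp$ exactly, and it is supported in $B_{Z_{\mathcal A}}(4\varepsilon)$, so the local hypothesis applies to it.

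The second ingredient is to bound $(1-\chi)\eta$, which is supported in $\Omega(2\varepsilon) = X\setminus B_{Z_{\mathcal A}}(2\varepsilon)$. Here I would use the Concentration Principle: away from $Z_{\mathcal A}$ the bundle map ${\mathcal A}$ is invertible, so $\lvert\tilde{\mathcal A}\zeta\rvert \ge c\lvert\zeta\rvert$ on this region for some $c>0$ depending only on $\varepsilon$ and ${\mathcal A}$. Consequently, for a section $\zeta$ supported in $\Omega(2\varepsilon)$,
\[
\|D_s\zeta\|_{L^2(X)}^2 \;\ge\; \tfrac12 s^2\|{\mathcal A}\zeta\|_{L^2}^2 - \|D\zeta\|_{L^2}^2 \;\ge\; \bigl(\tfrac12 c^2 s^2 - C'\|\zeta\|_{W^{1,2}}^2/\|\zeta\|_{L^2}^2\bigr)\|\zeta\|_{L^2}^2,
\]
where the first-order term $\|D\zeta\|$ is absorbed for $s$ large after a standard $W^{1,2}$-elliptic estimate for $D_s$; so $\|D_s\zeta\|_{L^2} \ge c_1 s\|\zeta\|_{L^2} \ge C_2\|\zeta\|_{L^2}$ for $s > s_0$. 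The reason a bare pointwise ellipticity argument suffices here, rather than the delicate Poincar\'e estimate, is precisely that we are bounded \emph{away} from $Z_{\mathcal A}$.

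Finally I would combine the two pieces. Writing $D_s\eta = D_s(\chi\eta) + D_s((1-\chi)\eta)$ and using the product rule $D_s(\chi\eta) = \chi D_s\eta + c(d\chi)\eta$ (and similarly for $1-\chi$), the commutator terms $\pm c(d\chi)\eta$ are supported in the overlap annulus $B_{Z_{\mathcal A}}(4\varepsilon)\setminus B_{Z_{\mathcal A}}(2\varepsilon) \subset \Omega(2\varepsilon)$ and are pointwise bounded by $(C/\varepsilon)\lvert\eta\rvert$; these are absorbed into the right-hand side of the far-away estimate at the cost of enlarging $s_0$. Adding $\|D_s(\chi\eta)\|_{L^2}^2$ (controlled by the local estimate) and $\|D_s((1-\chi)\eta)\|_{L^2}^2$ (controlled above), and using $\|\eta\|_{L^2}^2 \le 2\|\chi\eta\|_{L^2}^2 + 2\|(1-\chi)\eta\|_{L^2}^2$ together with the triangle inequality in the form $\|D_s\eta\|_{L^2}^2 \ge \tfrac12\|D_s(\chi\eta)\|^2 - \|D_s((1-\chi)\eta) + \text{commutators}\|^2$ rearranged appropriately, yields $\|D_s\eta\|_{L^2} \ge C_2'\|\eta\|_{L^2}$ for all $\eta\in V_{s,\varepsilon}^\perp$ and all $s > s_0$, possibly after shrinking $C_2$ and enlarging $s_0$. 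I expect the main obstacle to be the bookkeeping of the cross terms $\langle D_s(\chi\eta), D_s((1-\chi)\eta)\rangle$ and the commutator contributions: one must verify that all the error terms living on the annulus are genuinely small compared to $s\|\eta\|_{L^2}$, which again rests on the lower bound $\lvert{\mathcal A}\zeta\rvert \ge c\lvert\zeta\rvert$ there and on choosing $s_0$ after $\varepsilon$ (not before).
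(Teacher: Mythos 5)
Your overall strategy matches the paper's: decompose $\eta = \chi\eta + (1-\chi)\eta$, observe that $\chi\eta \in V_{s,\varepsilon}^\perp$ because elements of $V_{s,\varepsilon}$ live in $B_{Z_{\mathcal A}}(2\varepsilon)$ where $\chi \equiv 1$, apply the local hypothesis to $\chi\eta$, use a concentration-type lower bound on $(1-\chi)\eta$, and control the commutator terms on the annulus. So the architecture is right. However, there is a concrete gap in how you estimate the far-away piece.

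You write
\[
\|D_s\zeta\|_{L^2(X)}^2 \;\ge\; \tfrac12 s^2\|{\mathcal A}\zeta\|_{L^2}^2 - \|D\zeta\|_{L^2}^2 \;\ge\; \bigl(\tfrac12 c^2 s^2 - C'\|\zeta\|_{W^{1,2}}^2/\|\zeta\|_{L^2}^2\bigr)\|\zeta\|_{L^2}^2,
\]
and then claim you can choose $s_0$ large enough to conclude $\|D_s\zeta\| \ge C_2\|\zeta\|_{L^2}$. This does not work: the ratio $\|\zeta\|_{W^{1,2}}/\|\zeta\|_{L^2}$ is not bounded uniformly over the sections $\zeta = (1-\chi)\eta$ you need to handle, so any $s_0$ chosen this way would depend on $\eta$. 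The Peter--Paul absorption of the cross term $2s\langle D\zeta, {\mathcal A}\zeta\rangle$ at the cost of a $-\|D\zeta\|^2$ is exactly where the argument breaks. The correct mechanism --- and the reason the concentration condition \eqref{eq:cond} enters at all --- is that $D^*\circ{\mathcal A} + {\mathcal A}^*\circ D = B_{\mathcal A}$ is a \emph{bundle map}, so
\[
\|D_s\zeta\|^2 \;=\; \|D\zeta\|^2 + s\langle B_{\mathcal A}\zeta,\zeta\rangle + s^2\|{\mathcal A}\zeta\|^2 \;\ge\; \bigl(s^2\kappa_{2\varepsilon}^2 - sC_0\bigr)\|\zeta\|_{L^2}^2,
\]
where $C_0 = \sup_X|B_{\mathcal A}|$ and $\kappa_{2\varepsilon}$ is the lower bound for ${\mathcal A}$ on $\Omega(2\varepsilon)$: the $\|D\zeta\|^2$ term is simply dropped (it is nonnegative), and the cross term is zeroth order. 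No Sobolev estimate is needed and $s_0$ depends only on $\varepsilon$.

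A second, smaller issue is that you assert the annulus error terms are "absorbed" without saying how you bound $\int_{\text{annulus}}|\eta|^2$. In the paper this is done by a second application of the concentration estimate to $\eta$ itself: since $(d\chi)_\cdot\eta$ is supported in $\Omega(2\varepsilon)$, the same algebraic identity gives $\int_{\Omega(2\varepsilon)}|\eta|^2 \le (s^2\kappa_{2\varepsilon}^2)^{-1}\|D_s\eta\|^2 + (s\kappa_{2\varepsilon}^2)^{-1}C_0\|\eta\|^2$, and only after this is the error genuinely small and absorbable. Make both of these points explicit and the proof closes.
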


\begin{proof}
Let $\eta\in V_{s, \varepsilon}^\perp$ and recall the cutoff $\rho_\varepsilon$ used in Definition~\ref{def:space_of_approx_solutions} and define $\rho' = \rho_{2\varepsilon} :X\rightarrow [0,1]$, a bump function supported in $B_{Z_{\mathcal A}}(4\varepsilon)$ with $\rho' \equiv 1$ in $B_{Z_{\mathcal A}}(2\varepsilon)$.  Write $\eta = \rho'\eta + (1-\rho')\eta = \eta_1 + \eta_2$ with supp $\eta_1 \subset B_{Z_{\mathcal A}}(4\varepsilon)$ and supp $\eta_2\subset X\backslash B_{Z_{\mathcal A}}(4\varepsilon) = \Omega(4\varepsilon)$. Then
\begin{equation}
\label{eq:interpol}
\|D_s\eta\|^2_{L^2(X)} = \|D_s\eta_1\|^2_{L^2(X)} + \|D_s\eta_2\|^2_{L^2(X)} +2\langle D_s\eta_1, D_s\eta_2\rangle_{L^2(X)}.
\end{equation}
 Since $\rho'\cdot \rho_\varepsilon = \rho_\varepsilon$ we have $\eta_1\in V_{s, \varepsilon}^\perp$ and by assumption there exist $C_1=C_1(\varepsilon)>0$ and $s_0 = s_0(\varepsilon)>0$ so that 
\[
\|D_s\eta_1\|^2_{L^2(X)}\geq C_1\|\eta_1\|^2_{L^2(X)}
\]
for every $s> s_0$.  Also, by a concentration estimate
\[ 
\|D_s\eta_2\|^2_{L^2(X)} \geq s^2\|{\mathcal A}\eta_2\|^2_{L^2(X)} - s\lvert\langle \eta_2, B_{\mathcal A}\eta_2\rangle_{L^2(X)}\rvert \geq (s^2 \kappa^2_{2\varepsilon} - s C_0)\|\eta_2\|^2_{L^2(X)}.
\]
with constants $\kappa_{2\varepsilon}$ and $C_0$ as in \eqref{eq:useful_constants}. To estimate the cross terms we calculate  
\[
D_s \eta_1\, =\, \rho' D_s\eta + (d\rho')_\cdot \eta,\qquad D_s\eta_2\, =\, (1-\rho') D_s\eta - (d\rho')_\cdot \eta
\]
and hence
\begin{align*}
\langle D_s\eta_1, D_s\eta_2\rangle_{L^2(X)} =& \int_X\rho'(1-\rho') \left\lvert D_s\eta\right\rvert^2\, d\mathrm{vol}^X + \int_X(1-2\rho')\langle (d\rho')_\cdot \eta, D_s\eta\rangle\, d\mathrm{vol}^X - \int_X \left\lvert(d\rho')_\cdot \eta\right\rvert^2\, d\mathrm{vol}^X
\\
\geq&  -\frac{1}{2}\|D_s\eta\|^2_{L^2(X)} -\frac{3}{2} \int _X \left\lvert(d\rho')_\cdot\eta\right\rvert^2\, d\mathrm{vol}^X.
\end{align*}
 We used that $\lvert ab\rvert\leq \tfrac{1}{2}(a^2 + b^2)$ and that $(1-2\rho')^2\leq 1$. But $(d\rho')_\cdot \eta $ is supported in $\Omega(2\varepsilon)$ hence by a concentration estimate applied again
\begin{align*}
\int _X \left\lvert(d\rho')_\cdot\eta\right\rvert^2\, d\mathrm{vol}^X &\leq C_\varepsilon \int_{\Omega(2\varepsilon)}\left\lvert\eta\right\rvert^2\, d\mathrm{vol}^X \leq \frac{C_\varepsilon}{s^2 \kappa^2_{2\varepsilon}} \|D_s \eta\|^2_{L^2(X)} + \frac{C_\varepsilon C_0}{s \kappa^2_{2\varepsilon}}\|\eta\|_{L^2(X)}^2 
\\
&\leq \frac{1}{3} \|D_s\eta\|^2_{L^2(X)} + \frac{C_\varepsilon}{s}\|\eta\|_{L^2(X)}^2
\end{align*}
for $s$ large enough. Hence
\[
\langle D_s\eta_1, D_s\eta_2\rangle_{L^2(X)} \geq - \|D_s\eta\|^2_{L^2(X)} - \frac{C_\varepsilon}{s}\|\eta\|_{L^2(X)}^2.
\]
Substituting to \eqref{eq:interpol} and absorbing the first term in the left hand side there is an $s_1 = s_1(\varepsilon)$ with
\begin{align*}
3\|D_s\eta\|^2_{L^2(X)}&\geq  \|D_s\eta_1\|^2_{L^2(X)} + (s^2 \kappa^2_{2\varepsilon} - s C_0)\|\eta_2\|^2_{L^2(X)} - \frac{C_\varepsilon}{s}\|\eta\|_{L^2(X)}^2
\\ 
&\geq C_1(\|\eta_1\|^2_{L^2(X)} + \|\eta_2\|^2_{L^2(X)}) - \frac{C_\varepsilon}{s}\|\eta\|_{L^2(X)}^2
\\
& \geq C_1 \|\eta\|^2_{L^2(X)},
\end{align*}
for every $s\geq s_1$. 
\end{proof}

Since $L^2$-norms are additive on sections with disjoint supports, we can work with $\eta \in V_s^\perp$ so that the support of $\eta$ lies in a tubular neighborhood $B(Z, 4\varepsilon)$ of some individual singular component $Z$ of $Z_{\mathcal A}$. There the distance function $r$ from the set $Z$ is defined and we have the following  generalization of  estimate \eqref{eq:est2} of Theorem \ref{Th:hvalue}:

\begin{lemma}
\label{lemma:aux_estimate}
There exist $\varepsilon_0>0$ and $C= C(\varepsilon_0)$ so that for every $\varepsilon \in (0, \varepsilon_0)$ there exist $s_0(\varepsilon)>0$ with the following property: for every $s > s_0$ and every $\eta \in V_{s, \varepsilon}^\perp \cap W^{1,2}_0(B_Z(4\varepsilon) ; E^0)$ and every $k=0,1,2$,
\begin{align}
\label{eq:est2'}
\| D_s\eta\|_{L^2(X)} \geq   s^{k/2}C\| r^k\eta\|_{L^2(X)}.
\end{align}
\end{lemma}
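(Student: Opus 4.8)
The plan is to prove the estimate \eqref{eq:est2'} by transplanting the whole problem into the total space $N$ of the normal bundle of $Z$, using the diffeomorphism \eqref{eq:exp_diffeomorphism} and the parallel-transport trivialization $\tau$. Given $\eta$ supported in $B_Z(4\varepsilon)$, set $\xi = \tau^{-1}({\mathcal I}^{-1}\eta\circ\exp) \in W^{1,2}_0(\mathcal N;\pi^*(E^0|_Z))$, extended by zero to all of $N$. By Remark~\ref{rem:comments_on_Th_hvalue}(2) and \eqref{eq:density_comparison} it is enough to prove $\|\tau^{-1}\tilde D_s\tilde\eta\|_{L^2(N)} \geq s^{k/2}C\|r^k\xi\|_{L^2(N)}$, and by Corollary~\ref{cor:taylorexp} the operator $\tau^{-1}\tilde D_s\tau$ differs from the model operator $\slashed D_s + \bar D^Z + {\mathcal B}^0 + \tfrac12 sr^2\bar A_{rr}$ (on the $\pi^*(S^0|_Z)$ part) and from $\slashed D_0 + \bar D^Z + {\mathcal B}^0 + s\bar{\mathcal A}$ (on the $\pi^*(S^0|_Z)^\perp$ part) only by error terms of order $O(r^2\partial^{\mathcal V} + r\partial^{\mathcal H} + r + sr^3)$ resp. $O(r^2\partial^{\mathcal V}+r\partial^{\mathcal H}+(1+s)r)$. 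So the first step is to reduce, up to controllable errors, to proving the Poincaré-type inequality for this model operator on $N$.

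Next I would split $\xi = P^0\xi + P^0_\perp\xi =: u + w$ into its $\pi^*(S^0|_Z)$ and $\pi^*(S^0|_Z)^\perp$ components. On $w$ the perturbation $s\bar{\mathcal A}$ is fiberwise invertible with $|\bar{\mathcal A} w|\geq c|w|$ (this is the statement that $Z_{\mathcal A}$ is exactly the singular set, together with Assumption~\ref{Assumption:normal_rates} being a statement only about the $S^0$-direction), so the $w$-part already gives $\|D_s\xi\|\gtrsim s\|w\|$, hence also $\gtrsim s^{1+k/2}\varepsilon^{-k}\|r^k w\|$ on the support; this is the easy component. For $u$, further decompose $u = u^0 + u^1$ into $\ker\slashed D_s$ and its orthogonal complement using Proposition~\ref{prop:vertical_cross} and Remark~\ref{rem:observations_on_W_1,2_k_0}(2). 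On $u^1$ the spectral estimate \eqref{eq:spectral_estimate} gives $\|\slashed D_s u^1\|^2\geq 2sC_0\|u^1\|^2$, and combined with the bootstrap estimate \eqref{eq:bootstrap_estimate} and the cross-term vanishing \eqref{eq:cross_terms1}--\eqref{eq:cross_terms2} this controls $\|r^k u^1\|$ by $s^{-k/2}\|D_s\xi\|$ plus lower order. The genuinely hard part is $u^0 = \sum_\ell\varphi_{s\ell}\cdot \xi_\ell$: here $\slashed D_s u^0 = 0$, so the vertical operator gives nothing, and one must use the horizontal operator. By \eqref{eq:br_D_Z_versus_D_Z_+} and the definition of ${\mathcal P}_s$, the $u^0$-component of the model operator applied to $\xi$ projects (after integrating out the Gaussian in the normal directions, which introduces the constants $C_{\ell,\ell'}$) precisely to $(D^Z_+ + {\mathcal B}^Z_{0+})\xi$ acting on $Z$.

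The crux is therefore: the orthogonality hypothesis $\eta\in V_{s,\varepsilon}^\perp$ must be converted into a statement forcing $\xi$, when reduced to the $\ker\slashed D_s$ piece and pushed down to $Z$, to be $L^2(Z)$-orthogonal (up to $O(s^{-1/2})$ errors) to $\ker(D^Z_+ + {\mathcal B}^Z_{0+})$; then a standard elliptic lower bound $\|(D^Z_+ + {\mathcal B}^Z_{0+})\zeta\|_{L^2(Z)}\geq c\|\zeta\|_{L^2(Z)}$ on the orthogonal complement of the (finite-dimensional) kernel of this Dirac-type operator on the closed manifold $Z$ finishes the job, after using \eqref{eq:Gaussian_evaluation} to translate $L^2(Z)$ norms back to $L^2(N)$ norms of $u^0$ and its Gaussian weight. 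The main obstacle I expect is precisely this reduction of the global orthogonality condition to the fiberwise/base-level orthogonality while carefully tracking all the error terms from Corollary~\ref{cor:taylorexp}, from the splicing cutoff $\rho_\varepsilon$, and from the difference between $d\mathrm{vol}^{\mathcal N}$ and $d\mathrm{vol}^N$: one has to choose $\varepsilon$ small (to make the Taylor remainders uniformly small relative to the leading terms) and then $s$ large (to beat the $\varepsilon$-dependent constants and to make the Gaussian tails past radius $\varepsilon$ negligible), in the correct order, and ensure the orthogonality survives projecting onto $V_{s,\varepsilon}$. The weighted estimates (the $r^k$ factors for $k=1,2$) then follow by feeding the weighted bootstrap inequalities \eqref{eq:elliptic_estimate1_k>=0}, \eqref{eq:bootstrap_estimate}, \eqref{eq:aux_estimate_2} into each of the three pieces $w$, $u^1$, $u^0$ and absorbing, exactly as in the proof of \eqref{eq:est1}.
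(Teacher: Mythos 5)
Your proposal takes a genuinely different route from the paper's: you propose a direct Poincar\'e-type estimate via decomposition and projection, whereas the paper argues by contradiction, using the rescaling $\{y_\alpha=\sqrt{s_j}x_\alpha\}_\alpha$, weak $W^{1,2}$-compactness and a Rellich/diagonal argument (Lemma~\ref{lemma:perps_of_app_solutions}) to extract a limit section $\bar\xi\in\ker(D^Z_++{\mathcal B}^Z_{0+})$ that the orthogonality to $V_{s,\varepsilon}$ then forces to vanish, and finally obtains a contradiction from the concentration estimate applied to the escaping mass. Both approaches share the same anatomy of the section ($w$ in $(S^0)^\perp$, $u^1\perp\ker\slashed D_s$, $u^0\in\ker\slashed D_s$), and both reduce the $u^0$-part to the ellipticity of $D^Z_++{\mathcal B}^Z_{0+}$ on $Z$, so the underlying geometry is the same; the difference is whether one tracks all cross-terms quantitatively or lets them die in a weak limit.

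The gap in your proposal is that you identify the ``crux'' as converting the orthogonality $\eta\in V_{s,\varepsilon}^\perp$ into approximate $L^2(Z)$-orthogonality of $\bar u$ to $\ker(D^Z_++{\mathcal B}^Z_{0+})$, and that part is in fact routine (expand $\langle\eta,\rho_\varepsilon\tau\xi_s\rangle=0$, peel off the $O(s^{-1/2})$ corrections $\xi^1,\xi^2$ via Lemma~\ref{lem:balancing_condition}, and apply Fubini and \eqref{eq:Gaussian_evaluation}). The genuinely hard step, which your sketch treats as a projection identity rather than an inequality, is showing that the $\ker\slashed D_s^*$-projection of $D_s\eta$ is dominated from below by $\|(D^Z_++{\mathcal B}^Z_{0+})\bar u\|_{L^2(Z)}$ with only $O(s^{-1/2})\|\eta\|$ error, rather than $O(1)\|\eta\|$ error. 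The naive application of the Taylor estimate \eqref{eq:Taylor_estimate} controls each of $\slashed D_s\xi_1^1$, $\bar\nabla^{\mathcal H}\xi_1^0$, $\bar\nabla^{\mathcal H}\xi_1^1$, $s\bar{\mathcal A}\xi_2$ by $C(\|D_s\eta\|+\|\eta\|)$ with a constant $C=O(1)$; if this is all one has, the conclusion $\|\eta\|\lesssim\|D_s\eta\|+\|\eta\|$ is vacuous. To close the direct argument you must show that the $\ker\slashed D_s^*$-projection of $\bar D^Z\xi_1^1$ and of the $O(r^2\partial^{\mathcal V}+r\partial^{\mathcal H}+r+sr^3)$ remainders comes in at $O(s^{-1/2})(\|D_s\eta\|+\|\eta\|)$ rather than $O(1)$ --- this uses that $\xi_1^1\perp\ker\slashed D_s$ and the adjoint identity $\langle\bar D^Z\xi_1^1,\eta\rangle=-\langle\xi_1^1,\bar D^{Z*}\eta\rangle$ together with the Gaussian weighting of $\eta\in\ker\slashed D_s^*$ --- and it is a calculation of roughly the same length as the paper's Lemma~\ref{lemma:perps_of_app_solutions}. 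The paper's contradiction/compactness route avoids it entirely, since in the limit the system decouples into three $L^2$-orthogonal equations (for $\xi$, $\psi_1$, $\psi_2$) and the cross-terms vanish identically. Your proposal is salvageable, but the missing projection estimate is the technical heart, not the orthogonality reduction you flag, and this is worth making explicit.
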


As mentioned in Remark~\ref{rem:comments_on_Th_hvalue}, the exponential map identifies diffeomorphically this neighborhood to a neighborhood $\mathcal{N}_{2\varepsilon}$ of the zero section on the total space of the normal bundle $N$ of $Z$ and by using the maps introduced in \eqref{eq:exp_diffeomorphism}, we can prove  estimate \eqref{eq:est2'} for the diffeomorphic copies $\tilde D_s$ of $D_s$ and $\tilde V_{s, \varepsilon}^\perp\cap W^{1,2}_0(\mathcal{N}_{2\varepsilon} ; \tilde E^0\vert_{\mathcal{N}_{2\varepsilon}}) $ of $V_{s, \varepsilon}^\perp \cap W^{1,2}_0( B_Z(4\varepsilon) ; E^0\vert_{B_Z(4\varepsilon)})$. The tubular neighborhood admits two different volume elements namely the pullback volume $ d\mathrm{vol}^\mathcal{N} = \exp^* d\mathrm{vol}^X$ and the volume form $d\mathrm{vol}^N$ introduced in Appendix~\ref{subApp:The_expansion_of_the_volume_from_along_Z}. The corresponding densities are equivalent per Appendix~\eqref{eq:density_comparison}. We prove  estimate \eqref{eq:est2'} for the $L^2(N)$-norms and function spaces induced by $d\mathrm{vol}^N$.  

In the following lemmas until the end of the paragraph, we use the following conventions: given $\eta =\tau \xi$ and $\xi\in C^\infty_c(\mathcal{N}_{2\varepsilon}; \pi^*(E^0\vert_Z))$, we decompose $\eta = \eta_1 + \eta_2 = \tau (\xi_1 + \xi_2)$ where $\xi_1 = P^0 \xi $ and $\xi_2 = (1_{E^0\vert_Z} - P^0)\xi$ are sections of the bundles $\pi^*(S^0\vert_Z)$ and $\pi^*(S^0\vert_Z)^\perp$ respectively. It follows that $\xi_1$ and $\xi_2$ belong in different $\text{Cl}_n$-modules. We further decompose $\xi_1 = \xi^0_1  +\xi^1_1$ where $\xi_1^0 \in \ker \slashed D_s$ and $\xi^1_1 \in (\ker \slashed D_s)^{\perp_{L^2}} \cap \left(\bigcap_{k,l} W^{1,2}_{k,l}(N; \pi^*(S^0\vert_Z)) \right)$. We have the following basic estimate:

\begin{lemma}
There exist $s_0, \varepsilon_0>0$ and a constant $C=C(s_0, \varepsilon_0) >0$ so that for every $\varepsilon \in (0, \varepsilon_0)$, every $s>s_0$ and every $\eta =\tau \xi \in C^\infty_c(\mathcal{N}_{2\varepsilon}; \pi^*(E^0\vert_Z))$, we have an estimate,
\begin{multline}
\label{eq:Taylor_estimate}
\| \slashed D_s \xi_1^1\|_{L^2(N)} + \| \slashed D_0 \xi_2\|_{L^2(N)} + s\|\bar{\mathcal A} \xi_2\|_{L^2(N)}
\\
+ \| \bar \nabla^{\mathcal H} \xi_1^0\|_{L^2(N)} +\| \bar \nabla^{\mathcal H} \xi_1^1\|_{L^2(N)} +  \| \bar \nabla^{\mathcal H} \xi_2\|_{L^2(N)}  
\\
\leq C(\|\tilde D_s \eta\|_{L^2(N)} + \|\eta\|_{L^2(N)}).
\end{multline}

\end{lemma}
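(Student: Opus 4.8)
The plan is to establish \eqref{eq:Taylor_estimate} by feeding the Taylor expansions of Corollary~\ref{cor:taylorexp} into the orthogonal decomposition $\xi = \xi_1 + \xi_2 = (\xi_1^0 + \xi_1^1) + \xi_2$ and then bootstrapping with the elliptic estimates already available for $\slashed D_s$ and $\bar D^Z$. First I would apply $\tau^{-1}$ and split according to the $\text{Cl}_n$-module decomposition: since $\xi_1$ is a section of $\pi^*(S^0\vert_Z)$ and $\xi_2$ of $\pi^*(S^0\vert_Z)^\perp$, and since $\slashed D_0$, $\bar D^Z$, $\bar{\mathcal A}$ all respect this splitting (by \eqref{eq:spin_action_on_S} and the transversality consequences $\nabla_u{\mathcal A}(S^0)\subseteq S^1$), the quantity $\tau^{-1}\tilde D_s\eta$ decomposes as the sum of the $S^1\vert_Z$-component (coming from $\xi_1$) and the $(S^1\vert_Z)^\perp$-component (coming from $\xi_2$), and each can be controlled separately by $\|\tilde D_s\eta\|_{L^2(N)}$ plus lower-order terms. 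By \eqref{eq:density_comparison} the $d\mathrm{vol}^{\mathcal N}$ and $d\mathrm{vol}^N$ densities are uniformly equivalent on $\mathcal{N}_{2\varepsilon}$, so I may freely pass to the $L^2(N)$ norms throughout.

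Next, for the $S^0\vert_Z$-part I would use expansion \eqref{eq:taylorexp}: $\tau^{-1}\tilde D_s\eta\big|_{S^1} = (\slashed D_s + \bar D^Z + {\mathcal B}^0 + \tfrac12 sr^2\bar A_{rr})\xi_1 + O(r^2\partial^{\mathcal V} + r\partial^{\mathcal H} + r + sr^3)\xi_1$, restricted to a $2\varepsilon$-tubular neighborhood where $r<2\varepsilon$. Since $\slashed D_s\xi_1^0 = 0$ (as $\xi_1^0\in\ker\slashed D_s$), the term $\slashed D_s\xi_1 = \slashed D_s\xi_1^1$ appears alone and is orthogonal in a useful sense to $\bar D^Z\xi_1^0$ and its kernel projection; here I would invoke Proposition~\ref{prop:vertical_cross}\eqref{eq:spectral_estimate} and the elliptic estimate \eqref{eq:elliptic_estimate1_k>=0} to absorb the weighted error terms $r^2\partial^{\mathcal V}\xi_1^1$, $r^3 s\xi_1^1$, $r\partial^{\mathcal H}\xi_1^1$ etc.\ into a small multiple of $\|\slashed D_s\xi_1^1\|$ plus $\|\eta\|$, provided $\varepsilon$ is small enough and $s$ large enough so that factors like $\varepsilon^2$ and $\varepsilon s^{1/2}\cdot s^{-1/2}$ beat the constants. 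For the horizontal-derivative terms $\|\bar\nabla^{\mathcal H}\xi_1^0\|$ and $\|\bar\nabla^{\mathcal H}\xi_1^1\|$ I would use \eqref{eq:aux_estimate_2} and \eqref{eq:horizontal_regularity_1}/\eqref{eq:elliptic_estimate2} respectively, again with the weighting chosen so the errors are controlled. For the $(S^0\vert_Z)^\perp$-part I would use expansion \eqref{eq:taylorexp1}: $\tau^{-1}\tilde D_s\eta\big|_{(S^1)^\perp} = (\slashed D_0 + \bar D^Z + {\mathcal B}^0)\xi_2 + s\bar{\mathcal A}\xi_2 + O(r^2\partial^{\mathcal V} + r\partial^{\mathcal H} + (1+s)r)\xi_2$, where now $\bar{\mathcal A}$ is invertible on $(S^0\vert_Z)^\perp$ so $\|s\bar{\mathcal A}\xi_2\|\gtrsim s\|\xi_2\|$, and the error term $(1+s)r\xi_2$ is bounded by $2\varepsilon(1+s)\|\xi_2\|\leq \tfrac12\|s\bar{\mathcal A}\xi_2\|$ for $\varepsilon$ small; the remaining terms $\slashed D_0\xi_2$ and $\bar\nabla^{\mathcal H}\xi_2$ are then the orthogonal projections of what's left after subtracting the dominant $s\bar{\mathcal A}\xi_2$, so they satisfy Weitzenböck-type bounds via \eqref{eq:bar_D_z_Weitzenbock} and \eqref{eq:cross_terms3} (the cross term $\slashed D_0^*\bar{\mathcal A} + \bar{\mathcal A}^*\slashed D_0$ being zeroth order).

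The main obstacle, I expect, will be the bookkeeping of \emph{which} error terms get absorbed into \emph{which} principal terms and in \emph{which} order — in particular making sure the absorption is consistent, i.e.\ that no term is simultaneously being bounded by and used to bound another. The cleanest route is a two-stage argument: first bound $\|s\bar{\mathcal A}\xi_2\|$ and $\|\slashed D_s\xi_1^1\|$ by $\|\tilde D_s\eta\| + \|\eta\|$ plus \emph{small} multiples of the horizontal-derivative quantities (using that the support radius $2\varepsilon$ supplies the small factor); then bound the horizontal-derivative quantities $\|\bar\nabla^{\mathcal H}\xi_1^0\|,\|\bar\nabla^{\mathcal H}\xi_1^1\|,\|\bar\nabla^{\mathcal H}\xi_2\|$ using \eqref{eq:elliptic_estimate2}, \eqref{eq:aux_estimate_2}, \eqref{eq:horizontal_regularity_1} in terms of $\|\bar D^Z(\cdot)\|$, $\|\slashed D_s(\cdot)\|$ and $\|\eta\|$, which in turn are already controlled from stage one; finally combine and absorb. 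A secondary subtlety is that $\xi$ is only compactly supported in $\mathcal{N}_{2\varepsilon}$, not globally on $N$, so before applying the $L^2(N)$ estimates of Sections~\ref{sec:harmonic_oscillator}--\ref{sec:The_operator_bar_D_Z_and_the_horizontal_derivatives} I would note that extension by zero lands in the relevant weighted spaces $W^{1,2}_{k,l}(N)$, which is immediate since all weights $(1+r^2)^k$ are comparable to constants on the bounded region $\{r<2\varepsilon\}$. Modulo this, the estimate follows by assembling the triangle inequalities above and choosing $\varepsilon_0$, then $s_0(\varepsilon)$, appropriately.
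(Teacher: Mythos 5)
Your overall architecture is sound and closely parallels the paper's: decompose $\xi = (\xi_1^0+\xi_1^1) + \xi_2$ along the $\text{Cl}_n$-module splitting, feed in the Taylor expansions \eqref{eq:taylorexp}--\eqref{eq:taylorexp1}, and absorb error terms using smallness of $r$ and largeness of $s$. The treatment of $\xi_2$ is essentially correct (modulo noting that you also need $\slashed D_0^*\bar D^Z + \bar D^{Z*}\slashed D_0 \equiv 0$, i.e.\ \eqref{eq:cross_terms2}, not just \eqref{eq:cross_terms3}, to disentangle $\slashed D_0\xi_2$ and $\bar D^Z\xi_2$). But there is a genuine gap in the $\xi_1$-part of your Stage 1.

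To extract $\|\slashed D_s\xi_1^1\|$ from $\|(\slashed D_s+\bar D^Z)\xi_1\|$ you must control the cross term $2\langle \slashed D_s\xi_1, \bar D^Z\xi_1\rangle$. You write that the support radius $2\varepsilon$ supplies the needed small factor, but after integrating by parts this cross term produces the zeroth-order operator $\slashed D_s^*\bar D^Z + \bar D^{Z*}\slashed D_s$, whose coefficients are $sO(r)$ by \eqref{eq:cross_terms1} --- at $r\sim\varepsilon$ this is $s\varepsilon$, which is \emph{not} small for fixed $\varepsilon$ as $s\to\infty$. The paper neutralizes this in two pieces: on the kernel part it shows $\langle(\slashed D_s^*\bar D^Z+\bar D^{Z*}\slashed D_s)\xi_1^0,\xi_1^0\rangle = 0$ pointwise because \eqref{eq:cross_terms} lands $\xi_{1\ell}^0$ in the $C^0$-eigenspace with eigenvalue $(n-m-1)\lambda_\ell$, orthogonal to the $(n-m)\lambda_\ell$ eigenspace containing $\xi_{1\ell}^0$; on the remaining cross terms it pairs the $sO(r)$ coefficient with the weighted bootstrap bound $\|r\xi_1^1\|\lesssim s^{-1}\|\slashed D_s\xi_1^1\|$ from \eqref{eq:bootstrap_estimate}, canceling the $s$ and leaving something absorbable by Peter--Paul. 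Your proposal does not articulate either mechanism, and without them the cross term is not controlled.

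A secondary but real issue: you propose bounding $\|\bar\nabla^{\mathcal H}\xi_1^1\|$ via \eqref{eq:horizontal_regularity_1}, but that estimate requires $\|\bar\nabla^{\mathcal H}(\slashed D_s\xi_1^1)\|$, i.e.\ an extra horizontal derivative of $\slashed D_s\xi_1^1$, which is not controlled by $\|\tilde D_s\eta\|_{L^2}$ alone. The paper instead proves the intermediate estimate on $\|\bar\nabla^{\mathcal H}\xi_1^0\|^2+\|\bar\nabla^{\mathcal H}\xi_1^1\|^2$ by an integration-by-parts argument on the cross term $\langle\bar\nabla^{{\mathcal H}*}\bar\nabla^{\mathcal H}\xi_1^0,\xi_1^1\rangle$, expanding $\bar\nabla^{{\mathcal H}*}\bar\nabla^{\mathcal H}\xi_1^0$ explicitly in terms of the Gaussian weight $\varphi_{s\ell}$ and absorbing. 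You should replace the appeal to Proposition~\ref{prop:horizontal_regularity} with this direct argument.

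\end{document}
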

\begin{proof}
It is enough to prove the estimate for a section $\xi$, supported in a bundle chart $(\pi^{-1}(U), (x_j, x_\alpha)_{j,\alpha})$.  We first prove the auxiliary estimates, 
\begin{align}
\label{eq:auxiliary_estimate1}
\|\slashed D_s \xi_1\|_{L^2(N)}^2 + \|\bar D^Z\xi_1\|_{L^2(N)}^2 &\leq C(\|(\slashed D_s + \bar D^Z)\xi_1\|^2_{L^2(N)} + \|\xi_1\|_{L^2(N)}^2),
\end{align} 
then
\begin{equation}
    \label{eq:auxiliary_estimate1.5}
\|\bar\nabla^{\mathcal H} \xi_1^0\|^2_{L^2(N)} + \|\bar\nabla^{\mathcal H} \xi_1^1\|^2_{L^2(N)}
\leq C (\|\slashed D_s \xi_1^1\|^2_{L^2(N)} + \|\bar\nabla^{\mathcal H} \xi_1\|^2_{L^2(N)} + \|\xi_1\|^2_{L^2(N)}), 
\end{equation} 
and
\begin{equation}
\label{eq:auxiliary_estimate2}
\| \slashed D_0 \xi_2\|_{L^2(N)}^2 +  \|\bar D^Z\xi_2\|_{L^2(N)}^2 + s^2 \|\xi_2\|_{L^2(N)}^2 
\leq  C\|(\slashed D_0 + \bar D^Z + s\bar {\mathcal A})\xi_2\|^2_{L^2(N)}.
\end{equation}

In proving \eqref{eq:auxiliary_estimate1} we expand the right hand side and we are led in estimating the cross term,  
\begin{equation}
\label{eq:auxiliary_cross_terms}
\begin{aligned}  
2\langle \slashed D_s \xi_1, \bar D^Z\xi_1\rangle_{L^2(N)} =&\, \langle (\slashed D_s ^*\bar D^Z+ \bar D^{Z*} \slashed D_s) \xi_1,\xi_1\rangle_{L^2(N)} 
\\
=&\, \langle (\slashed D_s^* \bar D^Z+ \bar D^{Z*} \slashed D_s) \xi^0_1,\xi^0_1\rangle_{L^2(N)}  
\\
&+ 2 \langle (\slashed D_s^* \bar D^{Z*}+ D^{Z*} \slashed D_s) \xi^0_1,\xi^1_1\rangle_{L^2(N)}  
\\
&+ \langle (\slashed D_s^* \bar D^Z+ \bar D^{Z*} \slashed D_s) \xi^1_1,\xi^1_1\rangle_{L^2(N)}. 
\end{aligned} 
\end{equation}
We further decompose $\xi^0_1 = \sum_\ell \xi^0_{1 \ell}$ where $\xi^0_{1 \ell} \in\bigcap_{k,l} W^{1,2}_{k,l}(N; \pi^*S_\ell^{0+})$. Then by using \eqref{eq:cross_terms}, the pointwise inner product is
\[
 \langle (\slashed D_s^* \bar D^Z+ D^{Z*} \slashed D_s) \xi^0_1,\xi^0_1\rangle(v) = - s \sum_{\alpha, \ell} x_\alpha \langle \mathfrak{c}_N (h^\alpha) \mathfrak{c}_N( \pi^* d\lambda_\ell)\xi^0_{1\ell}, \xi^0_{1\ell} \rangle(v)=0, 
\]
because $\mathfrak{c}_N(h^\alpha) \mathfrak{c}_N (\pi^*d\lambda_\ell)\xi^0_{1\ell}$ belong to the eigenspace of $C^0$ with eigenvalue $(n-m-1)\lambda_\ell$ and $\xi^0_{1\ell}$ belong to the eigenspace with eigenvalue $(n-m)\lambda_\ell$. By Proposition~\ref{prop:Weitzenbock_identities_and_cross_terms} \eqref{eq:cross_terms1} the operator $\slashed D_s^* \bar D^Z+ \bar D^{Z*} \slashed D_s$ is a bundle map with coefficients vanishing up to $sO(r)$ as $r \to 0^+$. Therefore the remaining terms in \eqref{eq:auxiliary_cross_terms} are estimated above by,
\begin{align*}
Cs(\| \xi^0_1\|_{L^2(N)} + \|\xi^1_1\|_{L^2(N)}) \|r\xi^1_1\|_{L^2(N)} &\leq C\|\xi_1\|_{L^2(N)}\|\slashed D_s \xi^1_1\|_{L^2(N)} 
\\
&\leq \delta \|\slashed D_s \xi_1\|_{L^2(N)}^2 + C\delta^{-1}\|\xi_1\|_{L^2(N)}^2,
\end{align*}
where we used \eqref{eq:bootstrap_estimate} with $k=0$. The cross term \eqref{eq:auxiliary_cross_terms} therefore estimates as
\[
2\lvert\langle \slashed D_s \xi_1, \bar D^Z \xi_1\rangle_{L^2(N)}\rvert \leq  \delta\|\slashed D_s \xi_1\|^2_{L^2(N)} + C\delta^{-1} \|\xi_1\|^2_{L^2(N)},  
\]
for an updated constant $C$. Combining the preceding estimates and choosing $\delta>0$ small enough as suggested by the preceding constants, the term $\|\slashed D_s \xi_1\|^2_{L^2(N)}$ is absorbed to the left hand side of the expansion thus arriving at inequality \eqref{eq:auxiliary_estimate1}.

To prove inequality \eqref{eq:auxiliary_estimate1.5} we again expand the right hand side and this time, we estimate the cross term,  
\[
2\vert\langle \bar \nabla^{\mathcal H} \xi^0_1, \bar\nabla^{\mathcal H} \xi_1^1\rangle_{L^2(N)}\vert  =  2\vert\langle \bar\nabla^{{\mathcal H}*}\bar \nabla^{\mathcal H} \xi^0_1, \xi_1^1\rangle_{L^2(N)}\vert.
\]
Using again the decomposition $\xi_1^0 = \sum_\ell \xi_{1\ell}^0$, with $\xi_{1\ell}^0 = \varphi_{s \ell} \cdot \zeta_\ell$, we calculate explicitly,
\begin{align*}
\bar\nabla^{{\mathcal H}*}\bar \nabla^{\mathcal H} \xi^0_1 &= - \sum_{i,\ell} \bar \nabla_{h_i} \bar\nabla_{h_i} \xi_{1\ell}^0
\\
&=- \sum_{i,\ell}\bar \nabla_{h_i}(M_{i\ell}\cdot \xi^0_{s\ell} + \varphi_{s\ell} \cdot \bar\nabla_{e_i} \zeta_\ell)
\\
&= -\sum_{i,\ell}[(e_i(M_{s\ell}) - M_{i\ell}^2) \cdot \xi^0_{s\ell} +2M_{i\ell} \cdot\bar\nabla_{h_i}\xi_{s\ell}^0] + (\bar\nabla^*\bar\nabla \zeta)^0,    
\end{align*}
where $\zeta = \sum_\ell \zeta_{\ell}$ and $M_{i\ell} :=\left(\frac{n-m}{2} - sr^2 \lambda_\ell\right) \frac{e_i(\lambda_\ell)}{2\lambda_\ell}$.
We then estimate,
\begin{align*}
\lvert\langle  \bar\nabla^{{\mathcal H}*}\bar \nabla^{\mathcal H} \xi^0_1 , \xi_1^1\rangle_{L^2(N)}\rvert \leq &\, C\|(1+ sr^2 + s^2 r^4)\xi^0_1\|_{L^2(N)}\| \xi_1^1\|_{L^2(N)}  +C\|\bar\nabla^{\mathcal H}\xi_1^0\|_{L^2(N)} \|(1+sr^2)\xi_1^1\|_{L^2(N)}
\\
 \leq &\, C\|\xi_1^0\|_{L^2(N)} \|\xi_1^1\|_{L^2(N)}  + C\|\bar\nabla^{\mathcal H}\xi_1^0\|_{L^2(N)} [\|\xi_1^1\|_{L^2(N)}+ (\varepsilon + s^{-1/2})\|\slashed D_s\xi_1^1\|_{L^2(N)}],
\end{align*}
and by applying Cauchy-Schwartz,
\begin{equation}
    \label{eq:auxiliary_cross_terms_3}
2\lvert\langle  \bar\nabla^{{\mathcal H}*}\bar \nabla^{\mathcal H} \xi^0_1 , \xi_1^1\rangle_{L^2(N)}\rvert \leq  \frac{1}{2} \|\bar\nabla^{\mathcal H} \xi_1^0\|^2_{L^2(N)} + C(\|\slashed D_s \xi_1^1\|^2_{L^2(N)} + \|\xi_1\|^2_{L^2(N)}),
\end{equation}
where in the third line of the preceding estimate, we applied \eqref{eq:elliptic_estimate1_k>=0} on $\xi_1^0$ with $k=1$ and $k=3$ and we applied \eqref{eq:bootstrap_estimate} with $k=1$ on $\xi_1^1$. Absorbing the first term of \eqref{eq:auxiliary_cross_terms_3} on the left hand side of the expansion, we obtain \eqref{eq:auxiliary_estimate1.5}.

By  Proposition~\ref{prop:Weitzenbock_identities_and_cross_terms} \eqref{eq:cross_terms3}, the operator $ \bar{\mathcal A}^*\circ(\slashed D_0 + \bar D^Z) +  (\slashed D_0 + \bar D^Z)^* \circ \bar{\mathcal A}$ is a bundle map and we estimate
\begin{align*}
2 \lvert\langle (\slashed D_0 + \bar D^Z) \xi_2, \bar{\mathcal A} \xi_2 \rangle_{L^2(N)}\rvert \leq C \| \xi_2\|_{L^2(N)}^2 \leq C \| \bar{\mathcal A} \eta_2\|_{L^2(N)}^2,
\end{align*}
and by Proposition~\ref{prop:Weitzenbock_identities_and_cross_terms} \eqref{eq:cross_terms2} $\slashed D_0^* \bar D^Z + \bar D^{Z*} \slashed D_0 \equiv 0$, so that
\begin{multline*}
\|\slashed D_0 \xi_2\|_{L^2(N)}^2+ \|\bar D^Z\xi_2\|_{L^2(N)}^2 + s^2 \|\bar{\mathcal A} \xi_2\|_{L^2(N)}^2
\\
\leq \|(\slashed D_0 + \bar D^Z + s \bar{\mathcal A}) \xi_2\|_{L^2(N)}^2 + 2s \lvert\langle (\slashed D_0 + \bar D^Z) \xi_2, \bar{\mathcal A} \xi_2 \rangle_{L^2(N)}\rvert
\\
\leq \|(\slashed D_0 + \bar D^Z + s \bar{\mathcal A}) \xi_2\|_{L^2(N)}^2 +C s\| \bar{\mathcal A} \eta_2\|_{L^2(N)}^2.
\end{multline*}
Choosing $s>0$ large enough we absorb the term $s\| \bar{\mathcal A} \eta_2\|_{L^2(N)}^2$ to the left hand side of the preceding inequality thus, obtaining \eqref{eq:auxiliary_estimate2}.

Finally we prove \eqref{eq:Taylor_estimate}:  by combining expansions \eqref{eq:taylorexp} and  \eqref{eq:taylorexp1} and rearranging terms, we obtain
\begin{equation}
\label{eq:Taylorexp1}
\tau[(\slashed D_s + \bar D^Z)\xi_1 + (\slashed D_0 + \bar D^Z + s\bar{\mathcal A})\xi_2] =  D_s \eta +    O(r^2 \partial^{\mathcal V} + r\partial^{\mathcal H} + 1) \xi  + O(sr^2)\xi_1 +  O(sr) \xi_2.
\end{equation}
Taking $L^2$-norms on and applying \eqref{eq:auxiliary_estimate1} and \eqref{eq:auxiliary_estimate2}
\begin{multline}
\label{eq:long_inequality}
\| \slashed D_s \xi_1\|_{L^2(N)} + \| \bar D^Z \xi_1\|_{L^2(N)} + \| \slashed D_0 \xi_2\|_{L^2(N)} + \| \bar D^Z \xi_2\|_{L^2(N)} + s\|\bar{\mathcal A} \xi_2\|_{L^2(N)} 
\\
\leq C\| D_s \eta\|_{L^2(N)} + \|  O( r^2\partial^{\mathcal V} + r\partial^{\mathcal H} + 1 + sr^2) \tau\xi_1\|_{L^2(N)} 
\\
+ \|O( r^2\partial^{\mathcal V} + r\partial^{\mathcal H} +1+sr)\tau\xi_2\|_{L^2(N)} + C\|\xi_1\|_{L^2(N)}.
\end{multline}
The $L^2$ norm  over the tubular region $\mathcal{N}_\varepsilon$ of the error terms for $\xi_1$ in the right hand side of \eqref{eq:Taylorexp1} are  estimated as,
\begin{equation*}
\|O(r^2) \partial^{\mathcal V} \xi_1\|_{L^2(N)} \leq C[(\varepsilon^2+ \varepsilon s^{-1/2} + s^{-1} )\|\slashed D_s \xi_1\|_{L^2(N)} + s^{-1/2}\|\xi_1\|_{L^2(N)}],
\end{equation*}
by applying \eqref{eq:elliptic_estimate1_k>=0} with $k=2$, then 
\begin{equation*}
\|r \partial^{\mathcal H} \xi_1\|_{L^2(N)} \leq C\varepsilon[(\varepsilon+ s^{-1/2})\|\slashed D_s\xi_1 \|_{L^2(N)} +\|\bar D^Z \xi_1\|_{L^2(N)} + \|\xi_1\|_{L^2(N)})],
\end{equation*}
by using \eqref{eq:elliptic_estimate2}, and
\begin{equation*}
s\|O(r^2) \xi_1\|_{L^2(N)} \leq  C[(s^{-1/2} + \varepsilon)\|\slashed D_s \xi_1\|_{L^2(N)} + \|\xi_1\|_{L^2(N)}],
\end{equation*}
by using \eqref{eq:elliptic_estimate1_k>=0} with $k=1$. For the corresponding error terms for $\xi_2$,
\[
\| O(r^2 \partial^{\mathcal V}+ r \partial^{\mathcal H} + 1+ sr) \xi_2\|_2 \leq C\varepsilon(\|\slashed D_0 \eta_2\|_2 +  \|\bar \nabla^{\mathcal H} \xi_2\|_2 + s\|\bar{\mathcal A} \xi_2\|_2 ) + C\|\xi_2\|_2. 
\]
By combining the estimates of the error terms and the preceding estimates with \eqref{eq:long_inequality}  and absorbing terms on the left hand side and choosing at first $\varepsilon$ small enough and then $s>0$ large enough, we obtain
\[
\| \slashed D_s \xi_1\|_2 + \| \bar \nabla^{\mathcal H} \xi_1\|_2 + \| \slashed D_0 \xi_2\|_2 + \| \bar \nabla^{\mathcal H} \xi_2\|_2 + s\|\bar{\mathcal A} \xi_2\|_2 
\leq C(\| D_s \eta\|_2 + \|\eta\|_2).
\]
Finally, by combining the preceding estimate with \eqref{eq:auxiliary_estimate1.5}, we obtain estimate \eqref{eq:Taylor_estimate}.
\end{proof}

In the proofs of the following lemmas, we use the re-scaling $\{y_\alpha = \sqrt{s} x_\alpha\}_\alpha$. This is independent of the Fermi coordinates defining a global diffeomorphism of the tubular neighborhoods $\mathcal{N}^\varepsilon \to \mathcal{N}^{\sqrt{s}\varepsilon}$. The volume element re-scales accordingly as  $d\mathrm{vol}^N_x = s^{\frac{m-n}{2}} d\mathrm{vol}^N_y$. Recall the orthogonal projections $P^i: E^i\vert_Z \to S^i\vert_Z,\, i =0,1$ introduced in Section~\ref{sec:Concentration_Principle_for_Dirac_Operators}.

\begin{lemma}
\label{lemma:perps_of_app_solutions}
Suppose there exists sequence $\{s_j\}_j$ of positive numbers with no accumulation point and a sequence $\{\eta_j\} \subset W^{1,2}_0(\mathcal{N}_{2\varepsilon} , \tilde E^0\vert_{\mathcal{N}_{2\varepsilon}})$  satisfying $\sup_j \|\eta_j\|_{L^2(N)} < \infty$ and $\|\tilde D_{s_j}\eta_j\|_{L^2(N)}^2 \rightarrow 0$  as $j\rightarrow \infty$. Then $P^1 \eta_j \to 0$ in $L^2(N)$ and, after re-scaling, there exist a subsequence of $\{\xi_j\}$, of $P^0 \eta_j$ that converges $L^2_{\text{loc}}$-strongly and $W^{1,2}$-weakly on $N$, to a section $\sum_\ell\phi_{1\ell}\xi_\ell$ with $\xi_\ell\in W^{1,2}(Z, S^{0+}_\ell)$. Furthermore the section $\bar\xi = \sum_\ell \xi_\ell :Z \to S^{0+}$ satisfies, 
\begin{equation}
\label{eq:limiting_conditions0}
 (D^Z_+ + {\mathcal B}^Z_{0+})\bar\xi = 0. 
\end{equation}
\end{lemma}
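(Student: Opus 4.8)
The plan is to run a standard blow-up/compactness argument, using the a priori bound of estimate \eqref{eq:Taylor_estimate} as the source of uniform control and the mapping properties of $\slashed D_s$ and $\bar D^Z$ from Sections \ref{sec:harmonic_oscillator} and \ref{sec:The_operator_bar_D_Z_and_the_horizontal_derivatives} to identify the limit. First I would feed the hypotheses into \eqref{eq:Taylor_estimate}: since $\|\tilde D_{s_j}\eta_j\|_{L^2(N)}\to 0$ and $\|\eta_j\|_{L^2(N)}$ is bounded, the left side of \eqref{eq:Taylor_estimate} is bounded, so writing $\eta_j = \tau(\xi_{1,j}+\xi_{2,j})$ with $\xi_{1,j}=P^0\xi_j$, $\xi_{2,j}=(1-P^0)\xi_j$ and further $\xi_{1,j}=\xi_{1,j}^0+\xi_{1,j}^1$ we get uniform bounds on $\|\slashed D_{s_j}\xi_{1,j}^1\|_{L^2(N)}$, $\|\slashed D_0\xi_{2,j}\|_{L^2(N)}$, $s_j\|\bar{\mathcal A}\xi_{2,j}\|_{L^2(N)}$, and on all three horizontal derivatives $\|\bar\nabla^{\mathcal H}\xi_{1,j}^0\|_{L^2(N)}$, $\|\bar\nabla^{\mathcal H}\xi_{1,j}^1\|_{L^2(N)}$, $\|\bar\nabla^{\mathcal H}\xi_{2,j}\|_{L^2(N)}$. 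The bound $s_j\|\bar{\mathcal A}\xi_{2,j}\|_{L^2(N)}\le C$ together with the invertibility of $\bar{\mathcal A}$ on $(S^0\vert_Z)^\perp$ forces $\|\xi_{2,j}\|_{L^2(N)}=O(s_j^{-1})\to 0$, so $P^1\eta_j\to 0$ in $L^2(N)$; combined with the spectral estimate \eqref{eq:spectral_estimate} applied to $\xi_{1,j}^1\in(\ker\slashed D_{s_j})^{\perp_{L^2}}$ we also get $\sqrt{s_j}\|\xi_{1,j}^1\|_{L^2(N)}\le C\|\slashed D_{s_j}\xi_{1,j}^1\|_{L^2(N)}\le C$, hence $\xi_{1,j}^1\to 0$ in $L^2(N)$ as well. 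Thus the only surviving part is $\xi_{1,j}^0\in\ker\slashed D_{s_j}$, which by \eqref{eq:kernel_D_s_calculation} has the form $\xi_{1,j}^0=\sum_\ell\varphi_{s_j\ell}\cdot\zeta_{\ell,j}$ with $\zeta_{\ell,j}\in W^{1,2}(Z;S^{0+}_\ell)$.

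Next I would perform the rescaling $\{y_\alpha=\sqrt{s_j}x_\alpha\}$, under which $d\mathrm{vol}^N_x=s_j^{(m-n)/2}d\mathrm{vol}^N_y$, and rename the rescaled sections $\xi_j$. In these coordinates $\varphi_{s_j\ell}$ becomes (up to the fixed normalization constant absorbed by \eqref{eq:Gaussian_evaluation}) the $j$-independent Gaussian $v\mapsto\lambda_\ell^{(n-m)/4}e^{-\frac12\lambda_\ell|v|^2}$ on each fiber; the rescaled $L^2$ norm of $\xi_j$ equals $\pi^{(n-m)/4}\|\bar\xi_j\|_{L^2(Z)}$ by \eqref{eq:Gaussian_evaluation}, where $\bar\xi_j=\sum_\ell\zeta_{\ell,j}$. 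The horizontal-derivative bounds, which are scale invariant in the relevant way, give a uniform $W^{1,2}$ bound for $\bar\xi_j$ on $Z$ (here I use the identity relating $\bar\nabla^{\mathcal H}\xi_{1,j}^0$ to $d\varphi_{s_j\ell}\otimes\zeta_{\ell,j}+\varphi_{s_j\ell}\bar\nabla\zeta_{\ell,j}$ from the proof in Section \ref{sec:The_operator_bar_D_Z_and_the_horizontal_derivatives}, together with \eqref{eq:elliptic_estimate1_k>=0} to control the Hermite-polynomial weights). Rellich on the compact $Z$ then yields a subsequence with $\bar\xi_j\to\bar\xi$ strongly in $L^2(Z)$ and weakly in $W^{1,2}(Z)$, with $\bar\xi\in W^{1,2}(Z;S^{0+})=\bigoplus_\ell W^{1,2}(Z;S^{0+}_\ell)$; correspondingly the rescaled $\xi_j=\sum_\ell\varphi_{\infty\ell}\zeta_{\ell,j}$ converge $L^2_{\mathrm{loc}}$-strongly and $W^{1,2}$-weakly on $N$ to $\sum_\ell\varphi_{\infty\ell}\xi_\ell$, since the Gaussian fiber factor is fixed and provides the decay needed to upgrade local to global $L^2$ convergence.

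Finally I would identify the limiting equation. Starting from the Taylor expansion \eqref{eq:taylorexp} for $\tilde D_{s_j}\eta_j$, projecting onto $\ker\slashed D_{s_j}$ (which kills the $\slashed D_{s_j}\xi^0$ term and, by the computation of $L^2$-projections in the proof of Lemma \ref{lem:balancing_condition}, turns $\bar D^Z\xi_{1,j}^0+{\mathcal B}^0\xi_{1,j}^0+\frac12 s_jr^2\bar A_{rr}\xi_{1,j}^0$ into the global operator $(D^Z_++{\mathcal B}^Z_{0+})\bar\xi_j$ acting on the base via \eqref{eq:br_D_Z_versus_D_Z_+}), one gets that the $\ker\slashed D_{s_j}$-component of $\tilde D_{s_j}\eta_j$ is, after rescaling and using \eqref{eq:Gaussian_evaluation} to pass back to $Z$, equal to $(D^Z_++{\mathcal B}^Z_{0+})\bar\xi_j$ plus terms that are $O(s_j^{-1/2})$ in $L^2(Z)$ (the error terms $O(r^2\partial^{\mathcal V}+r\partial^{\mathcal H}+r+s_jr^3)$ of \eqref{eq:taylorexp}, the contributions of $\xi_{1,j}^1$ and $\xi_{2,j}$, and the localization cutoff $d\rho_\varepsilon$, all controlled by the a priori bounds above). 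Since $\|\tilde D_{s_j}\eta_j\|_{L^2(N)}\to 0$, its $\ker\slashed D_{s_j}$-component also tends to $0$, so $\|(D^Z_++{\mathcal B}^Z_{0+})\bar\xi_j\|_{L^2(Z)}\to 0$; the operator $D^Z_++{\mathcal B}^Z_{0+}$ is a fixed first-order elliptic operator on the compact $Z$, so it is closed and weak $W^{1,2}$ convergence of $\bar\xi_j$ combined with $L^2$ convergence of its image passes to the limit, giving $(D^Z_++{\mathcal B}^Z_{0+})\bar\xi=0$, which is \eqref{eq:limiting_conditions0}. The main obstacle I anticipate is bookkeeping the weights carefully across the rescaling — making sure each error term in \eqref{eq:taylorexp}, when multiplied by the appropriate power of $s_j$ from the rescaled volume form and estimated via \eqref{eq:elliptic_estimate1_k>=0}, \eqref{eq:bootstrap_estimate}, \eqref{eq:Gaussian_evaluation} and the horizontal estimates, genuinely contributes $o(1)$ in $L^2(Z)$, and confirming that the projection onto $\ker\slashed D_{s_j}$ commutes appropriately with the rescaling and horizontal differentiation so that the limit operator is exactly $D^Z_++{\mathcal B}^Z_{0+}$ and not some rescaled variant.
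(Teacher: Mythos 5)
Your proposal runs the same blow-up/compactness scheme as the paper: feed the hypotheses into \eqref{eq:Taylor_estimate}, kill the $(S^0)^\perp$-part and the $(\ker\slashed D_{s_j})^{\perp}$-part, rescale, extract a weak $W^{1,2}$ limit of the kernel part via \eqref{eq:kernel_D_s_calculation}, and pass to the limit in the Taylor expansion. The paper organizes the last step differently: it first extracts \emph{auxiliary} weak limits $\psi_1 = \mathrm{w\text{-}lim}\,\sqrt{s_j}\,\xi_{j1}^1$ and $\psi_2 = \mathrm{w\text{-}lim}\,s_j\xi_{j2}$, writes down the full limiting system, and only then projects the $\slashed D_1\psi_1 + (\bar D^Z + P^0(\mathcal{B}^0 + \tfrac12 r^2\bar A_{rr}))\xi = 0$ equation onto $\ker\slashed D_1$, where the $\slashed D_1\psi_1$ term drops automatically. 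You instead project $\tau^{-1}\tilde D_{s_j}\eta_j$ onto $\ker\slashed D_{s_j}^*$ at finite $j$ and take the limit afterward.

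Both routes work, but be careful about one point you flagged yourself: to get your claimed $O(s_j^{-1/2})$ rate for the projected remainder you must control the $\ker\slashed D_1^*$-component of $\bar D^Z\xi_{j1}^1$, and the ``a priori bounds above'' (i.e., $\|\bar\nabla^{\mathcal H}\xi_{j1}^1\|_{L^2(N)}\le C$ from \eqref{eq:Taylor_estimate}) only give that this projection is $O(1)$. What you actually need is the weak-derivative formula \eqref{eq:weak_derivatives} together with \eqref{eq:projection1_estimate}, which show that the $\ker\slashed D_1$-projection of $\bar\nabla_{h_j}\xi_{j1}^1$ is bounded by $C\|\slashed D_1\xi_{j1}^1\|_{L^2(N)} = O(s_j^{-1/2})$; this is precisely the ``commuting with horizontal differentiation'' issue you anticipate. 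Alternatively --- and this is what the paper's bookkeeping achieves implicitly by introducing $\psi_1$ --- weak $L^2$-convergence to $0$ of the projected remainder follows from Lemma~\ref{lemma:weak_convergence} and the boundedness of the orthogonal projection, and that is enough: combine it with weak-to-weak continuity of $D^Z_+ + \mathcal{B}^Z_{0+}: W^{1,2}(Z)\to L^2(Z)$ and uniqueness of weak limits to get \eqref{eq:limiting_conditions0}, without any quantitative rate. Two smaller notational points: you should project onto $\ker\slashed D_{s_j}^*$ rather than $\ker\slashed D_{s_j}$ since $\tilde D_{s_j}\eta_j$ takes values in $E^1$, and there is no cutoff $d\rho_\varepsilon$ in this lemma --- $\eta_j$ is already compactly supported in $\mathcal{N}_{2\varepsilon}$ by hypothesis.
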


\begin{proof}
We decompose $\eta_j = \eta_{j1} + \eta_{j2}$ into sections of $S^0$ and $(S^0)^\perp$ correspondingly. We re-scale the sequence $\{\eta_j\}_j$ around the critical set $Z$: recall the Fermi coordinates $(\mathcal{N}_U ,\,(x_k,\, x_\alpha)_{k,\alpha})$ and the parallel transport map $\tau$ from \eqref{eq:parallel_transport_map}. We define the re-scaled sections of $\pi^*S^0 \to\mathcal{N}^{2\sqrt{s_j}\varepsilon}_U$, by 
\begin{equation}
\label{eq:modified_section}
\tau\xi_{jl}(x_k,y_\alpha) = s_j^{\tfrac{m-n}{4}} \eta_{jl}\left(x_k,\frac{y_\alpha}{\sqrt{s_j}}\right),\ l=1,2 \quad \text{and}\quad \xi_j = \xi_{j1} + \xi_{j2},
\end{equation}
The re-scaling is defined independently of the choice of the Fermi coordinates allowing the components $\xi_j$ over the various charts $\{\mathcal{N}_U: U\subset Z\}$ to patch together, defining sections over the tubular neighborhood $\mathcal{N}_{\sqrt{s_j}\varepsilon}$. We decompose further $\xi_{j1} = \xi_{j1}^0 + \xi_{j1}^1$ with $\xi_{j1}^0\in \ker \slashed D_1$ and $\xi_{j1}^1 \perp_{L^2} \overline{\ker\slashed D_1}^{L^2}$. The operator $\bar D^Z$ and the derivatives $\bar\nabla^{\mathcal H}$ remain invariant under the change of variables $\{y_\alpha = x_\alpha\sqrt{s_j}\}_\alpha$ while the operator $\slashed D_{s_j}$ changes to $\sqrt{s_j}\slashed D_1$. Changing variables on the left hand side of  \eqref{eq:Taylor_estimate}, we obtain
\begin{multline}
\label{eq:Taylor_sequence}
\sqrt{s_j}\|\slashed D_1 \xi_{j1}\|_{L^2(N)}  + \sqrt{s_j}\| \slashed D_0 \xi_{j2}\|_{L^2(N)} + s_j\| \xi_{j2}\|_{L^2(N)} 
\\
+ \|\bar \nabla^{\mathcal H}\xi_{j1}^0\|_{L^2(N)} + \|\bar \nabla^{\mathcal H}\xi_{j1}^1\|_{L^2(N)} + \|\bar\nabla^{\mathcal H}\xi_{j2}\|_{L^2(N)}
\\
\leq C(\|\tilde D_{s_j} \eta_j\|_{L^2(N)} + \|\eta_j\|_{L^2(N)}) \leq C, 
\end{multline}
for every $j$. We now deal with each individual component of  \eqref{eq:Taylor_sequence}:

\bigskip

\begin{case}[The sequence $\{\xi_{j1}^0\}_j$]\hfill

Changing variables on integrals of the left hand side of estimates \eqref{eq:elliptic_estimate1_k>=0} with $k=0$ and using \eqref{eq:Taylor_sequence}, we obtain a uniform bound,
\begin{equation*}
 \|\bar\nabla \xi_{j1}^0 \|_{L^2(N)} \leq  \|\bar\nabla^{\mathcal V} \xi_{j1}^0 \|_{L^2(N)} +  \|\bar\nabla^{\mathcal H} \xi_{j1}^0 \|_{L^2(N)} \leq  C\|\xi^0_{j 1}\|_{L^2(N)} +  \|\bar\nabla^{\mathcal H} \xi_{j1}^0 \|_{L^2(N)} \leq C, 
\end{equation*}
for every $j$. Hence $\sup_j\|\xi_{j1}^0\|_{W^{1,2}(N)} < \infty$. By the weak compactness of the unit ball in $W^{1,2}(N)$, there exist a subsequence denoted again as $\{\xi_{j1}^0\}_j$ converging weakly in $W^{1,2}(N)$ to a section $\xi\in W^{1,2}(N)$. By Rellich Theorem, for every $T>0$, there exist a subsequence, denoted again as $\{\xi_{j1}^0\}_j$, converging in $L^2(Z,T)$ to, $\xi\vert_{B(Z,T)}$ . Notice that we can choose a subsequence $\{\xi_{1j}^0\}_j$ so that $\xi_{1j}^0 \to \xi$ in $L^2_{loc}(N)$ topology. Indeed this follows by obtaining the subsequence $\{\xi_{j1}^0\vert_{B(Z,T)}\}_j$ extracted by Rellich Theorem applied to $B(Z, T+1)$ and extracting a subsequence by applying again Rellich's Theorem on the larger neighborhood $B(Z, T+1),\ T\in \mathbb{N}$ and then applying a diagonal argument on $T$. It follows that $\{\xi_{j1}^0\}_j$ converges on $L^2_{loc}(N)$ and weakly on $W^{1,2}(N)$ to $\xi$. We denote the $L^2$ norm of $L^2(B(Z,T))$ by $\|\cdot\|_{2,T}$. By weak lower semicontinuity $\|\slashed D_1\xi\|_{2,T}=0$ for every $T>0$ so that $\slashed D_1\xi=0$.
\end{case}

\begin{case}[The sequence $\{\xi_{j1}^1\}_j$]\hfill

We have estimates,
\begin{align*}
\|\bar\nabla^{\mathcal V} \xi_{j1}^1 \|_{L^2(N)} &\leq C\|\slashed D_1 \xi_{j1}\|_{L^2(N)} \leq C s_j^{-1/2}, \qquad (\text{by \eqref{eq:Taylor_sequence}})  
\\ 
\|\bar\nabla^{\mathcal H} \xi_{j1}^1\|_{L^2(N)} &\leq C,\qquad (\text{by \eqref{eq:Taylor_sequence}})
\\
\sup_j\sqrt{s_j}\|\xi_{j1}^1\|_{L^2(N)} &\leq C  \sup_j \sqrt{s_j}\|\slashed D_1 \xi_{j1}^1\|_{L^2(N)}< C, \qquad (\text{by \eqref{eq:spectral_estimate}.}) 
\end{align*}

Hence $\{\xi_{j1}^1\}_j$ converges strongly on $L^2(N)$ to zero and weakly on $W^{1,2}(N)$ to the zero section. Also, by weak compactness in $L^2(N)$, there exist weak limit $\psi_1\in L^2(N)$ with
\[
\sqrt{s_j} \xi_{j1}^1  \rightharpoonup \psi_1,  
\]
on $L^2(N)$. By weak lower semicontinuity of the $L^2(N)$-norm, for every $T'<T$, every $0<h<\tfrac{1}{2}(T-T')$ and every $\alpha$, the difference quotients in the fiber directions at $(p,v)\in N$, 
\[
\partial_\alpha^h \psi_1(p,v) := \frac{1}{h}[\psi(p,v + h e_\alpha) - \psi(p,v)]\in E_p ,
\]
satisfy
\begin{align*}
\|\partial_\alpha^h \psi_1\|_{2,T'} &\leq \liminf_j \sqrt{s_j}\|\partial^h_\alpha\xi_{j1}^1\|_{2, T'} 
\\
&\leq \limsup_j C_T \sqrt{s_j}\|\partial_\alpha\xi_{j1}^1\|_{2, T}
\\
&\leq \limsup_j C_T \sqrt{s_j}\|\slashed D_1 \xi_{j1}^1\|_{L^2(N)}< C_T,   
\end{align*}
where in the last couple of lines of the preceding estimate, we used  \eqref{eq:bootstrap_estimate} with $k=0$ and \eqref{eq:Taylor_sequence}. Hence $\psi_1$ has uniform $L^2(N)$-bounds on the difference quotients of the normal directions and therefore has weak derivatives in the normal directions that are bounded in $L^2(N)$. Since $\sup_j \|\sqrt{s_j} \slashed D_1 \xi_{j1}\|_{L^2(N)}<\infty$, it follows by Lemma ~\ref{lemma:weak_convergence} applied to the sequence $\{\sqrt{s_j} \xi_{j1}- \psi_1\}_j$, that $\sqrt{s_j} \slashed D_1 \xi_{j1}\rightharpoonup \slashed D_1\psi_1$ in $L^2(N)$-weakly. 
\end{case}

\begin{case}[The sequence $\{\xi_{j2}\}_j$]\hfill   

By \eqref{eq:Taylor_sequence}, we have estimates
\begin{align*}
\|\bar\nabla^{\mathcal V} \xi_{j2} \|_{L^2(N)} &=  \| \slashed D_0 \xi_{j2}\|_{L^2(N)} \leq C s_j^{-1/2}
\\
\|\bar\nabla^{\mathcal H} \xi_{j2} \|_{L^2(N)} &\leq C
\\
 s_j\| \xi_{j2}\|_{L^2(N)} &\leq C.
\end{align*}
Hence $\{\xi_{j2}\}_j$ converge strongly on $L^2(N)$ and weakly on $W^{1,2}(N)$ to the zero section. 
Similarly the sequence $\{\sqrt{s_j} \xi_{j2}\}_j$ converge strongly on $L^2(N)$ to zero and the sequence $\{\sqrt{s_j} \slashed D_0\xi_{j2}\}_j$ is bounded on $L^2(N)$. Using Lemma ~\ref{lemma:weak_convergence} the later sequence converge weakly in $L^2$ to zero. Finally by weak compactness in $L^2(N)$, there  exist $\psi_2\in L^2(N)$ so that
\[
 s_j \xi_{j2}  \rightharpoonup \psi_2.
\]
\end{case}

To summarize, we have the $L^2(N)$-weak limits,
\begin{align*}
 (\sqrt{s_j} \slashed D_1 + \bar D^Z)\xi_{j1} &\rightharpoonup \slashed D_1 \psi_1 + \bar D^Z\xi,
\\
\sqrt{s_j} \slashed D_0 \xi_{j2},\ \bar D^Z\xi_{j2} & \rightharpoonup 0,
\\
s_j \xi_{j2}&\rightharpoonup \psi_2.
\end{align*}

Using the expansions \eqref{eq:taylorexp} and  \eqref{eq:taylorexp1},
\begin{align*}
\tau^{-1}\tilde D_{s_j} \eta_j =& \left(\sqrt{s_j}\slashed D_1 + \bar D^Z + {\mathcal B}^0 + \frac{1}{2}r^2 \bar A_{rr}\right)\xi_{j1}  +(\sqrt{s_j}\slashed D_0 + \bar D^Z +   s_j\bar{\mathcal A})\xi_{j2} 
\\
&+s_j^{-1/2} O(r^2  \partial^{\mathcal N} +  r\partial^{\mathcal H})\xi_j + s^{-1/2}_jO(r +  r^3)\xi_{j1} +\tau O(1 + s_j^{1/2} r)\xi_{j2} .
\end{align*}
The $L^2(\mathcal{N}^T)$-norm of the error term estimates no more than $s_j^{-1/2} C_{T, \varepsilon} \| \xi_j\|_{1,2, T}$ for large $j$. By our assumption that $\tilde D_{s_j} \eta_j \to 0 $ in $L^2(N)$, we obtain that 
\[
\left\|\left(\sqrt{s_j}\slashed D_1 + \bar D^Z + {\mathcal B}^0 + \frac{1}{2}r^2 \bar A_{rr}\right)\xi_{j1} + (\sqrt{s_j}\slashed D_0 + \bar D^Z +   s_j\bar{\mathcal A})\xi_{j2}\right\|_{2,T}  \to 0 
\]
as $j\to \infty$ for every $T>0$. By weak lower semicontinuity, we conclude that $\xi, \psi_1, \psi_2$ satisfy the system of mutually $L^2(N)$-orthogonal components,
\begin{align*}
\slashed D_1 \xi&=0,
\\
\slashed D_1 \psi_1 + \left(\bar D^Z + P^0\circ \left({\mathcal B}^0 + \frac{1}{2}r^2 \bar A_{rr}\right) \right) \xi&=0, 
\\
 \bar{\mathcal A} \psi_2 +  (1- P^0)\circ \left( {\mathcal B}^0 + \frac{1}{2}r^2 \bar A_{rr}\right)\xi &=0.
\end{align*}
By Proposition~\ref{prop:horizontal_regularity}, we obtain that $\psi_1 \in W^{1,2}_{k,l}(N)$, for every $k,l>0$. We further brake the second equation into components. We start by using the decompositions $S^0 = \bigoplus_\ell S^0_\ell$ so that
\[
\xi = \sum_\ell \varphi_{1\ell}\cdot \xi_\ell\qquad \text{and} \qquad \bar\xi:= \sum_\ell \xi_\ell, 
\]
where $\varphi_{1\ell} = \lambda_\ell^{\tfrac{n-m}{4}} \exp\left(-\tfrac{1}{2} \lambda_\ell r^2\right)$  and $\xi_\ell \in W^{1,2}(Z; S^{0+}_\ell)$.
By \eqref{eq:br_D_Z_versus_D_Z_+},
\[
\bar D^Z \xi  =  \sum_\ell (\mathfrak{c}_N(\pi^* d \ln\varphi_\ell) \xi_\ell +  \varphi_\ell\cdot D^Z_+\xi_\ell).
\]
In the proof of the claim in Lemma~\ref{lem:balancing_condition} we calculated the components of the second equation of the system belonging to $\ker\slashed D_1$. These give the equation $D^Z_+\bar\xi  + {\mathcal B}^Z_{0+}\bar \xi =0$. The remaining terms of the aforementioned  equation, are $L^2$-perpendicular to  $\ker\slashed D_1$. They give $\slashed D_1 \psi_1 + {\mathcal P}_1 \xi =0$. This completes the proof of the existence of $\xi$ with the asserted properties. 
\end{proof}

We used the following lemma:

\begin{lemma}
\label{lemma:weak_convergence}
Let $\{\xi_j\}_j $ a sequence converging weakly in $L^2(N)$ to zero and possessing directional weak derivatives in $L^2$ in the necessary directions to guarantee the existence of the sequence $\{L\xi_j\}_j$ for a given differential operator $L$ of 1st order. Assume $\sup_j\|L\xi_j\|_{L^2(N)} < \infty$. Then the sequence $\{L\xi_j\}_j$ converges weakly to zero in $L^2$-norm. 
\end{lemma}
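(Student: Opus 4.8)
The plan is to argue by duality, testing $L\xi_j$ against an arbitrary compactly supported smooth section and integrating by parts to move $L$ onto the test function. Concretely, fix $\zeta \in C^\infty_c(N; \pi^*(E^1\vert_Z))$ (or whichever bundle is the target of $L$). Since $L$ is a first order differential operator, we have the pointwise identity $\langle L\xi_j, \zeta\rangle = \langle \xi_j, L^*\zeta\rangle + (\text{divergence term})$, and integrating over $N$ the divergence term drops out because $\zeta$ has compact support; here $L^*$ is the formal $L^2(N)$-adjoint of $L$ with respect to the fixed metrics on $\pi^*E$ and $g^{TN}$ and the volume form $d\mathrm{vol}^N$. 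Thus
\[
\langle L\xi_j, \zeta\rangle_{L^2(N)} = \langle \xi_j, L^*\zeta\rangle_{L^2(N)}.
\]
Since $\zeta$ is smooth with compact support, $L^*\zeta \in L^2(N)$, so by the hypothesis $\xi_j \rightharpoonup 0$ in $L^2(N)$ the right hand side tends to $0$ as $j \to \infty$. Hence $\langle L\xi_j, \zeta\rangle_{L^2(N)} \to 0$ for every $\zeta \in C^\infty_c(N)$.

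The second step is to upgrade testing against $C^\infty_c(N)$ to testing against all of $L^2(N)$, and this is where the uniform bound $\sup_j \|L\xi_j\|_{L^2(N)} =: B < \infty$ enters. Given an arbitrary $\zeta \in L^2(N)$ and $\varepsilon>0$, choose $\zeta' \in C^\infty_c(N)$ with $\|\zeta - \zeta'\|_{L^2(N)} < \varepsilon$ (possible since $C^\infty_c(N)$ is dense in $L^2(N)$). Then
\[
\lvert \langle L\xi_j, \zeta\rangle_{L^2(N)} \rvert \le \lvert \langle L\xi_j, \zeta'\rangle_{L^2(N)} \rvert + \lvert \langle L\xi_j, \zeta - \zeta'\rangle_{L^2(N)} \rvert \le \lvert \langle L\xi_j, \zeta'\rangle_{L^2(N)} \rvert + B\varepsilon.
\]
Taking $\limsup_{j\to\infty}$ and using the first step, $\limsup_j \lvert \langle L\xi_j, \zeta\rangle_{L^2(N)}\rvert \le B\varepsilon$; since $\varepsilon>0$ was arbitrary this limit is $0$. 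Therefore $L\xi_j \rightharpoonup 0$ weakly in $L^2(N)$, as claimed.

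I do not expect a genuine obstacle here; the argument is the standard "weak convergence plus uniform bound on the image plus dense test class" trick. The only points requiring a sentence of care are: (i) making sure the integration by parts is legitimate, which in this setting follows from $\zeta$ having compact support together with the fact (already used repeatedly in the paper, e.g. in the proof of Proposition~\ref{prop:cross_adjoint}) that the relevant divergence identity holds for the connections $\bar\nabla$ and $\bar\nabla^{TN}$ on $N$; and (ii) ensuring $L^*\zeta$ genuinely lies in $L^2(N)$, which holds because $\zeta$ is smooth and compactly supported so $L^*\zeta$ is smooth and compactly supported. One should also remark that the notion of "weak derivative in $L^2$ in the necessary directions" in the hypothesis is exactly what is needed to make $L\xi_j$ a well-defined element of $L^2(N)$ against which the pairing $\langle L\xi_j, \zeta'\rangle$ can be interpreted either classically (after approximation) or via the definition of the weak derivative, and these two interpretations agree precisely by the integration-by-parts identity above.
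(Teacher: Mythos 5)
Your argument is correct and matches the paper's proof essentially line for line: both test $L\xi_j$ against a compactly supported smooth section, integrate by parts to move $L^*$ onto the test section, invoke $\xi_j \rightharpoonup 0$ to handle the approximating term, and then use density of $C^\infty_c$ in $L^2(N)$ together with the uniform bound $\sup_j\|L\xi_j\|_{L^2(N)}$ to pass to an arbitrary $L^2$ test section. The only cosmetic difference is the placement of the $\varepsilon$--$B$ bookkeeping ($\|\zeta-\zeta'\|<\varepsilon$ followed by multiplication by $B$, versus the paper's $\|\chi-\psi\|<\varepsilon/M$); substantively the two proofs are identical.
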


\begin{proof}
Let $\sup_j \|L\xi_j\|_{L^2(N)} = M$ and choose $\psi \in L^2(N)$ and $\varepsilon>0$. Choose smooth compactly supported section $\chi\in W^{1,2}(N)$ with $\| \chi - \psi\|_{L^2(N)} < \varepsilon/M$. Then 
\begin{equation*}
\lvert\langle L\xi_j, \psi\rangle_{L^2(N)}\rvert \leq \lvert\langle L\xi_j, \chi\rangle_{L^2(N)}\rvert + \|L\xi_j\|_{L^2(N)}\| \psi - \chi\|_{L^2(N)} 
\leq \lvert\langle \xi_j, L^*\chi\rangle_{L^2(N)}\rvert +  \varepsilon  
\end{equation*}
so that $\limsup_j \lvert\langle L\xi_j, \psi\rangle_{L^2(N)}\rvert< \varepsilon$. Since this is an arbitrarily chosen $\varepsilon$, we have that $\lim_j \langle L\xi_j, \psi\rangle_{L^2(N)} =0$. This concludes the proof.
\end{proof}

\begin{proof}[Proof of estimate (\ref{eq:est2'}) in Lemma \ref{lemma:aux_estimate}]
This is a Poincar\'{e} type inequality and we prove it by contradiction. Fix $k\in \{0,1,2\}$. Negating the statement of  Lemma \ref{lemma:aux_estimate} for $\varepsilon_0 = 1/j$ and $C=j$, there exist $0< \varepsilon_j < 1/j$  with the following significance: there is an unbounded sequence of $\{s_u(\varepsilon_j)\}_u$ and $\eta_u(\varepsilon_j) \in\tilde V^\perp_{\varepsilon_j, s_u}\cap  C^\infty_c(\mathcal{N}_{2\varepsilon_j} ; \tilde E^0)$ so that
\[
\int_N\lvert\eta_u\rvert^2\, d\mathrm{vol}^\mathcal{N}=1 \quad \text{and}\quad  j\| \tilde D_{s_u} \eta_u\|_{L^2(N)} \leq s_u^{\tfrac{k}{2}}\| r^k \eta_u\|_{L^2(N)}, \quad \text{for every $u\in \mathbb{N}$.} 
\]
In particular, we set $s_j$ to be the first term of the unbounded sequence $\{s_u\}_u$ that is bigger than $\frac{j^2}{\varepsilon_j^2}$. We also set $\eta_j$ to be the associated compactly supported section to $s_j$, and denote $\mathcal{N}_j: = \mathcal{N}_{2\varepsilon_j}$ and $\tilde  V_j^\perp:= \tilde V^{\perp_{L^2}}_{\varepsilon_j, s_j}\cap C^\infty_c(\mathcal{N}_j ; \tilde E^0\vert_{\mathcal{N}_j})$.

By using induction in $j\in \mathbb{N}$, we obtain sequences  $\{\varepsilon_j\}_j \subset (0, 1/j),\ \{s_j\}_j \subset (j, \infty)$ and $\{\eta_j\}_j \subset \tilde V^\perp_j$  so that, 
\[
\int_N \lvert\eta_j\rvert^2\, d\mathrm{vol}^\mathcal{N} =1 \quad \text{and}\quad j\|\tilde D_{s_j}\eta_j\|_{L^2(N)}\leq s_j^{\tfrac{k}{2}}\|r^k\eta_j\|_{L^2(N)}, \quad \text{for every $j\in \mathbb{N}$.}
\]
When $k=0$ this implies that $\|\tilde D_{s_j}\eta_j\|_{L^2(N)} \to 0$. When $k=1$ or $2$, then set $ \tau\bar \eta_j = \eta_j$ and estimate
\begin{equation*}
s_j^{\tfrac{k}{2}}\|r^k\bar\eta_j\|_{L^2(N)} \leq C \left( (s_j^{-1/2} + \varepsilon_j) \|\slashed D_{s_j} \bar \eta_j\|_{L^2(N)} +  \|\eta_j\|_{L^2(N)}\right) \leq C( \|\tilde D_{s_j} \eta_j\|_{L^2(N)} + 1), 
\end{equation*}
where in the first inequality we used \eqref{eq:elliptic_estimate1_k>=0}, with $k=1$ or $k=2$ and in the second one we used \eqref{eq:Taylor_estimate} and the fact that $\|\eta_j\|_{L^2(N)} \leq 2$. It follows that
\[
j\|\tilde D_{s_j}\eta_j\|_{L^2(N)}\leq C( \|\tilde D_{s_j} \eta_j\|_{L^2(N)} + 1),
\]
for every $j$ in which case we obtain again that $\|\tilde D_{s_j}\eta_j\|_{L^2(N)} \to 0$.

We recall the re-scaled sequence $\{\xi_j\}_j\subset W^{1,2}(N; \pi^*(E^0\vert_Z))$ of $\{\eta_j\}_j$ introduced in \eqref{eq:modified_section}. By Lemma~\ref{lemma:perps_of_app_solutions} there exist a subsequence denoted again as $\{\xi_j\}_j$ that converges $L^2_{loc}$-strongly and $W^{1,2}$-weakly to a section $\sum_\ell \varphi_\ell \xi_\ell$ where $\xi_\ell \in W^{1,2}(Z; S^{0+}_\ell)$ satisfies $(D^Z_+ + {\mathcal B}^Z_{0+})\xi_\ell = 0$ for every $\ell$ and $\varphi_\ell := \lambda_\ell^{\tfrac{n-m}{4}} \exp\left( -\frac{1}{2}\lambda_\ell r^2 \right)$.

\medskip

\textit{Claim:} $\xi_\ell \equiv 0$ for every $\ell$. 

\proof[Proof of claim]
By assumption $\eta_j \perp_{L^2} \tilde V_{s_j, \varepsilon_j}$. For every $j$ we construct $\xi_{s_j} = \xi_j^0 + \xi_j^1 + \xi_j^2$ where  $\xi_j^0 = \sum_\ell \varphi_{s_j \ell} \cdot \xi_\ell$ and $\xi_j^1,\ \xi_j^2$ are constructed by equations \eqref{eq:balansing_condition_1} and \eqref{eq:balansing_condition_2} respectively. Using Definition~\ref{def:space_of_approx_solutions}, we have  $\rho_{\varepsilon_j} \cdot \tau\xi_{s_j}\perp_{L^2} \eta_j$, for every $j$. 
We denote by $d\mathrm{vol}_j$, the density with density function the pullback of $d\mathrm{vol}^\mathcal{N}/ d \mathrm{vol}^N$ under the rescalling $\{y_\alpha = \sqrt{s_j} x_\alpha\}_\alpha$.  The orthogonality condition writes
\begin{equation}
\label{eq:orthogonality_in_L_2}
0=\int_{\mathcal{N}_j} \langle \eta_j, \rho_{\varepsilon_j} \cdot \tau\xi_{s_j}\rangle\, d\mathrm{vol}^\mathcal{N} = \sum_\ell\int_N \langle \xi_j, \rho_{\varepsilon_j\sqrt{s_j}} \cdot \varphi_\ell\cdot\xi_\ell\rangle\, d\mathrm{vol}_j + \int_{\mathcal{N}_j} \langle \eta_j, \rho_{\varepsilon_j} \cdot \tau (\xi^1_j + \xi^2_j)\rangle\, d\mathrm{vol}^\mathcal{N}.
\end{equation}
The second integral of the right hand side obeys the bound,
\begin{equation*}
\left\lvert\int_{\mathcal{N}_j} \langle \eta_j, \rho_{\varepsilon_j} \cdot (\xi^1_j + \xi^2_j)\rangle\, d\mathrm{vol}^\mathcal{N}\right\rvert \leq C\|\eta_j\|_{L^2(N)} (\|\xi^1_j \|_{L^2(N)} +  \|\xi^2_j \|_{L^2(N)}) \leq Cs_j^{-1/2}\sum_\ell\|\xi_\ell\|_{L^2(N)},
\end{equation*}
where in the last line we used estimates of Lemma~\ref{lem:balancing_condition}). It follows that the second integral of the right hand side vanishes as $j\to \infty$. Here we also used that the density $d\mathrm{vol}^\mathcal{N}$ is equivalent to $d\mathrm{vol}^N$. On the other hand, by construction $\varepsilon_j \sqrt{s_j} >j$ for every $j$ and therefore $\rho_{\varepsilon_j \sqrt{s_j} } \to 1$ uniformly on compact subsets on $N$.  Also by expansion \eqref{eq:volume-expansion} we have $\lim_j \lvert d\mathrm{vol}_j - d\mathrm{vol}^N\rvert =0$. Hence for every $T>0$,
\begin{align*}
\sum_\ell\int_{B(Z,T)} \varphi_\ell^2 \cdot \lvert\xi_\ell\rvert^2\, d\mathrm{vol}^N  & =  \lim_j \sum_\ell\int_{B(Z,T)} \langle \rho_{\varepsilon_j\sqrt{s_j}} \cdot\xi_j,  \varphi_\ell\cdot\xi_\ell\rangle\, d\mathrm{vol}_j
\\
&= - \lim_j\sum_\ell\int_{B(Z,T)^c} \langle \xi_j, \rho_{\varepsilon_j\sqrt{s_j}} \cdot \varphi_\ell\cdot\xi_\ell\rangle\, d\mathrm{vol}_j 
\\ 
&\leq \lim\sup_j \sum_\ell \int_{B(Z,T)^c}\lvert\langle\xi_j, \varphi_\ell \xi_\ell\rangle\rvert\, d\mathrm{vol}_j
\\
&\leq \sum_\ell\left(\int_{B(Z,T)^c}\varphi_\ell^2\lvert\xi_\ell\rvert^2\, d\mathrm{vol}^N\right)^{1/2},
\end{align*}
where in the second line we used \eqref{eq:orthogonality_in_L_2} the inequality in the last line follows by Cauchy-Schwarz and the fact that $\lim_j \int_N \lvert\xi_j\rvert^2 \, d\mathrm{vol}_j =1$. Letting $T\rightarrow \infty$ and using the fact that $\phi_\ell \cdot\xi_\ell\in L^2(N)$, we see that $\xi_\ell \equiv 0$, finishing the proof of the claim. 

Finally we fix first $T>0$ and $j$ large enough and we decompose the region $\mathcal{N}_j$ where $\eta_j$ is supported as
\[
\mathcal{N}_j = B(Z,T/\sqrt{s_j}) \cup (B(Z,T/\sqrt{s_j})^c,
\]
It follows that for every $T>0$, as $j\rightarrow \infty$,
\[
\lim_j\int_{B(Z,T/\sqrt{s_j})}\lvert\eta_j\rvert^2\, d\mathrm{vol}^\mathcal{N}\, =\, \lim_j \int_{B(Z,T)} \lvert\xi_j\rvert^2\, d\mathrm{vol}_j\, =\, \int_{B(Z, T)}\varphi_\ell^2\cdot\lvert\xi\rvert^2\, d\mathrm{vol}^N\, =\, 0,
\]
And therefore
\[
\lim_j  \int_{B(Z, T/\sqrt{s_j})^c}\lvert\eta_j\rvert^2 d\mathrm{vol}^\mathcal{N} = 1 -\lim_j \int_{B(Z,T/\sqrt{s_j})}\lvert\eta_j\rvert^2\, d\mathrm{vol}^\mathcal{N} = 1.
\]
We now obtain a contradiction from the concentration estimate. Since $\eta_j$ is compactly supported in $\mathcal{N}_j$, by Lemma \ref{lemma:normal_rates}, it satisfies the pointwise estimate,
\[
\lvert\tilde {\mathcal A} \eta_j\rvert^2 \geq Cr^2 \lvert\eta_j\rvert^2,
\]
But then, by a concentration estimate
\begin{equation*}
\int_{\mathcal{N}_j}\lvert\tilde D_{s_j}\eta_j\rvert^2 \, d\mathrm{vol}^\mathcal{N} \geq s_j^2\int_{B(Z, T/\sqrt{s_j})^c}\lvert \tilde{\mathcal A}(\eta_j)\rvert^2\, d\mathrm{vol}^\mathcal{N} - C_1 s_j \geq s_j\left(C T^2 \int_{B(Z,T/(\sqrt{s_j}))^c}\lvert\eta_j\rvert^2\, d\mathrm{vol}^\mathcal{N} -  C_1\right).
\end{equation*}
But then, for $T> 2\sqrt{C_1/C} $, the preceding estimate contradicts the estimate,
\[
\lim _j \int_\mathcal{N} \lvert\tilde D_{s_j}\eta_j\rvert^2\, d \mathrm{vol}^\mathcal{N} \leq 2\lim _j \int_\mathcal{N} \lvert\tilde D_{s_j}\eta_j\rvert^2\, d \mathrm{vol}^N =0, 
\]
where we used the volume inequality \eqref{eq:density_comparison}. This contradiction proves estimate \eqref{eq:est2'}.
\end{proof}

Finally we have the 
\begin{proof}[Proof of estimate \eqref{eq:est2} of Theorem \ref{Th:hvalue}]
Estimate  \eqref{eq:est2} follows for compactly supported sections $\eta \in V_{s, \varepsilon}^\perp \cap W^{1,2}_0(B_Z(4\varepsilon) ; E^0)$, from estimate \eqref{eq:est2'} by setting $k=0$. By Lemma~\ref{lemma:local_implies_global}, the same estimate is true for general sections $\eta \in V_{s, \varepsilon}^\perp$. This completes the proof. 
\end{proof}

\medskip
   
\vspace{1cm}

\section{Morse-Bott example} 
\label{sec:Morse_Bott_example}

 On a closed Riemmanian manifold $(X^n,g)$ the bundle $E\oplus F = {\Lambda}^\mathrm{ev} T^*X \oplus {\Lambda}^{odd}T^*X$ is a Clifford algebra bundle in two ways:  
\begin{align}
\label{eq:hatcl}
c(v) = v\wedge - \iota_{v^\#}\qquad \mbox{and}\qquad \hat{c}(w) = w\wedge + \iota_{w^\#}
\end{align}
for $v,w\in T^*X$. One checks that these anti-commute: 
\begin{align}
\label{eq:twocliff}
c(v)\hat{c}(w)\ =\   - \hat{c}(w) c(v).
\end{align} 
Note that $D=d+d^*$ is a  first-order operator whose symbol is $c$. Fix a Morse-Bott function $f$ with critical $m_\ell$-dimensional submanifolds $Z_\ell$ and normal bundles $N_\ell$ so that the Hessian $\text{Hess}(f)_\ell:N_\ell \to N_\ell$ is symmetric nondegenerate of Morse-index $q_\ell$.  Then Theorem~\ref{Th:mainT} shows that the low eigenvectors of
\[
D_s = D + s{\mathcal A}_f = (d + d^*) + s\hat{c}(df) : \Omega^{ev}(X)\rightarrow \Omega^{odd}(X)
\]
concentrate around the critical submanifolds $Z_\ell$; fix a critical set $Z=Z_\ell$ of dimension $m$ with normal bundle $N$ and Morse index $q$. The splitting $T^*X\vert_Z= T^*Z\oplus N^*$ gives decompositions
\begin{align*}
\Lambda^{ev}X\vert_Z &= \Lambda^{ev}Z \otimes \Lambda^{ev}N \oplus  \Lambda^{odd}Z\otimes \Lambda^{odd}N
\\
\Lambda^{odd}X\vert_Z &= \Lambda^{ev}Z \otimes \Lambda^{odd}N \oplus  \Lambda^{odd}Z \otimes \Lambda^{ev}N
\end{align*}
The normal bundle $N$ is orientable and further decompose to the orientable positive $N^+\to Z$ and negative $N^-\to Z$ eigenbundles of the Hessian, of dimension $q$. In Morse-Bott coordinates near $p\in Z$ the function takes the form 
\[
f(x_i ,x_\alpha)= \sum_{\alpha= 1}^{n-m} \eta_\alpha x_\alpha^2 ,\quad \text{where} \quad \eta_\alpha = \begin{cases} -1&\quad \text{when $\alpha \leq q$}
\\
1&\quad \text{when $\alpha \geq q+1$}
\end{cases},
\]
and the nondegenerate hessian  has the form $\text{Hess}(f)\vert_Z = \text{diag}(\eta_\alpha)$. Then  
\[
M_\alpha^0 = - \eta_\alpha c(dx^\alpha) \hat{c}(dx^\alpha): {\Lambda}^\mathrm{ev} T_p^*X\rightarrow {\Lambda}^\mathrm{ev} T_p^*X, \ \text{for every $\alpha$}
\]
are invertible self-adjoint matrices with symmetric spectrum of eigenvalues $\pm 1$ that commute with each other. 

\begin{lemma} 
\begin{itemize}
\item If $Z$ has index $q$  then the real line bundle $ \Lambda^qN^-  \to Z$ is trivial. 

\item If $\phi$ belongs in the +1- eigenspace of $M_\alpha^0$ then
\[
S^{0+}_\alpha =\begin{cases} \{\xi\in \Lambda^{ev}X: \xi\wedge dx^\alpha=0\},&\quad \text{when $\alpha\leq q$}
\\ 
\{\xi\in \Lambda^{ev}X: \iota_{\partial_\alpha}\xi =0\} ,&\quad \text{when $\alpha\geq q+1$} \end{cases}.
\]
An analogue description holds for $M_\alpha^1$ but with $\Lambda^{odd}X$ in place of $\Lambda^{ev}X$.
\item If $q$ is even, then $S^{0+} \cong \Lambda^{ev}Z \otimes \Lambda^qN^-$ and $S^{1+} =\Lambda^{odd}Z \otimes \Lambda^qN^- $, and 
\item If $q$ is odd, then  $S^{0+} = \Lambda^{odd}Z \otimes \Lambda^qN^-$ and $S^{1+}\cong  \Lambda^{ev}Z \otimes \Lambda^qN^-$.

\item The Clifford map $c:T^*Z \otimes \Lambda^*X \to \Lambda^*X $ restricts to the map $\bar c:T^*Z\otimes \Lambda^{ev}Z \to \Lambda^{odd}Z$ and $c_Z:T^*Z \otimes \Lambda^{ev}Z \otimes \Lambda^q N^- \to \Lambda^{odd}Z \otimes \Lambda^q N^-$ is given by $c_Z = \bar c\otimes 1_{\Lambda^q N^-}$ when $q$ is even and by $c_Z = \bar c^*\otimes 1_{\Lambda^q N^-}$, when $q$ is odd.

\end{itemize}
\end{lemma}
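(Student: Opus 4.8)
The plan is to work locally in Morse-Bott coordinates near a point $p\in Z$ and exploit the fact that the matrices $M_\alpha^0 = -\eta_\alpha c(dx^\alpha)\hat c(dx^\alpha)$ commute pairwise and square to the identity; this is exactly the situation analyzed in Proposition~\ref{prop:properties_of_compatible_subspaces}, so the combinatorial structure of the eigenspaces $S^{i\pm}_\ell$ is already in place and only needs to be identified with concrete subbundles of $\Lambda^*X\vert_Z$. I would proceed item by item. For the triviality of $\Lambda^q N^-\to Z$: since $N^-$ is the negative eigenbundle of the Hessian of a globally defined function, it has a well-defined orientation (one can, for instance, choose the Morse-Bott coordinate frames consistently, or observe that $N^-$ is the subbundle on which $\mathrm{Hess}(f)$ is negative definite, which is an open condition preserved under the structure group), hence $\Lambda^q N^-$, its top exterior power, is the orientation line bundle of an oriented bundle and is therefore trivial.

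For the description of $S^{0+}_\alpha$: I would compute directly. Using \eqref{eq:hatcl}, for $\alpha\le q$ one has $M_\alpha^0 = c(dx^\alpha)\hat c(dx^\alpha) = (dx^\alpha\wedge - \iota_{\partial_\alpha})(dx^\alpha\wedge + \iota_{\partial_\alpha})$, and expanding on a form $\xi$ with $\xi\wedge dx^\alpha=0$ versus $\iota_{\partial_\alpha}\xi=0$ one reads off the $\pm 1$ eigenvalue; the sign $\eta_\alpha$ flips the roles for $\alpha\ge q+1$. This is the routine calculation I would not grind through, but it is the crux of matching eigenspaces to kernels/cokernels of wedge and contraction. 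Intersecting over all $\alpha$ as in \eqref{defn:positive_negative_espaces1} gives that $\xi\in S^{0+}$ iff $\xi$ is killed by $dx^\alpha\wedge$ for all $\alpha\le q$ (so $\xi$ is divisible by $dx^1\wedge\cdots\wedge dx^q$, the generator of $\Lambda^q N^-$) and by $\iota_{\partial_\alpha}$ for all $\alpha\ge q+1$ (so $\xi$ has no $N^+$ component). Hence $S^{0+}\cong \Lambda^* Z\otimes \Lambda^q N^-$, and the $\mathbb{Z}_2$-grading of $\Lambda^{ev}X\vert_Z$ forces the parity of $\Lambda^* Z$ according to the parity of $q$: if $q$ is even, $S^{0+}\cong \Lambda^{ev}Z\otimes \Lambda^q N^-$ and $S^{1+}\cong\Lambda^{odd}Z\otimes\Lambda^q N^-$; if $q$ is odd the parities swap. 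This uses the decomposition of $\Lambda^{ev}X\vert_Z$ and $\Lambda^{odd}X\vert_Z$ displayed in the text.

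For the last item, I would use that $c_Z$ is by Definition the restriction of $c$ to $T^*Z\otimes S^{i+}$. Since $S^{i+}\cong \Lambda^{\cdot}Z\otimes\Lambda^q N^-$ with the $\Lambda^q N^-$ factor annihilated by all wedge/contractions in the normal directions, Clifford multiplication by $v\in T^*Z$ acts only on the first tensor factor, giving $c_Z = \bar c\otimes 1_{\Lambda^q N^-}$ when the first factor is $\Lambda^{ev}Z$ (i.e. $q$ even) and, when $q$ is odd, $c_Z$ maps $\Lambda^{odd}Z\otimes\Lambda^q N^-\to\Lambda^{ev}Z\otimes\Lambda^q N^-$, which is $\bar c^*\otimes 1_{\Lambda^q N^-}$ since $\bar c:T^*Z\otimes\Lambda^{ev}Z\to\Lambda^{odd}Z$ has $L^2$-adjoint the contraction direction $T^*Z\otimes\Lambda^{odd}Z\to\Lambda^{ev}Z$. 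The main obstacle I anticipate is bookkeeping the signs and the $\mathbb{Z}_2$-grading consistently — in particular making sure the identification $S^{i+}\cong\Lambda^\cdot Z\otimes\Lambda^q N^-$ respects the grading inherited from $\Lambda^{ev/odd}X$ rather than an abstract grading, and checking that the isomorphism $c_\ell$ of Proposition~\ref{prop:properties_of_compatible_subspaces}\eqref{prop:structure_of_compatible_subspaces3} specializes to $\Lambda^* N^*\simeq \text{Cl}(N^*)$ acting by wedge in exactly the direction that produces $\Lambda^q N^-$ and not its complement; once the eigenvalue computation for a single $M_\alpha^0$ is pinned down, the rest is a direct unwinding of definitions.
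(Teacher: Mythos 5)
Your proof of the second through fifth bullets is essentially the paper's: a direct computation with $M_\alpha^0 = -\eta_\alpha c(dx^\alpha)\hat c(dx^\alpha)$ via Cartan's identity identifies $S^{0+}_\alpha$ with the kernel of $dx^\alpha\wedge$ or $\iota_{\partial_\alpha}$ depending on the sign $\eta_\alpha$, intersecting over $\alpha$ forces divisibility by $dx^1\wedge\cdots\wedge dx^q$ and independence of the $N^+$ variables, and the $\mathbb{Z}_2$-grading then fixes the parity of the $\Lambda^* Z$ factor. That matches the paper's argument exactly.

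The trouble is your treatment of the first bullet. The claim that $\Lambda^q N^-$ is trivial is equivalent to $N^-$ being orientable, and your two offered justifications do not establish that. Saying one can ``choose the Morse-Bott coordinate frames consistently'' is simply restating orientability, not proving it. And the observation that negative-definiteness of $\mathrm{Hess}(f)\vert_{N^-}$ is an ``open condition preserved under the structure group'' only shows that $N^-$ is a well-defined smooth subbundle of $N$; it says nothing about whether $N^-$ admits a global orientation. A counterexample to the reasoning: any nonorientable rank-$1$ bundle carries a globally negative-definite symmetric form (take $-g$ for any metric $g$), so being the negative eigenbundle of a symmetric endomorphism cannot by itself force orientability. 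The paper does not attempt to derive this either --- it takes orientability of $N^\pm$ as part of the hypotheses of the Morse--Bott example, stated just before the lemma, and the proof of the first bullet is nothing more than the remark that triviality of the top exterior power is the same thing as orientability. You should either cite that standing assumption or, if you want to prove it, supply an actual argument (e.g.\ in cases where it genuinely holds, such as when the Hessian eigenvalues never change sign along $Z$ and $N$ is oriented with $N^+$ oriented); the general statement, with no side hypothesis, is false.
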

\begin{proof}
The first bullet follows because $N^-\to Z$ is an orientable bundle.

For the second bullet we use an orthonormal coframe $\{e^j\}_j$ at $p\in Z$ and decompose $\phi = \sum_I \lambda_I e^I$. Then, using \eqref{eq:hatcl}, $\phi\in S^{0+}_\alpha$ if and only if 
\begin{align*}
\phi &= M_\alpha^0 \phi = - \eta_\alpha c(dx^\alpha) \hat{c}(dx^\alpha)\phi 
= -\eta_\alpha( dx^\alpha\wedge(\iota_{\partial_\alpha}\phi)\, -\, \iota_{\partial_\alpha}(dx^\alpha\wedge \phi))
\\ &= \eta_\alpha( \phi\, - \,2dx^\alpha\wedge (\iota_{\partial_\alpha} \phi)) 
\\
&= \eta_\alpha\left(\sum_{\{I: \lvert I\rvert\, \text{even},\ \alpha\notin I\}} \lambda_I e^I -   \sum_{\{I: \lvert I\rvert\, \text{even},\ \alpha \in I\}} \lambda_I e^I \right)
\\
&= \begin{cases} \sum_{\{I: \lvert I\rvert\, \text{even},\ \alpha\notin I\}} \lambda_I e^I -   \sum_{\{I: \lvert I\rvert\, \text{even},\ \alpha \in I\}} \lambda_I e^I , \quad \text{if $\alpha>q$}
\\
-\sum_{\{I: \lvert I\rvert\, \text{even},\ \alpha\notin I\}} \lambda_I e^I +   \sum_{\{I: \lvert I\rvert\, \text{even},\ \alpha \in I\}} \lambda_I e^I , \quad \text{if $\alpha \leq q$},
\end{cases}
\end{align*}
where in the fourth equality we used the Cartan's identity. It follows that when $\alpha>q$ then $\lambda_I=0$ if and only if $\alpha\in I$ and when $\alpha\leq q$ then $\lambda_I=0 $ if and only if $\alpha\notin I$. Therefore we obtain the descriptions in the second bullet.  

Continuing the preceding argument, $\phi \in \bigcap_\alpha S^{0+}_\alpha$ then $\lambda_I=0$ if and only if $\{1, \dots, q\} \subset I$ and $\{q+1, \dots, n-m\} \cap I = \emptyset$. Hence if $q$ is even, the third bullet holds. If $q$ is odd then the fourth bullet holds. The last bullet is an easy consequence of the preceding bullets.
\end{proof}

The Levi-Civita connection of $X$ restricts to $Z$ and together with the restriction from the Clifford action induce the operator $d_Z+ d_Z^*: \Lambda^{ev}Z \to \Lambda^{odd}Z$. The Localization Theorem with the Poincare-Hopf Theorem then give,
\begin{align*}
\chi(X) &= \mathrm{index\,}\left(d+d^*: C^\infty(X;\Lambda^{ev}X) \to C^\infty(X; \Lambda^{odd}X) \right) 
\\
&= \sum_\ell (-1)^{q_\ell} \mathrm{index\,}\left(d_\ell+d^*_\ell: C^\infty(Z_\ell;\Lambda^{ev}Z_\ell) \to C^\infty(Z_\ell; \Lambda^{odd}Z_\ell) \right) 
\\
&= \sum_\ell (-1)^{q_\ell} \chi(Z_\ell),
\end{align*}
a well know identity emerging from Morse-Bott homology. This is the localization in  E.~Witten's well-known paper on Morse Theory \cite{w1} but in the Morse-Bott case.

\appendix

\section*{Appendix}

\setcounter{equation}{0}
\renewcommand\theequation{A.\arabic{equation}}
\section{Fermi - coordinates setup near the singular set}
\label{App:Fermi_coordinates_setup_near_the_singular_set}

Fix $Z$ an $m$-dimensional submanifold of the $n$-dimensional Riemannian manifold $(X,\, g_X)$ with normal bundle $\pi:N \rightarrow Z$. Following the exposition in the reference \cite{g}[Ch.2], the distance function $r$ from the core set $Z$ as well as the exponential map of the normal bundle $N$ are well defined on a sufficiently small neighborhood of $Z$ in $X$. For small $\varepsilon>0$ they identify an open tubular neighborhood $B_Z(2\varepsilon) := \{p\in X : r(p)<2\varepsilon\}$ of $Z$ in $X$ with the neighborhood $\mathcal{N}_\varepsilon= \{(z,v) \in N: \lvert v\rvert_z<2\varepsilon\}$ of the zero section in $N$. In particular, we get a principal frame bundle isomorphism $I$ and its induced bundle isomorphism ${\mathcal I}$ introduced in \eqref{eq:exp_diffeomorphism} both covering the exponential diffeomorphism of the base. The metric tensor $g_X$, the volume form $d\mathrm{vol}^X$, the Levi-Civita connection $\nabla^{TX}$, the Clifford multiplication $c$ and the Clifford compatible connection $\nabla^E$, pull back and define a metric $g = \exp^* g_X$, a volume form $d\mathrm{vol}^\mathcal{N}$, connections  $\tilde\nabla^{T\mathcal{N}}$ and $\tilde \nabla^{\tilde E}$ and a Clifford action $\tilde c$ , respectively. Henceforth, the analysis and the expansions are carried out on $\mathcal{N}_\varepsilon\subset N$. Note that, when restricted to $Z$, the bundle maps $I$ and ${\mathcal I}$ are the identity bundle isomorphisms. 

The orthogonal decomposition $T\mathcal{N}\vert_Z = TZ \oplus N$ descents to a decomposition of the Riemannian metric $g\vert_Z = g_Z \oplus g_N$ and a decomposition of sections of $T\mathcal{N}\vert_Z$ into vector fields on $Z$ and sections of the bundle $N$. The restriction of the Levi-Civita connection $\tilde\nabla^{T\mathcal{N}}$ on sections of $T\mathcal{N}\vert_Z$ decomposes accordingly to three parts; 1) the Levi-Civita connection $\nabla^{TZ}$ of $(Z, g_Z)$, 2) a metric compatible connection $\nabla^N$ acting on sections of the normal bundle $(N, g_N)$ and 3) the 2nd fundamental form of the embedding $(Z, g_Z) \hookrightarrow (X, g_X)$.

Fix normal coordinates $(z_j)_j$ on a neighborhood $U\subset Z$ centered at $p$ and choose orthonormal frames $\{e_\alpha\}_\alpha$ that are $\nabla^N$-parallel at $p$, trivializing the restriction bundle $N\vert_U = \pi^{-1}(U)$. The frame $\{e_\alpha\}_\alpha$ at $z\in U$ identifies $N_z$ with $\mathbb{R}^{n-m}$ and $\mathcal{N}_z\subset N_z$ with an open subset of $B(0, 2\varepsilon)\subset\mathbb{R}^{n-m}$ with coordinates $(t_\alpha)_\alpha$. We define the so called Fermi coordinates as the bundle coordinates, $(x_j, x_\alpha)_{A=j, \alpha}$ on a neighborhood $\pi^{-1}(U)$ by requiring,
\begin{align*}
x_i\left(\sum_{\alpha=m+1}^n t_\alpha e_\alpha(z) \right) &= z_j ,\quad j= 1,\dots, m,\\
x_\alpha \left(\sum_{\alpha=m+1}^n t_\alpha e_\alpha(z)\right)&= t_\alpha, \quad \alpha = m+1,\dots ,n,
\end{align*}
when restricted on subsets 
\[
\mathcal{N}_U^\varepsilon:= \mathcal{N}_\varepsilon\cap \pi^{-1}(U).
\]
Frequently we will omit $\varepsilon$ from the notation and write $\mathcal{N}_U$ instead of $\mathcal{N}_U^\varepsilon$ and $\mathcal{N}$ instead of $\mathcal{N}_\varepsilon$. 

We denote by $\{\partial_j, \partial_\alpha\}$ and $\{dx^j, dx^\alpha\}$, the tangent and cotangent frames respectively so that $\partial_a\vert_U = e_\alpha$ and $\partial_j\vert_U = \partial_{z_j}$. More generally, a local vector field at $\mathcal{N}_U$ is called tangential Fermi field provided it has the form $ \phi = \sum_j d_j\partial_j$, for some constants $\{d_j\}_j$ and is called a normal Fermi field if it has the form $\psi = \sum_\alpha d_\alpha\partial_\alpha$, for some constants $\{d_\alpha\}_\alpha$. In these coordinates, the distance function is described as $r^2 = x_{m+1}^2 + \dots+ x_n^2$ and the radial outward vector field with its dual, are given by
\[
\partial_r = \sum_{\alpha = m+1}^n \frac{x_\alpha}{r} \partial_\alpha\qquad \text{and} \qquad dr = \sum_{\alpha = m+1}^n \frac{x_\alpha}{r} dx^\alpha.
\]

The notation $O(r^k\partial^H)$ and $O(r^k\partial^N)$ will denote expressions of tangential and normal Fermi fields correspondingly with coefficient components vanishing up to order $r^k$ when $r\to 0^+$. Using the transformation rules of the bundle coordinates,
\[
y_j= y_j(x_1, \dots, x_m), \quad y_\alpha = O_{\alpha\beta}(x_1,\dots, x_m) x_\beta
\]
for some orthogonal matrix $[O_{\alpha\beta}(x_1,\dots, x_m)]_{\alpha,\beta}$, the orders in $r$ of expressions of the form $O(r^k \partial^N)$ and $O(r^{k+1} \partial^N + r^k \partial^H)$ do not change under different coordinate frames.

Fermi coordinates generalize Gauss coordinates and the components $\{g_{AB}\}_{A,B= j,\alpha, r}$ of the Riemannian metric $g$ in $\mathcal{N}_U$, satisfy   
\begin{equation}
\label{eq:Fermi_Riemannian_metric}
 g_{rr}=1, \quad  g_{jr}=0, \quad g_{\alpha r} = \frac{x_\alpha}{r},\quad g_{j\alpha} = O(r), \quad g_{a\beta} = \delta_{a\beta}+ O(r^2),
\end{equation}
as $r\to 0^+$, for every $j,\alpha$. The Christofel symbols of $\tilde\nabla^{T\mathcal{N}}$ in Fermi coordinates are
\[
\tilde\nabla_j^{T\mathcal{N}} = \partial_j + \Gamma_{j A}^B,\qquad \tilde\nabla_\alpha^{T\mathcal{N}} = \partial_\alpha + \Gamma_{\alpha A}^B \qquad \text{and} \qquad \nabla_r = \partial_r + \Gamma_{r A}^B, 
\]
where $A, B = j, \beta$. We make use of the bar notation to denote restriction of the quantity to $U\subset \mathcal{N}_U \subset \mathcal{N}$ that is $ \bar\Gamma_{j \alpha}^k =\Gamma_{j \alpha}^k\vert_U$. The Christofel symbols obey relations
\begin{equation}
\label{eq:Christoffel_relations_0}
\Gamma_{AB}^\Delta = \Gamma_{BA}^\Delta,\quad \bar\Gamma_{\alpha \beta}\equiv 0,  \quad  \bar\Gamma_{j \alpha}^\beta = - \bar \Gamma_{j \beta}^\alpha, \quad \bar\Gamma_{ij}^\alpha = - \bar\Gamma_{i\alpha}^k \bar g_{kj},
\end{equation}
for every $A,B,\Delta$ and every $i,j,\alpha, \beta$ and the contraction $-\bar\Gamma_{ij}^\alpha \bar g^{ij} = \sum_k\bar\Gamma_{k\alpha}^k$ is the component $H_\alpha:= H_{e_\alpha}$ of the mean curvature in the direction of $e_\alpha$. 

The Dirac operator $D$ is expressed using orthonormal frames instead of Fermi frames. To that purpose, we introduce a $\nabla^{TZ}$-parallel at $p\in U$, orthonormal frame $\{e_j\}_j$ trivializing $TZ\vert_U$, so that $e_j\vert_p = \partial_j\vert_p$. The frames are compared with $e_j= d^j_k \partial_k\vert_U$, where  $d^i_k:U \to \mathbb{R}$ satisfies $d^i_k(p) = \delta^i_k$. We extend the frames $\{e_j, e_\alpha\}_{j,\alpha}$ by radial $\tilde \nabla^{T{\mathcal N}}$-parallel transport to frames $\{\tau_j, \tau_\alpha\}_{j,\alpha}$ over $\mathcal{N}_U$. The connection components $\tilde\nabla^{T\mathcal{N}}_{\tau_A} \tau_B = \omega_{AB}^\Delta \tau_\Delta$, satisfy 
\begin{equation}
\label{eq:connection_comp_rates}
\omega_{A B}^\Delta + \omega_{A \Delta}^B =0, \quad \bar\omega_{\alpha A}^B = 0 , \quad \text{and} \quad \omega_{j \alpha}^\beta(p) = \omega_{j k }^l(p)=0, 
\end{equation}
for every $A,B= j,\alpha$ and every $j,\alpha$.
\bigskip

\setcounter{equation}{0}
\renewcommand\theequation{B.\arabic{equation}}

\section{Taylor Expansions in Fermi coordinates and 1-jets}
\label{App:Taylor_Expansions_in_Fermi_coordinates}

In this section of the appendix, we calculate the Taylor expansions of some of the quantities involved in the proof of Theorem~\ref{Th:mainT}. The expansions are calculated up to the orders $O(r^3)$ for ${\mathcal A}$ and $O(r)$ for $\nabla{\mathcal A}$. The error terms are then, becoming negligible after the rescaling $\{w_\alpha=\sqrt{s} x_\alpha\}_\alpha$ as $s\to\infty$.

We work on a Fermi chart $(\mathcal{N}_U, (x_j, x_\alpha)_{j, \alpha})$ defined by normal coordinates $(U, (x_j)_j)$ on $Z$ and an orthonormal frame $\{e_\alpha\}_\alpha$ of $N\vert_U$. Let $\{\sigma_\ell\}_\ell,\ \{f_k\}_k$ orthonormal frames trivializing $E^0\vert_U$ and $E^1\vert_U$. Using Assumptions~\ref{Assumption:transversality1} and \ref{Assumption:transversality2}, we choose the frames so that the first $d$ vectors trivialize $S^i\vert_U,\ i=0,1$. We extend these frames radially by $\tilde\nabla^{\tilde E^i}$-parallel transport to obtain trivializations $\{\sigma_\ell\}_\ell,\ \{f_k\}_k$ over $\mathcal{N}_U$. 

The connection 1-form of $\tilde \nabla^{\tilde E^i}$ in these frames with respect to the frames $\{\tau_A\}_{A = j, \alpha}$ is given by
\begin{equation}
\label{eq:Clifford_connection_one_form_local_representations_in_on_frames}
\begin{aligned}
\tilde\nabla^{\tilde E^i}_{\tau_A} &= \tau_A + \theta^i_A, \quad A= j,\alpha,
\\
\bar\theta^i_a &= 0,  \quad  \partial_\alpha \bar{\theta}_\beta^i + \partial_\beta \bar{\theta}_\alpha^i =0
\end{aligned}
\end{equation}
for every $\alpha, \beta$ and every $i=0,1$. 

Finally we introduce the $\tilde\nabla^{\tilde E}$-parallel transport map along the radial geodesics of $\mathcal{N}_z$
\begin{align}
\label{eq:parallel_transport_map}
\begin{array}{cccl}
\tau : &C^\infty(\mathcal{N}_z; E_z^i) &\rightarrow &C^\infty(\mathcal{N}_z ; \tilde E^i\vert_{\mathcal{N}_z})
\\
&f(z,v)\sigma_k(z,0)&\mapsto &f(z,v)\sigma_k(z,v),
\end{array}  
\end{align}
for every $v\in N_z$ with $\lvert v\rvert_z< 2\varepsilon$ and every $i=0,1$. Hence $\tau$ operates from sections of $\pi^*E\vert_Z$ to give sections of $\tilde E\vert_\mathcal{N}$.

Subsections \ref{subApp:The_expansion_of_A_A*A_nabla_A_along_Z} and \ref{subApp:The_expansion_of_the_Spin_connection_along_Z} of the Appendix and Section~\ref{Sec:structure_of_A_near_the_singular_set} deal with the proof of Proposition~\ref{prop:properties_of_compatible_subspaces},  supporting the existence of decompositions \eqref{eq:eigenspaces_of_Q_i} and \eqref{eq:decompositions_of_S_ell} and the proof of Proposition~\ref{prop:basic_restriction_connection_properties} assering the properties of the adapted connection $\bar\nabla$ introduced in Definition~\ref{eq:connection_bar_nabla} and the term $B^i$ introduced in \eqref{eq:remainder_term}. Aside from these sections, the properties of the frames $\{\sigma_\ell\}_\ell,\ \{f_k\}_k$ trivializing the bundles $E^0\vert_U$ and $E^1\vert_U$ are updated so that: 1) they respect decompositions \eqref{eq:eigenspaces_of_Q_i} and \eqref{eq:decompositions_of_S_ell} and the decomposition of each $S^i_{\ell k}$ into simultaneous eigenspaces of $\{M_\alpha\}_\alpha$ as in \eqref{eq:decomposition_of_S_0_given_frame_e_a},  2) they are $\bar\nabla$-parallel at $p\in U$. In particular there exist a sub-frame that trivializes $S^{i+}_\ell\vert_U$. The components of the connections $\tilde\nabla^{\tilde E^i},\ \bar\nabla$ and the term $B^i$, are given in these frames by, 
\begin{align*}
\tilde\nabla^{\tilde E^0}_{e_A}\sigma_\ell = \bar\theta^0_A\sigma_\ell, &\qquad \tilde\nabla^{\tilde E^1}_{e_A}f_\ell = \bar\theta^1_Af_\ell,\quad A= j,\alpha,
\\
\bar\nabla_{e_j}^{E^0\vert_Z}\sigma_\ell  = \phi_j^0\sigma_\ell, &\qquad
\bar\nabla_{e_j}^{E^1\vert_Z}f_\ell  = \phi_j^1f_\ell,
\\
B^0_{e_j}\sigma_\ell = B^0_j\sigma_\ell, &\qquad
B^1_{e_j}f_\ell = B^1_jf_\ell, 
\end{align*}
so that, 
\begin{equation}
\label{eq:comparing_tilde_nabla_orthonormal_to_bar_nabla_orthonormal}
\bar\theta_j^i = \phi_j^i + B_j^i, \quad \phi^i_j(p)=0, \quad \phi_j^i(S^{i+}_\ell ) \subset S^{i+}_\ell,
\end{equation}
for every $i=0,1$.

\medskip

\subsection{The expansions of \texorpdfstring{${\mathcal A},\ {\mathcal A}^*{\mathcal A},\ \nabla{\mathcal A}$}{} along \texorpdfstring{$Z$}{}, as \texorpdfstring{$r\to 0^+$}{}.} \hfill
\label{subApp:The_expansion_of_A_A*A_nabla_A_along_Z}

We next  proceed to calculating terms up to second order in the Taylor expansion along the normal directions of $Z$ of the bundle maps $\tilde {\mathcal A},\  \tilde{\mathcal A}^*\tilde{\mathcal A}$ and their covariant derivatives.  We work in a chart $(\mathcal{N}_U, (x_j,x_\alpha)_{j,\alpha})$, choosing frames $\{\sigma_\ell\}_\ell$ of $\tilde E^0\vert_{\mathcal{N}_U}$ centered at $p\in U$ with coframes $\{\sigma^\ell\}_\ell$ and $\{f_k\}_k$ of $\tilde E^1\vert_{\mathcal{N}_U}$ so that they respect the decompositions $\tilde E^i= S^i \oplus (S^i)^\perp,\,  i =0,1$ introduced in Definition~\ref{defn:kernel_bundles}. Recall the orthogonal projections $P^i: E^i\vert_Z \to S^i\vert_Z,\ i=0,1$.  We denote the following notations for covariant derivatives, 
\[
A_\alpha:=  (\tilde\nabla_{\tau_\alpha}\tilde{\mathcal A})\vert_{\mathcal{N}_U}, \qquad A_{\beta \gamma} := (\tilde\nabla_{\tau_\beta} \tilde\nabla_{\tau_\gamma} \tilde{\mathcal A})\vert_{\mathcal{N}_U}.
\]
and
\begin{equation}
\label{eq:1st_jet_2nd_jet_of_A}
\bar A_ r := \frac{x_a}{r} \bar A_\alpha \qquad \bar A_{rr} := \frac{x_\alpha x_\beta}{r^2}\bar A_{\alpha\beta},
\end{equation}
where we use the bar notation when restricting components in $U$ that is $\bar{\mathcal A} := \tilde{\mathcal A}\vert_U,\ \partial_A \bar A_\ell^k := (\partial_A A_\ell^k)\vert_U,\ \bar A_\alpha= \tilde\nabla_{e_\alpha}\tilde{\mathcal A}$ etc. The expressions $A_B, \, A_{B\Gamma}$ depend on the choice of Fermi coordinates and frames while the expressions  $\bar A_r,\, \bar A_{rr}$ do not. 

\begin{proposition}
\label{prop:properties_of_perturbation_term_A}
\begin{enumerate}
\item 
\label{eq:transversality_equivalence}
If ${\mathcal A}$ satisfies transversality Assumption~\ref{Assumption:transversality1} then ${\mathcal A}^*$ does too.

\item
Assumption~\ref{Assumption:transversality1} imply that,
\begin{align}
\label{eq:transversality_assumption_with_respect_to_connection}
\nabla_u{\mathcal A}(S^0)\subseteq S^1 \qquad \mbox{and}\qquad \nabla_u{\mathcal A}^*(S^1)\subseteq S^0,
\end{align}
for every $u \in N$, where the later relation is obtained by the former one using \eqref{eq:dcond}. 

\item
\label{prop:Taylor_expansions_ of perturbation_term}
Recall the map $\tau$ from \eqref{eq:parallel_transport_map}. Then, for every $\eta\in C^\infty(\mathcal{N}_U; S^0)$ with $\eta = \tau \xi$,
\begin{align}
\tilde {\mathcal A} \eta &= \tau \left(r\bar A_r\xi + \frac{1}{2} r^2 \bar A_{rr}\xi + O(r^3) \xi \right)  \label{eq:jet0}
\\
\tilde {\mathcal A}^* \tilde {\mathcal A} \eta &=  r^2\tau\left( \bar A_r^* \bar A_r \xi+  \frac{1}{2} \bar{\mathcal A}^* \bar A_{rr} \xi + O(r) \xi \right)\label{eq:jet1},
\end{align}

\item  
\label{prop:expressions_for_map_C_i} 
Recall the matrices $C^i$ introduced in \eqref{eq:IntroDefCp}. We have expressions, 
\begin{equation}
C^i = \begin{cases} -\sum_\alpha c(e^\alpha) \left. \bar A_\alpha\right\vert_{S^0}, \quad \text{if $i=0$,}
\\
 -\sum_\alpha c(e^\alpha) \left.\bar A^*_\alpha\right\vert_{S^1}, \quad \text{if $i=1$}
\end{cases}, \qquad 
\end{equation}

\end{enumerate}
\end{proposition}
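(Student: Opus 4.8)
The plan is to prove Proposition~\ref{prop:properties_of_perturbation_term_A} item by item, since each of the four assertions is really a short formal computation once the relevant definitions are unwound. I will proceed in the order the items are stated, because the later ones depend on the earlier.

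\textbf{Item \eqref{eq:transversality_equivalence}.} First I would observe that transversality of a section to a submanifold at a point is invariant under a bundle automorphism of the ambient bundle covering the identity. The adjoint operation $A \mapsto A^*$ is exactly such an automorphism of $\mathrm{Hom}(E^0,E^1) \cong \mathrm{Hom}(E^1,E^0)$ (conjugate-linear but fibrewise a diffeomorphism), and it carries the stratum ${\mathcal F}^l$ of rank-jump-$l$ maps onto the corresponding stratum for $\mathrm{Hom}(E^1,E^0)$, because $\dim\ker A = \dim\ker A^* + (\operatorname{rank} E^1 - \operatorname{rank} E^0) = \dim\ker A^*$ since $E^0, E^1$ have equal rank. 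Moreover ${\mathcal L}$ is carried to its counterpart by Lemma~\ref{lem:lr} (the concentration condition \eqref{eq:cond} is equivalent to its adjoint version \eqref{eq:cond_version2}). Hence ${\mathcal A}^*$ is transverse to ${\mathcal L}\cap{\mathcal F}^l$ at the same points. The one thing to be careful about is that the differential of the adjoint map at $A$ sends $T_A{\mathcal F}^l$ isomorphically onto $T_{A^*}{\mathcal F}^l$, which follows because the adjoint is a linear isomorphism on the total space; so transversality (a statement about images of differentials spanning together) is preserved.

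\textbf{Item \eqref{eq:transversality_assumption_with_respect_to_connection}.} Here I would differentiate the rank-jump description. By Assumption~\ref{Assumption:transversality1}, along $Z = {\mathcal A}^{-1}({\mathcal L}\cap{\mathcal F}^l)$ the kernel $\ker{\mathcal A}$ and cokernel $\ker{\mathcal A}^*$ have locally constant rank, so $S^0 = \ker{\mathcal A}$ and $S^1 = \ker{\mathcal A}^*$ are genuine subbundles over $Z$. Pick a local section $\xi$ of $S^0$ extended off $Z$ by radial parallel transport. Then ${\mathcal A}\xi$ vanishes on $Z$, so for $u \in N = TX\vert_Z \ominus TZ$ (a normal direction) we get $\nabla_u({\mathcal A}\xi) = (\nabla_u{\mathcal A})\xi + {\mathcal A}(\nabla_u\xi)$; the left side need not vanish, but its component in $({\mathcal A}\text{-image})$-orthogonal directions is controlled by the transversality. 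The cleanest route is to use the standard fact (implicit function theorem normal form for the stratum ${\mathcal F}^l$, see e.g. \cite{k2} or the discussion after Assumption~\ref{Assumption:transversality2}) that transversality to ${\mathcal F}^l$ is equivalent to the \emph{intrinsic derivative} $\xi \mapsto P^1(\nabla_u{\mathcal A})\xi$ being the full Hessian-type map with no extra constraint — which in particular forces $(1-P^1)(\nabla_u{\mathcal A})\xi$ to be determined, and in fact zero, for $\xi \in S^0$. I expect this to be the main obstacle: spelling out precisely why transversality gives $\nabla_u{\mathcal A}(S^0) \subseteq S^1$ rather than merely some weaker containment. I would handle it by writing ${\mathcal A}$ in a local frame adapted to $E^0 = S^0 \oplus (S^0)^\perp$, $E^1 = S^1 \oplus (S^1)^\perp$, so that ${\mathcal A}$ has a block form with the $(S^0 \to (S^1)^\perp)$ block vanishing on $Z$; transversality to ${\mathcal F}^l$ says exactly that this block, viewed as a map $N \to \mathrm{Hom}(S^0, (S^1)^\perp)$ via its normal derivative, is the zero map is \emph{not} what we want — rather transversality is about the $(S^0 \to S^1)$ block's normal derivative being surjective onto the relevant jet space. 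I will need to invoke the precise local model: near a point of ${\mathcal F}^l$, the stratum is cut out by the vanishing of the $(S^0\to(S^1)^\perp)$ block (an $l \times l$-type system after reduction), and transversality of ${\mathcal A}$ means $u \mapsto (1-P^1)(\nabla_u{\mathcal A})\vert_{S^0}$ spans; but evaluated \emph{at} $Z$ along $N$-directions, combined with the fact that ${\mathcal A}$ restricted to $Z$ already has this block zero, Taylor expansion gives that the leading normal jet of ${\mathcal A}\vert_{S^0}$ lands in $S^1$, which is \eqref{eq:transversality_assumption_with_respect_to_connection}. The adjoint containment then follows by applying $c(u)^*\cdot(-)\cdot c(u)$ or directly differentiating \eqref{eq:cond_version2} as the statement indicates.

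\textbf{Items \eqref{prop:Taylor_expansions_ of perturbation_term} and \eqref{prop:expressions_for_map_C_i}.} These are pure Taylor expansion in Fermi coordinates. For \eqref{eq:jet0}: since ${\mathcal A}\vert_Z = 0$ on $S^0$, write $\tilde{\mathcal A}\eta = \tau(\tau^{-1}\tilde{\mathcal A}\tau\,\xi)$ and Taylor expand $\tau^{-1}\tilde{\mathcal A}\tau$ in the normal variables $x_\alpha$ around $r=0$; the constant term is $\bar{\mathcal A}\vert_{S^0} = 0$, the linear term assembles into $x_\alpha \bar A_\alpha = r\bar A_r$ (using \eqref{eq:connection_comp_rates} and \eqref{eq:Clifford_connection_one_form_local_representations_in_on_frames} to see that $\tilde\nabla$-covariant derivatives and coordinate derivatives agree at $p$ modulo the stated orders), the quadratic term is $\tfrac12 x_\alpha x_\beta \bar A_{\alpha\beta} = \tfrac12 r^2\bar A_{rr}$, with $O(r^3)$ remainder. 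Then \eqref{eq:jet1} follows by composing: $\tilde{\mathcal A}^*\tilde{\mathcal A}\eta = \tau((r\bar A_r + \tfrac12 r^2\bar A_{rr} + O(r^3))^*(r\bar A_r + \tfrac12 r^2\bar A_{rr} + O(r^3))\xi) = r^2\tau(\bar A_r^*\bar A_r\xi + \tfrac12(\bar A_r^* r\bar A_{rr} + \dots)\xi + O(r^3)\xi)$, and noting $\bar A_r^*\vert_{S^0}$-side: actually the cross term $\tfrac12 r^2(\bar A_r^*\cdot r\bar A_{rr})$ contributes at order $r^3$, so the $r^2$-order terms are $\bar A_r^*\bar A_r$ plus $\tfrac12\bar{\mathcal A}^*\bar A_{rr}$ — wait, $\bar{\mathcal A}\vert_{S^0}=0$, so one must be careful: the term $\tfrac12\bar{\mathcal A}^*\bar A_{rr}$ in the stated expansion refers to $\bar{\mathcal A}$ \emph{not} restricted to $S^0$ on the left factor; I would re-examine and note that the second-order term of $\tilde{\mathcal A}^*\tilde{\mathcal A}$ acting on $S^0$ is $\tfrac12((\tilde\nabla^2\tilde{\mathcal A})^*\tilde{\mathcal A} + \tilde{\mathcal A}^*(\tilde\nabla^2\tilde{\mathcal A}) + 2(\tilde\nabla\tilde{\mathcal A})^*(\tilde\nabla\tilde{\mathcal A}))$ evaluated in radial directions, and the term $\tilde{\mathcal A}^*(\tilde\nabla^2\tilde{\mathcal A})$ survives because only the \emph{left} $\tilde{\mathcal A}$ is adjointed and it is being applied after $\bar A_{rr}$ which need not land in $S^0$ — this is exactly the $\tfrac12\bar{\mathcal A}^*\bar A_{rr}$ term, matching Assumption~\ref{Assumption:normal_rates}. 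Finally \eqref{prop:expressions_for_map_C_i} is immediate from the definition \eqref{eq:IntroDefCp} of $C^i$: unwinding the composition $-c \circ (\iota_N^*\otimes P^{1-i}) \circ \nabla{\mathcal A}^i$ in the orthonormal coframe $\{e^\alpha\}$ of $N^*$ gives $C^i\xi = -\sum_\alpha c(e^\alpha)P^{1-i}(\nabla_{e_\alpha}{\mathcal A}^i)\xi$, and by item \eqref{eq:transversality_assumption_with_respect_to_connection} the projection $P^{1-i}$ is superfluous on $S^i$, yielding $C^0 = -\sum_\alpha c(e^\alpha)\bar A_\alpha\vert_{S^0}$ and likewise for $C^1$ with $\bar A_\alpha^*$. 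I expect items \eqref{prop:Taylor_expansions_ of perturbation_term}–\eqref{prop:expressions_for_map_C_i} to be routine; the genuine content is entirely in item \eqref{eq:transversality_assumption_with_respect_to_connection}, and that is where I would spend the bulk of the write-up, invoking the local normal form for the determinantal stratum ${\mathcal F}^l$.
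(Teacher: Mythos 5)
Items (1), (3), and (4) are handled essentially as in the paper; for item (1) you argue via invariance of transversality under the adjoint automorphism of $\mathrm{Hom}(E^0,E^1)$, whereas the paper reads it off the local coordinate form of the assumption, but both routes are sound, and items (3)--(4) are the same Taylor expansion in Fermi coordinates once item (2) is granted.

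The gap is in item (2), which as you say carries the real content. You write that ``the stratum ${\mathcal F}^l$ is cut out by the vanishing of the $(S^0\to(S^1)^\perp)$ block'' and then try to deduce $\nabla_u{\mathcal A}(S^0)\subseteq S^1$ from this plus transversality. This misidentifies the defining equations of the determinantal stratum. In the adapted block decomposition $A = \left(\begin{smallmatrix} X & Y \\ Z & W\end{smallmatrix}\right)$ with $X\colon S^0\to S^1$ and $Z\colon S^0\to(S^1)^\perp$, the stratum ${\mathcal F}^l$ near a point where $X_0=Y_0=Z_0=0$ and $W_0$ is invertible is cut out by the Schur complement $X - YW^{-1}Z = 0$, i.e.\ to first order by the vanishing of the $(S^0\to S^1)$ block $X$, not by the $Z$-block. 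Transversality of ${\mathcal A}$ therefore controls the surjectivity of $u\mapsto P^1(\nabla_u{\mathcal A})\vert_{S^0}$ onto the $X$-directions; it says nothing by itself about the vanishing of $(1-P^1)(\nabla_u{\mathcal A})\vert_{S^0}$, which is what \eqref{eq:transversality_assumption_with_respect_to_connection} asserts. The ``Taylor expansion gives that the leading normal jet lands in $S^1$'' step does not close this: the vanishing of the $Z$-block of ${\mathcal A}$ along $Z$ is the definition of $S^0,\,S^1$ and constrains the value of ${\mathcal A}$ there, not its normal derivative. The paper's own proof declares $\partial_\alpha \bar A_\ell^k\equiv 0$ for both off-diagonal blocks to be ``the transversality Assumption~\ref{Assumption:transversality1} in local coordinates'' (its equation \eqref{eq:coordinate_transversality}) and deduces items (1) and (2) directly from that; a self-contained proof of item (2) would have to establish \eqref{eq:coordinate_transversality}, most plausibly by exploiting that $\nabla_u{\mathcal A}$ must lie in $T_{{\mathcal A}(p)}{\mathcal L}$ so that its $Y$- and $Z$-blocks are linked via the linearized concentration condition \eqref{eq:dcond}, rather than from the normal form of ${\mathcal F}^l$ alone.
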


\begin{proof}
The bundle map $\tilde{\mathcal A}$ writes in local frames as,
\begin{equation}
\label{eq:perturbation_term_frame}
\tilde {\mathcal A}\vert_{\mathcal{N}_U} = A_\ell^k \sigma^\ell \otimes f_k\qquad \text{where} \qquad \bar A_\ell^k = 0 \qquad \text{when $\ell \leq d$ or $k \leq d$}, 
\end{equation}
and we obtain expressions, 
\begin{align}
A_\alpha =  (\tau_\alpha A_\ell^k + A_l^k \theta_{\alpha l}^{0\ell} + A_\ell^l\theta_{\alpha l}^{1k}) \sigma^\ell\otimes f_k, \label{eq:perturb_vertical_taylor},
\end{align}
and
\begin{align*}
\bar A_{\alpha \beta} &= \partial^2_{\alpha \beta} \bar A_\ell^k  \sigma^\ell \otimes f_k + \bar A_\ell^k( \partial_\alpha \bar{\theta}_{\beta \ell}^{0 t} \sigma^t \otimes f_k + \partial_\alpha \bar{\theta}^{1 t}_{\beta k}\sigma^\ell \otimes f_t),  
\end{align*}
where some of the terms of the preceding expression are vanishing because the connection 1-forms $\theta^i,\ i=0,1$ satisfy relations \eqref{eq:Clifford_connection_one_form_local_representations_in_on_frames}. Similarly,
\begin{equation}
\label{eq:nabla_perturb}
(\bar A_\alpha)_\ell^k = \partial_\alpha \bar A_\ell^k\qquad \text{and} \qquad  \sum_{\alpha, \beta} x_\alpha x_\beta (\bar A_{\alpha \beta})_\ell^k = \sum_{\alpha, \beta}x_\alpha x_\beta \partial_{\alpha \beta}^2 \bar A_\ell^k,
\end{equation}
for every $k,\ell$. 

To prove \eqref{eq:transversality_assumption_with_respect_to_connection} we recall the transversality Assumption~\eqref{Assumption:transversality1} in local coordinates, 
\begin{equation}
\label{eq:coordinate_transversality}
\partial_\alpha \bar A_\ell^k \equiv 0, \quad \text{when $\ell\geq d+1,\ k\leq d$ or $\ell \leq d,\ k\geq d+1$.}  
\end{equation}
It follows that \eqref{eq:transversality_equivalence} and inclusions \eqref{eq:transversality_assumption_with_respect_to_connection} is a direct consequence of \eqref{eq:coordinate_transversality} and \eqref{eq:nabla_perturb}.

By Taylor expanding the coefficients $A_\ell^k$ along the normal directions, we obtain 
\begin{align*}
\tilde {\mathcal A} =& \left( \bar A_\ell^k + x_\alpha (\partial_\alpha \bar A_\ell^k) + \frac{1}{2} x_\alpha x_\beta (\partial_{\alpha\beta}^2 \bar A_\ell^k) + O(\lvert x\rvert^3)\right) \sigma^\ell \otimes f_k,
\\
\tilde {\mathcal A}^* \tilde{\mathcal A} =&  \left(\bar A^k_l \bar A^k_\ell +  x_\alpha [ \bar A^k_l (\partial_\alpha \bar A_\ell^k) + (\partial_\alpha \bar A^k_l) \bar A_\ell^k]\right) \sigma^\ell\otimes \sigma_l 
\\
&+ x_\alpha x_\beta \left((\partial_\alpha \bar A^k_l)(\partial_\beta \bar A_\ell^k) + \frac{1}{2}\left((\partial_{\alpha\beta}^2 \bar A^k_l) \bar A_\ell^k + \bar A^k_l ( \partial_{\alpha\beta}^2 \bar A_\ell^k) \right) \right)\sigma^\ell\otimes \sigma_l + O(\lvert x\rvert^3) \sigma^\ell\otimes \sigma_l.
\end{align*}

Therefore, when we restrict to the subbundle $S^0$ spanned by the frame $\{\sigma_\ell\}_{\ell \leq d}$, we use \eqref{eq:perturbation_term_frame} and \eqref{eq:coordinate_transversality} to obtain,
\begin{equation}
\label{eq:perturb_exp}
\left.\tilde {\mathcal A} \right\vert_{S^0} =  \sum_{\ell, k \leq d}x_\alpha (\partial_\alpha \bar A_\ell^k) \sigma^\ell \otimes f_k + \sum_{\ell \leq d}\left(\frac{1}{2}x_\alpha x_\beta  \partial_{\alpha \beta}^2 \bar A_\ell^k + O(\lvert x\rvert^3) \right) \sigma^\ell \otimes f_k, 
\end{equation}
and
\begin{equation*}
\left.\tilde {\mathcal A}^* \tilde{\mathcal A} \right\vert_{S^0} =  x_\alpha x_\beta\sum_{k,l, \ell \leq d}(\partial_\alpha \bar A^k_l)(\partial_\beta \bar A_\ell^k)  \sigma^\ell\otimes \sigma_l  +  \frac{1}{2} x_\alpha x_\beta \sum_{\genfrac{}{}{0pt}{2}{ k,l \geq d+1}{\ell\leq d}} \bar A^k_l ( \partial_{\alpha\beta}^2 \bar A_\ell^k) \sigma^\ell\otimes \sigma_l  + O(\lvert x\rvert^3)\sigma^\ell\otimes \sigma_l. 
\end{equation*}
From these relations and equations \eqref{eq:nabla_perturb}, expansions \eqref{eq:jet0} and \eqref{eq:jet1} follow.

Finally for \eqref{prop:expressions_for_map_C_i}, we prove the case for $i=0$ and the other is proven similarly. We have that 
\[
\tilde\nabla \tilde{\mathcal A}\vert_{S^0} = e^j\otimes \bar A_j\vert_{S^0} + e^\alpha\otimes \bar A_\alpha\vert_{S^0}\quad\text{and}\quad c= e_j\otimes c(e^j)+ e_\alpha\otimes c(e^\alpha),  
\] 
and by using \ref{eq:transversality_assumption_with_respect_to_connection},
\[
\iota_\mathcal{N}^*\otimes P^1\circ\nabla{\mathcal A}\vert_{S^0} = e^\alpha\otimes A_\alpha\vert_{S^0}.
\]
It follows that
\[
C^0= - c(e^\alpha)\circ A_\alpha\vert_{S^0},
\]
finishing the proof.
\end{proof}

\begin{lemma}
\label{lemma:normal_rates}
Assume ${\mathcal A}$ satisfies Assumption~\ref{Assumption:normal_rates}. Then there exist $\varepsilon_0>0$ and $C=C(\varepsilon_0)>0$ so that whenever $\eta\in C^\infty\left(B_{Z_{\mathcal A}}(2\varepsilon_0); E^0\vert_{B_{Z_{\mathcal A}}(2\varepsilon_0)}\right)$, then 
\[
\lvert{\mathcal A} \eta\rvert^2 \geq C r^2\lvert\eta\rvert^2,
\]
where $r$ is the distance function from $Z_{\mathcal A}$
\end{lemma}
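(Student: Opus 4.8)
The statement is a quantitative lower bound $|{\mathcal A}\eta|^2 \geq C r^2 |\eta|^2$ on a tubular neighborhood of $Z_{\mathcal A}$, and the natural strategy is to reduce it to the eigenvalue estimates already built into Assumption~\ref{Assumption:normal_rates}. Since $Z_{\mathcal A}$ has finitely many connected components (Assumption~\ref{Assumption:transversality2}, each $Z_\ell$ closed, $X$ compact), and $L^2$-type pointwise estimates localize, it suffices to prove the bound on a tubular neighborhood $\mathcal{N}$ of a single component $Z = Z_\ell$, with a constant depending on that component; then take the minimum of the finitely many constants and the minimum of the finitely many radii. After transporting via the bundle isomorphism ${\mathcal I}$ of \eqref{eq:exp_diffeomorphism}, it is equivalent to prove $|\tilde{\mathcal A}\tilde\eta|^2 \geq C r^2 |\tilde\eta|^2$ for sections $\tilde\eta$ of $\tilde E^0$ over $\mathcal{N}$, where $r$ is the distance from $Z$.

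\textbf{Key steps.} First I would split $\tilde\eta$ pointwise along $Z$ using the orthogonal projections $P^0 : E^0\vert_Z \to S^0\vert_Z$, extended radially: write $\tilde\eta = \tilde\eta_1 + \tilde\eta_2$ with $\tilde\eta_1$ a section of $S^0$ and $\tilde\eta_2$ a section of $(S^0)^\perp$ (as in the conventions preceding Lemma~\ref{lemma:aux_estimate}). On $(S^0)^\perp$, $\bar{\mathcal A} = \tilde{\mathcal A}\vert_Z$ is already an isomorphism (by definition $S^0 = \ker{\mathcal A}$ on $Z$), so by continuity and compactness there is $c_1 > 0$ with $|\tilde{\mathcal A}\tilde\eta_2|^2 \geq c_1 |\tilde\eta_2|^2$ on a small enough $\mathcal{N}$; this dominates $c_1 r^2 |\tilde\eta_2|^2$ for $r$ bounded. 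On $S^0$, I use the expansion \eqref{eq:jet1} of Proposition~\ref{prop:properties_of_perturbation_term_A}, together with Assumption~\ref{Assumption:normal_rates} rewritten via Lemma~\ref{lem:basic_properties_of_M_v_w} as
\[
\left.\tilde{\mathcal A}^*\tilde{\mathcal A}\right\vert_{S^0} = r^2\left(Q^0 + \tfrac{1}{2}\bar{\mathcal A}^*\bar A_{rr}\vert_{S^0}\right) + O(r^3),
\]
where $Q^0$ is positive-definite symmetric on $S^0\vert_Z$, hence bounded below by some $q_0 > 0$ uniformly over the compact $Z$. Thus $\langle \tilde{\mathcal A}^*\tilde{\mathcal A}\,\tilde\eta_1, \tilde\eta_1\rangle = r^2(\langle Q^0\tilde\eta_1,\tilde\eta_1\rangle + O(r))|\tilde\eta_1|^2 \geq \tfrac{1}{2}q_0 r^2 |\tilde\eta_1|^2$ for $r < \varepsilon_0$ small. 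Finally I combine the two pieces: $|\tilde{\mathcal A}\tilde\eta|^2 \geq \tfrac{1}{2}|\tilde{\mathcal A}\tilde\eta_1|^2 - |\tilde{\mathcal A}\tilde\eta_2|^2$ is the wrong direction, so instead I would note that $\tilde{\mathcal A}\tilde\eta_1 \in S^1$-direction and $\tilde{\mathcal A}\tilde\eta_2$ lives (to leading order) in $(S^1)^\perp$, and use the near-orthogonality of these images for small $r$ — or, more cleanly, simply estimate $|\tilde{\mathcal A}\tilde\eta|^2 = \langle \tilde{\mathcal A}^*\tilde{\mathcal A}(\tilde\eta_1+\tilde\eta_2), \tilde\eta_1+\tilde\eta_2\rangle$, expand into the four blocks, observe the diagonal blocks are bounded below ($\gtrsim r^2|\tilde\eta_1|^2$ and $\gtrsim |\tilde\eta_2|^2 \geq r^2|\tilde\eta_2|^2$ respectively), and absorb the off-diagonal cross terms $\langle \tilde{\mathcal A}^*\tilde{\mathcal A}\tilde\eta_1,\tilde\eta_2\rangle$, which by \eqref{eq:jet1} are $O(r^2)$ against $S^0$ but $\tilde{\mathcal A}^*\tilde{\mathcal A}$ maps $S^0$-leading-terms away from $(S^0)^\perp$-leading-terms, so the cross terms are genuinely $O(r^3)|\tilde\eta|^2$, hence absorbable by Peter–Paul into the $c_1|\tilde\eta_2|^2$ term for $r$ small.

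\textbf{Main obstacle.} The one point needing care is the cross-term bookkeeping: a priori $\langle \tilde{\mathcal A}^*\tilde{\mathcal A}\,\tilde\eta_1,\tilde\eta_2\rangle$ is only $O(r^2)|\tilde\eta_1||\tilde\eta_2|$, which by Cauchy–Schwarz would give $\epsilon r^2 |\tilde\eta_1|^2 + C r^2\epsilon^{-1}|\tilde\eta_2|^2$ — and the second term then competes with, rather than is dominated by, the $c_1|\tilde\eta_2|^2$ lower bound only because $r^2$ is small; so one must track that $r^2 C\epsilon^{-1} < c_1$ for $r$ small, which is fine once $\epsilon$ is fixed first and $\varepsilon_0$ chosen afterward. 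Alternatively, and this is probably the slicker route, one observes from \eqref{eq:perturb_exp} and \eqref{eq:coordinate_transversality} that $\tilde{\mathcal A}\vert_{S^0}$ has image in the $S^1$-span at leading order $O(r)$ while $\bar{\mathcal A}\vert_{(S^0)^\perp}$ has image in the $(S^1)^\perp$-span at order $O(1)$, so $|\tilde{\mathcal A}(\tilde\eta_1 + \tilde\eta_2)|^2 = |\tilde{\mathcal A}\tilde\eta_1|^2 + |\tilde{\mathcal A}\tilde\eta_2|^2 + 2\mathrm{Re}\langle\tilde{\mathcal A}\tilde\eta_1,\tilde{\mathcal A}\tilde\eta_2\rangle$ with the last inner product being $O(r)\cdot O(1)\cdot$(angle) $= O(r)|\tilde\eta_1||\tilde\eta_2|$ but with the two image vectors nearly orthogonal up to $O(r)$, making the cross term $O(r^2)|\tilde\eta_1||\tilde\eta_2|$ and absorbable. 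Either way the proof is short; the content is entirely in Assumption~\ref{Assumption:normal_rates} and its reformulation in Lemma~\ref{lem:basic_properties_of_M_v_w}.
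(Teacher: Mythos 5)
Your plan matches the paper's proof: same decomposition $\eta = \eta_1 + \eta_2$ along $S^0 \oplus (S^0)^\perp$, same use of the $Q^0$ lower bound for the $S^0$-block, same uniform lower bound for $\bar{\mathcal A}$ on $(S^0)^\perp$, and the same Peter–Paul absorption of the $O(r^2)\lvert\eta_1\rvert\lvert\eta_2\rvert$ cross term into $C_1\lvert\eta_2\rvert^2$ using the extra factor of $r^2$ once $\varepsilon_0$ is chosen small. Your side remark that the cross term is ``genuinely $O(r^3)$'' overstates the gain — the $\tfrac{r^2}{2}\bar{\mathcal A}^*\bar A_{rr}$ piece does contribute at order $r^2$ and is exactly what the paper controls via Peter–Paul — but your primary route already handles the $O(r^2)$ case correctly, so this does not affect the argument.
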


\begin{proof}
Without loss of generality assume that $Z_{\mathcal A}$ is a single component $Z$ and we can replace ${\mathcal A}$ by $\tilde{\mathcal A}$. We decompose $\eta = \eta_1 + \eta_2$ according to the decomposition $E^0\vert_{\mathcal{N}}= S^0\oplus (S^0)^\perp$. By using Assumption \ref{Assumption:normal_rates},  we estimate
\begin{align*}
\lvert \tilde{\mathcal A} \eta\rvert^2 &=  \langle \tilde{\mathcal A}^*\tilde{\mathcal A} \eta_1, \eta_1\rangle + 2  \langle \tilde{\mathcal A}^* \tilde{\mathcal A} \eta_1, \eta_2\rangle + \lvert \tilde{\mathcal A} \eta_2\rvert^2
\\
&\geq  r^2\langle \eta_1, Q^0 \eta_1\rangle + \langle O(r^3) \eta_1, \eta_1\rangle + r^2\langle A_{rr} \eta_1, \bar{\mathcal A} \eta_2\rangle +O(r^3)\lvert\eta_1\rvert\lvert\eta_2\rvert + C_1\lvert\eta_2\rvert^2.
\end{align*}
For the cross terms we estimate
\[
\lvert r^2\langle A_{rr} \eta_1, \bar{\mathcal A} \eta_2\rangle\rvert \leq \delta r^2\lvert\eta_1\rvert^2 + \frac{r^2C_3}{2\delta}\lvert\eta_2\rvert^2,
\]
so that, 
\begin{equation*}
\lvert\tilde {\mathcal A} \eta\rvert^2 \geq (C_0-\delta)r^2\lvert\eta_1\rvert^2  + (C_1-\frac{r^2C_3}{2\delta})\lvert\eta_2\rvert^2 + O(r^3)(\lvert\eta_1\rvert^2+\lvert\eta_2\rvert^2) \geq C_4r^2 (\lvert\eta_1\rvert^2 + \lvert\eta_2\rvert^2) = C_4 r^2 \lvert\eta\rvert^2,
\end{equation*}
where $\delta = C_0/2$ and $0<r<\varepsilon_0$ for some possibly even smaller $\varepsilon_0>0$ and $C_4=C_4(\varepsilon_0)$. 
\end{proof}

\begin{lemma}
\label{lemma:coordinate_independency}
The terms 
\[
\sum_\alpha x_a  \bar A_\alpha = x_\alpha c^\alpha M_\alpha^0, \quad \sum_\alpha \bar A_{\alpha\alpha}\quad  \text{and}\quad \sum_j c^j B_j^0, 
\]
do not depend on the choice of bundle coordinates $(N\vert_U, (x_j, x_\alpha)_{j,\alpha})$ and frames, where $c^j=c(dx^j)$ and $c^\alpha := c(e^\alpha)$ are the Clifford matrices in local frames. 
\end{lemma}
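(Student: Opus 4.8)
The statement asserts that three Clifford-contracted expressions, built from the first and second jets of $\tilde{\mathcal A}$ along $Z$ and from the defect term $B^0$, are independent of the choice of Fermi bundle coordinates and adapted frames. The natural approach is to recognize each expression as the local coordinate shadow of an intrinsically defined bundle map on $Z$, so that coordinate-independence is automatic once we identify the invariant object. First I would set up the comparison between two Fermi charts $(N\vert_U,(x_j,x_\alpha))$ and $(N\vert_{U'},(y_j,y_\alpha))$ on the overlap: by the transformation rules recorded in Appendix~\ref{App:Fermi_coordinates_setup_near_the_singular_set}, one has $y_j=y_j(x_1,\dots,x_m)$ and $y_\alpha=O_{\alpha\beta}(x)\,x_\beta$ with $[O_{\alpha\beta}]$ an orthogonal matrix depending only on the base coordinates, and correspondingly the frames $\{e_\alpha\},\{e_\alpha'\}$ are related by the same orthogonal matrix up to $\bar\nabla^N$-parallel transport, while the bundle frames $\{\sigma_\ell\},\{\sigma_\ell'\}$ differ by a transition matrix that is the identity at the center $p$ (both being $\bar\nabla$-parallel there) or, more robustly, that lies in the structure group reduced in Proposition~\ref{prop:properties_of_compatible_subspaces}.

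The key observation for the first term is equation~\eqref{eq:IntroDefCp} itself: $C^0$ is defined as the composition $-c\circ(\iota_N^*\otimes P^1)\circ\nabla{\mathcal A}^0$, manifestly a section of $\mathrm{End}(S^0\vert_Z)$ with no reference to coordinates, and by Remark~\ref{rem:properties_of_horizontal_and_vertical_operators}\ref{rem:local_expression_of_slashed_D_0} together with Proposition~\ref{prop:properties_of_perturbation_term_A}\eqref{prop:expressions_for_map_C_i} one has $x_\alpha\bar A_\alpha = x_\alpha c^\alpha M^0_\alpha = r\,\bar A_r$, and $r\bar A_r$ is built from $\bar A_r=\tfrac{x_\alpha}{r}\bar A_\alpha$, which by~\eqref{eq:1st_jet_2nd_jet_of_A} and the remark following it is already noted to be independent of the Fermi data since it is the radial covariant derivative $\tilde\nabla_{\partial_r}\tilde{\mathcal A}$ restricted to $U$. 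So the first term equals $r$ times an invariant, hence is invariant. For the second term I would identify $\sum_\alpha\bar A_{\alpha\alpha}$ with the trace over the normal bundle of the Hessian of $\tilde{\mathcal A}$: using~\eqref{eq:nabla_perturb} and the fact that for any $\nabla^N$-parallel orthonormal frame $\{e_\alpha\}$ at $p$ the contraction $\sum_\alpha\tilde\nabla^2_{e_\alpha,e_\alpha}\tilde{\mathcal A}$ agrees at $p$ with the (frame-independent) normal-Laplacian-type trace $\mathrm{tr}_N(\nabla^2\tilde{\mathcal A})$; the Christoffel and connection-one-form relations~\eqref{eq:Christoffel_relations_0},~\eqref{eq:connection_comp_rates},~\eqref{eq:Clifford_connection_one_form_local_representations_in_on_frames} ensure that the correction terms distinguishing $\tilde\nabla^2$ from coordinate second derivatives either vanish at $p$ or contract to the mean-curvature terms $H_\alpha$, which are themselves intrinsic. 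For the third term $\sum_j c^j B^0_j$, I would note that $B^0$ is defined in~\eqref{eq:remainder_term} as the difference $\nabla-\bar\nabla$ of two genuine connections on $E^0\vert_Z$, hence a bona fide $\mathrm{End}$-valued one-form on $Z$; then $\sum_j c(dx^j)B^0_{e_j}=\mathrm{tr}_{T^*Z}(c\otimes 1)(B^0)= c_Z\circ B^0$ (full Clifford contraction against $TZ$), an invariant bundle map $S^0\vert_Z\to S^1\vert_Z$ — this is exactly the operator ${\mathcal B}^0$ of Lemma~\ref{lem:Dtaylorexp}, whose very definition as $c^jB^0_j\in\mathrm{Hom}(E^0\vert_Z;E^1\vert_Z)$ presupposes this invariance.

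The main obstacle I anticipate is the bookkeeping for the second term: unlike the other two, which are transparently contractions of intrinsically-defined tensors, $\sum_\alpha\bar A_{\alpha\alpha}$ as written uses coordinate second partials $\partial^2_{\alpha\beta}\bar A^k_\ell$ (via~\eqref{eq:nabla_perturb}), and one must carefully check that under the orthogonal change $y_\alpha=O_{\alpha\beta}x_\beta$ the non-tensorial pieces cancel — i.e. that $\sum_\alpha\partial^2_{\alpha\alpha}\bar A$ transforms correctly modulo terms involving $\partial_\alpha O_{\beta\gamma}$, which enter only through the horizontal derivatives of the base-dependent rotation and assemble into connection terms already accounted for. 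Concretely I would compute $\sum_\alpha\tilde\nabla_{\tau_\alpha}\tilde\nabla_{\tau_\alpha}\tilde{\mathcal A}\vert_U$ in both frames, use~\eqref{eq:connection_comp_rates} (namely $\bar\omega^B_{\alpha A}=0$) to drop the connection-one-form contributions along the fiber at $p$, and invoke $O(r)$-negligibility of the cross terms away from $p$ combined with continuity to conclude equality of the restrictions to $U$. Once the three terms are each matched to their intrinsic counterparts, the lemma follows with no further computation; I would keep the write-up short, citing~\eqref{eq:IntroDefCp},~\eqref{eq:remainder_term}, Proposition~\ref{prop:properties_of_perturbation_term_A}\eqref{prop:expressions_for_map_C_i}, and the frame-transformation rules of Appendix~\ref{App:Fermi_coordinates_setup_near_the_singular_set}, and only spelling out the orthogonal-change computation for $\sum_\alpha\bar A_{\alpha\alpha}$ in a couple of lines.
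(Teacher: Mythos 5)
Your proposal is correct and reaches the same conclusion, but the route is presented a bit differently. The paper runs a bare-hands verification: it writes out the transformation rules under a change of Fermi chart, $y_\alpha=\Delta^\alpha_\beta(x_j)\,x_\beta$ and $\tilde\sigma_l=O^k_l\sigma_k$ with $\Delta,O$ orthogonal and depending only on the base variables, and checks directly that each contraction is sent to its $O$-conjugate, the whole point being the cancellation $\Delta^\alpha_\beta\Delta^\alpha_\gamma=\delta_{\beta\gamma}$ together with $\partial_{x_\beta}\Delta=0$. You instead prefer to recognize each contraction as the local expression of an invariantly defined bundle map (radial covariant derivative $\nabla_{v}\tilde{\mathcal A}$ at $(z,v)\in N$; $N$-trace of the iterated covariant derivative of $\tilde{\mathcal A}$; Clifford contraction of the one-form $B^0$ over $T^*Z\otimes TZ$), so invariance is tautological once the identification is made. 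Both routes reduce to the same one-line orthogonality calculation, so the mathematical content is the same.

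Two points in your write-up deserve care. First, the worry about ``coordinate second partials'' in $\sum_\alpha\bar A_{\alpha\alpha}$ is spurious: by definition $\bar A_{\alpha\beta}=\bigl(\tilde\nabla_{\tau_\alpha}\tilde\nabla_{\tau_\beta}\tilde{\mathcal A}\bigr)\big\vert_U$ is an iterated covariant derivative restricted to the zero section, not a coordinate Hessian, and since $\Delta$ is constant in the fiber variables, $\sum_\alpha\nabla_{\partial_{y_\alpha}}\nabla_{\partial_{y_\alpha}}{\mathcal A}=\Delta^\alpha_\beta\Delta^\alpha_\gamma\nabla_{\partial_{x_\beta}}\nabla_{\partial_{x_\gamma}}{\mathcal A}=\sum_\beta\nabla_{\partial_{x_\beta}}\nabla_{\partial_{x_\beta}}{\mathcal A}$ with no non-tensorial piece to track; equation~\eqref{eq:nabla_perturb} is a consequence of $\bar\theta^i_\alpha=0$ but is not needed for the invariance argument. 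Second, the proposed appeal to ``$O(r)$-negligibility away from $p$ combined with continuity'' is misplaced: the restricted quantities $\bar A_\alpha$, $\bar A_{\alpha\alpha}$, $B^0_j$ are sections over $U\cap V\subset Z$ where $r\equiv 0$, so the asserted equality must and does hold pointwise over all of $U\cap V$ with no approximation in $r$; the direct transformation check closes the gap exactly, which is what the paper does.
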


\begin{proof}
Let $V\subset Z$ with $V\cap U \neq \emptyset$ and new coordinates $(N\vert_V, (y_j, y_\alpha)_{j ,\alpha})$ so that $ y_\alpha = \Delta_\beta^ \alpha x_\beta$ and frames $\{\tilde\sigma_l\}_l$ of the bundle $E\vert_{U\cap V}$ associated to the $y$-coordinates so that $\tilde\sigma_l = O_l^k \sigma_k$  for some orthogonal transformations $\Delta : U\cap V \to SO(n-m)$ and $O : U\cap V \to SO(\dim E)$. If $\xi$ and $\tilde\xi$ denote the coordinates of a section of the bundle $\pi^*(E\vert_Z)$ with respect to the corresponding local frames, then $\tilde\xi = O \xi$.  Also the coordinate vectors and covectors are transforming as,
\begin{align*}
dy^j &= \frac{\partial y_j}{\partial x_k} dx^k, \quad  \partial_{y_j} = \frac{\partial x_k}{\partial y_j} \partial_{x_k} + \frac{\partial \Delta^\beta_\alpha}{\partial y_j} \Delta^\beta_\gamma x_\gamma \partial_{x_\alpha} 
\\
dy^\alpha &=  \frac{\partial \Delta^\alpha_\beta}{\partial x_k} x_\beta dx^k +  \Delta^\alpha_\beta dx^\beta, \quad \partial_{y_\alpha} = \Delta_\beta^\alpha \partial_{x_\beta}.
\end{align*}
If $e^\alpha = dx^\alpha\vert_U$, it follows that 
\[
c_Z(\tilde e^\alpha)O = \Delta_\beta^\alpha O c_Z(e^\beta) \quad \text{and}\quad  c_Z(dy^j) O = \frac{\partial y_j}{\partial x_k} O c_Z(dx^k),
\]
and the connection transform as
\[
\bar\nabla_{\partial_{y_\alpha}} \tilde \xi =  \Delta_\beta^\alpha O  \bar\nabla_{\partial_{x_\beta}}\xi = \Delta_\beta^\alpha  O\bar\nabla_{\partial_{x_\beta}}\xi \quad \text{and} \quad\bar\nabla_{{\mathcal H}(\partial_{y_j}\vert_{U\cap V})} \tilde\xi =   \frac{\partial x_k}{\partial y_j}  O\bar\nabla_{{\mathcal H}(\partial_{x_k}\vert_{U\cap V})}\xi. 
\]
For the first couple of terms ,
\begin{align*}
 y_\alpha(\nabla_{\partial_{y_\alpha}}{\mathcal A}) \tilde \xi &= x_\beta \Delta^\alpha_\beta \Delta^\alpha_\gamma O (\nabla_{\partial_{x_\alpha}}{\mathcal A})  \xi = x_\beta O (\nabla_{\partial_{x_\beta}}{\mathcal A}) \xi,
\\
(\nabla_{\partial_{y_\alpha}}\nabla_{\partial_{y_\alpha}}{\mathcal A}) \tilde\xi &=  \Delta^\alpha_\beta \Delta^\alpha_\gamma O (\nabla_{\partial_{x_\beta}} \nabla_{\partial_{x_\gamma}}{\mathcal A})  \xi =  O(\nabla_{\partial_{x_\beta}}\nabla_{\partial_{x_\beta}}{\mathcal A}) \tilde\xi.  
\end{align*}
For the third term,
\[
c_Z(dy^j)B^0_{\partial_{y_j}\vert_{U\cap V}} \tilde\xi=   \frac{\partial y_j}{\partial x_k}\frac{\partial x_l}{\partial y_j}   O c_Z(dx^k) B^0_{\partial_{x_l}\vert_{U\cap V}} \xi =  O c_Z(dx^j)B^0_{\partial_{x_j}\vert_{U\cap V}}\xi.
\]
\end{proof}

In the following  lemma we show how we can perturb the bundle map ${\mathcal A}$ to a new map with second order jet terms vanishing: 
\begin{lemma}
\label{lemma:whew}
Under Assumption \ref{Assumption:normal_rates}, we can choose our perturbation ${\mathcal A}$ so that $\nabla^2_{u,v}{\mathcal A}\vert_Z \equiv 0$ for every $u,v\in N$. 
\end{lemma}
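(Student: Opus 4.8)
\textbf{Proof proposal for Lemma~\ref{lemma:whew}.}

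The plan is to exhibit an explicit compactly-supported modification of ${\mathcal A}$, supported in a tubular neighborhood of each component $Z\subset Z_{\mathcal A}$, that kills the symmetric $2$-jet in the normal directions while preserving all the structure we care about: the concentration condition \eqref{eq:cond}, transversality Assumption~\ref{Assumption:transversality1}, the singular set $Z_{\mathcal A}$ and the kernel bundles $S^i\vert_Z$, and Assumption~\ref{Assumption:normal_rates} (since the modification affects only the order-$r^2$ behaviour along $S^0$, which is precisely the term $\bar{\mathcal A}^*\bar A_{rr}$ that Assumption~\ref{Assumption:normal_rates} already allows to be arbitrary, and leaves $Q^0$ untouched). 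Because the index of $D_s$ is unchanged under compact perturbations and the Spectral Separation Theorem only depends on the listed assumptions, such a replacement changes none of the conclusions; concretely this justifies the Remark that $\sum_\alpha \bar A_{\alpha\alpha}$ may be taken to vanish.

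The key steps, in order: (1) Work on one component $Z$ with Fermi coordinates $(\mathcal{N}_U,(x_j,x_\alpha)_{j,\alpha})$ and the parallel frames from Appendix~\ref{App:Taylor_Expansions_in_Fermi_coordinates}; recall from \eqref{eq:perturb_exp} that, restricted to $S^0$, one has $\tilde{\mathcal A}=x_\alpha\bar A_\alpha + \tfrac12 x_\alpha x_\beta \bar A_{\alpha\beta}+O(r^3)$, and that the transversality relation \eqref{eq:coordinate_transversality} constrains which matrix entries are nonzero. Define the correction ${\mathcal C}$ locally by ${\mathcal C} := -\tfrac12\rho_\varepsilon(r)\, x_\alpha x_\beta\, P^1\!\circ\bar A_{\alpha\beta}\!\circ P^0$ — i.e.\ exactly minus the order-$r^2$ term of ${\mathcal A}$ on $S^0$, cut off by the bump function $\rho_\varepsilon$ of Definition~\ref{def:space_of_approx_solutions} — extended radially by $\tilde\nabla^{\tilde E}$-parallel transport and then by zero outside $\mathcal{N}_{2\varepsilon}$. (2) Check that ${\mathcal C}$ is a globally well-defined bundle map: by Lemma~\ref{lemma:coordinate_independency} the quantity $x_\alpha x_\beta\bar A_{\alpha\beta}$ (equivalently $\sum_\alpha \bar A_{\alpha\alpha}$ after polarization and the compatibility from Assumption~\ref{Assumption:normal_rates}) is independent of the choice of Fermi coordinates and frames, so the local pieces patch. (3) Verify that ${\mathcal A}':={\mathcal A}+{\mathcal C}$ still satisfies the concentration condition \eqref{eq:cond}: differentiating \eqref{eq:cond} twice along normal directions shows $\bar A_{\alpha\beta}^*\circ c(u)=c(u)^*\circ\bar A_{\alpha\beta}$ on the relevant subspaces, hence ${\mathcal C}^*\circ c(u)=c(u)^*\circ{\mathcal C}$; alternatively, argue that the subbundle ${\mathcal L}\subset\mathrm{Hom}(E^0,E^1)$ is linear, so the fibrewise affine combination ${\mathcal A}+{\mathcal C}$ of sections of ${\mathcal L}$ stays in ${\mathcal L}$. (4) Since ${\mathcal C}$ vanishes to order $r^2$ on $S^0$ and is zero on $(S^0)^\perp$ and has image in $S^1$, the $0$-jet and $1$-jet of ${\mathcal A}'$ along $Z$ agree with those of ${\mathcal A}$; in particular $Z_{{\mathcal A}'}=Z_{\mathcal A}$, the bundles $S^i\vert_Z$ are unchanged, transversality Assumption~\ref{Assumption:transversality1} is preserved (it depends only on the $1$-jet $\bar A_\alpha$), and the maps $M_\alpha^i$, $C^i$, $Q^i$ and the bundles $S^{i\pm}_\ell$ are all unchanged. (5) Compute the new $2$-jet: $\bar A'_{\alpha\beta}\vert_{S^0}=\bar A_{\alpha\beta}\vert_{S^0}+2\,\partial_{(\alpha}\partial_{\beta)}{\mathcal C}=0$ by construction, so $\nabla^2_{u,v}{\mathcal A}'\vert_Z\equiv0$ for $u,v\in N$, and moreover $\sum_\alpha\bar A'_{\alpha\alpha}=0$. (6) Conclude Assumption~\ref{Assumption:normal_rates} still holds for ${\mathcal A}'$: $\tilde{\mathcal A}'^*\tilde{\mathcal A}'\vert_{S^0}=r^2(Q^0+\tfrac12\bar{\mathcal A}'^*\bar A'_{rr}\vert_{S^0})+O(r^3)=r^2 Q^0+O(r^3)$ with the same positive-definite $Q^0$, which is of the required form.

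The main obstacle is step~(3) together with step~(2): one must be sure that the naive "subtract the bad term" correction genuinely stays inside the class of admissible perturbations — i.e.\ that it still satisfies \eqref{eq:cond} globally and is frame-independent. The frame-independence is handled cleanly by Lemma~\ref{lemma:coordinate_independency}; the concentration condition requires either the linearity-of-${\mathcal L}$ argument or a short direct computation differentiating \eqref{eq:cond}, and one should be careful that $\bar A_{\alpha\beta}$ restricted to $S^0$ with codomain projected to $S^1$ is exactly the piece controlled by those differentiated relations (the mixed blocks $S^0\to(S^1)^\perp$ etc.\ do not enter ${\mathcal C}$, which is consistent since \eqref{eq:taylorexp1} handles the $(S^0)^\perp$ part separately). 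Everything else is a routine jet bookkeeping that uses only the expansions already recorded in Proposition~\ref{prop:properties_of_perturbation_term_A} and Lemma~\ref{lem:basic_properties_of_M_v_w}.
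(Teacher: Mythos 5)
Your overall strategy is the same as the paper's (subtract a cutoff version of the second normal jet, check the concentration condition survives by differentiating \eqref{eq:cond} twice, verify the $1$-jet data and hence Assumption~\ref{Assumption:normal_rates} are unchanged), but there are two genuine gaps.

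First, and most seriously, step~(4) asserts $Z_{{\mathcal A}'}=Z_{\mathcal A}$ as a consequence of the $0$- and $1$-jets of ${\mathcal A}'$ agreeing with those of ${\mathcal A}$ \emph{along} $Z$. That does not follow: agreement of jets on $Z$ tells you nothing about whether ${\mathcal A}'={\mathcal A}+{\mathcal C}$ develops new zeros in the annulus $0<r<2\varepsilon$ where ${\mathcal C}\ne0$. Ruling that out is precisely the quantitative content of the paper's proof: one first establishes that $\bar{\mathcal A}+T_x$ (with $T_x=x_\alpha A_\alpha$) is invertible for $0<\lvert x\rvert<\epsilon_1$ with $\lvert(\bar{\mathcal A}+T_x)^{-1}\rvert\le C/\lvert x\rvert$, using the positive-definiteness of $Q^0$ from Assumption~\ref{Assumption:normal_rates}; one then writes the modified map as $\bar{\mathcal A}+T_x+\Delta$, where $\Delta$ collects the residual $O(r^2)$ and $O(r^3)$ terms multiplied by $(1-\rho)$, and shows that after shrinking $\epsilon$ one has $\lvert\Delta\rvert<\tfrac12\lvert(\bar{\mathcal A}+T_x)^{-1}\rvert^{-1}$, so invertibility off $Z$ is preserved by a Neumann-series argument. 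Without some version of this estimate your proof does not show the singular set is unchanged, and the lemma collapses.

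Second, your correction $\mathcal{C}:=-\tfrac12\rho_\varepsilon(r)\,x_\alpha x_\beta\,P^1\circ\bar A_{\alpha\beta}\circ P^0$ only cancels the $S^0\to S^1$ block of $\bar A_{\alpha\beta}$, but the lemma asserts the full $\nabla^2_{u,v}{\mathcal A}\vert_Z\equiv 0$ as a map $E^0\vert_Z\to E^1\vert_Z$. The transversality assumption constrains the $1$-jet $\bar A_\alpha$ but puts no block-diagonal constraint on the $2$-jet, so in general $\bar A_{\alpha\beta}$ has nonzero $S^0\to(S^1)^\perp$, $(S^0)^\perp\to S^1$ and $(S^0)^\perp\to(S^1)^\perp$ components; your $\mathcal{C}$ leaves all of these untouched. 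Consequently $\bar A'_{\alpha\beta}\vert_{S^0}=(1-P^1)\bar A_{\alpha\beta}\vert_{S^0}$, which need not vanish, and your step~(5) does not give the stated conclusion. The paper avoids this by subtracting the full (unprojected) Hessian $\tfrac{\rho}{2}\tau\circ\nabla^2_{v,v}({\mathcal A}\circ\tau^{-1})$; note that differentiating \eqref{eq:dcond} twice shows the unprojected $\nabla^2_{v,v}{\mathcal A}$ already satisfies $u_\cdot\nabla^2_{v,v}{\mathcal A}=-\nabla^2_{v,v}{\mathcal A}^*\,u_\cdot$, so no projection is needed to stay inside ${\mathcal L}$. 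Dropping the $P^1,P^0$ from your formula fixes this gap, but you would still need to supply the invertibility estimate described above.
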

\begin{proof}
Set $T_x = x_\alpha A_\alpha$. Then according to Assumption \ref{Assumption:normal_rates} and relation \eqref{eq:transversality_assumption_with_respect_to_connection}, $T_x(S^0\vert_Z) = S^1\vert_Z$. Also by Lemma~\ref{lem:basic_properties_of_M_v_w} and it's proof, for every $\xi_1\in S^0\vert_Z$, 
\begin{equation*}
T_x^*T_x\xi_1 = \sum_{\alpha\neq \beta}x_\alpha x_\beta (A^*_\alpha A_\beta + A^*_\beta A_\alpha)\xi_1 + \sum_\alpha x_\alpha^2 A_\alpha^*A_\alpha\xi_1 =  \sum_{\alpha\neq \beta}x_\alpha x_\beta M_{\alpha, \beta}\xi_1 + \lvert x\rvert^2 Q_0\xi_1 = \lvert x\rvert^2 Q_0\xi_1,
\end{equation*}
so that 
\[
\lvert T_x \xi_1\rvert^2 = \lvert x\rvert^2 \langle Q_0 \xi_1, \xi_1\rangle \geq C_0\lvert x\rvert^2 \lvert\xi_1\rvert^2.
\]
When  $\xi\in E^0\vert_Z = (S^0\oplus (S^0)^\perp)\vert_Z$, we decompose $\xi = \xi_1 + \xi_2$ and estimate
\begin{equation*}
\lvert(\bar {\mathcal A} + T_x)\xi\rvert^2 = \lvert\bar A \xi_2 + T_x\xi_1 + T_x \xi_2\rvert^2 = \lvert\bar{\mathcal A} \xi_2 + T_x\xi_2\rvert^2 +\lvert T_x \xi_1\rvert^2 + 2 \langle T_x \xi_1 , T_x \xi_2\rangle.
\end{equation*}
Each of the preceding terms estimates,
\begin{align*}
\lvert\bar {\mathcal A} \xi_2 + T_x\xi_2\rvert &\geq \lvert\bar {\mathcal A} \xi_2\rvert  - \lvert T_x\xi_2\rvert \geq (C_0- C_1\lvert x\rvert)\lvert\xi_2\rvert
\\
2 \lvert\langle T_x \xi_1 , T_x \xi_2\rangle\rvert &\leq 2 C_1^2\lvert x\rvert^2\lvert\xi_1\rvert\lvert\xi_2\rvert \leq  \delta \lvert x\rvert^2\lvert\xi_1\rvert^2 + \frac{\lvert x\rvert^2C_1^2}{\delta}\lvert\xi_2\rvert^2
\\
\lvert T_x \xi_1\rvert^2 &\geq   C_0\lvert x\rvert^2 \lvert\xi_1\rvert^2.
\end{align*}
Putting everything together and choosing $\epsilon_1>0$ small enough, we obtain
\begin{align*}
\lvert(\bar {\mathcal A} + T_x)\xi\rvert^2 &\geq \left[(C_0- C_1\lvert x\rvert)^2-\frac{\lvert x\rvert^2C_1^2}{\delta}\right] \lvert\xi_2\rvert^2 + (C_0 - \delta)\lvert x\rvert^2 \lvert\xi_1\rvert^2 \geq  C_3 \lvert x\rvert^2 \lvert\xi\rvert^2,
\end{align*}
for $\delta = C_0/2$ and every $0<\lvert x\rvert<\epsilon_1$. Hence
\[
\bar {\mathcal A} + T_x : E^0\vert_Z\rightarrow E^1\vert_Z
\]
is invertible for every $0<\lvert x\rvert<\epsilon_1$ and 
\[
\lvert(\bar {\mathcal A} + T_x)^{-1}\rvert \leq \frac{C}{\lvert x\rvert}.
\]

Introduce now a cut off function supported on $\mathcal{N}$, a tubular neighborhood around $Z$ of radius $\epsilon$ to be chosen later. Let $\hat{\rho} : [0,\infty) \rightarrow [0, 1]$ smooth cut off with $\hat{\rho}^{-1}(\{0\}) = [1, \infty),\, \hat{\rho}^{-1}(\{1\}) = [0,1/2]$ and strictly decreasing in $[1/2, 1]$, define $\rho(q) =\hat{\rho}(\tfrac{r(q)}{\epsilon})$ on $\mathcal{N}$ and extend as $0$ on $X-\mathcal{N}$. Recall the map $\tau$ from \eqref{eq:parallel_transport_map}. We define the bundle map,
\[
{\mathcal B} : E^0\rightarrow E^1, \, \quad {\mathcal B}(q) = \frac{\rho(q)}{2} \tau\circ\nabla^2_{v,v}({\mathcal A}\circ\tau^{-1} , \quad q = \exp_z(v), 
\]
for every $(z,v)\in N$. Derivating  relation \eqref{eq:dcond} we get that
\begin{equation}
\label{eq:second_order_cond}
u_\cdot \nabla_\alpha\nabla_\beta{\mathcal A} = -\nabla_\alpha\nabla_\beta{\mathcal A}^* u_\cdot 
\end{equation}
hence
\[
u_\cdot {\mathcal B} = - {\mathcal B}^* u_\cdot
\]
for every $u\in T^*X\vert_Z$. Hence ${\mathcal A} - {\mathcal B}$ satisfies (\ref{eq:dcond}) and using the expansion of ${\mathcal A}$ on $\mathcal{N}_U$ 
\[
{\mathcal A} - {\mathcal B} \,=\, \bar{\mathcal A}+ T_x +\,\frac{1 -  \rho(x)}{2} x_\alpha x_\beta\bar A_{\alpha \beta}\,+\, O(\lvert x\rvert^3).
\]
Choose $0<\epsilon < \epsilon_1$ so that for every $0<\lvert x\rvert<\epsilon$
\[
\left\lvert\frac{1 -  \rho(x)}{2} x_\alpha x_\beta \bar A_{\alpha\beta}\,+\, O(\lvert x\rvert^3)\right\rvert < \frac{\lvert x\rvert}{2C} < \frac{1}{2}\left\lvert \left(\bar{\mathcal A} + T_x\right)^{-1}\right\rvert^{-1}. 
\]
If 
\[
\Delta:= \frac{1 -  \rho(x)}{2} x_\alpha x_\beta \bar A_{\alpha\beta}\,+\, O(\lvert x\rvert^3),
\]
then if $({\mathcal A} - {\mathcal B})\xi=0$ for some $\xi\in E\vert_{\mathcal{N} - Z}$, then $\xi$ solves the equation
\[
-(\bar{\mathcal A}+ T_x)^{-1} \Delta \xi = \xi. 
\]
Therefore
\[
\lvert\xi\rvert= \lvert(\bar{\mathcal A}+ T_x)^{-1} \Delta \xi\rvert \leq \lvert(\bar{\mathcal A}+ T_x)^{-1}\rvert \lvert\Delta\rvert \lvert\xi\rvert \leq \frac{1}{2}\lvert\xi\rvert,
\]
so that $\xi=0$. Hence ${\mathcal A} - {\mathcal B}$ is invertible on $\mathcal{N} - Z$ and agrees with ${\mathcal A}$ outside $\mathcal{N}$ and therefore $Z_{{\mathcal A} - {\mathcal B}} = Z_{\mathcal A}$. Also by construction
\[
\nabla^2_{u,v}({\mathcal A} - {\mathcal B})\vert_Z \equiv 0 : E^0\vert_Z\rightarrow E^1\vert_Z 
\]
for every $u,v \in N$. Finally we have only changed the 2-jet of ${\mathcal A}$ around $Z$ to produce ${\mathcal A} - {\mathcal B}$. Assumption \ref{Assumption:normal_rates} still holds for ${\mathcal A} - {\mathcal B}$ since it relates only to the 1-jet of ${\mathcal A}$ on $Z$. Replacing ${\mathcal A}$ with ${\mathcal A}-{\mathcal B}$ on every component $Z$ of $Z_{\mathcal A}$ we are done.
\end{proof}

\medskip

\subsection{Proof of Proposition \ref{prop:basic_restriction_connection_properties}} 
\label{subApp:The_expansion_of_the_Spin_connection_along_Z}

Recall that since $(S^0 \oplus S^1)\vert_Z$ and its complement in $(E^0 \oplus E^1)\vert_Z$ are both $\mathbb{Z}_2$-graded $\text{Cl}(T^*\mathcal{N}\vert_Z)$-submodules and the projections satisfy 
\begin{equation}
\label{eq:projections_vs_Clifford_multiplication}
c(u)\circ P^{1-i} = P^i \circ c(u)   \qquad \text{and} \qquad c(v)\circ P^{(1-i)+}_\ell = P^{i+}_\ell \circ c(v),
\end{equation}
for every $u\in T^*\mathcal{N}\vert_Z$ and every $v\in T^*Z$. We use the dot notation $w_\cdot$ for $c(w)$ throughout the proof. The connection $\bar\nabla$ is the push-forward of the connection on the isometric bundle $\Lambda^*N\otimes S^{i+}$ through the isomorphisms \eqref{eq:decompositions}. The later is metric compatible and so is the former.  

Recall the parallelism condition of the compatibility with the Clifford multiplication,
\begin{equation}
\label{eq:compatibility}
[\tilde\nabla^{\tilde E}, w_\cdot ]\xi = (\tilde\nabla^{T^*\mathcal{N}}w)_\cdot \xi,
\end{equation}
for every $\xi \in C^\infty(Z; E\vert_Z)$.

When $\xi \in C^\infty(Z; (S^i\vert_Z)^\perp)$ we apply to both sides projection $1_{E^i\vert_Z} - P^i$ and use \eqref{eq:projections_vs_Clifford_multiplication} to obtain \eqref{prop:basic_restriction_connection_properties1}.  

In proceeding to show \eqref{prop:basic_restriction_connection_properties2}, we divide the proof into two cases: 

\setcounter{case}{0}

\begin{case} 
Assume that $\xi\in C^\infty(Z; S^+_\ell)$. When $w\in C^\infty(Z; T^*Z)$, then $w_\cdot \xi \in  C^\infty(Z; S^+_\ell)$ and
\begin{align*}
[\bar\nabla, w_\cdot ]\xi &= P^+_\ell [\tilde\nabla^{E\vert_Z}, w_\cdot ]\xi , \qquad (\text{by Definition~\ref{eq:connection_bar_nabla}}) 
\\
&=   P^+_\ell  ( \tilde\nabla^{T^*\mathcal{N}}w_\cdot \xi), \qquad (\text{ by \eqref{eq:compatibility}})
\\
&= P^+_\ell (  \nabla^{T^*Z}w_\cdot \xi +  p_{N^*}\nabla^{T^*\mathcal{N}}w_\cdot \xi) 
\\
&= \nabla^{T^*Z}w_\cdot \xi.
\end{align*}
Here $p_{N^*}: T^*\mathcal{N}\vert_Z \to N^*$ is the projection and the last equality follows since the term  $p_{N^*}\nabla^{T^*\mathcal{N}}w_\cdot \xi$ is a section pointwise orthogonal to $S^+_\ell$. When  $w\in C^\infty(Z; N^*)$ the identity follows by the definition of $\bar\nabla$. This finishes Case 1.
\end{case}

\begin{case}
Assume now that $\xi = \theta_\cdot \xi^+$ for some section $\theta\in C^\infty(Z; \Lambda^k N^*)$ and some $\xi^+\in C^\infty(Z; S^+_\ell)$. When $w\in C^\infty(Z; T^*Z)$, then $w_\cdot \xi^+ \in C^\infty(Z; S^+_\ell)$ and
 \begin{align*}
[\bar\nabla, w_\cdot ] (\theta_\cdot\xi^+) &= \bar\nabla ((-1)^k\theta_\cdot w_\cdot \xi^+) - w_\cdot \bar\nabla(\theta_\cdot \xi^+)
\\
&= (-1)^k \left[ (\nabla^{N^*}\theta)_\cdot w_\cdot \xi^+ + \theta_\cdot \bar\nabla(w_\cdot \xi^+)\right]  - w_\cdot (\nabla^{N^*}\theta )_\cdot \xi^+ - w_\cdot \theta_\cdot \bar\nabla\xi^+
\\
&=   w_\cdot (\nabla^{N^*}\theta)_\cdot \xi^+ + (-1)^k\theta_\cdot [\bar\nabla,  w_\cdot] \xi^+ - w_\cdot(\nabla^{N^*} \theta)_\cdot \xi^+
\\
&=  (-1)^k \theta_\cdot (\nabla^{T^*Z} w)_\cdot \xi^+, \qquad (\text{by Case 1})
\\
&= \nabla^{T^*Z}w_\cdot (\theta_\cdot\xi^+),
\end{align*}
where in the second line we used Definition~\ref{eq:connection_bar_nabla}.
When $w\in C^\infty(Z;N^*)$, we use the identity $w_\cdot \theta_\cdot \xi^+ = ( \hat c(w)\theta)_\cdot \xi^+$ proved in \cite{m}[Lemma 4.1]. We calculate,
\begin{align*}
[\bar\nabla, w_\cdot ] (\theta_\cdot\xi^+) &= \bar\nabla ( w_\cdot\theta_\cdot \xi^+) - w_\cdot (\nabla^{N^*}\theta)_\cdot \xi^+ - w_\cdot \theta_\cdot \bar\nabla \xi^+, \qquad (\text{by Definition~\ref{eq:connection_bar_nabla}}) 
\\
&= \bar\nabla (  (\hat c(w)\theta)_\cdot \xi^+)  - (\hat c(w)\theta)_\cdot \bar\nabla \xi^+ - (\hat c(w) \nabla^{N^*} \theta)_\cdot \xi^+
\\
&= (\nabla^{N^*}(\hat c(w)\theta)_\cdot \xi^+  - (\hat c (w)\nabla^{N^*} \theta)_\cdot \xi^+ , \qquad (\text{by Definition~\ref{eq:connection_bar_nabla}})
\\
&= ([\nabla^{N^*} , \hat c(w)]\theta)_\cdot \xi^+.
\end{align*}
 Here $\hat c(w) \theta = w \wedge \theta - \iota_w^\sharp \theta \in \Lambda^*N^*$. As a consequence of $\nabla^{N^*}$ being  compatible with the Riemannian metric on $\Lambda^*N^*$, we have that $[\nabla^{N^*} , \hat c(w)]\theta = \hat c(\nabla^{N^*}w)\theta$ and therefore, 
\[
([\nabla^{N^*}, \hat c(w)]\theta)_\cdot \xi^+ = ( \hat c(\nabla^{N^*}w)\theta)_\cdot \xi^+ =  \nabla^{N^*} w_\cdot( \theta_\cdot \xi^+),
\]
as required. This finishes Case 2.
\end{case}
The proof of the proposition is complete.\hfill $\square$

\medskip

\subsection{The expansion of the \texorpdfstring{$\{\tau_j,\tau_\alpha\}_{j,\alpha}$}{} frames and the Clifford action.}
\label{subApp:tau_j_tau_a_frames}

In expanding the Dirac operator $\tilde D = c(\tau^A) \tilde\nabla_{\tau_{\mathcal A}}$ along the fibers of the normal bundle in Section~\ref{sec:structure_of_D_sA_along_the_normal_fibers}, Lemma~\ref {lem:Dtaylorexp}, one has to expand also the frames $\{\tau_A\}_A,\ A=j,\alpha$. Recall the frames $\{e_j, e_\alpha\}_{j,\alpha}$ and the connection components $\tilde\nabla^{T\mathcal{N}}_{\tau_A} \tau_B = \omega_{AB}^\Delta$, satisfying \eqref{eq:connection_comp_rates}. The connection $\nabla^N$ of the bundle $N\to Z$ induces horizontal lifts of $TZ$ in the tangent space $TN$ and a bundle isomoprhism $H:\pi^*(TZ \oplus N) \to TN$, given in local coordinates by  
\begin{equation}
\label{eq:horizontal_lift}
 H((x_j ,x_\alpha), w) = \begin{cases} ((x_j, x_\alpha), d_j^k\partial_k -  x_\gamma \bar\omega_{j \gamma}^\beta \partial_\beta),& \quad \text{if $w=(e_j,0)$,}
\\ 
((x_j, x_\alpha), \partial_\alpha),& \quad \text{if $w=(0,e_\alpha)$.}\end{cases}
\end{equation}
We introduce the vertical and horizontal distributions by ${\mathcal V}= H(\pi^*N)$ and ${\mathcal H} = H(\pi^*TZ)$ respectively so that $TN = {\mathcal V} \oplus {\mathcal H}$. We also denote  
\[
h_A = H e_A \quad \text{and the algebraic dual by}\quad h^A = (H^*)^{-1} e^A, 
\]
for every $A=j,\alpha$ where $H^*$ denotes the adjoint map 
\begin{equation}
\label{eq:dual_horizontal_lift}
H^*: T^*N\to \pi^*(T^*Z \oplus N^*).
\end{equation}
The dual frames are described alternatively, by 
\begin{equation}
\label{eq:dual_horizontal_lift_frames}
h^\alpha = dx^\alpha + x_\beta \omega_{k \beta}^\alpha d^k_\ell dx^\ell \quad \text{and}\quad h^j = d^j_\ell dx^\ell.
\end{equation}
The horizontal lifts appear in the expansion of the  $\{\tau_j,\tau_\alpha\}_{j,\alpha}$ frames because of the curvature of the normal bundle $(N, \nabla^N)\to Z$:

\begin{prop}
\label{prop:expansions_of_orthonormal_frames}
The frames $\{\tau_i\}_i$ and $\{ \tau_\alpha\}_\alpha$ expand in the radial directions as
\begin{align}
\label{eq:on_frames_expansions}
\tau_\alpha = h_\alpha + O( r^2 \partial^{\mathcal V} + r \partial^{\mathcal H})\quad \text{and}\quad  \tau_j = h_j + O( r^2 \partial^{\mathcal V} + r \partial^{\mathcal H}),
\end{align}
where $O(r^k\partial^{\mathcal H})$ and $O(r^k\partial^{\mathcal V})$ will denote expressions in horizontal and vertical lifts correspondingly with coefficient components vanishing up to order $r^k$ when $r\to 0^+$.
\end{prop}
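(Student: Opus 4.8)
\textbf{Proof proposal for Proposition~\ref{prop:expansions_of_orthonormal_frames}.}

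The plan is to compare the two global frames of $T\mathcal N$ near $Z$: the radially $\tilde\nabla^{T\mathcal N}$-parallel frame $\{\tau_A\}_{A=j,\alpha}$, defined by parallel transport of $\{e_j,e_\alpha\}$ along the normal geodesics, and the horizontal/vertical frame $\{h_A\}_{A=j,\alpha}=\{H e_A\}$ attached to the connection $\nabla^N$ on $N\to Z$. Both agree with $\{e_j,e_\alpha\}$ on the zero section $Z$, so the difference $\tau_A-h_A$ vanishes at $r=0$; the content of the statement is the precise order in $r$ of this difference, split according to the decomposition $T\mathcal N={\mathcal V}\oplus{\mathcal H}$. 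First I would write $\tau_A=h_A+\sum_\beta a_A^\beta\, \partial_\beta + \sum_k b_A^k\, h_k$ in Fermi coordinates (using $\{\partial_\beta\}$ as a convenient global frame of ${\mathcal V}$ along the fibers and $\{h_k\}$ for ${\mathcal H}$), so that the assertion becomes $a_A^\beta=O(r^2)$ and $b_A^k=O(r)$ as $r\to0^+$.

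The key mechanism is the defining ODE of the $\tau_A$: along a radial geodesic $s\mapsto s v$ with $v\in S(N)_p$ one has $\tilde\nabla_{\partial_r}\tau_A=0$, i.e. $\partial_r(\tau_A)^B = -\,\omega_{rC}^{B}(\tau_A)^C$, with initial condition $\tau_A=e_A$ at $r=0$. By \eqref{eq:connection_comp_rates} the connection components $\omega^\bullet_{rC}$ of $\tilde\nabla^{T\mathcal N}$ satisfy $\bar\omega_{\alpha A}^{B}=0$, and more precisely, working in the frame $\{h_A\}$ rather than in the coordinate frame, the relevant Christoffel-type coefficients relating $\{\tau_A\}$ and $\{h_A\}$ vanish to first order in $r$: this is exactly where the curvature $F^N$ of $(N,\nabla^N)$ enters, through the horizontal lift formula \eqref{eq:horizontal_lift}--\eqref{eq:dual_horizontal_lift_frames}, since $[h_j,h_k]=h_{[e_j,e_k]}-x_\gamma F^N(e_j,e_k)^\beta_\gamma\,\partial_\beta$ and $[h_j,h_\alpha]=O(r)$ in the vertical directions. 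Integrating the ODE once from $0$ to $r$ gives $\tau_A-h_A=O(r)\cdot(\text{vertical}\oplus\text{horizontal})$; then I would feed this first-order estimate back into the integral (a single Picard-type iteration) and use that the vertical components of the structure functions vanish at $r=0$ — because $h_\alpha=\partial_\alpha$ is itself $\tilde\nabla^{T\mathcal N}$-parallel to leading order and the mixed connection coefficients $\bar\omega_{j\alpha}^\beta$ appear linearly in $x_\gamma$ — to upgrade the vertical part to $O(r^2)$ while the horizontal part stays at $O(r)$. This yields \eqref{eq:on_frames_expansions} for $\tau_\alpha$; the argument for $\tau_j$ is identical, the only difference being the initial datum.

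Two bookkeeping points need care. First, one must check that the splitting of the error into $O(r^2\partial^{\mathcal V})$ and $O(r\partial^{\mathcal H})$ is coordinate-independent; this follows from the remark in Appendix~\ref{App:Fermi_coordinates_setup_near_the_singular_set} that expressions of the form $O(r^{k+1}\partial^{\mathcal N}+r^k\partial^{\mathcal H})$ are stable under the admissible coordinate changes $y_j=y_j(x_1,\dots,x_m)$, $y_\alpha=O_{\alpha\beta}(x)x_\beta$, since such changes preserve ${\mathcal V}$, act on $r$ trivially, and mix $\partial^{\mathcal H}$ only with $r\,\partial^{\mathcal V}$. Second, one should be slightly careful that the vanishing $\omega_{j\alpha}^\beta(p)=\omega_{jk}^l(p)=0$ in \eqref{eq:connection_comp_rates} holds only at the center $p$ of the normal chart, not on all of $U$, so the horizontal error component is genuinely $O(r)$ and not better; the vertical improvement to $O(r^2)$, by contrast, comes from the $r$-integration of a quantity that is $O(r)$ \emph{uniformly} on $\mathcal N_U$, using $\bar\omega_{\alpha A}^{B}\equiv0$ on all of $U$.

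The main obstacle I expect is not conceptual but organizational: setting up the comparison ODE in the right frame so that the curvature-of-$N$ terms are isolated cleanly, and then running the one-step bootstrap carefully enough that the vertical and horizontal components genuinely separate at orders $r^2$ and $r$ respectively, rather than both coming out at $O(r)$. Once the structure functions $[h_A,h_B]$ and the parallel-transport coefficients are expanded to the needed order in $r$ using \eqref{eq:horizontal_lift}, \eqref{eq:dual_horizontal_lift_frames}, \eqref{eq:Christoffel_relations_0} and \eqref{eq:connection_comp_rates}, the estimate \eqref{eq:on_frames_expansions} drops out by a routine Gronwall/Picard iteration.
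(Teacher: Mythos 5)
Your ODE-plus-Picard strategy is sound and the orders in \eqref{eq:on_frames_expansions} are correct, but one of the two inputs you cite for the vertical improvement is wrong and, taken at face value, would block the iteration. You assert that $[h_j,h_\alpha]=O(r)$ in the vertical directions. Computing directly from \eqref{eq:horizontal_lift}, with $h_\alpha=\partial_\alpha$ and $h_j=d_j^k\partial_k-x_\gamma\bar\omega_{j\gamma}^\beta\partial_\beta$, one finds $[h_j,h_\alpha]=\bar\omega_{j\alpha}^\beta\,\partial_\beta$. The coefficient $\bar\omega_{j\alpha}^\beta$ depends only on the base point, not on the fiber variable, so it does not tend to zero as $r\to0^+$; by \eqref{eq:connection_comp_rates} it vanishes only at the single center $p$ of the Fermi chart. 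Thus the vertical component of this Lie bracket is generically $O(1)$, and a Picard step driven by it would keep the vertical error at $O(r)$ rather than upgrading it to $O(r^2)$.

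The quantity whose vertical part genuinely vanishes on $Z$ --- and what your appeal to the linear appearance of $x_\gamma\bar\omega_{j\gamma}^\beta$ in $h_j$ is really pointing at --- is the covariant derivative $\tilde\nabla^{T\mathcal{N}}_{\partial_r}h_j$, which is what actually sources the ODE $\tilde\nabla^{T\mathcal{N}}_{\partial_r}(\tau_A-h_A)=-\tilde\nabla^{T\mathcal{N}}_{\partial_r}h_A$. Along a radial geodesic, $d_j^k\,\tilde\nabla^{T\mathcal{N}}_{\partial_r}\partial_k$ contributes $d_j^k \tfrac{x_\delta}{r}\bar\Gamma_{\delta k}^\beta\partial_\beta$ to the vertical directions at $r=0$, and this cancels exactly against the $-\tfrac{x_\gamma}{r}\bar\omega_{j\gamma}^\beta\partial_\beta$ produced by differentiating the correction $-x_\gamma\bar\omega_{j\gamma}^\beta\partial_\beta$ built into $h_j$, since $\bar\omega_{j\delta}^\beta=d_j^k\bar\Gamma_{k\delta}^\beta$ and the Christoffel symbols are symmetric. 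What survives at leading order is the second-fundamental-form term $d_j^k\tfrac{x_\delta}{r}\bar\Gamma_{\delta k}^l\partial_l$, which, once $\partial_l$ is expressed in the $\{h_m\}$ frame, is horizontal up to an $O(r)$ vertical correction --- precisely the $O(r\,\partial^{\mathcal{H}})$ error in the statement. With this covariant-derivative cancellation replacing the Lie-bracket claim, a single integration gives \eqref{eq:on_frames_expansions}. This is in substance what the paper does: it quotes the Fermi-coordinate expansions $\tau_\alpha=\partial_\alpha+O(r^2\partial^N+r\partial^H)$ and $\tau_j=d_j^k\partial_k-x_\beta\bar\omega_{j\beta}^\alpha\partial_\alpha+O(r^2\partial^N+r\partial^H)$ and then rewrites them in the $\{h_A\}$ frame via \eqref{eq:horizontal_lift}.
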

\begin{proof}
The frames $\{\tau\}_i$ and $\{ \tau_\alpha\}_\alpha$ expand with respect to tagential and normal Fermi fields as 
\begin{align*}
\tau_\alpha = \partial_\alpha + O( r^2 \partial^N + r \partial^H)\quad \text{and}\quad  \tau_j = d_j^k \partial_k - x_\beta \bar\omega_{j \beta}^\alpha \partial_\alpha + O( r^2 \partial^N + r \partial^H).
\end{align*}
Using the coordinate formulas of the bundle map $H:\pi^*(TZ\oplus N)\to TN$, we obtain the expansions \eqref{eq:on_frames_expansions}. 
\end{proof}

\begin{rem}
\begin{enumerate}
\item 
The expansions of the frames $ \{\tau_j\}_j$ are being developed up to order $O( r^2)\partial^N$ in the spherical Fermi fields because  terms of order $O( r)\partial^N$ become significant after the rescaling $\{w_\alpha = \sqrt{s} x_\alpha\}_\alpha$ is applied. In particular the horizontal lifts $h_j$ are precisely the terms of significant order in the expansion of $\tilde D$.   

\item If the bundle $(N, \nabla^N)\to Z$ is flat, then the tangential and spherical Fermi-fields suffice to describe the expansion of $\tilde D$.
\end{enumerate}
\end{rem}

There is also a Clifford action of $T^*N$ to the fibers:
\begin{defn}
\label{defn:definition_of_mathfrac_Clifford}
We use the bundle isomorphism $H^*$ from \eqref{eq:dual_horizontal_lift} to define $\mathfrak{c}_N$ as the composition
\[
\xymatrix{
\mathfrak{c}_N: T^*N \otimes \pi^*(E^i\vert_Z) \ar[r]^-{H^*\otimes 1}&  \pi^*( (T^*Z\oplus N^*)\otimes E^i\vert_Z) \ar[r]^-{c\vert_Z}& \pi^*(E^{1-i}\vert_Z). 
}
\] 
\end{defn}

To describe the components of $\mathfrak{c}_N$ in the local frames $\{h_A, \sigma_k, f_l\}_{A, k, l}$, we observe that, 
\begin{equation}
\label{eq:clifford_matrix_restricted}
\tilde c = c_k^{A l} \tau_A\otimes \sigma^k\otimes f_l.
\end{equation}
Since the Clifford multiplication and the local frames are $\tilde\nabla^{\tilde E}$-parallel, the components $c_k^{A l}:U \to \mathbb{R}$ are the components of the representation matrices $c^A, \, A=j,\alpha$ of the restricted Clifford multiplication $c\vert_Z: T^*X\vert_Z \otimes E^i\vert_Z\to E^{1-i}\vert_Z,\ i =0,1$. It's pullback therefore give a map with components 
\[
( c_k^{A l} \circ \pi)  e_A \otimes \sigma^k \otimes f_l 
\]
that is, the same components as those in equation \eqref{eq:clifford_matrix_restricted}. Because $H^*( h^A) = e^A,\ A=j,\alpha$, we obtain 
\[
\mathfrak{c}_N= ( c_k^{A l} \circ \pi) h_A\otimes \sigma^k \otimes f_l,
\]
to be the expression of $\mathfrak{c}_N$ in these local frames.

\medskip
 
\subsection{The total space \texorpdfstring{$TN\to N$}{}.}
\label{subApp:The_total_space_of_the_normal_bundle_N_to_Z}

In this subsection we introduce the connections and the Riemmanian metric we will be using for the analytical part of the paper. Recall the splitting $TN = {\mathcal V} \oplus {\mathcal H}$ into vertical and horizontal distributions, introduced in Appendix subsection~\ref{subApp:tau_j_tau_a_frames}. The splitting is crucial in the expansion of Lemma~\ref{lem:Dtaylorexp}. The vertical and horizontal distributions are used in Definition~\ref{defn:vrtical_horizontal_Dirac} to introduce the vertical and horizontal operators $\slashed D_0$ and $\bar D^Z$ respectively. Under the rescaling $\{w_\alpha = \sqrt{s} x_\alpha\}_\alpha$, the fields $\{h_\alpha\}_\alpha$ re-scale by a factor of $\sqrt{s}$ while the fields $\{h_j\}_j$ are invariant. To account for different rates in the re-scaling, we use the bundle isomorphism $H:\pi^*(TZ \oplus N) \to TN$ from \eqref{eq:horizontal_lift} to push forward the Riemannian metric and connection into the total space $TN$: we define a new Riemannian metric on the total space $TN$, as the push-forward
\begin{equation}
\label{eq:g_TN}
g_{TN}:TN\otimes TN \to \mathbb{R},\quad  Hv \otimes Hw \mapsto g(v,w), \quad v,w\in \pi^*(TZ \oplus N).
\end{equation}
The pullback bundle $\pi^*(T\mathcal{N}\vert_Z)=\pi^*(TZ \oplus N) $ with the pullback connection $\nabla^{\pi^*N}  \oplus \nabla^{\pi^*TZ}$ induce through the bundle isomorphism $H$, a new connection $\bar\nabla^{TN}$, 
\begin{equation}
\label{eq:connection}
\bar\nabla^{TN}= H \circ (\nabla^{\pi^*N}  \oplus \nabla^{\pi^*TZ}) \circ H^{-1}. 
\end{equation}
The frames $\{h_A = He_A\}_{A=j,\alpha}$ of $TN$ with dual coframes $\{h^A\}_{A=j,\alpha}$ are $g_{TN}$-orthonormal and the connection writes,
\begin{equation}
\label{eq:local_components_for_bar_nabla_TN}
\bar\nabla^{TN}_{h_j} h_k = \bar\omega_{jk}^l h_l, \quad \bar\nabla^{TN}_{h_j} h_\alpha = \bar\omega_{j \alpha}^ \beta h_\beta, \quad \text{and}\quad \bar\nabla^{TN}_{h_\alpha} h_A = 0, 
\end{equation}
for every $j,k,\alpha, A$, where $\omega^A_{B\Gamma}$ are the components in \eqref{eq:connection_comp_rates}. 

Similarly, the definition for the connection in the dual bundles is given by,
\begin{equation}
\label{eq:dual_connection}
\bar\nabla^{T^*N}= (H^*)^{-1} \circ (\nabla^{\pi^*N^*}  \oplus \nabla^{\pi^*T^*Z}) \circ H^*. 
\end{equation}

\medskip

\subsection{The expansion of the volume from along \texorpdfstring{$Z$}{} when \texorpdfstring{$r\to 0^+$}{}.}\hfill
\label{subApp:The_expansion_of_the_volume_from_along_Z}

Recall that $g = \exp^* g_X$. The volume element of $g$ is expressed in local coordinates $d\mathrm{vol}^\mathcal{N} =  (\det g)^{1/2}\, \bigwedge_j dx^j \wedge \bigwedge_\alpha dx^\alpha $. Since $\partial_A (\det g)^{1/2} = ({\rm div}_g \partial_A) (\det g)^{1/2}$, by using \cite{g}[Theorem 9.22] and relations \eqref{eq:Christoffel_relations_0}, we have,
\[
{\rm div}_g \partial_\alpha = \sum_j \Gamma_{\alpha j}^j + \sum_\beta \Gamma_{\alpha\beta}^\beta =  H_\alpha + O(r) 
\]
as $r \to 0^+$. Therefore, in Fermi coordinates, we have an expansion,
\begin{equation}
\label{eq:volume-expansion}
d\mathrm{vol}^\mathcal{N}= \left(1 + \sum_\alpha x_\alpha H_\alpha + O(r^2)\right)\, (\det \bar g)^{1/2}\bigwedge_j dx^j \wedge  \bigwedge_\alpha  dx^\alpha.
\end{equation}
The local volume element  $ (\det \bar g)^{1/2} \bigwedge_j dx^j \wedge  \bigwedge_\alpha  dx^\alpha$ is independent of the bundle coordinates and defines a volume element $d\mathrm{vol}^N$ that is expressed in the frames $\{h^A\}_A$ in \eqref{eq:dual_horizontal_lift_frames} as $h^1 \wedge \dots \wedge h^n $, on the total space of the normal bundle $N$. More to the point $d\mathrm{vol}^N$ is the associated volume element induced by $g_{TN}$ in an orthonormal coframe of lifts $\{h^A\}_A$. Finally there exist $\varepsilon_0>0$ so that for every $0<\varepsilon< \varepsilon_0$, the corresponding densities satisfy, 
\begin{align}
\label{eq:density_comparison}
\frac{1}{2}\lvert d\mathrm{vol}^\mathcal{N}\rvert_q \leq \lvert d\mathrm{vol}^N\rvert_q \leq 2\lvert d\mathrm{vol}^\mathcal{N}\rvert_q, 
\end{align}
for every $q\in \mathcal{N}_\varepsilon$.

\medskip
 
\subsection{The Clifford bundle \texorpdfstring{$(\pi^*(E\vert_Z), \bar\nabla, \mathfrak{c}_N )$}{}.}
\label{subApp:The_pullback_bundle_E_Z_bar_nabla_E_Z_c_Z}

Recall the connection $\bar\nabla^{E\vert_Z}$, introduced in Definition \ref{eq:connection_bar_nabla}. The restriction bundle $(E\vert_Z, \bar\nabla^{E\vert_Z}, c_Z )$ pullback through the normal bundle map $\pi:N\to Z$, inducing new bundles over the total space $N$, so that the diagram    
\[
\xymatrix{
\left(\pi^*(E\vert_Z),\, \bar\nabla^{\pi^*(E\vert_Z)} \right)\ar[r] \ar[d] & \left(E\vert_Z,\, \bar\nabla^{E\vert_Z}\right) \ar[d]
\\
N \ar[r]^\pi & Z}
\] 
commute. According to \cite{kn}[Ch.II, Prop. 6.2], the main property of the pullback connection $\bar\nabla^{\pi^*(E\vert_Z)}$ is that, for every section $\xi :Z \to E\vert_Z$,
\begin{equation}
\label{eq:pullback_connection_universal_property}
\bar\nabla^{\pi^*(E\vert_Z)}_v (\pi^*\xi) = \pi^*(\bar\nabla^{E\vert_Z}_{\pi_*v} \xi), \quad \text{for every $v\in TN$.}
\end{equation}
Therefore in bundle coordinates $(N_U, (x_j, x_\alpha)_{j, \alpha})$, using the frames $\{h_A = H e_A\}_{A=j,\alpha}$ and the frames $\{\sigma_\ell, f_k\}_{\ell, k}$,
\begin{equation}
\label{eq:pullback_bar_connection_components}
\bar\nabla_{h_\alpha}^{\pi^*(E^i\vert_Z)} = h_\alpha \quad \text{and} \quad  \bar\nabla_{h_j}^{\pi^*(E^i\vert_Z)} = h_j + \phi_j^i,\ i=0,1,
\end{equation}
where the matrices $\phi_j^i$ are introduced in \eqref{eq:comparing_tilde_nabla_orthonormal_to_bar_nabla_orthonormal}. Likewise, the curvature $F^{\pi^*(E\vert_Z)}$ of the connection $\bar\nabla^{\pi^*(E\vert_Z)})$ satisfies,
\begin{equation}
\label{eq:pullback_curvature_universal_property}
F^{\pi^*(E\vert_Z)}(u,v)(\pi^*\xi) = \pi^*( F^{E\vert_Z}( \pi_*u, \pi_*v) \xi),
\end{equation}
where $F^{E\vert_Z}$ denotes the curvature of the bundle $(E\vert_Z, \bar\nabla)$. For notational simplicity, every section $\xi: Z \to E\vert_Z$ that pulls back to $\pi^* \xi: N \to \pi^*E$, will be denoted by again by $\xi$.  We also denote $\bar\nabla^{\pi^*(E\vert_Z)}$ as simply $\bar\nabla$. 

Proposition~\ref{prop:basic_restriction_connection_properties} has the following analogue for the pullback connection: 
\begin{prop}
\label{prop:basic_extension_bar_connection_properties}
\begin{enumerate}
\item 
\label{prop:basic_extension_bar_connection_properties2}
The connection $\bar\nabla^{TN}$ is compatible with $g^{TN}$. Its torsion $T:=T^{\bar\nabla^{TN}}: TN \otimes TN \to TN$ is given by 
\[
((p,v), X \otimes Y) \mapsto HF^{\nabla^N}( \pi_*X, \pi_*Y)v, 
\]
for every $(p,v) \in N$.

\item
\label{prop:basic_extension_bar_connection_properties1} 
For every $\theta\in C^\infty(N; T^*N)$
\begin{equation}
    \label{eq:basic_extension_bar_connection_properties1}
[\bar\nabla, \mathfrak{c}_N(\theta)] = \mathfrak{c}_N( \bar\nabla^{T^*N} \theta).  
\end{equation}

\item
\label{prop:basic_extension_bar_connection_properties3}
The connection $\bar\nabla$ is compatible with the inner product on the bundle $\pi^*(E\vert_Z)$. Furthermore it reduces to a sum of connections, each of which is associated to each summand of the decomposition of $\pi^*(S^i\vert_Z)$ into eigenbundles $\pi^*(S^i_{\ell k})$ introduced in decompositions \eqref{eq:eigenspaces_of_Q_i} and \eqref{eq:decompositions_of_S_ell}.

\item 
\label{prop:basic_extension_bar_connection_properties4}
We define 
\[
\bar\nabla^{\mathcal V}: C^\infty(N; \pi^*E) \to C^\infty(N; {\mathcal V}^* \otimes \pi^*E),\quad  \bar\nabla^{\mathcal V} = h^\alpha \otimes \bar\nabla_{h_\alpha},
\]
and 
\[
\bar\nabla^{\mathcal H}: C^\infty(N;  \pi^*E) \to C^\infty(N; {\mathcal H}^* \otimes \pi^*E),\quad \bar\nabla^{\mathcal H} = h^j\otimes \bar\nabla_{h_j}.
\]
Then the expressions are independent of the frames used. Their adjoints are given by 
\[
\bar\nabla^{{\mathcal V}*}(h^\alpha \otimes s_\alpha)  =-\bar\nabla_{h_\alpha} s_\alpha \quad \text{and}\quad \bar\nabla^{{\mathcal H}*}(h^j \otimes s_j) = - \bar\nabla_{h_j} s_j .
\] 

\item 
\label{prop:basic_extension_bar_connection_properties5}
We define the Hessian operator with respect to the connection $\bar\nabla \otimes 1 + 1\otimes \bar\nabla$ of $TN \otimes \pi^* E$ as 
\[
\mathrm{Hess}(u,v)\xi = \bar\nabla_u \bar\nabla_v \xi - \bar\nabla_{ \bar\nabla_u v} \xi, 
\]
for every $u,v\in TN$. Then, the skew-symmetrization of the Hessian operator is related to the curvature $F^{\pi^*(E\vert_Z)}$ of the bundle $(\pi^* (E\vert_Z), \bar\nabla)$ by the equation 
\begin{equation}
\label{eq:basic_extension_bar_connection_properties5}
\mathrm{Hess}(u,v)\xi - \mathrm{Hess}(v,u)\xi = F^{\pi^*(E\vert_Z)}(u,v) \xi - \bar\nabla_{T(u,v)} \xi, 
\end{equation}
for every $u,v\in TN$. Moreover, the Hessian is symmetric when at least one of the $u$ and $v$, belong in ${\mathcal V}$. 
\end{enumerate}
\end{prop}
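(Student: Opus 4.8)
The plan is to prove the five assertions in the order stated, exploiting the fact that every object appearing on $N$ is the pushforward along $\pi:N\to Z$ — via $H$, $H^*$, or the pullback $\pi^*$ — of a corresponding object on $Z$, so that each identity either reduces to its analogue on $Z$, which is Proposition~\ref{prop:basic_restriction_connection_properties}, or to a short computation over a fiber $N_p$. For part \eqref{prop:basic_extension_bar_connection_properties2}: since by \eqref{eq:g_TN} and \eqref{eq:connection} the bundle map $H$ is an isometry intertwining $g$ with $g^{TN}$ and $\nabla^{\pi^*N}\oplus\nabla^{\pi^*TZ}$ with $\bar\nabla^{TN}$, and the former connection is $g$-compatible, $\bar\nabla^{TN}$ is $g^{TN}$-compatible. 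For the torsion I would compute $T(h_A,h_B)=\bar\nabla^{TN}_{h_A}h_B-\bar\nabla^{TN}_{h_B}h_A-[h_A,h_B]$ in the lifted frame using \eqref{eq:local_components_for_bar_nabla_TN} together with the bracket relations of the lifts read off from \eqref{eq:horizontal_lift}: the vertical lifts commute, the mixed brackets $[h_j,h_\alpha]$ cancel against the connection terms, and the horizontal–horizontal bracket $[h_j,h_k]$ produces exactly the vertical vector $-HF^{\nabla^N}(e_j,e_k)v$ at $(p,v)$ — the standard identity for horizontal lifts of a linear connection. Collecting terms yields the stated formula, which is tensorial and hence frame–independent.

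For part \eqref{prop:basic_extension_bar_connection_properties1} the key input is that $c_Z=c\vert_Z$ is $\bar\nabla^{E\vert_Z}$-parallel, which is precisely the content of the equations \eqref{prop:basic_restriction_connection_properties2}–\eqref{prop:basic_restriction_connection_properties1} of Proposition~\ref{prop:basic_restriction_connection_properties}. Since $\mathfrak{c}_N$ is defined in Definition~\ref{defn:definition_of_mathfrac_Clifford} by precomposing $c_Z$ with $H^*\otimes 1$, since $\bar\nabla$ on $\pi^*(E\vert_Z)$ satisfies the pullback universal property \eqref{eq:pullback_connection_universal_property}, and since $\bar\nabla^{T^*N}$ is by \eqref{eq:dual_connection} the $H^*$-conjugate of the pulled-back $\nabla^{T^*Z}\oplus\nabla^{N^*}$, the identity \eqref{eq:basic_extension_bar_connection_properties1} holds for $\theta=\pi^*w$ and $\xi=\pi^*\eta$ by pulling back the parallelism of $c_Z$; for general $\theta\in C^\infty(N;T^*N)$ one extends by $C^\infty(N)$-linearity in $\theta$ and the Leibniz rule, since $[\bar\nabla_v,\mathfrak{c}_N(\theta)]\xi=\mathfrak{c}_N(\bar\nabla^{T^*N}_v\theta)\xi$ is immediate from Leibniz once $\mathfrak{c}_N$ is known to be parallel as a bundle section. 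Part \eqref{prop:basic_extension_bar_connection_properties3} is then automatic: metric compatibility of $\bar\nabla$ is inherited from $\bar\nabla^{E\vert_Z}$ through the pullback, and because $\bar\nabla^{E\vert_Z}$ preserves each $S^i_{\ell k}$ (Proposition~\ref{prop:basic_restriction_connection_properties}), its pullback preserves each $\pi^*(S^i_{\ell k})$.

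For part \eqref{prop:basic_extension_bar_connection_properties4}, frame–independence of $\bar\nabla^{\mathcal V}=h^\alpha\otimes\bar\nabla_{h_\alpha}$ and of $\bar\nabla^{\mathcal H}=h^j\otimes\bar\nabla_{h_j}$ follows because under a change of bundle chart the vertical lifts $\{h_\alpha\}$ (resp. the coframes $\{h^j\}$) transform by an orthogonal matrix among themselves — the transformation rules recorded in the proof of Lemma~\ref{lemma:coordinate_independency} — so the sums are the intrinsic restrictions of $\bar\nabla$ to the canonical vertical, resp. horizontal, cotangent directions. For the adjoints I would argue exactly as in the proof of Proposition~\ref{prop:cross_adjoint}: given compactly supported sections, form the vector field $Y_{\mathcal V}=\sum_\alpha\langle s_\alpha,t\rangle h_\alpha$ (which is frame–independent), note that over any fiber $N_p$ the connection coefficients of $\bar\nabla^{TN}$ vanish and $\operatorname{div}_{d\mathrm{vol}^N}h_\alpha=0$ (as recalled in Section~\ref{sec:Properties_of_the_operators_slashed_D_s_and_D_Z.}), so $\operatorname{div}Y_{\mathcal V}=\langle\bar\nabla_{h_\alpha}s_\alpha,t\rangle+\langle s_\alpha,\bar\nabla_{h_\alpha}t\rangle$ pointwise — and since both sides are manifestly coordinate–free scalars this holds everywhere; integrating and applying Stokes to $\iota_{Y_{\mathcal V}}d\mathrm{vol}^N$ gives $\bar\nabla^{{\mathcal V}*}(h^\alpha\otimes s_\alpha)=-\bar\nabla_{h_\alpha}s_\alpha$, and the computation for $\bar\nabla^{{\mathcal H}*}$ is identical with $h_j$ in place of $h_\alpha$. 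Finally, for part \eqref{prop:basic_extension_bar_connection_properties5}, writing $\mathrm{Hess}(u,v)-\mathrm{Hess}(v,u)=(\bar\nabla_u\bar\nabla_v-\bar\nabla_v\bar\nabla_u)\xi-\bar\nabla_{\bar\nabla_uv-\bar\nabla_vu}\xi$ and then substituting $\bar\nabla_u\bar\nabla_v-\bar\nabla_v\bar\nabla_u-\bar\nabla_{[u,v]}=F^{\pi^*(E\vert_Z)}(u,v)$ together with $\bar\nabla_uv-\bar\nabla_vu-[u,v]=T(u,v)$ gives \eqref{eq:basic_extension_bar_connection_properties5} at once; and when $u\in{\mathcal V}$ or $v\in{\mathcal V}$ we have $\pi_*u=0$ or $\pi_*v=0$, so $T(u,v)=0$ by part \eqref{prop:basic_extension_bar_connection_properties2} and $F^{\pi^*(E\vert_Z)}(u,v)=\pi^*F^{E\vert_Z}(\pi_*u,\pi_*v)=0$ by \eqref{eq:pullback_curvature_universal_property}, so the Hessian is symmetric there.

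The only genuinely delicate point is the torsion formula in part \eqref{prop:basic_extension_bar_connection_properties2}: one must set up the Lie brackets of the lifted frame correctly from \eqref{eq:horizontal_lift} and identify the $F^{\nabla^N}$-term inside $[h_j,h_k]$; everything else is either a direct invocation of Proposition~\ref{prop:basic_restriction_connection_properties} through the pullback universal properties, an orthogonal-change-of-frame bookkeeping, or the fiberwise integration by parts carried out as in Proposition~\ref{prop:cross_adjoint}.
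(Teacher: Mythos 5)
Your proposal is correct and takes essentially the same approach as the paper: fiberwise computations over $N_p$ exploiting the vanishing of the connection coefficients of $\bar\nabla^{TN}$ there, the explicit bracket computation $[h_j,h_k]$ yielding $HF^{\nabla^N}$, a Cartan/Stokes argument for the adjoints as in Proposition~\ref{prop:cross_adjoint}, and the vanishing of $T$ and $F^{\pi^*(E\vert_Z)}$ on vertical directions for the symmetry of the Hessian. The only presentational difference is in proving \eqref{eq:basic_extension_bar_connection_properties1}, where you invoke the pullback functoriality of $\bar\nabla$ plus $C^\infty(N)$-Leibniz extension, while the paper computes directly in lifted frames after splitting $\theta$ into ${\mathcal H}^*$ and ${\mathcal V}^*$ components; both rest on the same input, namely Proposition~\ref{prop:basic_restriction_connection_properties}.
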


\begin{proof}
We use local orthonormal frames $\{e_j\}_j$ that are $\nabla^{TZ}$-parallel at $p \in Z$ and $\{e_\alpha\}_\alpha$ that are $\nabla^N$-parallel at $p$ with their dual frames $\{e^A\}_A$. Then equations \eqref{eq:connection_comp_rates} hold. Introduce lifts $\{h_A\}_A$ with dual frames $\{h^A\}_A$ and relations $\eqref{eq:local_components_for_bar_nabla_TN}$ hold. Finally introduce frames $\{\sigma_\ell\}_\ell,\ \{f_k\}_k$. The Clifford action obeys $\mathfrak{c}_N(h^A) = c(e^A) =: c^A$, the matrix representation with respect to the preceding frames. 

Part \eqref{prop:basic_extension_bar_connection_properties3} follows by the definition of $\bar\nabla$. To prove part \eqref{prop:basic_extension_bar_connection_properties2} we write $g^{TN} = h^k\otimes h^k + h^\alpha \otimes h^\alpha$. By the local expressions \eqref{eq:local_components_for_bar_nabla_TN} it follows that $\bar\nabla_{h_\alpha} g^{TN} = 0$ and that  
\begin{align*}
\bar\nabla_{h_j} g^{TN} = - (\bar\omega_{jk}^l + \bar\omega_{jl}^k) h^k \otimes h^l - (\bar\omega_{j\alpha}^\beta + \bar\omega_{j \beta}^\alpha) h^\alpha \otimes h^\beta =0, \qquad (\text{by  \eqref{eq:connection_comp_rates}}). 
\end{align*}
We calculate the torsion $T$ of the connection $\bar\nabla^{TN}$ over the fiber $N_p$ of the total space $N$ at $p$. We have that 
\[
\bar\nabla_{h_A} h_B=0\quad \text{and}\quad [h_\alpha, h_\beta] = [\partial_\alpha , \partial_\beta] =0, 
\]
for every $A,B,\alpha,\beta$. Therefore $T(h_\alpha, h_\beta)=0$. Also $[ h_j, h_\alpha]= \bar\omega_{j \alpha}^\beta h_\beta  = 0$ and therefore $T(h_j, h_\alpha)=0$. Finally one of the definitions of the curvature of $(N, \nabla^N)$ is as the obstruction to the integrability of the horizontal distribution induced by $\nabla^N$ in $TN$. We review the calculation, 
\begin{align*}
[h_j, h_k] &= [ d_j^l \partial_l , d_k^\ell \partial_\ell] + [x_\beta \bar\omega_{j \beta}^\alpha \partial_\alpha , x_\gamma \bar\omega_{k \gamma}^\delta \partial_\delta] 
- [ d_j^l \partial_l ,x_\gamma \bar\omega_{k \gamma}^\delta \partial_\delta] - [x_\beta \bar\omega_{j \beta}^\alpha \partial_\alpha  , d_k^\ell \partial_\ell] 
\\
&= \left( e_j(d_k^l) -  e_k(d_j^l) \right)  \partial_l +  x_\alpha\left(e_k(\bar\omega_{j\alpha}^\beta) - e_j(\bar \omega_{k \alpha}^\beta)\right) \partial_\beta 
\\
&= 0+  H(F^{\nabla^N}( e_k , e_j) ( x_\alpha e_\alpha)),
\end{align*}
over $N_p$ and we obtain $T( h_j, h_k) =  H(F^{\nabla^N}( \pi_* h_j , \pi_* h_k) ( x_\alpha e_\alpha))$. This finishes the computation of the torsion.

To prove part \eqref{prop:basic_extension_bar_connection_properties1} we recall the decomposition $T^*N = {\mathcal H}^*\oplus {\mathcal V}^*$. First we consider sections $\theta\in C^\infty(N; {\mathcal H}^*)$ so that $H^* \theta = r_1 w_1 + r_2 w_2$ with $w_1\in C^\infty(Z;T^*Z)$ and  $w_2\in C^\infty(Z;N^*)$ and $r_1, r_2 \in C^\infty(N)$. Since $\pi_* h_j = e_j$, 
\begin{align*}
[\bar\nabla_{h_j}, \mathfrak{c}_N(\theta)] = \sum_{i =1}^2 \left( h_j(r_i)  c(w_i)+ r_i  [\bar\nabla_{e_j}, c(w_i)] \right),
\end{align*}
by definition of $\bar\nabla$. From equation \eqref{prop:basic_restriction_connection_properties2}
\[
[\bar\nabla_{e_j}, c(w_i)] = \begin{cases} c( \nabla^{T^*Z}_{e_j} w_1),& \quad \text{if $i=1$,}
\\ c(\nabla^{N^*}_{e_j}w_2),& \quad \text{if $i=2$.}
\end{cases} .
\]
Substituting back and using \eqref{eq:dual_connection}, we obtain 
\[
[\bar\nabla_{h_j}, \mathfrak{c}_N(\theta)] =  c\left((\nabla^{N^*}_{h_j} \oplus \nabla^{T^*Z}_{h_j})( r_1w_1 + r_2w_2)\right)=  \mathfrak{c}_N( \bar\nabla^{T^*N}_{h_j} \theta).  
\]
Finally we consider section $\theta\in C^\infty(N; {\mathcal V}^*)$ so that $\theta = \theta_A h^A$. Then $\mathfrak{c}_N(\theta) = \theta_A c(e^A) = \theta_A c^A$ and therefore, 
\begin{align*}
[\bar\nabla_{h_\alpha}, \mathfrak{c}_N(\theta_A h^A)] &= (h_\alpha\theta_A) c^A +  \theta_A [\partial_\alpha, c^A] =  \mathfrak{c}_N(\bar\nabla_{h_\alpha} (\theta_A h ^A)).
\end{align*}
Any other section of $T^*N$ is locally a linear combination of sections $\theta$ of the forms examined. Therefore part \eqref{prop:basic_extension_bar_connection_properties1} hold in general.

To prove part \eqref{prop:basic_extension_bar_connection_properties4} let $\xi\in C^\infty(N; \pi^*E)$ and $s\in C^\infty(N; {\mathcal H}^*\otimes \pi^*E)$. In local frames $s = h^j\otimes s_j$. We define $Y = \sum_j Y^j h_j$ where $Y^j= \langle \xi, s_j\rangle$ and set $\omega = \iota_Y d\mathrm{vol}^N$. 

By Cartan's identity $d \omega = {\mathcal L}_Y\, d \mathrm{vol}^N$ where $d\mathrm{vol}^N = h^1 \wedge \dots \wedge h^n$ so that
\[
{\mathcal L}_Y d \mathrm{vol}^N = \sum_{j,A} h^1 \wedge \dots \wedge {\mathcal L}_{Y^j h_j} h^A \wedge \dots \wedge h^n.   
\]
Calculating over $N_p$, we have that 
\begin{align*}
{\mathcal L}_{Y^j h_j} h^k &= - h^k( [Y^j h_j, h _l]) h^l  - h^k([Y^j h_j, h_\beta]) h^\beta = dY^k,
\\
{\mathcal L}_{Y^j h_j} h^\alpha &= - h^\alpha( [Y^j h_j, h _l]) h^l  - h^\alpha([Y^j h_j, h_\beta]) h^\beta = - h^\alpha([h_j,h_l])Y^j h^l,
\end{align*}
so that ${\mathcal L}_Y d \mathrm{vol}^N = h_j(Y^j) \, d \mathrm{vol}^N$. Then 
\begin{align*}
h_j(Y^j) &= \langle \bar\nabla_{h_j} \xi , s_j \rangle + \langle \xi, \nabla_{h_j} s_j\rangle
\\
&=    \langle h^j\otimes \bar\nabla_{h_j} \xi , h^j\otimes s_j \rangle + \langle \xi, \bar\nabla_{h_j} s_j\rangle, \qquad (\text{since $\lvert h^j\rvert=1$})
\\
&= \langle \bar\nabla^{\mathcal H} \xi, s\rangle - \langle \xi, \bar\nabla^{{\mathcal H}*} s \rangle,
\end{align*}
that is 
\[
d\omega= (\langle \bar\nabla^{\mathcal H} \xi, s\rangle - \langle \xi, \bar\nabla^{{\mathcal H}*} s \rangle)\,  d \mathrm{vol}^N
\]
This finishes the proof of the local formula for $\nabla^{{\mathcal H}*}$. The formula for $\nabla^{{\mathcal V}*}$ follows the same pattern. The formula for the skew-symmetrization of the Hessian in part \eqref{prop:basic_extension_bar_connection_properties5} is a straightforward calculation. Also the asserted symmetry of the Hessian follows by the formula, since $T(h_\alpha, h_\beta)= T(h_j, h_\alpha)=0$ and $F^{\pi^*(E\vert_Z)}(h_\alpha, h_\beta) = F^{\pi^*(E\vert_Z)}(h_j, h_\alpha) =0$ by \eqref{eq:pullback_curvature_universal_property}.
\end{proof}

\bigskip

\setcounter{equation}{0}
\renewcommand\theequation{C.\arabic{equation}}
\section{Various analytical proofs}
\label{subApp:various_analytical_proofs}

\begin{proof}[Proof of Proposition~\ref{prop:improved_concentration_Prop}(The improved concentration principle)]
The proof uses a technique described in \cite{as}[Ch.4, Th.4.4,pp.59]. Let $\rho :X \to \mathbb{R}$ be smooth and $\xi \in C^\infty(X;E^0)$. Using that $D_s(\rho \xi) = d\rho_\cdot \xi + \rho D_s \xi$, we calculate
\begin{align*}
\langle D_s \xi, D_s(\rho^2 \xi)\rangle_{L^2(X)} &=  \langle \rho D_s \xi, D_s(\rho\xi)\rangle_{L^2(X)} + \langle D_s \xi, \rho d\rho_\cdot \xi\rangle_{L^2(X)}
\\
&= \|D_s(\rho \xi)\|_{L^2(X)}^2 - \langle  d\rho_\cdot\xi, D_s(\rho \xi)\rangle_{L^2(X)} + \langle  d\rho_\cdot \xi, \rho D_s \xi\rangle_{L^2(X)}
\\
&=  \|D_s(\rho \xi)\|_{L^2(X)}^2  - \| \lvert d\rho\rvert \xi\|_{L^2(X)}^2. 
\end{align*}

Let now $\delta>0$ a number satisfying the assumptions of the statement. Assume additionally that $\mathrm{supp\,}\rho \subset \Omega(\delta)$. Then, by a concentration estimate, for every $\xi \in C^\infty(X;E)$,            
\begin{equation*}
\| D_s (\rho \xi)\|_{L^2(X)}^2 \geq s\langle B_{\mathcal A} \xi, \rho^2 \xi \rangle_{L^2(X)} + s^2\|{\mathcal A}(\rho \xi)\|_{L^2(X)}^2 \geq (s^2\kappa_\delta^2 - \lvert s\rvert C_0)\|\rho \xi\|_{L^2(X)}^2,
\end{equation*}
where we set,
\begin{equation}
\begin{aligned}
\label{eq:useful_constants}
C_0 &= \sup\{\lvert B_{\mathcal A}\rvert_p: p\in X\} 
\\
\kappa_\delta &= \inf\{\lvert{\mathcal A} v\rvert_p : p\in \Omega(\delta),\ v \in E_p,\ \lvert v\rvert_p=1 \}.
\end{aligned}
\end{equation}
Combining the preceding estimates, we obtain
\begin{multline*}
\int_{\Omega(\delta)}[( s^2 \kappa_\delta^2 - \lvert s\rvert C_0 - \lambda_s) \rho^2 - \lvert d\rho\rvert^2] \lvert\xi\rvert^2\, d\mathrm{vol}^X \leq  \int_{\Omega(\delta)}( \lvert D_s(\rho \xi)\rvert^2 - \lvert d\rho\rvert^2 \lvert\xi\rvert^2 - \lambda_s\lvert\rho\xi\rvert^2)\, d\mathrm{vol}^X
\\
= \langle D_s \xi , D_s(\varrho^2 \xi)\rangle_{L^2(X)} - \lambda_s \| \rho\xi\|^2_{L^2(X)} = \langle (D_s^* D_s - \lambda_s) \xi, \rho^2 \xi \rangle_{L^2(X)},
\end{multline*}
that is 
\[
\int_{\Omega(\delta)}[( s^2 \kappa_\delta^2 - \lvert s\rvert C_0 - \lambda_s) \rho^2 - \lvert d\rho\rvert^2] \lvert\xi\rvert^2\, d\mathrm{vol}^X \leq 0.
\]
We simplify this inequality by choosing $\lvert s\rvert> 2(C_0 + C_1)/ \kappa_\delta$, so that 
\begin{equation}
\label{eq:basic_exponential_decay}
\int_{\Omega(\delta)}\left(\frac{1}{2}s^2 \kappa_\delta^2 \rho^2 - \lvert d\rho\rvert^2\right) \lvert\xi\rvert^2\, d\mathrm{vol}^X \leq 0.
\end{equation}
Note that, by an approximation argument, the following inequality holds for every $\rho$ Lipschitz.  
 
Let now $\chi : [0, \infty) \to [0,1]$ a cutoff with $\chi\vert_{[0,1]} \equiv 1,\ \mathrm{supp\,} \chi \subset [0,2]$ and $\lvert\chi'\rvert \leq 1$. Then define
\[
\rho(p) = \begin{cases} 
\left(1- \chi\left(\frac{r}{\delta}\right) \right) e^{\tfrac{1}{2} \lvert s\rvert r^2 \epsilon}, \quad \text{if $p \in Z_{\mathcal A}(4\delta)$,}
\\
e^{8\lvert s\rvert\delta^2 \epsilon}, \quad \text{if $p\in \Omega(4\delta)$,} 
\end{cases}
\]
where $r = r(p)$ is the distance of $p$ from the core $Z_{\mathcal A}$. Note that
\[
d\rho(p) = \begin{cases} 
0, \quad \text{if $p \in Z_{\mathcal A}(\delta) \cup \Omega(4\delta)$,}
\\
-\frac{1}{\delta} \chi'\left(\frac{r}{\delta}\right) e^{\tfrac{1}{2} \lvert s\rvert r^2 \epsilon} dr + \lvert s\rvert r\epsilon \rho dr, \quad \text{if $p\in \Omega(\delta; 2\delta)$,}
\\
\lvert s\rvert r\epsilon \rho dr, \quad \text{if $p\in \Omega(2\delta; 4\delta)$,} 
\end{cases}
\]
where $\Omega( \delta_1;\delta_2) = Z_{\mathcal A}(\delta_2) \setminus Z_{\mathcal A}(\delta_1)$, for every $0 < \delta_1 < \delta_2$. Substituting back to \eqref{eq:basic_exponential_decay} and using that $(a + b)^2 \leq 2(a^2 + b^2)$,
\begin{equation*} 
\frac{1}{2}s^2 \kappa^2_\delta e^{16\lvert s\rvert\delta^2 \epsilon}\int_{\Omega(4\delta)} \lvert\xi\rvert^2\, d\mathrm{vol}^X + s^2\int_{\Omega(\delta, 4\delta)}\left(\frac{1}{2}\kappa^2_\delta - 2 r^2 \epsilon^2\right)  \lvert\rho \xi\rvert^2\, d\mathrm{vol}^X \leq \frac{2}{\delta^2} \int_{\Omega(\delta; 2\delta)} e^{ \lvert s\rvert r^2 \epsilon} \lvert\xi\rvert^2\, d\mathrm{vol}^X.
\end{equation*} 
The second integral of the left hand side is positive (and can be ignored) provided that $8\delta \epsilon < \kappa_\delta$ that is when we choose $0<\epsilon < \kappa_\delta/( 8\delta)$. We obtain that
\[
\int_{\Omega(4\delta)} \lvert\xi\rvert^2\, d\mathrm{vol}^X \leq \frac{4}{s^2 \delta^2\kappa_\delta^2} e^{- 12\lvert s\rvert\delta^2 \epsilon} \|\xi\|_{L^2(X)}^2. 
\]
An analogue to the bootstrap argument given in \cite{m}[Cor.2.6] proves now the proposition, where $\delta$ is changing to $2\delta$ or $3\delta$ in each iteration and there are as many iterations as needed for application of Morrey's inequality with the norm of the space $C^{\ell, \alpha}(\Omega(\ell \delta))$. This explains the dependence of the constants $C', \varepsilon_0$ and $s_0$ in $\ell$ and $\alpha$.
\end{proof}

We include here an approximation theorem, for the spaces $W_{k, l}^{1,2}(N)$ introduced in Definition~\ref{defn:eighted_norms_spaces}:

\begin{theorem}
\label{thm:approximation_theorem_for_weighted_spaces}
We have alternative descriptions:
\begin{align}
L^2_k(N) &= \{ \xi \in L^2(N): \|\xi\|_{0,2,k,0} < \infty\}, \label{eq:L_2_k_description}
\\
W^{1,2}_{k,0}(N) &= \{\xi \in L^2(N): \bar\nabla^{\mathcal V} \xi \in L^2(N)\ \text{and}\ \|\xi\|_{1,2,k,0} <\infty\},\label{eq:W_2_k_0_description}
\\
W^{1,2}_{k,l}(N) &= \{ \xi \in W^{1,2}(N): \|\xi\|_{1,2,k,l} < \infty\}. \label{eq:W_2_k_l_description}
\end{align}
\end{theorem}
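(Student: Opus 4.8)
The plan is to prove the three equalities by showing that in each case the completion of $C_c^\infty(N)$ in the stated norm coincides with the concrete function space on the right. The nontrivial inclusion is always ``$\supseteq$'': one must show that an element of the right-hand side can be approximated in the relevant norm by compactly supported smooth sections. The reverse inclusion ``$\subseteq$'' is immediate, since $C_c^\infty(N)$ sits inside each right-hand space and the norms are continuous with respect to the ambient $L^2$ or $W^{1,2}$ topology, so a Cauchy sequence in the weighted norm has a limit that still lies in the right-hand space (the weight $(1+r^2)^{k+1}$ is bounded below by $1$, hence weighted-Cauchy implies $L^2$-Cauchy, and similarly for the $W^{1,2}$ pieces).

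For the approximation direction I would use a two-step cutoff-and-mollify argument. First, given $\xi$ in the right-hand space, multiply by the radial cutoffs $\rho_j(v) = \rho(r(v)/j)$ already used elsewhere in the paper (with $\lvert d\rho_j\rvert \le C/j$); since $\bar\nabla^{\mathcal H}\rho_j$ and $\bar\nabla^{\mathcal V}\rho_j$ are supported in the shell $j \le r \le 2j$ and are $O(1/j)$ there, dominated convergence gives $\rho_j\xi \to \xi$ in the weighted norm, using that $r^{2k}\lvert\xi\rvert^2$, $r^{2k}\lvert\bar\nabla^{\mathcal V}\xi\rvert^2$ and (when $l\ge 1$) $r^{2(l-1)}\lvert\bar\nabla^{\mathcal H}\xi\rvert^2$ are all integrable on $N$ by hypothesis — this is where the precise form of the weights in Definition~\ref{defn:eighted_norms_spaces} is used, in particular the factor $l(1+r^2)^{l-1}$ which lets the $l=0$ case avoid any horizontal control. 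So it suffices to approximate a \emph{compactly supported} element of the right-hand space by elements of $C_c^\infty(N)$; but on a fixed compact set the weights are uniformly comparable to constants, so this reduces to the standard fact that $C_c^\infty$ is dense in $L^2$ and in $W^{1,2}$ on a manifold (and, for \eqref{eq:W_2_k_0_description}, dense in the space of $L^2$ sections with $L^2$ vertical derivative) — apply a partition of unity subordinate to bundle charts $\pi^{-1}(U)$ and mollify in coordinates.

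The one genuinely delicate point is \eqref{eq:W_2_k_0_description}: here the right-hand space only asks for the \emph{vertical} weak derivative to be in $L^2$, with no a priori horizontal derivative, so one cannot invoke global $W^{1,2}$-density directly. The fix is that $\slashed D_s$ (equivalently $\slashed D_0$) controls exactly the vertical directions: for fixed $z\in Z$ the operator $\slashed D_0$ restricted to the fiber $N_z$ is the Euclidean Dirac operator $c^\alpha\partial_\alpha$ by Remark~\ref{rem:properties_of_horizontal_and_vertical_operators}\ref{rem:local_expression_of_slashed_D_0}, so elliptic regularity in the fiber variables plus the product structure $N\to Z$ lets one mollify only in the fiber directions and in the base directions separately; alternatively one mollifies in coordinates and checks that the horizontal derivative of the mollification need not converge, only the section and its vertical derivative, which is all the norm $\|\cdot\|_{1,2,k,0}$ measures. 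I expect this separation-of-directions bookkeeping — making sure the cutoff $\rho_j$ does not introduce uncontrolled horizontal derivatives when $l=0$, and that mollification respects the vertical-only regularity — to be the main obstacle; everything else is routine dominated convergence and the standard local density of smooth functions.
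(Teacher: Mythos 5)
Your argument follows the same scheme as the paper: a radial cutoff (you use the dilation $\rho(r/j)$ with $O(1/j)$ derivative, the paper a translation $\rho(\lvert v\rvert - j)$ with bounded derivative, but both are controlled by the finiteness of the weighted integral over the outer shell) followed by local density of smooth sections, with the easy inclusion coming from the pointwise domination of the unweighted norms by the weighted ones. You are right that the one genuine subtlety is \eqref{eq:W_2_k_0_description}, where only the vertical weak derivative is assumed and one must therefore mollify only in the fiber directions rather than invoking full $W^{1,2}$-density; the paper's proof glosses over this by asserting ``the same argument works,'' and your sketch of how to handle it is correct in outline though left programmatic.
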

\begin{proof}
For every $\xi \in C^\infty_c(N)$, we have $\|\xi\|_2 \leq \|\xi\|_{0,2,k,0}$. Therefore a Cauchy sequence in $L^2_k(N)$ is also Cauchy in $L^2(N)$ and converges to the same limit. This proves the one inclusion in \eqref{eq:L_2_k_description}. For the other inclusion, we introduce a bump function $\rho:\mathbb{R} \to [0,1]$, with $\rho([0,1]) = \{1\}$ and $\rho^{-1}(\{0\}) = [2,\infty)$ so that $\lvert\rho'(t)\rvert \leq 1$. Given $\xi \in L^2(N)$ with $\|\xi\|_{0,2,k,0} < \infty$, we introduce $\xi_j(v) = \xi(v) \cdot \rho(\lvert v\rvert -j),\ v \in N,\ j \in \mathbb{N}$ and estimate
\[
\|\xi_j - \xi\|_{0,2,k,0}^2 \leq \int_{\{v: \lvert v\rvert \geq j\}} \lvert \xi\rvert^2(1+ r^2)^{k+1}\, d\mathrm{vol}^N \to 0,
\]
as $j \to \infty$.  Let $\varepsilon>0$ and choose $j$ so that $\xi_j$ is in $\varepsilon/2$ distance from $\xi$, we can find a $\phi\in C^\infty_c(B(Z, j))$ so that $\|\xi_j - \phi\|_{0,2,k,0} < \varepsilon/2$. Hence $\phi$ is $\varepsilon$-close to $\xi$. This proves  \eqref{eq:L_2_k_description}. 

Similarly, we have that $\|\bar\nabla^{\mathcal V}\xi\|_2 \leq \|\xi\|_{1,2,k,0}$ and that $\|\xi\|_{1,2} \leq \|\xi\|_{1,2,k,l}$. Hence if $\xi \in W^{1,2}_{k,l}(N)$, then $\xi$ poses weak derivatives in normal directions if $l=0$ and in every direction if $l>0$ and the weak derivatives belong in $L^2(N)$. On the other hand, given $\xi \in W^{1,2}(N)$ with $\|\xi\|_{1,2,k,l} < \infty$ we form the sequence $\xi_j$ as above and estimate
\[
\int_N \lvert \bar \nabla (\xi_j - \xi)\rvert^2(1 + r^2)^{k+1}\, d\mathrm{vol}^N \leq 2\int_{\{v: \lvert v\rvert \geq j\}} ( \lvert\xi\rvert^2+  \lvert \bar \nabla \xi\rvert^2)(1 + r^2)^{k+1}\, d\mathrm{vol}^N \to 0, 
\]
as $j \to \infty$. The same argument as with the proof of \eqref{eq:L_2_k_description} works to prove \eqref{eq:W_2_k_0_description} and \eqref{eq:W_2_k_l_description}. 
\end{proof}


\medskip


\begin{thebibliography}{}

\bibitem[A]{as} A. Samuel, {\em Lectures on Exponential Decay of Solutions of Second-Order Elliptic Equations: Bounds on Eigenfunctions of N-Body Schrodinger Operations.}, (1982) Princeton University Press. 

\bibitem[BL]{bl} J. M. Bismut and G. Lebeau {\em Complex immersions and Quillen metrics}, Publ. math. de l' I.H.E.S., tome 74 (1991).

\bibitem[G]{g} A. Grey, {\em Tubes}, (2000) Birkh\"{a}user Verlag.

\bibitem[K]{k}  T. Kato, {\em Perturbation Theory for Linear Operators} (2nd edition), (1995) Springer Verlag.

\bibitem[K2]{k2} U. Koschorke, {\em Infinite dimensional K-theory and characteristic classes of Fredholm bundle maps,} in Proceedings of the Symposia in Pure Mathematics, Vol. XV, AMS, Providence, R.I. 1970.        

\bibitem[KN]{kn} S. Kobayashi and K. Nomizu, {\em Foundations of Differential Geometry}, (1963) Interscience Publishers.  
 
\bibitem[LM]{lm}  H. B. Lawson and M. L. Michelsohn, {\em Spin Geometry}, (1989) Princeton University Press.

\bibitem[M]{m} M.Maridakis, {\em Spinor pairs and the Concentration Principle for Dirac operators}, Trans of AMS (2016).   

\bibitem[NA]{n} \'{A}.Nagy, {\em Conjugate linear deformations of the Dirac operators and Majorana fermions}, arxiv:2110.05326 



\bibitem[P]{p} A. Parmeggiani, {\em Spectral Theory of Non-Commutative Harmonic Oscillators: An Introduction}, (1992) Springer.  


\bibitem[PR]{pr} I. Prokhorenkov and K. Richardson, {\em Perturbations of Dirac operators}, J. Geom. Phys. \textbf{ 57} (2006), no. 1, pp. 297-321. 
 

\bibitem[T1]{t1} C. H. Taubes, {\em Counting pseudo-holomorphic curves in dimension four}, J. Diff. Geom. \textbf{ 44}    (1996), 818--893.
 
\bibitem[T2]{t2}  C. H. Taubes,  {\em SW $\Rightarrow$ Gr: From the Seiberg-Witten equations to pseudo-holomorphic curves}, J. Amer. Math. Soc. \textbf{ 9} (1996), no. 3, 845-918.

\bibitem[W1]{w1} E. Witten, {\em Supersymmetry and Morse Theory}, J. Diff. Geom., \textbf{ 17} (1982), 661-692. 

\bibitem[W2]{w2} E. Witten, {\em Monopoles and 4- manifolds}, Math. Res. Letters \textbf{ 1} (1994), 769-796.  






\end{thebibliography}
\end{document}